\newcommand\setItemnumber[1]{\setcounter{enum\romannumeral\@enumdepth}{\numexpr#1-1\relax}}
\newcommand{\p}{\mathbb{P}}
\DeclareMathOperator{\Id}{Id}
\DeclareMathOperator{\Hom}{Hom}
\DeclareMathOperator{\Gr}{Gr}
\DeclareMathOperator{\ZZ}{\mathbb{Z}}
\DeclareMathOperator{\CC}{\mathbb{C}}
\DeclareMathOperator{\Pic}{Pic}
\DeclareMathOperator{\Sing}{Sing}
\DeclareMathOperator{\Coker}{Coker}
\DeclareMathOperator{\Ker}{Ker}
\renewcommand{\Im}{\mathrm{Im}}
\DeclareMathOperator{\U}{\mathcal{U}}
\DeclareMathOperator{\Spin}{\mathcal{S}}
\DeclareMathOperator{\Spinmap}{s}
\DeclareMathOperator{\OP}{\mathcal{O}}
\DeclareMathOperator{\rk}{rk}
\newcommand{\LA}{\mathcal{A}}
\newcommand{\LB}{\mathcal{B}}
\newcommand{\I}{\mathcal{I}}
\newcommand{\C}{\mathcal{C}}
\newcommand{\B}{\widetilde{B}}
\newcommand{\F}{\mathcal{F}}
\newcommand{\E}{\mathcal{E}}
\newcommand{\K}{\mathcal{K}}
\newcommand{\M}{\mathcal{M}}
\newcommand{\V}{\mathcal{V}}
\newcommand{\doublecover}{M}
\newcommand{\codes}{\mathrm{\overline{C}}_B}
\newcommand{\HomSh}{\mathscr{H}\mathit{om}}
\newcommand{\ExtSh}{\mathscr{E}\mathit{xt}}
\newcommand{\deltamap}{\mathrm{d}}
\DeclareMathOperator{\Ext}{Ext}
\newcommand{\xdashrightarrow}[2][]{\ext@arrow 0359\rightarrowfill@@{#1}{#2}}
\newtheorem{theorem}[equation]{Theorem}
\newtheorem{lemma}[equation]{Lemma}
\newtheorem{proposition}[equation]{Proposition}
\newtheorem{corollary}[equation]{Corollary}
\theoremstyle{definition}
\newtheorem{defin}[equation]{Definition}
\newtheorem{example}[equation]{Example}
\title[Double quadrics with Artin--Mumford obstructions to rationality]{Double covers of smooth quadric threefolds with Artin--Mumford obstructions to rationality}
\author{Alexandra Kuznetsova}
\address{Steklov Mathematical Institute of Russian Academy of Sciences, Moscow, Russia
}
 \email{sasha.kuznetsova.57@gmail.com}
 \thanks{This work was supported by the Russian Science Foundation under grant no. 23-11-00033, https://rscf.ru/en/project/23-11-00033/}
\begin{document}
\begin{abstract}
 We study obstructions to rationality on a nodal Fano threefold $\doublecover$ that is a double cover of a smooth quadric threefold ramified over an intersection with a quartic threefold in $\mathbb{P}^4$. We prove that if $\doublecover$ admits an Artin--Mumford obstruction to rationality then it lies in one of three explicitly described families. Conversely, a general element of any of these families admits an Artin--Mumford obstruction to rationality. Only one of these three families was known before; other two families of nodal Fano threefolds with obstructions to rationality are new.
\end{abstract}

 \maketitle
 
 \section{Introduction}
 If $X$ is a smooth projective complex threefold and $Y$ is a blow-up of $X$ in a smooth subvariety in $X$ then it is easy to show that the torsion subgroups in the groups $H^3(X,\ZZ)$ and $H^3(Y,\ZZ)$ are isomorphic. This implies that  a non-trivial torsion subgroup in $H^3(X,\ZZ)$ is an obstruction to rationality for a smooth projective complex threefold~$X$. In fact, it is even an obstruction to stable rationality, i.e. the product~\mbox{$X\times \p^m$} is not rational for any $m\geqslant 0$. Artin and Mumford used this fact in their famous paper \cite{Artin_Mumford} in order to construct one of the first examples of a unirational non-rational variety. Note that in dimension 1 and~2 unirational varieties are necessarily rational by L\"uroth  and Castelnuovo theorems respectively. The Artin and Mumford's example shows that these theorems cannot be generalized to higher dimensions.
 
 In their paper Artin and Mumford considered  a nodal quartic double solid $\doublecover$, i.e. a double cover of~$\p^3$ ramified over a quartic surface, choose a resolution $\widetilde{\doublecover}$ of singularities of $\doublecover$ and proved that $H^3(\widetilde{\doublecover},\ZZ)$ contains a non-trivial torsion subgroup. More precisely, the quartic surface in the example of Artin and Mumford is a {\itshape symmetroid}: a zero locus of the determinant of a symmetric $4\times 4$ matrix of linear forms over~$\p^3$.
 
 Endrass in \cite{Endrass99} studied all nodal quartic double solids $\doublecover$ which admit an {\itshape Artin--Mumford obstruction to rationality}, i.e. varieties $\doublecover$ such that the group $H^3(\widetilde{\doublecover},\ZZ)$ contains a non-trivial torsion subgroup where~$\widetilde{\doublecover}$ is a resolution of singularities of $\doublecover$. He proved that the only family of such threefolds is the one constructed by Artin and Mumford. Sextic double solids are very similar to quartic double solids in many aspects. The question whether some of them admit obstructions to rationality is also interesting. These varieties were studied in \cite{IKP}, \cite{ex-Beau}, and \cite{Kuznetsova_sextic_double_solids}, where four families of nodal sextic double solids with Artin--Mumford obstructions to rationality were constructed. It was also shown that any nodal sextic double solid with such obstruction lies in one of these families. 
 
 Our goal in this paper is to construct new examples of stably non-rational varieties with Artin--Mumford obstruction to rationality. 
 Examples of quartic and sextic nodal double solids with obstructions to rationality of this type were obtained using the approach  developed by Endrass in \cite{Endrass99}. We formulate the method in generality necessary for applications in this paper.
 
 Let $X$ be a smooth threefold such that integral cohomology groups of $X$ are torsion-free and $H^3(X,\ZZ) = 0$, let $\pi\colon \doublecover\to X$ be the double cover ramified over a nodal surface $B\subset X$ and let $\widetilde{\doublecover}$ be a resolution of singularities of $\doublecover$. Endrass suggested to study special subsets of singular points on $B$ called~{\itshape\mbox{$\frac{\delta}{2}$-ev}\-en sets of nodes} where~$\delta  = 0$ or $1$, see Definition \ref{definition: even sets of nodes}. Such sets of nodes have nice geometric properties; they were first introduced in \cite{Catanese} and \cite{Beauville} and then studied in \cite{Casnati_Catanese}, \cite{Endrass98}, \cite{Catanese_Tonoli}, \cite{Catanese_Cho_Coughlan_Frapporti_Verra_Kiermaier}. The set of ~\mbox{$\frac{\delta}{2}$-ev}\-en sets of nodes on $B$ has a structure of a finite group and one can construct an embedding of the torsion subgroup~$T\subset H^3(\widetilde{\doublecover},\ZZ)$ into this group, see \cite[Lemma 1.2]{Endrass99}. Thus, if $\doublecover$ admits an Artin--Mumford obstruction to rationality then the surface $B$ contains a $\frac{\delta}{2}$-even set of nodes. A useful construction of a nodal surface in $X$ with a $\frac{\delta}{2}$-even set of nodes was suggested by Barth.
 
 \begin{example}[{\cite{Barth}}]\label{example: Phi}
  Let $X$ be a smooth threefold and let~$\OP_X(1)$ be an ample line bundle on $X$. Let $\E$ be a vector bundle on $X$ and let $\Phi$ be an element of the group~$H^0(X,(S^2\E)(d+\delta))\subset \Hom_{X}(\E^{\vee}(-d-\delta), \E)$ which induces an embedding of sheaves $\Phi\colon \E^{\vee}(-d-\delta)\hookrightarrow \E$. Here $d$ is a positive integer and $\delta = 0$ or $1$.
  Then by \cite{Barth} under a  certain condition on the degree of $\E$ and on $\Phi$ we can define a scheme $B(\Phi)$ of degree $d$ in $X$ and a subscheme $w(\Phi)$ on~$B(\Phi)$ in the following way:
 \begin{align}\label{eq: barth construction}
  B(\Phi) = \left\{\det\left(\Phi\colon \E^{\vee}(-d-\delta)\to \E\right) = 0\right\}; && w(\Phi) = \left\{\mathrm{corank}\left(\Phi\colon \E^{\vee}(-d-\delta)\to \E\right) = 2\right\}.
 \end{align}
 Moreover, if the vector bundle $(S^2\E)(d+\delta)$ is globally generated then there exists a non-empty Zariski open subset $\mathcal{U} \subset  H^0(X,(S^2\E)(d+\delta))$ such that for any $\Phi\in \mathcal{U}$ the scheme $B(\Phi)$ is a nodal surface of degree $d$, the subscheme $w(\Phi)$ is a $\frac{\delta}{2}$-set of nodes on $B(\Phi)$ and the singular locus of $B(\Phi)$ equals $w(\Phi)$.
 
 Note that in the case where $X$ is a projective space, $d = 4$, $\delta = 1$ and $\E = \OP_{\p^3}(-2)^{\oplus 4}$  this construction describes a quartic symmetroid as in Artin and Mumford's example of a nodal quartic double solid with an obstruction to rationality.
 \end{example}
 Vice versa, Casnati and Catanese proved that if $X=\p^3$ and~$B$ is a nodal surface in $\p^3$ of degree $d$ with a~\mbox{$\frac{\delta}{2}$-ev}\-en set of nodes~$w\subset B$ then there exists a vector bundle~$\E$ on $\p^3$ and an element $\Phi\in H^0(\p^3,(S^2\E)(d+\delta))$ such that~\mbox{$B = B(\Phi)$} and $w = w(\Phi)$ as in \eqref{eq: barth construction}, see \cite{Casnati_Catanese} and \cite{Casnati_Catanese_erratum}.
 
 The Casnati and Catanese theorem was used in \cite{Kuznetsova_sextic_double_solids} for the classification of nodal sextic double solids with Artin--Mumford obstructions to rationality. The classification proceeds as follows. Given a nodal double solid $\doublecover$ we consider its branch divisor $B$ which is a nodal surface in $\p^3$ of degree $6$. Since $M$ admits the Artin--Mumford obstruction to rationality, the surface~$B$ contains a $\frac{\delta}{2}$-even set of nodes $w$. Using the Casnati and Catanese theorem  we construct a vector bundle~$\E$ on $\p^3$ and an element $\Phi\in H^0(\p^3,(S^2\E)(d+\delta))$ such that $B = B(\Phi)$ and $w = w(\Phi)$. Then in terms of $\E$ and $\Phi$ we can describe in which situations $\doublecover$ admits an Artin--Mumford obstruction to rationality. In this paper we are going to use the same method in order to study obstructions to rationality for a new class of threefolds.
 
 Let $Q\subset \p^4$ be a smooth complex quadric threefold and let $B\subset Q$ be a nodal surface which is a complete intersection of $Q$ with a quartic hypersurface in $\p^4$. Consider the double cover $\pi\colon \doublecover\to Q$ ramified over the surface $B$. Then $\doublecover$ is a nodal Fano threefold of index $1$ and genus $3$. An example of a Fano threefold of this type with 20 nodes and an Artin--Mumford obstruction to rationality was introduced in~\mbox{\cite[Proposition 5.8]{Przyjalkowski_Shramov}}; see also \cite[Remark 4.3]{IKP}. Our goal is to construct new examples of such varieties.  
 
 We begin with the study of $\frac{\delta}{2}$-even sets of nodes on the complete intersection  $B$ of a smooth quadric  threefold $Q$  and a quartic threefold in $\p^4$. Our first result is the following generalization of the Casnati and Catanese result \cite{Casnati_Catanese} to the case of quadric threefolds. Here by $\Spin$ we denote the {\itshape spinor bundle} on the threefold $Q$, see \cite{Ottaviani-spinors} for the definition, also in Section \ref{subsection: spinor bundles} we briefly recall its properties.

 \begin{theorem}\label{theorem: classification of minimal sets of nodes}
 Let $Q\subset \p^4$ be a smooth quadric threefold and let $B\subset Q$ be a nodal surface which is a complete intersection of $Q$ with a quartic hypersurface in $\p^4$. If~$B$ contains a non-empty $\frac{\delta}{2}$-even set of nodes~$w$, then there exist a $\frac{\delta'}{2}$-even set of nodes $w'$ which maybe differs from $w$, a vector bundle $\E$ on $Q$ and an element $\Phi\in H^0(Q,(S^2\E)(4+\delta))$ such that~$B = B(\Phi)$ and $w' = w(\Phi)$ where $\E$ and $w'$ are as follows:
  \begin{align*}
   \delta = 0:\hspace{1cm} 
               & \text{\textup{(E1)}\hspace{1cm}}|w'| = 16 \text{ and }\E = \OP_Q(-1)^{\oplus 2}\oplus \OP_Q(-2);\\
               & \text{\textup{(E2)}\hspace{1cm}}|w'| = 20 \text{ and }\E = \OP_Q(-1)\oplus \Spin(-1);\\
               & \text{\textup{(E3)}\hspace{1cm}}|w'| = 24 \text{ and }\E = \Spin(-1)^{\oplus 2};\\               
               & \\
   \delta = 1:\hspace{1cm} & \text{\textup{(O1)}\hspace{1cm}}|w'| = 12 \text{ and }\E = \OP_Q(-1)\oplus \OP_Q(-2);\\
               & \text{\textup{(O2)}\hspace{1cm}}|w'| = 20 \text{ and }\E = \OP_Q(-2)^{\oplus 4};\\
               & \text{\textup{(O3)}\hspace{1cm}}|w'| = 20 \text{ and }\E = \Spin(-1).
  \end{align*}
  Moreover, in all these cases there exists a non-empty Zariski open subset $\mathcal{U} \subset  H^0(X,(S^2\E)(d+\delta))$ such that for any $\Phi\in \mathcal{U}$ the surface $B$ is nodal and the singular locus of $B$ equals $w(\Phi)$.
 \end{theorem}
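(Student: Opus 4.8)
The plan is to adapt the strategy of Casnati and Catanese, replacing the free resolutions over $\p^3$ by resolutions built from the full exceptional collection of $D^b(Q)$, namely the spinor bundle $\Spin$ together with $\OP_Q,\OP_Q(1),\OP_Q(2)$ (Kapranov). First I would translate the combinatorial datum of $w$ into a sheaf on $Q$. Resolving the nodes of $B$ and using the defining property of a $\frac{\delta}{2}$-even set of nodes (Definition \ref{definition: even sets of nodes}), the class $w$ produces a square root of $\OP(\sum_i N_i)$, up to a twist by $\OP_B(\delta)$, on the minimal resolution, and hence an associated double cover. Pushing forward a suitable line bundle from this double cover and reflexivizing yields a torsion-free sheaf $\F$ on $Q$, supported on $B$, of theta-characteristic type. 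The structural property to extract is self-duality: a symmetric isomorphism $\F\xrightarrow{\ \sim\ }\ExtSh^1_{\OP_Q}(\F,\omega_Q)(t)$ for the appropriate twist $t$. This is where the evenness of $w$ is genuinely used, and I would establish it by Grothendieck--Serre duality on $B$, comparing $\F$ with $\HomSh_{\OP_B}(\F,\omega_B)$.

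Next, since $\F$ is a maximal Cohen--Macaulay $\OP_Q$-module of codimension $1$, Auslander--Buchsbaum gives it projective dimension $1$, so it admits a locally free resolution of length two
\[
 0 \longrightarrow \E_1 \xrightarrow{\ \Phi\ } \E_0 \longrightarrow \F \longrightarrow 0.
\]
Applying $\ExtSh^\bullet_{\OP_Q}(-,\omega_Q)$ and using the self-duality of $\F$ together with the uniqueness of the minimal resolution forces $\E_1\cong\E_0^\vee(-4-\delta)$ and makes $\Phi$ symmetric, i.e. $\Phi\in H^0(Q,(S^2\E)(4+\delta))$ with $\E:=\E_0$. Then $\det\Phi$ cuts out $B$ and the corank-$2$ locus of $\Phi$ is the even set $w'=w(\Phi)$ attached to $\F$, exactly as in \eqref{eq: barth construction}. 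Note that $w'$ and its parity $\delta'$ may differ from $w$ and $\delta$, since reflexivizing and minimizing the resolution can replace the original square root by a canonically associated one.

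It remains to enumerate the admissible $\E$. Decomposing $\E_0$ against the exceptional collection and imposing that $\det\Phi$ has class $4H$ gives the Chern-class identity $2c_1(\E)+(4+\delta)\rk(\E)\,H=4H$; combined with the vanishing of the intermediate cohomology of $\F$ coming from minimality of the resolution, and with the node-count computed from $c_2,c_3$ of $\E$ via the Harris--Tu formula for symmetric degeneracy loci, this reduces the list of possible pairs $(\E,|w'|)$ to the six cases (E1)--(E3) and (O1)--(O3). I expect this enumeration to be the main obstacle: in particular the cohomology computations involving $\Spin(-1)$ that exclude the spurious bundles and pin down each value of $|w'|$ are the delicate point, as is verifying that the resolution has length exactly two and is minimal in the presence of the spinor summands.

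Finally, for the converse I would run Example \ref{example: Phi} in each of the six cases: one checks that $(S^2\E)(4+\delta)$ is globally generated and that the degree hypothesis of Barth's construction holds, so that the cited open subset $\mathcal{U}\subset H^0(Q,(S^2\E)(4+\delta))$ exists and a general $\Phi\in\mathcal{U}$ yields a nodal surface $B=B(\Phi)$ with $\Sing B=w(\Phi)$ of the stated cardinality.
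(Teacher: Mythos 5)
Your outline follows the same broad strategy as the paper (associate a self-dual sheaf $\F$ of projective dimension $1$ to $w$, produce a symmetric length-two resolution, enumerate the possible $\E$), but two steps that you treat as routine are precisely where the paper has to do its real work, and as written they do not go through. First, you derive $\E_1\cong\E_0^{\vee}(-4-\delta)$ and the symmetry of $\Phi$ from ``the uniqueness of the minimal resolution.'' On $\p^3$ this is Walter's form of the Horrocks criterion, which is what Casnati--Catanese use; on a quadric threefold it is false, and the paper says so explicitly: $\Omega^1_{\p^4}|_Q\oplus T_{\p^4}(-2)|_Q$ and $S^2\Spin$ have identical intermediate cohomology in all twists, have no rank-one summands, and are not isomorphic, so there is no syzygy bundle determined by the cohomology of $\F$. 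The paper instead builds $\E_0$ by hand, using maps $\Spin(-i)\to\F$ produced from a \emph{maximal isotropic subspace} $W$ of $H^1(Q,\Spin\otimes\F(1))$ with respect to an alternating Serre pairing; it is this choice of $W$ that makes the resolution self-dual (Lemma \ref{lemma: cases E2 and E3 are symmetric}), and the symmetrization of $\Phi$ additionally requires $H^1(Q,(S^2\E^{\vee})(-4-\delta))=0$ (Lemma \ref{lemma: CC plus H1 vanishes implies symmetricity}). Without some substitute for these arguments, the identification of the kernel and the symmetry of $\Phi$ are unjustified.

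Second, your enumeration cannot close the case of a minimal $\frac12$-even set of $28$ nodes, which passes all the numerical constraints ($|w|\leqslant 28$, $|w|\equiv 4\bmod 8$). For such a set the sheaf $\F$ has $H^1(Q,\F(1))\cong\CC^2$, and its symmetric Casnati--Catanese resolution uses $\E=\Omega^1_{\p^4}(-1)|_Q\oplus\OP_Q(-2)^{\oplus 2}$ --- a bundle that is \emph{not} on the list (E1)--(O3). The theorem survives only because of Proposition \ref{proposition: 28 nodes implies 12 nodes}: any surface carrying such a set also carries a disjoint $\frac12$-even set of $12$ nodes, so one replaces $w$ by that set and lands in case (O1). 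Proving this occupies all of Section \ref{section: 1/2-even sets of 28 nodes} (push-forward to $\p^3$, an auxiliary non-symmetric resolution, and a tangent-hyperplane-along-a-conic argument), and it is also the true reason the theorem's $w'$ may differ from $w$ --- not, as you suggest, any effect of reflexivization, which does not change the corank-$2$ locus. A Chern-class identity plus a degeneracy-locus node count, as you propose, would either wrongly exclude this bundle or wrongly include it in the classification; either way the sketch is missing the idea needed to handle it.
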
 
 The idea of the proof of Casnati and Catanese theorem \cite{Casnati_Catanese} is to associate to a $\frac{\delta}{2}$-even set of nodes~$w$ on a surface $B\subset\p^3$ a sheaf $\F$ and to construct a special locally free resolution of its direct image to $Q$. Then the vector bundle~$\E$ and the map $\Phi$ can be defined in terms of this resolution. In order to construct this resolution Casnati and Catanese use Horrocks theorem in the form proved by Walter in~\mbox{\cite[Theorem 0.4]{Walter}} which claims that if~$\E$ is a vector bundle on the projective space $\p^m$ then there exists a unique decomposition of~$\E$ into a direct sum of a vector bundle $\mathcal{S}yz(\E)$ and several line bundles on~$\p^m$; moreover, the vector bundle~$\mathcal{S}yz(\E)$ depends only on the intermediate cohomology groups of all twists of $\E$ in the sense that it is a unique bundle with no summands of rank 1 and such that~\mbox{$H^i(\p^m,\mathcal{S}yz(\E)(n)) = H^i(\p^m,\E(n))$} for all $n\in \ZZ$ and all $1\leqslant i\leqslant m-1$. 
 
 The proof of Theorem \ref{theorem: classification of minimal sets of nodes} is similar to the one in \cite{Casnati_Catanese}. We consider the surface $B$ with a $\frac{\delta}{2}$-even set of nodes $w$, associate a sheaf $\F$ to this set and get $\E$ and $\Phi$ from a special resolution of the direct image of $\F$ to $Q$. However, there are certain differences in proofs since the Horrocks theorem does not hold for a quadric threefold. One can check that if~\mbox{$\E_1 =  \Omega^1_{\p^4}|_Q\oplus T_{\p^4}(-2)|_Q$} and~$\E_2 = S^2\Spin$, then the intermediate cohomology groups of~$\E_1(n)$ and~$\E_2(n)$ are the same for all $n\in \ZZ$ though none of these vector bundles has summands of rank 1 and they are not isomorphic. Thus, there is no hope to construct a universal vector bundle $\mathcal{S}yz(\E_1)$. The crucial reason why the Walter's proof of the Horrocks criterion can not be extended to a quadric threefold is that the spectrum of the graded algebra which defines it is singular unlike the case of the projective space.
 
 Therefore, in some cases it is quite difficult to show that we can construct a resolution of the necessary type and we need to study properties of $\frac{\delta}{2}$-even sets of nodes on $B$ in detail in order to do it. That is why we study $\frac{\delta}{2}$-even sets of nodes with additional assumption. Namely, we assume that $w$ is {\itshape minimal} i.e. that~$w$ does not contain a proper $\frac{\delta}{2}$-even subset of nodes. 
 
 This assumption implies in particular that $|w|\leqslant 28$, see Corollary \ref{corollary: properties of w}. In Sections \ref{section: 0-even sets} and \ref{section: 1/2-even sets less than 20} we study a minimal $\frac{\delta}{2}$-even set~$w$ of nodes whose cardinality strictly less than $28$ and show that in this case $w = w'$ and $B$ is a surface as in cases \textup{(E1 - E3)} and \textup{(O1 - O3)}. The most difficult part of the theorem is the case of a minimal set of $28$ nodes. We show that a surface with such set of nodes contains also another $\frac{1}{2}$-even set of nodes.
 \begin{proposition}\label{proposition: 28 nodes implies 12 nodes}
  Let $B$ be a nodal complete intersection of a smooth quadric hypersurface and a quartic hypersurface in $\p^4$. If the surface $B$ contains a minimal $\frac{\delta}{2}$-even set $w$ of $28$ nodes, then $\delta = 1$ and ~$B$ contains a~\mbox{$\frac{1}{2}$-ev}\-en set $w'$ of $12$ nodes. Sets $w$ and $w'$ are disjoint. 
 \end{proposition}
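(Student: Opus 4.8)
The plan is to run the resolution construction behind Theorem~\ref{theorem: classification of minimal sets of nodes} at the extremal value $|w|=28$, pin down the resolving bundle numerically, and then manufacture a \emph{second} symmetric determinantal representation of the same quartic section $B$ whose corank-two locus is the required $12$-node set. First I would attach to the minimal $\frac{\delta}{2}$-even set $w$ the associated sheaf $\F$ on $B$ and build, as in the proof of Theorem~\ref{theorem: classification of minimal sets of nodes}, a symmetric locally free resolution
\[
 0\to \E^{\vee}(-4-\delta)\xrightarrow{\ \Phi\ }\E\longrightarrow \F\to 0
\]
on $Q$ with $\det\Phi$ cutting out $B\in|\OP_Q(4)|$ and $w$ equal to the corank-two locus of $\Phi$. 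Taking determinants yields the constraint $2c_1(\E)+\rk(\E)\,(4+\delta)=4$, while the Harris--Tu / Józefiak--Lascoux--Pragacz formula for the corank-two locus of a symmetric morphism expresses $|w|$ as a fixed Chern-class polynomial in $\E$. Imposing $|w|=28$ together with this constraint and the admissible summands $\OP_Q(k)$, $\Spin(k)$ permitted by the cohomology of $\F$ should leave only a short list of candidate bundles. The assertion $\delta=1$ I expect to follow from this numerical analysis together with Corollary~\ref{corollary: properties of w}, since a minimal $0$-even set cannot realise $28$ nodes; oddness of $\delta$ then forces $\rk(\E)$ even and narrows $\E$ to a bundle assembled from $\Spin(-1)$ and the line bundles $\OP_Q(-1),\OP_Q(-2)$.

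The heart of the argument is the production of $w'$. The idea is that the distinguished bundle $\E$ of the $28$-node case fits into a structural short exact sequence coming from the spinor geometry of $Q$, which should let me trade the $\Spin$-summands for copies of $\OP_Q(-1)$ and $\OP_Q(-2)$ and transport the symmetric form $\Phi$ to a new symmetric map $\Phi'\colon \bigl(\OP_Q(-1)\oplus\OP_Q(-2)\bigr)^{\vee}(-5)\to \OP_Q(-1)\oplus\OP_Q(-2)$ with $\det\Phi'$ again equal to $B$. This is precisely case \textup{(O1)}: the length formula applied to $\OP_Q(-1)\oplus\OP_Q(-2)$ yields a corank-two locus $w'$ of $12$ points, which is $\frac12$-even by construction. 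Because the Horrocks--Walter splitting criterion fails on $Q$, the resolution is not canonical and $\E$ cannot simply be decomposed; it is exactly the minimality of the $28$-node set that should constrain the intermediate cohomology groups $H^1$ and $H^2$ of the twists of $\F$ tightly enough to realise this second representation. I expect this cohomological bookkeeping, rather than any single computation, to be the main obstacle.

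Finally, disjointness of $w$ and $w'$ I would settle by a local comparison of coranks at the nodes: at a point $p\in w$ the form $\Phi$ degenerates to corank exactly two, and one checks that under the transport above the new form $\Phi'$ retains corank at most one at $p$, so that $p\notin w'$, and symmetrically for points of $w'$. As a consistency check, both sets being $\frac12$-even makes the symmetric difference $w\,\triangle\,w'$ a $0$-even set, and a length count is then compatible only with $w\cap w'=\varnothing$.
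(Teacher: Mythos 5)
Your proposal rests on a premise that fails precisely in the $28$-node case: that the resolving bundle $\E$ can be assembled from twists of $\Spin$ and line bundles. The distinguishing feature of $|w|=28$ is that $H^1(Q,\F(1))\cong\CC^2$ does \emph{not} vanish (unlike the cases $|w|\leqslant 20$), so any surjection $\E\to\F$ whose kernel is to split into spinor and line summands must carry a summand with nontrivial intermediate cohomology; the paper is forced to take $\E=\Omega^1_{\p^4}(-1)|_Q\oplus\OP_Q(-2)^{\oplus 2}$, and identifying the kernel as $T_{\p^4}(-4)|_Q\oplus\OP_Q(-3)^{\oplus 2}$ requires an exceptional-collection argument in $D^b(Q)$ (Corollary \ref{corollary: symmetric exact sequence for 28 nodes}) exactly because the Horrocks--Walter splitting criterion fails on $Q$ --- the paper points out in the introduction that $\Omega^1_{\p^4}|_Q\oplus T_{\p^4}(-2)|_Q$ and $S^2\Spin$ have identical intermediate cohomology without being isomorphic. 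Your Chern-class bookkeeping over sums of $\OP_Q(k)$ and $\Spin(k)$ therefore never reaches the bundle that actually occurs, and the subsequent step --- ``trade the $\Spin$-summands'' for $\OP_Q(-1)\oplus\OP_Q(-2)$ and ``transport'' $\Phi$ to a symmetric $\Phi'$ with the same determinant --- is asserted without any mechanism. The paper's mechanism is genuinely geometric: using the extensions $0\to\OP_Q(-3)\to\Omega^1_{\p^4}(-1)|_Q\to(S^2\Spin)(-1)\to 0$ and its dual, the induced map $(S^2\Spin)(-2)\to(S^2\Spin)(-1)$ is multiplication by a linear form (Lemma \ref{lemma: special hyperplane in case of 28 nodes}), which produces a hyperplane section $\Pi$ tangent to $B$ along a curve $C$ of arithmetic genus at most $1$; then $w'=\Sing(B)\cap C$ is $\frac{1}{2}$-even by the divisor-class identity and has $12$ points by adjunction on $\widetilde{B}$. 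No part of your outline produces this tangent hyperplane or any substitute for it.

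The disjointness argument also does not close. Your parity check only gives that $|w\cap w'|=\tfrac{1}{2}\bigl(|w|+|w'|-|w\triangle w'|\bigr)$ is even (since $|w\triangle w'|\equiv 0\bmod 4$), which is far from forcing $w\cap w'=\varnothing$; and the local corank comparison presupposes the very ``transported'' form $\Phi'$ whose existence is the gap above. In the paper disjointness comes out of the tangency picture: along $C$ the restricted sheaf $\F|_{\Pi}$ is an extension of two cokernels each of rank exactly $1$ at every point of $C$, so no point of $C$ can be a point where $\F$ has rank $2$, i.e.\ no point of $w$. Finally, note that $\delta=1$ does not follow from Corollary \ref{corollary: properties of w} alone ($28$ is divisible by $4$, so $0$-evenness is not excluded by parity); it requires the full classification of minimal $0$-even sets in Section \ref{section: 0-even sets}, which bounds their cardinality by $24$.
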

  Proposition \ref{proposition: 28 nodes implies 12 nodes} implies that a surface $B$ containing a $\frac{\delta}{2}$-even set of 28 nodes is as in the case \textup{(O1)}. In order to prove it we assume that there exists a nodal surface $B$ with a $\frac{\delta}{2}$-even set of 28 nodes. In this assumption we construct a vector bundle~$\E$ and a section $\Phi\in H^0(B,(S^2\E)(5))$ such that $B = B(\Phi)$ and $w = w(\Phi)$. Then we study properties of the vector bundle $\E$ and prove that there exists a hyperplane section of $Q$ which is tangent to the surface $B$ along a curve $C$. Finally we conclude that in this situation $w' = \Sing(B)\cap C$ is a $\frac{1}{2}$-even set of 12 nodes.  
  
  Note that Proposition \ref{proposition: 28 nodes implies 12 nodes} works only under the assumption that there exist a quartic and smooth quadric hypersurfaces in $\p^4$ such that their intersection is a nodal surface with a minimal $\frac{1}{2}$-even set of 28 nodes. Moreover, in Section \ref{section: 1/2-even sets of 28 nodes} we give a concrete description of such surface if it exists. However, we can not check whether our construction actually gives a nodal surface. It would be very interesting to find out whether a nodal surface with such set of nodes exists.

 The description of complete intersections of quadric and quartic threefolds with minimal $\frac{\delta}{2}$-even sets of nodes given in Theorem \ref{theorem: classification of minimal sets of nodes} allows us to study Artin--Mumford obstructions to rationality of double covers of the smooth quadric threefold ramified over them. This leads us to the main result of this paper.
 \begin{corollary}\label{corollary: obstructions to rationality}
  Let $Q\subset \p^4$ be a smooth quadric hypersurface, let $B$ be the nodal complete intersection of $Q$ with a quartic hypersurface and let $\doublecover$ be the double cover of $Q$ ramified over $B$. If $\doublecover$ admits an Artin--Mumford obstruction to rationality then $B$ is a surface with a minimal $\frac{\delta}{2}$-even set of nodes from one of the cases \textup{(E2)}, \textup{(E3)} or \textup{(O2)} of Theorem \textup{\ref{theorem: classification of minimal sets of nodes}}. 
  
  Moreover, the double cover of $Q$ ramified over a general surface $B(\Phi)$ from one of these three cases admits an Artin--Mumford obstruction to rationality.
 \end{corollary}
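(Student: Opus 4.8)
The statement has two halves, and both hinge on one computation: that of the torsion subgroup $T\subset H^3(\widetilde{\doublecover},\ZZ)$. For the necessity direction the plan is to combine \cite[Lemma 1.2]{Endrass99} with Theorem \ref{theorem: classification of minimal sets of nodes}. If $\doublecover$ carries an Artin--Mumford obstruction then $T\neq 0$; the Endrass embedding of $T$ into the group of $\frac{\delta}{2}$-even sets of nodes on $B$ then exhibits a nonzero even set, and passing to a minimal even set inside it Theorem \ref{theorem: classification of minimal sets of nodes} places $B$ in one of the six families \textup{(E1)--(E3)}, \textup{(O1)--(O3)}. To finish I would determine the image of this embedding family by family, showing that an even set of type \textup{(E1)}, \textup{(O1)} or \textup{(O3)} never lifts to a torsion class, whereas those of type \textup{(E2)}, \textup{(E3)}, \textup{(O2)} can. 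Thus $T\neq 0$ forces $B$ to carry an even set of one of the latter three types.

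The torsion computation is therefore the crux, and I expect it to be the main obstacle. Note first that the cardinality of $w'$ cannot by itself decide the matter: families \textup{(O2)} and \textup{(O3)} both consist of surfaces with a minimal $\frac{1}{2}$-even set of $20$ nodes, yet only \textup{(O2)} is meant to carry an obstruction. The distinction must be extracted from the bundle $\E$. In case \textup{(O2)} one has $\E=\OP_Q(-2)^{\oplus 4}$, so $\Phi$ is a symmetric $4\times4$ matrix of linear forms and $B(\Phi)$ is a \emph{symmetroid} on $Q$, in direct analogy with the Artin--Mumford quartic symmetroid and with the example of \cite[Proposition 5.8]{Przyjalkowski_Shramov}; in case \textup{(O3)} the rank-$2$ bundle $\Spin(-1)$ produces a genuinely different theta characteristic. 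I would make the dichotomy precise by passing from $w'$ to the sheaf $\F$ and its locally free resolution built from $\E$, which were produced in the proof of Theorem \ref{theorem: classification of minimal sets of nodes}, and then reading off the $2$-torsion of $\widetilde{\doublecover}$ from the cohomology of $\E$ and $\F$ --- equivalently, from whether the double cover of $B$ determined by $w'$ is non-split, which is exactly what the torsion of the resolution detects. The spinor-bundle cohomology needed here, taken from Section \ref{subsection: spinor bundles}, is the technically delicate input, and separating \textup{(O2)} from \textup{(O3)} is the single hardest step.

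For the sufficiency direction I would run the construction of Example \ref{example: Phi} for each of the three bundles $\E$ attached to \textup{(E2)}, \textup{(E3)}, \textup{(O2)}. In each case $(S^2\E)(4+\delta)$ is globally generated, so the last assertion of Theorem \ref{theorem: classification of minimal sets of nodes} provides a non-empty Zariski open $\U\subset H^0(Q,(S^2\E)(4+\delta))$ for which $B(\Phi)$ is nodal with $\Sing B(\Phi)=w(\Phi)$ the predicted $\frac{\delta}{2}$-even set. It then remains to check that for general $\Phi\in\U$ the class attached to $w(\Phi)$ is \emph{nonzero} in $H^3(\widetilde{\doublecover},\ZZ)$, i.e. that the same cohomological computation returns nonzero $2$-torsion. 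Since non-vanishing is an open condition and $\U$ is irreducible, it suffices to exhibit one member of each family with non-split $\F$; for \textup{(O2)} this is already guaranteed by \cite{Przyjalkowski_Shramov}, so the genuinely new content is the verification of non-vanishing for the two spinor families \textup{(E2)} and \textup{(E3)}, which I would carry out either by an explicit $\Phi$ or by a controlled degeneration whose torsion can be computed by hand.
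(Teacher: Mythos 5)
Your skeleton (obstruction $\Rightarrow$ non-empty even set $\Rightarrow$ pass to a minimal one $\Rightarrow$ invoke Theorem \ref{theorem: classification of minimal sets of nodes} $\Rightarrow$ discriminate among the six families) matches the paper, but the discriminating invariant you propose is not the one that works, and this is a genuine gap rather than a stylistic difference. You suggest separating \textup{(O2)} from \textup{(O3)} by ``whether the double cover of $B$ determined by $w'$ is non-split.'' That cannot distinguish anything: for \emph{every} non-empty $\frac{\delta}{2}$-even set the class $\frac{E_w}{2}$ is a genuinely non-trivial square root and the associated sheaf $\F$ is non-split --- that is precisely what it means for $w$ to be even --- so this test is passed by all six families, including \textup{(E1)}, \textup{(O1)} and \textup{(O3)}. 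The invariant the paper actually uses is the \emph{defect} $d(w)$ of Definition \ref{definition: defect}: by Theorems \ref{theorem: inequality between codes and defect} and \ref{theorem: if all defects equal 1} (Cynk's inequality $|\codes|>2^{d(\Sing(B))}$ and its converse under the hypothesis that all defects are positive), an Artin--Mumford obstruction forces the existence of an even set of defect $0$ (Corollary \ref{corollary: AM implies w with 0 defect}), which by Lemma \ref{lemma: defect of a subset} may be taken minimal; the defect is then computed in each of the six cases from Jozefiak's resolution of $\I_w(3)$ (Theorem \ref{theorem: exact sequence for defect}) applied to the symmetric Casnati--Catanese resolution, giving $d(w)=1$ for \textup{(E1)}, \textup{(O1)}, \textup{(O3)} and $d(w)=0$ for \textup{(E2)}, \textup{(E3)}, \textup{(O2)}. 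Nothing in your proposal produces this computation or a substitute for it.

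The sufficiency half has a related problem. Your plan rests on ``non-vanishing is an open condition,'' but torsion in $H^3(\widetilde{\doublecover},\ZZ)$ is not obviously (semi)continuous in families, so exhibiting one good member does not by itself handle the general one; and the witness you propose to exhibit (``non-split $\F$'') is again automatic and proves nothing. The paper's argument is shorter and avoids both issues: for general $\Phi$ Barth's Bertini-type lemma gives $\Sing(B(\Phi))=w(\Phi)$, so $d(\Sing(B))=d(w)=0$ while $|\codes|\geqslant 2$, and Theorem \ref{theorem: inequality between codes and defect} applies directly to every such general member. No explicit example, degeneration, or openness claim is needed.
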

 Thus, there are three families of nodal Fano threefolds of index $1$ and genus $3$ with Artin--Mumford obstructions to rationality. The family \textup{(O2)} is the one described in \cite[Proposition 5.8]{Przyjalkowski_Shramov}. The other two examples are new. On the contrary, a general double cover of a smooth quadric ramified over a surface described in the case \textup{(E1)}, \textup{(O1)} or \textup{(O2)} does not admit an Artin--Mumford obstruction to rationality.
 
 By \cite[Section 2.1]{Iskovskikh_Pukhlikov} all smooth double covers of quadric threefolds ramified over their intersections with  quartic hypersurfaces in $\p^4$ are non-rational. In view of \cite{Voisin}, Corollary \ref{corollary: obstructions to rationality} gives a new proof of the fact that a very general Fano threefold of this type is stably non-rational, see also \cite[Remark 5.9]{Przyjalkowski_Shramov}. Moreover, since double covers of this type form a subfamily in the family of Fano threefolds of index $1$ and genus~$3$ whose general element is a quartic threefold we see also that a very general quartic threefold is stably non-rational as it was shown in~\cite{Colliot-Thelene_Pirutka}. 
 
 \medskip

 The paper is organized as follows: in Section \ref{section: preliminaries} we recall some properties of smooth quadric threefolds and vector bundles on them. We also define $\frac{\delta}{2}$-even sets of nodes on complete intersections of quadric and quartic threefolds and study their properties. In the next three sections we prove special cases of Theorem~\ref{theorem: classification of minimal sets of nodes}. In Section \ref{section: 0-even sets} we deal with minimal $0$-even sets of nodes, then in Section  \ref{section: 1/2-even sets less than 20} we study minimal $\frac{1}{2}$-even sets of at most 20 nodes and in Section \ref{section: 1/2-even sets of 28 nodes} we prove Proposition \ref{proposition: 28 nodes implies 12 nodes}. Finally, in Section \ref{section: proofs} we complete the proof of Theorem \ref{theorem: classification of minimal sets of nodes} and prove Corollary \ref{corollary: obstructions to rationality}.
 
 \medskip

{\bf Acknowledgements.} I am very grateful to Costya Shramov for suggesting the problem, his interest in this work and many useful discussions. I also would like to thank anonymous referees for useful comments.

\section{Preliminaries}\label{section: preliminaries}
\subsection{Spinor bundles} \label{subsection: spinor bundles}
Let $Q$ be a smooth quadric hypersurface in $\p^4$. The {\itshape spinor bundle} $\Spin$ on $Q$ is a natural generalization of the notion of the universal bundle on a Grassmannian variety. In order to define it consider the Grassmannian $\Gr(2,4)$, by Pl\"ucker it can be embedded to $\p^5$ and the image is a smooth quadric hypersurface of dimension $4$. Consider $Q$ as a hyperplane section of $\Gr(2,4)$. If we denote by $\U$ the tautological vector bundle on $\Gr(2,4)$, then the spinor bundle $\Spin$ on the quadric $Q$ can be defined as follows:
\begin{equation*}
 \Spin = \U|_{Q}.
\end{equation*}
This definition does not depend on the choice of a smooth hyperplane section of the Grassmannian $\Gr(2,4)$.
The vector bundle $\Spin$ on a smooth quadric threefold satisfies the following properties.
\begin{lemma}[{\cite{Ottaviani-spinors}, \cite{Ottaviani-Horrocks_criterion}}]\label{lemma: properties of Spin}
 Let $Q\subset \p^4$ be a smooth quadric threefold and let~$\Spin$ be the spinor bundle on~$Q$. Then the following assertions hold.
 \begin{enumerate}
  \item[$(1)$] One has $\deg(\Spin) = -1$ and $\Spin^{\vee} \cong \Spin(1)$. Moreover, $\Spin$ fits the following exact sequence: 
  \begin{equation}\label{eq: spin}
     0\to \Spin \xrightarrow{\Spinmap} \OP_Q^{\oplus 4} \to \Spin(1) \to 0.
  \end{equation}
  \item[$(2)$] The dimensions of the cohomology groups of the twisted spinor bundle $\Spin$ are as follows:
 \[
   h^i(Q, \Spin(n)) = \left\{ \begin{aligned}   \frac{2n(n+1)(n+2)}{3}, && &\text{ if }  i=0 \text{ and }n\geqslant 0;\\ 
                                               -\frac{2n(n+1)(n+2)}{3},  && &\text{ if }  i=3 \text{ and }n\leqslant -2;\\ 
                                                0, && &\text{ otherwise}.
                               \end{aligned}
                       \right.
 \]
 \item[$(3)$] The dimensions of the cohomology groups of the twisted bundle $\Spin\otimes\Spin$ are as follows:
  \[
   h^i(Q, \Spin\otimes\Spin(n)) = 
   \left\{ \begin{aligned}   \frac{(2n^2 + 2n -3)(2n+1)}{3}, && &\text{ if }  i=0 \text{ and }n\geqslant 1;\\ 
                              1,  && &\text{ if }  i=1 \text{ and }n=0;\\ 
                              1,  && &\text{ if }  i=2 \text{ and }n=-1;\\ 
                             -\frac{(2n^2 + 2n -3)(2n+1)}{3},  && &\text{ if }  i=3 \text{ and }n\leqslant -2;\\ 
                              0, && &\text{ otherwise}.
           \end{aligned}
  \right.
 \]
 \item[$(4)$] If $\E$ is a vector bundle on $Q$ such that $H^1(Q,\E(n)) = H^2(Q,\E(n)) = 0$ for all $n\in \ZZ$, then
 \[
  \E = \bigoplus_{i = a}^b \Spin(i)^{\oplus m_i} \oplus \bigoplus_{j=c}^d \OP_Q(j)^{\oplus k_j},
 \]
  where $a\leqslant b$ and $c\leqslant d$ are integers, and $m_i$ and $k_j$ are non-negative integers for all $a\leqslant i\leqslant b$ and~\mbox{$c\leqslant j\leqslant d$}.
  \item[$(5)$] Let $\E$ be a vector bundle on $Q$ such that $\E = \bigoplus_{i = a}^b  \Spin(i)^{\oplus m_i} \oplus \bigoplus_{j=c}^d \OP_Q(j)^{\oplus k_j}$ for $a\leqslant b$, $c\leqslant d$ and some non-negative integers~$m_i$ and $k_j$. Then for all integers $n\in \ZZ$ one has 
  \[
  h^1(Q, \E\otimes\Spin(n+1))  = h^2(Q, \E\otimes\Spin(n))
  \]
 \end{enumerate}
\end{lemma}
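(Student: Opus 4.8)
The plan is to treat the five assertions in order. The structural statements (1) and (4) are the genuinely deep inputs — both due to Ottaviani \cite{Ottaviani-spinors, Ottaviani-Horrocks_criterion} — while the three cohomological assertions (2), (3) and (5) all follow from the fundamental exact sequence \eqref{eq: spin} of (1) together with Riemann--Roch, Serre duality, and the standard vanishing on the quadric. I would cite (1) and (4) and reprove only what is needed downstream.

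For (1): since $\Spin = \U|_Q$ and $\U$ is the rank-$2$ tautological subbundle on $\Gr(2,4)$ with $\det\U = \OP(-1)$ in the Pl\"ucker polarization, restriction gives $\rk\Spin = 2$ and $\det\Spin = \OP_Q(-1)$, i.e. $\deg\Spin = -1$. For a rank-$2$ bundle one always has $\Spin^{\vee}\cong\Spin\otimes(\det\Spin)^{-1}=\Spin(1)$, which is the self-duality. The sequence \eqref{eq: spin} is obtained by restricting the tautological sequence $0\to\U\to\OP^{\oplus4}\to\mathcal{Q}\to0$ to the hyperplane section $Q$ and identifying $\mathcal{Q}|_Q\cong\Spin(1)$; this identification — reflecting the fact that the two spinor bundles of the ambient even quadric $\Gr(2,4)$ restrict to the single bundle $\Spin$ on the odd quadric $Q$ — is Ottaviani's, and I would cite it.

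For (2) and (3): I would first record the ACM vanishing $H^1(Q,\Spin(n))=H^2(Q,\Spin(n))=0$ for all $n$ (from \cite{Ottaviani-spinors}, or recovered from $0\to\U(n-1)\to\U(n)\to\Spin(n)\to0$ on $\Gr(2,4)$ together with Bott vanishing for the homogeneous bundle $\U(n)$). Granting this, the only possibly non-zero groups are $H^0$ and $H^3$, and Serre duality on $Q$ — with $\omega_Q=\OP_Q(-3)$ and $\Spin^{\vee}=\Spin(1)$ — yields $h^3(Q,\Spin(n))=h^0(Q,\Spin(-n-2))$, which explains the reflection $n\leftrightarrow-n-2$ in the table; the actual values then come from one Euler-characteristic computation, pinning $h^0$ for $n\ge0$ (where $h^3=h^0(\Spin(-n-2))=0$, since $-n-2\le-2$ and $\Spin(k)\hookrightarrow\OP_Q(k)^{\oplus4}$ has no sections for $k<0$). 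For (3) I would split $\Spin\otimes\Spin=S^2\Spin\oplus\wedge^2\Spin=S^2\Spin\oplus\OP_Q(-1)$; the line-bundle summand contributes only in degrees $0$ and $3$, while the genuinely new feature — the isolated classes $h^1=1$ at $n=0$ and $h^2=1$ at $n=-1$ — is accounted for by the non-split extension \eqref{eq: spin}, which represents a non-zero class in $\Ext^1(\Spin(1),\Spin)=H^1(Q,\Spin\otimes\Spin)$, together with Serre duality ($h^2(\Spin\otimes\Spin(-1))=h^1(\Spin\otimes\Spin)$). The remaining entries again follow from $\chi(\Spin\otimes\Spin(n))$ and the self-duality $n\leftrightarrow-n-1$.

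Assertion (5) is the cleanest: tensoring \eqref{eq: spin} by the locally free sheaf $\E(n)$ gives $0\to\E\otimes\Spin(n)\to\E(n)^{\oplus4}\to\E\otimes\Spin(n+1)\to0$, and since $\E$ is a sum of twists of $\Spin$ and of line bundles, part (2) together with the vanishing $H^1(\OP_Q(m))=H^2(\OP_Q(m))=0$ forces $H^1(\E(n))=H^2(\E(n))=0$ for all $n$; the connecting map in the long exact sequence is then an isomorphism $H^1(\E\otimes\Spin(n+1))\xrightarrow{\sim}H^2(\E\otimes\Spin(n))$, giving the asserted equality of dimensions. Finally, (4) — Ottaviani's Horrocks-type splitting criterion for the smooth quadric threefold — is the one place where no short self-contained argument is available: it rests on the classification of ACM (equivalently, matrix-factorization) modules over the quadric, for which $\OP_Q$ and $\Spin$ are the only indecomposables up to twist. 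This is the main obstacle, and I would simply invoke \cite{Ottaviani-Horrocks_criterion}.
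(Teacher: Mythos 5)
Your proposal is correct and follows essentially the same route as the paper: assertions (1) and (4) are taken from Ottaviani, assertions (2) and (3) are deduced from the exact sequence \eqref{eq: spin} (the paper leaves the Serre-duality and Euler-characteristic bookkeeping implicit, and your identification of the class of the non-split extension in $H^1(Q,\Spin\otimes\Spin)$ is a nice way to see the isolated $h^1=1$), and assertion (5) is obtained exactly as in the paper by tensoring \eqref{eq: spin} with $\E(n)$ and using the vanishing of the intermediate cohomology of $\E(n)$ to make the connecting homomorphism an isomorphism.
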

\begin{proof}
 The assertion \textup{(1)} is proved in \cite[Theorem 2.8]{Ottaviani-spinors}. The exact sequence from the assertion \textup{(1)} implies assertions \textup{(2)} and \textup{(3)}. The assertion \textup{(4)} is a generalization of the Horrocks criterion to the case quadrics proved in \cite[Theorem 3.5]{Ottaviani-Horrocks_criterion}. The assertion \textup{(5)} follows from the long exact sequence of cohomology groups of the exact sequence from the assertion \textup{(1)} tensored by~$\E(n)$. 
\end{proof}
Lemma \ref{lemma: properties of Spin}(3) implies in particular that $H^1(Q,\Spin\otimes\Spin)\cong \CC$. 
By the next lemma if the tensor product of a sheaf $\F$ on $Q$ with $\Spin$ admits a non-zero first cohomology group then sometimes one can construct a map from $\Spin$ to $\F$ such that after twisting by $\Spin$ this map induces an embedding of the first cohomology groups.
\begin{lemma}\label{lemma: map from spin}
 Let~$\F$ be a coherent sheaf on a smooth quadric threefold $Q\subset \p^4$ such that~\mbox{$H^1(Q, \F) = 0$}. Fix a one-dimensional subspace $U\subset H^1(Q, \Spin\otimes \F)$. Then there exists a unique map $\varphi\colon \Spin\to \F$ such that the induced map
 \begin{equation*}
 H^1(\mathrm{Id}_{\Spin}\otimes\varphi)\colon H^1(Q, \Spin\otimes \Spin) \to H^1(Q,\Spin\otimes\F)
 \end{equation*}
 is an embedding and the image of $H^1(\mathrm{Id}_{\Spin}\otimes\varphi)$ coincides with $U$.
\end{lemma}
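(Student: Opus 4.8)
The plan is to recognize the assignment $\varphi\mapsto H^1(\mathrm{Id}_{\Spin}\otimes\varphi)$ as a scalar multiple of a connecting homomorphism, and to extract the map $\varphi$ from a single cohomology long exact sequence. First I would use the self-duality $\Spin^{\vee}\cong\Spin(1)$ of Lemma \ref{lemma: properties of Spin}(1) to identify
\[
 \Hom_Q(\Spin,\F)\;\cong\;H^0(Q,\Spin^{\vee}\otimes\F)\;=\;H^0(Q,\Spin(1)\otimes\F).
\]
Since $H^1(Q,\Spin\otimes\Spin)\cong\CC$ by Lemma \ref{lemma: properties of Spin}(3), after fixing a generator $e$ of this line the induced map $H^1(\mathrm{Id}_{\Spin}\otimes\varphi)$ is completely recorded by the single vector $\Psi(\varphi):=H^1(\mathrm{Id}_{\Spin}\otimes\varphi)(e)\in H^1(Q,\Spin\otimes\F)$. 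Thus the whole statement reduces to understanding the linear map $\Psi\colon\Hom_Q(\Spin,\F)\to H^1(Q,\Spin\otimes\F)$: existence of a suitable $\varphi$ is surjectivity of $\Psi$, and the induced map is an embedding with image $U$ precisely when $\Psi(\varphi)$ is a nonzero vector of $U$.

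The conceptual heart is to identify $\Psi$ with the connecting map of \eqref{eq: spin} tensored by $\F$. The extension class of the defining sequence \eqref{eq: spin} lies in $\Ext^1_Q(\Spin(1),\Spin)\cong H^1(Q,\Spin(1)^{\vee}\otimes\Spin)\cong H^1(Q,\Spin\otimes\Spin)$, and it is nonzero because \eqref{eq: spin} is non-split ($\OP_Q^{\oplus 4}$ has no spinor summand); as $H^1(Q,\Spin\otimes\Spin)$ is one-dimensional, this class spans it, and I take it to be $e$. With this choice, the standard description of connecting maps as cup product with the extension class, together with the compatibility of the two contractions $\Spin\otimes\Spin^{\vee}\to\OP_Q$ involved, shows that under the identification above $\Psi$ coincides with the connecting homomorphism
\[
 \partial\colon H^0(Q,\Spin(1)\otimes\F)\longrightarrow H^1(Q,\Spin\otimes\F)
\]
of the sequence obtained from \eqref{eq: spin} by tensoring with $\F$.

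Now I tensor \eqref{eq: spin} by $\F$; this stays exact because the quotient $\Spin(1)$ is locally free, so the sequence is locally split and survives $\otimes\,\F$ even though $\F$ is only coherent. Its cohomology sequence reads
\[
 H^0(Q,\F)^{\oplus 4}\xrightarrow{\ \alpha\ }H^0(Q,\Spin(1)\otimes\F)\xrightarrow{\ \partial\ }H^1(Q,\Spin\otimes\F)\longrightarrow H^1(Q,\F)^{\oplus 4}.
\]
The hypothesis $H^1(Q,\F)=0$ kills the last term, so $\partial$ is surjective. Choosing any $u\in U\setminus\{0\}$ and a preimage $\varphi$ with $\partial(\varphi)=u$ yields, by the previous paragraph, a homomorphism $\varphi\colon\Spin\to\F$ for which $H^1(\mathrm{Id}_{\Spin}\otimes\varphi)$ sends $e$ to $u\neq 0$; hence it is an embedding whose image is the line $U$. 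This settles existence.

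The subtle point, and the step I expect to demand the most care, is uniqueness. From the exact sequence, $\ker\partial=\Im(\alpha)$ is exactly the space of homomorphisms $\Spin\to\F$ that factor through the inclusion $\Spinmap\colon\Spin\hookrightarrow\OP_Q^{\oplus 4}$; every such map induces the zero map on $H^1(Q,\Spin\otimes -)$, so it is excluded by the embedding condition and does not affect the image line $U$. Consequently the induced homomorphism, and in particular its image $U$, is well determined by the data. To upgrade this to uniqueness of $\varphi$ itself (up to scalar) one needs $\ker\partial=0$, i.e. that $\Spinmap$ induces an isomorphism $H^0(Q,\Spin\otimes\F)\xrightarrow{\sim}H^0(Q,\F)^{\oplus 4}$; this does not hold for every coherent $\F$ with $H^1(Q,\F)=0$, so the genuine obstacle is to verify it in the case at hand, which I would do through the cohomological dimensions of Lemma \ref{lemma: properties of Spin}(2),(3),(5) and, where available, the vanishing $H^0(Q,\F)=0$ for the sheaves $\F$ arising from even sets of nodes, the surrounding homological bookkeeping being routine.
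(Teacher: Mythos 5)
Your proposal is correct and follows essentially the same route as the paper: tensor the sequence \eqref{eq: spin} by $\F$, use $H^1(Q,\F)=0$ to see that the connecting homomorphism $\delta\colon H^0(Q,\Spin(1)\otimes\F)\to H^1(Q,\Spin\otimes\F)$ is surjective, and take $\varphi$ to be a preimage of a nonzero vector of $U$ under the identification $H^0(Q,\Spin(1)\otimes\F)\cong\Hom_Q(\Spin,\F)$. You are in fact more careful than the paper on two points it leaves implicit: the identification of $\varphi\mapsto H^1(\mathrm{Id}_{\Spin}\otimes\varphi)(e)$ with $\delta$ via cup product with the (nonzero, hence spanning) extension class of \eqref{eq: spin}, and the observation that the stated uniqueness of $\varphi$ is not actually delivered by this argument --- it fails at least up to scalar and up to elements of $\ker\delta=\Im(\alpha)$, and the paper's own proof does not address it either; what is genuinely unique, and what is used later, is the image line $U$ together with the induced map on $H^1$ up to the indeterminacy you describe.
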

\begin{proof}
 Consider the short exact sequence from Lemma \ref{lemma: properties of Spin}\textup{(1)} and tensor it by $\F$. We get a new short exact sequence on $Q$. Consider its the long exact sequence of cohomology groups:
 \begin{equation*}
  \cdots\to H^0(Q,\Spin(1)\otimes \F)\xrightarrow{\delta} H^1(Q, \Spin\otimes \F) \to H^1(Q,\F)^{\oplus 4} \to \cdots
 \end{equation*}
 By assumption the group $H^1(Q,\F)$ vanishes; thus, the connecting homomorphism $\delta$ is surjective. Choose a non-zero element $\varphi$ in $\delta^{-1}(U)$. Since $H^0(Q,\Spin(1)\otimes \F)\cong \Hom_Q(\Spin, \F)$ this element defines a map. By construction this map has the necessary property.
\end{proof}

Now we prove several useful and easy assertions we will need further.
The next assertion describes the support of the cokernel of a non-zero map from the spinor bundle to $\OP_Q$. We will use this fact in the proof of Lemma \ref{lemma: map from o to spin}.

\begin{lemma}\label{lemma: map from spin to o}
 Let $\psi\colon \Spin\to \OP_Q$ be a non-zero  map of sheaves of a smooth quadric threefold $Q\subset \p^4$. Then the support of the sheaf $\Coker(\psi)$ is a line on $Q$.
\end{lemma}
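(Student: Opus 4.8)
The plan is to reinterpret $\psi$ as a section of a twist of the spinor bundle and to identify the support of $\Coker(\psi)$ with its zero locus. By Lemma~\ref{lemma: properties of Spin}(1) we have $\Spin^{\vee}\cong\Spin(1)$, so there is a canonical isomorphism
\[
 \Hom_Q(\Spin,\OP_Q)\;\cong\;H^0(Q,\Spin^{\vee})\;\cong\;H^0(Q,\Spin(1)),
\]
and by Lemma~\ref{lemma: properties of Spin}(2) the right-hand space is $4$-dimensional. Hence the non-zero map $\psi$ corresponds to a non-zero section $s\in H^0(Q,\Spin(1))$. Trivializing $\Spin\cong\OP_Q^{\oplus 2}$ over an affine open set, the map $\psi$ is given by a pair $(f,g)$ of regular functions, its image is the ideal $(f,g)$, and $s$ is given by the same pair $(f,g)$; therefore $\operatorname{Supp}(\Coker(\psi))$ coincides with the zero locus $Z(s)$ of $s$. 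It thus suffices to prove that $Z(s)$ is a line.

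First I would show that $s$ does not vanish along a divisor. If it vanished along an effective divisor $D$ of degree $m\geqslant 1$, then $s$ would factor as the product of the equation of $D$ with a non-zero section of $\Spin(1)(-D)=\Spin(1-m)$; but $1-m\leqslant 0$, so $H^0(Q,\Spin(1-m))=0$ by Lemma~\ref{lemma: properties of Spin}(2), a contradiction. Consequently $Z(s)$ has no divisorial component. Since the zero locus of a section of a rank-$2$ bundle on the threefold $Q$ has codimension at most $2$ at each point, and $Z(s)\neq Q$ because $s\neq 0$, the locus $Z(s)$ is of pure codimension $2$; being cut out locally by the regular sequence $(f,g)$ it is a Cohen--Macaulay curve.

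It remains to compute the class of this curve. From $\Spin^{\vee}\cong\Spin(1)$ one gets $c_1(\Spin)=-H$, where $H$ is the hyperplane class, and applying the Whitney formula to the exact sequence \eqref{eq: spin} yields that $c_2(\Spin)=c_2(\Spin(1))$ is the class of a line in $H^4(Q,\ZZ)$. Since $Z(s)$ has the expected codimension $2$, its class equals $c_2(\Spin(1))$; in particular $Z(s)$ is non-empty and has degree $1$ with respect to $\OP_Q(1)$. A one-dimensional Cohen--Macaulay subscheme of $\p^4$ of degree $1$ is a line, so $\operatorname{Supp}(\Coker(\psi))=Z(s)$ is a line on $Q$.

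I expect the main obstacle to be the second paragraph, namely excluding the degenerate behaviour of $\psi$ viewed as a section of $\Spin(1)$: one must rule out vanishing along a divisor (which would make $\Coker(\psi)$ supported on a surface) and any excess-dimensional zero locus. The vanishing $H^0(Q,\Spin(1-m))=0$ for $m\geqslant 1$ from Lemma~\ref{lemma: properties of Spin}(2) controls the first issue, after which the Chern class computation pins the degree down to $1$ and forces the support to be a single line.
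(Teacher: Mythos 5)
Your proof is correct, but it follows a genuinely different route from the paper. The paper restricts $\psi$ to a hyperplane section $H\subset Q$, uses Ottaviani's restriction theorem to identify $\Spin|_H$ with the (sum of the) spinor line bundles on the quadric surface $\p^1\times\p^1$, observes that $\Coker(\psi)|_H$ is then supported at a single point, and concludes that the support of $\Coker(\psi)$ is a curve meeting every hyperplane in one point, hence a line. You instead view $\psi$ as a section $s$ of $\Spin^{\vee}\cong\Spin(1)$, rule out a divisorial component of $Z(s)$ via the vanishing $H^0(Q,\Spin(n))=0$ for $n\leqslant 0$ (together with $\Pic(Q)=\ZZ H$), and then pin down the degree by computing $c_2(\Spin(1))=c_2(\Spin)=\tfrac{1}{2}H^2$, the class of a line. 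Your argument yields slightly more — the full scheme structure of the degeneracy locus as a reduced line, not just its support — and it avoids invoking the behaviour of $\Spin$ under restriction to hyperplane sections; the paper's argument is shorter given that restriction result and needs no Chern class computation. Both rest only on facts already recorded in Lemma \ref{lemma: properties of Spin}, so either is acceptable here.
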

\begin{proof}
 Denote the cokernel of $\psi$ by $\F$ and consider the following exact sequence:
  \[
   \Spin\xrightarrow{\psi} \OP_Q\to \F\to 0.
  \]
  Denote by $\iota\colon H\to Q$ the embbeding of a hyperplane section to $Q$. Since $\iota^*$ is a right exact functor, then the following sequence is also exact: 
  \[
   \iota^*\Spin \xrightarrow{\psi|_H} \OP_H\to \iota^*\F\to 0.
  \]
  By \cite[Corollary 2.2]{Ottaviani-spinors} one has $\iota^*\Spin$ equals $\OP_{\p^1\times \p^1}(-1,-1)$. Since the rank of $\psi|_H$ equals 1 then the support of $\iota^*\F$ is a point. Therefore, the support of $\F$ is a curve in $\p^4$ whose intersection with a hyperplane consists of one point. Therefore, the support of $\F$ is a line.
\end{proof}

The next assertion describes the support of the cokernel of a map from the trivial bundle to the dual of the  spinor bundle.

\begin{lemma}\label{lemma: map from o to spin}
 Let $\varphi\colon \OP_Q^{\oplus k}\to \Spin(1)$ be a map of sheaves of a smooth quadric threefold $Q\subset \p^4$. If the rank of~$\varphi$ equals $2$ then $\C = \Coker(\varphi)$ is a sheaf whose support is either a line on $Q$ or a hyperplane section of $Q$ or empty.
\end{lemma}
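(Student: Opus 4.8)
The plan is to reduce the statement to the structure of the evaluation map of $\Spin(1)$ and then to the previous lemma. First I would record that $\varphi$ amounts to a choice of $k$ global sections of $\Spin(1)$, and that by Lemma \ref{lemma: properties of Spin}\textup{(2)} one has $h^0(Q,\Spin(1)) = 4$, while $h^0(Q,\Spin) = h^1(Q,\Spin) = 0$. Passing to cohomology in the sequence \eqref{eq: spin} then shows that the surjection $\OP_Q^{\oplus 4}\to \Spin(1)$ there is the evaluation map and induces an isomorphism $\CC^4 = H^0(Q,\OP_Q^{\oplus 4})\xrightarrow{\sim} H^0(Q,\Spin(1))$. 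Consequently the image of $\varphi$ equals the image of the evaluation map $\mathrm{ev}_W\colon W\otimes\OP_Q\to\Spin(1)$ of the subspace $W\subseteq H^0(Q,\Spin(1))\cong\CC^4$ spanned by the $k$ chosen sections, so that $\C = \Coker(\varphi) = \Coker(\mathrm{ev}_W)$. Writing $m = \dim W$, the rank hypothesis forces $\mathrm{ev}_W$ to be generically surjective onto the rank-$2$ bundle $\Spin(1)$, whence $m\geqslant 2$.

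The key step is to transport this cokernel across \eqref{eq: spin}. Put $W' = \CC^4/W$ and let $\beta\colon \Spin\to W'\otimes\OP_Q\cong\OP_Q^{\oplus(4-m)}$ be the composite of $\Spinmap\colon\Spin\hookrightarrow\CC^4\otimes\OP_Q$ with the projection $\CC^4\otimes\OP_Q\to W'\otimes\OP_Q$. A short diagram chase identifies $\Coker(\mathrm{ev}_W) = (\CC^4\otimes\OP_Q)/\bigl(\Spinmap(\Spin)+W\otimes\OP_Q\bigr)$ with $\Coker(\beta)$. Since $\mathrm{ev}_W$ is generically surjective, so is $\beta$; as $\rk\Spin = 2$ this forces $4-m\leqslant 2$, i.e. $m\in\{2,3,4\}$.

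Finally I would treat the three cases. If $m = 4$ then $W' = 0$, the target of $\beta$ vanishes, and $\C = 0$, so the support is empty. If $m = 3$ then $\beta\colon\Spin\to\OP_Q$ is a nonzero map, and Lemma \ref{lemma: map from spin to o} shows that $\Coker(\beta)$ is supported on a line. If $m = 2$ then $\beta\colon\Spin\to\OP_Q^{\oplus 2}$ is a generically surjective morphism of rank-$2$ bundles, hence generically an isomorphism, so $\det\beta\colon\det\Spin = \OP_Q(-1)\to\OP_Q$ is a nonzero section of $\OP_Q(1)$ (using $\deg\Spin = -1$ from Lemma \ref{lemma: properties of Spin}\textup{(1)}), and $\Coker(\beta)$ is supported exactly on the hyperplane section $\{\det\beta = 0\}$. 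I expect the crux to be the identification $\Coker(\mathrm{ev}_W)\cong\Coker(\beta)$ together with the rank bookkeeping: this is precisely what reduces a map out of $\OP_Q^{\oplus k}$ to a map out of the rank-$2$ bundle $\Spin$, and, in the case $4-m = 1$, to the already established Lemma \ref{lemma: map from spin to o}.
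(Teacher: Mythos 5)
Your proof is correct and follows essentially the same route as the paper: factor $\varphi$ through the evaluation map $\OP_Q^{\oplus 4}\to\Spin(1)$ from \eqref{eq: spin}, transport the cokernel across that sequence to a map out of $\Spin$ (the paper does this with the snake lemma for $k=3$, you do it uniformly via $\Coker(\mathrm{ev}_W)\cong\Coker(\beta)$), and invoke Lemma \ref{lemma: map from spin to o} for the line case and a determinant computation for the hyperplane case. Your replacement of $\OP_Q^{\oplus k}$ by $W\otimes\OP_Q$ with $W$ the span of the sections handles the degenerate cases (dependent sections, non-injective or non-surjective $\alpha$) a bit more cleanly than the paper's closing remark, but the substance is the same.
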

\begin{proof}
  First, assume that $k = 2$. Since the rank of $\varphi$ equals 2 then $\varphi\colon \OP_Q^{\oplus 2}\to \Spin(1)$ is an embedding. Then the cokernel $\C$ of $\varphi$ is the zero locus of the map $\det(\varphi)$. By Lemma \ref{lemma: properties of Spin}(1) we deduce that $\deg(\Spin(1)) = 1$. Thus, the supprot of $\C$ is a hyperplane section of $Q$.
  
  In order to continue the proof note that a map $\OP_Q\to \Spin(1)$ is induced by a global section $H^0(Q,\Spin(1))\cong \CC^4$. By \cite[Theorem 2.8]{Ottaviani-spinors} the map $\Phi\colon\OP_Q^{\oplus 4}\to \Spin(1)$ in the exact sequence \eqref{eq: spin} is induced by the whole space of global sections $H^0(Q,\Spin(1))$. Thus, our map $\varphi$ factors through this map, i.e. there exists a map $\alpha\colon \OP_Q^k \to \OP_Q^4$ such that the following diagram is commutative:
\[\begin{tikzcd}[ampersand replacement=\&]
	\&\& {\OP_Q^{\oplus k}} \& {\Spin(1)} \& \C \& 0 \\
	0 \& \Spin \& {\OP_Q^{\oplus 4}} \& {\Spin(1)} \& 0
	\arrow["\varphi", from=1-3, to=1-4]
	\arrow["\alpha"', from=1-3, to=2-3]
	\arrow[from=1-4, to=1-5]
	\arrow[Rightarrow, no head, from=1-4, to=2-4]
	\arrow[from=1-5, to=1-6]
	\arrow[from=2-1, to=2-2]
	\arrow[from=2-2, to=2-3]
	\arrow["\Phi", from=2-3, to=2-4]
	\arrow[from=2-4, to=2-5]
\end{tikzcd}\]
Thus, if $k\geqslant 4$ and the map $\alpha$ is surjective, then $\C$ equals 0. Now assume that $k$ equals $3$ and $\alpha$ is an embedding. Consider the map of the following two exact sequences:
\[\begin{tikzcd}[ampersand replacement=\&]
	0 \& {\OP_Q^{\oplus 3}} \& {\OP_Q^{\oplus 4}} \& {\OP_Q} \& 0 \\
	0 \& {\Spin(1)} \& {\Spin(1)} \& 0 \& {}
	\arrow[from=1-1, to=1-2]
	\arrow["\alpha", from=1-2, to=1-3]
	\arrow["\varphi"', from=1-2, to=2-2]
	\arrow[from=1-3, to=1-4]
	\arrow["\Phi", from=1-3, to=2-3]
	\arrow[from=1-4, to=1-5]
	\arrow[from=1-4, to=2-4]
	\arrow[from=2-1, to=2-2]
	\arrow[Rightarrow, no head, from=2-2, to=2-3]
	\arrow[from=2-3, to=2-4]
\end{tikzcd}\]
 By the snake lemma we get the following exact sequence of sheaves on the quadric threefold $Q$:
 \[
  \Spin\xrightarrow{\psi} \OP \to \C\to 0.
 \]
 The support of the cokernel $\C = \Coker(\varphi)$ cannot coinside with $Q$. Thus, the map $\psi$ is non-zero. By Lemma~\ref{lemma: map from spin to o} this implies that the support of $\C$ is a line.
 
 It remains to prove the assertion for the case when $k = 3$ but $\alpha$ is not an embedding and $k = 4$ but $\alpha$ is not surjective. However, one can see that the map $\varphi$ in these cases factors either through $\OP_Q^{\oplus 2}\to \Spin(1)$ or through $ \OP_Q^{\oplus 3}\to \Spin(1)$ where $\alpha'\colon  \OP_Q^{\oplus 3} \to \OP_Q\otimes H^0(Q,\Spin(1))$ is an embedding and this implies the result.
\end{proof}

\subsection{Geometry of a nodal intersection of a quadric and quartic threefolds}\label{subsection: even sets of nodes}
Consider a smooth quadric hypersurface $Q$ and a quartic hypersurface $V_4$ in $\p^4$ such that the intersection~\mbox{$B = Q\cap V_4$} is an irreducible reduced nodal surface. Denote by $\iota\colon B\to Q$ the embedding of the surface $B$ into $Q$ and by~\mbox{$\sigma\colon \B\to B $} the blow-up of $B$ in $\Sing(B)$; it is a crepant resolution of singularities of $B$. 
 \begin{lemma}\label{lemma: Pic(B) is torsion-free}
  Let $B$ be a nodal complete intersection of a smooth quadric hypersurface $Q$ and a quartic hypersurface in $\p^4$ and let $\B$ be the blow-up of $B$ in $\Sing(B)$. Then the Picard group $\Pic(\B)$ is torsion-free.
 \end{lemma}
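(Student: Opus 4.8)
The plan is to show that the resolution $\B$ is diffeomorphic to a \emph{smooth} complete intersection of a quadric and a quartic in $\p^4$, and then to transport the topological consequences of the Lefschetz hyperplane theorem back to $\B$.

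First I would reduce the assertion to a statement about singular cohomology. From the exponential sequence on the smooth projective surface $\B$ the first Chern class induces an inclusion $\NS(\B) = \Pic(\B)/\Pic^0(\B)\hookrightarrow H^2(\B,\ZZ)$, so it suffices to prove that $\Pic^0(\B) = 0$ and that $H^2(\B,\ZZ)$ is torsion-free. Both follow immediately once $H_1(\B,\ZZ) = 0$: indeed this forces $b_1(\B)=0$, hence $h^1(\B,\OP_{\B})=0$ and $\Pic^0(\B)=0$, while the universal coefficients theorem identifies the torsion of $H^2(\B,\ZZ)$ with the torsion of $H_1(\B,\ZZ)$. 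Thus everything reduces to the vanishing of $H_1(\B,\ZZ)$.

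To obtain this vanishing I would degenerate $B$ to a smooth surface. Keeping the smooth quadric $Q$ fixed, I write $V_4 = \{f=0\}$, choose a general quartic $\{h=0\}$, and consider the family $B_t = Q\cap\{f+th = 0\}$ over a small disc $t\in\Delta$, with total space $\mathcal{B} = \{f+th=0\}\cap(Q\times\Delta)$. Since $B$ has only finitely many nodes and $h$ may be chosen not to vanish at any of them, a local computation shows that $\mathcal{B}$ is smooth, that $B_t$ is smooth for $0<|t|\ll 1$, and that $B=B_0$ acquires precisely its nodes as the only degeneration of the smooth surfaces $B_t$. Near each node the situation is the standard nodal degeneration $x^2+y^2+z^2 = t$ of surfaces: the Milnor fibre of an $A_1$ surface singularity and the neighbourhood of the exceptional $(-2)$-curve produced by $\sigma$ are both diffeomorphic, relative to the common boundary link $\mathbb{RP}^3$, to the total space of $\OP_{\p^1}(-2)\cong T^*S^2$. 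Gluing these matching local pieces to the common smooth part $B\setminus\Sing(B)$ yields a diffeomorphism $\B\cong B_t$.

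Finally, $B_t$ is a smooth complete intersection surface in $\p^4$, hence simply connected by the Lefschetz hyperplane theorem, so $H_1(\B,\ZZ)\cong H_1(B_t,\ZZ)=0$ and the reduction above finishes the proof. The step requiring the most care is the construction of the degenerating family: one must check that a general $h$ simultaneously smooths all nodes and keeps the total space $\mathcal{B}$ smooth, and that for the $A_1$ singularity the diffeomorphism between the Milnor fibre and the resolution neighbourhood can be taken to agree along the boundary $\mathbb{RP}^3$, so that the local replacements glue into a global diffeomorphism.
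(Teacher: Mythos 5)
Your proof is correct and follows essentially the same route as the paper: deform $B$ to a smooth complete intersection, identify $\B$ diffeomorphically with the smooth fibre, and then combine the Lefschetz hyperplane theorem, universal coefficients, and the exponential sequence. The only difference is that the paper invokes Tyurina's theorem on simultaneous resolution to produce the smooth family with central fibre $\B$, whereas you reprove the needed special case by hand via the local $T^*S^2$ surgery at each $A_1$ point.
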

 \begin{proof}
  By \cite[Theorem 1]{Tyurina} one can construct a smooth $1$-dimensional family $\pi\colon \mathscr{B}\to T$ such that the fiber over the point $0\in T$ is isomorphic to $\B$ and a fiber $\mathscr{B}_t$ is a smooth complete intersection of a quadric and quartic hypersurfaces in $\p^4$ for all $t\in T\setminus \{0\}$. Thus, the surface $\B$ is deformation equivalent to $\mathscr{B}_t$ where $t\ne 0$.
  In particular, this implies that $\B$ is diffeomorphic to $\mathscr{B}_t$. 
  
  By the universal coefficient theorem the torsion subgroup of $H^2(\mathscr{B}_t, \ZZ)$ equals the torsion subgroup of the homology group $H_1(\mathscr{B}_t, \ZZ)$ which vanishes by the Lefschetz hyperplane theorem. Thus, the group $H^2(\mathscr{B}_t, \mathbb{Z})$ is torsion-free and so is the group $H^2(\B, \mathbb{Z})$. Finally, since $H^1(\B,\OP_{\B}) = 0$ the exponential exact sequence implies the result.
 \end{proof}
 
 Denote by $H$ the inverse image of the divisor class of a hyperplane section of $\p^4$ to the surface $\B$.
 For a node~$p$ on the surface $B$ denote by $E_p$ the exceptional divisor of $\sigma\colon\B\to B$ over this point. Then $H$ and $E_p$ for all nodes $p\in \Sing(B)$ are the classes of Cartier divisors on the surface $\widetilde{B}$.
 \begin{defin}\label{definition: even sets of nodes}
 We say that the set $w$ of nodes on the nodal surface $B$ is a \emph{$\frac{\delta}{2}$-even set of nodes on $B$} for~$\delta = 0$ or $1$ if 
 \begin{equation*}
  E_w = \delta H +\sum_{p\in w} E_p \in 2\Pic(\B).
 \end{equation*}
 A $\frac{\delta}{2}$-even set of nodes $w$ on $B$ is called {\itshape minimal} if $w$ does not contain a proper non-empty subset $w'$ which is a $\frac{\delta'}{2}$-even set of nodes on $B$ for $\delta' = 0$ or $1$.
 \end{defin}
 Choose the class $\frac{E_w}{2}$ in $\Pic(\B)$. If $\Pic(\widetilde{B})$ has 2-torsion, there are more than one choice for such class. However, by Lemma \ref{lemma: Pic(B) is torsion-free} the Picard group $\Pic(\widetilde{B})$ is torsion-free; thus, the class $\frac{E_w}{2}$ is unique. Then one can define the following reflexive sheaf on the surface $B$:
 \begin{equation*}\label{eq: definition of F}
  \F_B = \pi_*\OP_{\B}\left(-\frac{E_w}{2}\right).
 \end{equation*}
 We call $\F_B$ the {\itshape sheaf associated to the $\frac{\delta}{2}$-even set of nodes $w$ on the surface $B$}. Denote by $\F$ the direct image $\F = \iota_*\F_B$, it is the {\itshape sheaf associates to $w$ on the threefold $Q$}. These sheaves encodes many properties of the surface $B$ and the set of nodes $w$, see \cite{Casnati_Catanese}. By construction the rank of the sheaf $\F_B$ equals~$1$ at all points of $B$ except $w$ where it equals $2$. 
 
  Fix a $\frac{\delta}{2}$-even set of nodes $w$ on a nodal complete intersection of a quadric and quartic threefolds in $\p^4$. Then the sheaves $\F_B$ on $B$ and $\F$ on $Q$ associated to $w$ has the following properties.
 \begin{lemma}\label{lemma: properties of F}
 Let $B$ be a nodal complete intersection of a smooth quadric hypersurface $Q$ and a quartic hypersurface in $\p^4$, let $w$ be a $\frac{\delta}{2}$-even set of nodes on $B$ and let $\F_B$ and $\F$ be the sheaves associated to $w$ on $B$ and $Q$ respectively. Then the following assertions hold:
  \begin{enumerate}
   \item[$(1)$] The supports of sheaves $\F_B$ and $\F$ equal $B$ and the projective dimension of $\F$ on the quadric threefold $Q$ is~$1$. There is a non-degenerate pairing $S^2\F_B\to \OP_B(-\delta)$.
   \item[$(2)$] $\ExtSh_{Q}^1(\F,\OP_Q) \cong \F(4+\delta)$.
   \item[$(3)$] $h^i(B,\F_B(n)) = h^{2-i}(B, \F_B(\delta+1 - n))$ and $h^i(Q,\F(n)) = h^{2-i}(Q, \F(\delta+1 - n))$ for all $0\leqslant i\leqslant 2$ and all $n \in\ZZ$.
   \item[$(4)$] $h^i(B,\F_B\otimes\iota^*\Spin(n)) = h^{2-i}(B, \F_B\otimes\iota^*(\Spin)(\delta+2 - n))$ and $h^i(Q,\F\otimes\Spin(n)) = h^{2-i}(Q, \F\otimes\Spin(\delta+2 - n))$ for all $0\leqslant i\leqslant 2$ and all $n \in\ZZ$.
   \item[$(5)$] $h^1(B,\F_B(-n)) =h^1(Q,\F(-n)) = 0$ for all $n>0$.
   \item[$(6)$] If $w$ is a minimal $\frac{\delta}{2}$-even sets of nodes, then $h^1(B,\F_B) = h^1(Q,\F) = 0$.
   \item[$(7)$] $\chi(\F_B(n)) = \chi(\F(n)) = (2n - \delta)(2n - \delta - 2) - \frac{|w|}{4} + 6$.
   \item[$(8)$] $\chi(\F_B\otimes \iota^*(\Spin)(n)) =\chi(\F\otimes \Spin(n)) = 8n(n-2-\delta) + 16+ 10 \delta - \frac{|w|}{2}$.
   \item[$(9)$] If $|w|\geqslant 32$ then $w$ is not minimal.
  \end{enumerate}
 \end{lemma}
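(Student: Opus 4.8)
The plan is to push every cohomological assertion from $B$ and from $Q$ down to the smooth surface $\B$, where Serre duality and Riemann--Roch are available. Write $L=\tfrac{E_w}{2}=\tfrac12\big(\delta H+\sum_{p\in w}E_p\big)\in\Pic(\B)$, so that $\F_B=\sigma_*\OP_{\B}(-L)$ and $2L=\delta H+\sum_pE_p$. The basic intersection numbers on $\B$ are $H^2=8$, $K_{\B}=H$ (the resolution is crepant and $B$ is a $(2,4)$ complete intersection), $H\cdot E_p=0$, $E_p^2=-2$, $E_p\cdot E_q=0$ for $p\neq q$, and $\chi(\OP_{\B})=\chi(\OP_B)=6$; from these one gets $H\cdot L=4\delta$ and $L^2=2\delta^2-\tfrac{|w|}{2}$. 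Since $\OP_{\B}(-L)|_{E_p}\cong\OP_{\p^1}(1)$ one has $R^1\sigma_*\OP_{\B}(-L)=0$, so by the projection formula $H^i(B,\F_B(n))=H^i(\B,\OP_{\B}(nH-L))$ and, writing $\G:=\sigma^*\iota^*\Spin$, $H^i(B,\F_B\otimes\iota^*\Spin(n))=H^i(\B,\G\otimes\OP_{\B}(nH-L))$; because $\iota$ is a closed immersion the same groups compute $H^i(Q,\F(n))$ and $H^i(Q,\F\otimes\Spin(n))$. Thus every statement becomes one about the line bundle $\OP_{\B}(nH-L)$ or the rank-two bundle $\G\otimes\OP_{\B}(nH-L)$ on $\B$.

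For the duality assertions I argue as follows. In (1), the support is $B$ because $\F_B$ is generically a line bundle; the projective dimension of $\F=\iota_*\F_B$ on the smooth threefold $Q$ is $1$ by Auslander--Buchsbaum, since $\F_B$ is a maximal Cohen--Macaulay sheaf on the Gorenstein surface $B$, so $\operatorname{depth}_q\F=2$ while $\dim\OP_{Q,q}=3$. The pairing $S^2\F_B\to\OP_B(-\delta)$ is induced by the multiplication $\OP_{\B}(-L)^{\otimes2}\to\OP_{\B}(-2L)=\OP_{\B}(-\delta H-\sum_pE_p)$ after applying $\sigma_*$, its non-degeneracy being a local check at the nodes; equivalently $\F_B^{\vee}\cong\F_B(\delta)$. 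Assertion (2) then follows from Grothendieck duality for the Cartier divisor $B\in|\OP_Q(4)|$, namely $\ExtSh^1_Q(\iota_*\F_B,\OP_Q)\cong\iota_*(\F_B^{\vee}\otimes N_{B/Q})$ with $N_{B/Q}=\OP_B(4)$, combined with $\F_B^{\vee}\cong\F_B(\delta)$. For (3) and (4) I apply Serre duality on $\B$ ($K_{\B}=H$) together with the key observation that \emph{twisting by the total exceptional divisor} $E=\sum_pE_p$ leaves all cohomology unchanged whenever the bundle meets each $E_p$ in degree $1$, since the restriction of the twist to $E_p\cong\p^1$ is then a sum of copies of $\OP_{\p^1}(-1)$. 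Because the Serre dual of $\OP_{\B}(nH-L)$ is $\OP_{\B}(L-(n-1)H)=\OP_{\B}((\delta+1-n)H-L)\otimes\OP_{\B}(E)$ (the difference of the two classes is exactly $2L-\delta H=E$), this yields (3); in the spinor case one uses in addition $\Spin^{\vee}\cong\Spin(1)$ from Lemma \ref{lemma: properties of Spin}(1), which supplies an extra twist by $H$ and turns $\delta+1$ into $\delta+2$, giving (4).

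The numerical assertions (7) and (8) are Riemann--Roch on $\B$. For (7) one expands $\chi(\OP_{\B}(nH-L))=6+\tfrac12(nH-L)(nH-L-H)$ with the intersection numbers above; for (8) one uses $\chi(\V)=2\chi(\OP_{\B})+\tfrac12 c_1(\V)(c_1(\V)-K_{\B})-c_2(\V)$ with $\V=\G\otimes\OP_{\B}(nH-L)$, $c_1(\G)=-H$ and $c_2(\G)=4$, the latter value coming from $c_2(\Spin)=\tfrac12 H_Q^2$ (read off from \eqref{eq: spin}) together with $[B]=4H_Q$ and $H_Q^3=2$. In both cases the polynomial matches the stated one for $\delta\in\{0,1\}$.

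The genuine work is in (5), (6), (9). For (5), the duality of (3) (equivalently the maximal Cohen--Macaulay property $\ExtSh^{>0}_B(\F_B,\OP_B)=0$) reduces $h^1(B,\F_B(-n))=0$ for $n>0$ to $h^1(\B,\OP_{\B}(kH-L))=0$ for all $k\ge\delta+2$, which I would prove by induction on $k$, cutting by a general $H_0\in|H|$ (a smooth curve of genus $9$) and checking vanishing on $H_0$ by degree, the base case $k=\delta+2$ being delicate because $kH-L$ need not be nef when $B$ carries lines through several nodes. Assertion (6) is the only place using minimality and is the \textbf{main obstacle}: one must show that a non-zero class in $H^1(B,\F_B)$ forces $w$ to split off a proper non-empty $\tfrac{\delta'}2$-even subset of nodes, contradicting minimality; concretely this means converting such a class, via Serre duality and the symmetric structure of (1), into an effective ``half-divisor'' on $\B$ or a splitting of $\F_B$, the delicate geometric heart of the Casnati--Catanese method. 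Finally (9) runs the dictionary in reverse: $h^0(\F_B)=0$ since $-L$ is not effective, so $\chi(\F_B)=h^2(\F_B)-h^1(\F_B)$, while (7) gives $\chi(\F_B)=\delta(\delta+2)+6-\tfrac{|w|}4$ and forces $4\mid|w|$. If $|w|\ge32$ and $\delta=0$ then $\chi(\F_B)\le-2$, hence $h^1(\F_B)\ge2>0$ and $w$ is not minimal by the mechanism of (6). For $\delta=1$ the same computation settles $|w|\ge40$ at once, while the remaining values $|w|\in\{32,36\}$ require a lower bound on $h^2(\F_B)=h^0(\F_B(2))$, or the more negative spinor Euler characteristic (8) together with a spinor vanishing, to certify $h^1(\F_B)>0$; producing this sharp estimate in the $\delta=1$ case is the subtle arithmetic point.
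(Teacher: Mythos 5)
Your reduction of everything to the smooth model $\B$ is exactly the paper's strategy, and assertions (1)--(4), (7) and (8) are handled correctly and by essentially the same route: $R^1\sigma_*\OP_{\B}(-L)=0$ plus the projection formula identify all the cohomology groups with those of $\OP_{\B}(nH-L)$ and $\G\otimes\OP_{\B}(nH-L)$; the sequence $0\to\OP_{\B}(-L)\to\OP_{\B}(L)\to\bigoplus_p\OP_{E_p}(-1)\to0$ gives $\F_B(\delta)\cong\HomSh_B(\F_B,\OP_B)$; Grothendieck duality for $\iota$ gives (2); and Serre duality and Riemann--Roch on $\B$ with the intersection numbers you list (all correct) give (3), (4), (7), (8). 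So far this matches the paper.

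The genuine gaps are in (5), (6) and (9), and they are exactly where you flag them. For (5), your induction cannot start: after dualizing you need $h^1(\B,\OP_{\B}(kH-L))=0$ for $k\geqslant\delta+2$, but at the base case $k=\delta+2$ with $\delta=0$ the restriction of $2H-L$ to a general hyperplane curve $H_0$ is $2H|_{H_0}=K_{H_0}$ (since $K_{\B}=H$ and $L|_{H_0}=0$), so the curve-level $H^1$ does not vanish and the hyperplane-cutting argument fails precisely where it is needed; the paper avoids this by importing the argument of Casnati--Catanese, which does not proceed by cutting with hyperplanes. For (6) you have only restated the claim to be proved --- that a non-zero class in $H^1(B,\F_B)$ produces a proper non-empty even subset of $w$ --- without producing the mechanism; this is the content of Beauville's Lemma~2, which the paper cites, and since (6) is the only assertion that uses minimality, leaving it unproved leaves the lemma incomplete. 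For (9) your Euler-characteristic argument (using $h^0(\F_B)=0$, $h^2(\F_B)=h^0(\F_B(\delta+1))$ and (6)) does settle $\delta=0$, $|w|\geqslant 32$ and $\delta=1$, $|w|\geqslant 40$, but for $\delta=1$ and $|w|\in\{32,36\}$ one gets $\chi(\F_B)=9-|w|/4\geqslant 0$, so positivity of $h^1(\F_B)$ is not forced without an independent lower bound on $h^0(\F_B(2))$ or a separate spinor-theoretic vanishing; you acknowledge this but do not supply it, whereas the paper again delegates to Beauville's argument. Until (5), (6) and the residual cases of (9) are actually proved rather than described, the proof is not complete.
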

 \begin{proof}
  The assertion \textup{(1)} is analogous to \cite[Lemma 1.1]{Casnati_Catanese}. The existence of the non-degenerate pairing follows from \cite[Proposition 3.1]{Casnati_Catanese}. 
  Consider the following exact sequence of sheaves on the smooth surface~$\B$:
  \[
   0\to \OP_{\B}\left(\frac{\delta H - \sum_{p\in w}E_p}{2}\right)\to\OP_{\B}\left(\frac{\delta H + \sum_{p\in w}E_p}{2}\right)\to \bigoplus_{p\in w} \OP_{E_p}\left(\frac{\delta H + \sum_{p\in w}E_p}{2}\right) \to 0
  \]
  All the cohomology groups of the sheaf $\OP_{E_p}\left(\frac{\delta H + \sum_{p\in w}E_p}{2}\right) = \OP_{E_p}(-1)$ supported on the curve $E_p\cong\p^1$ vanish. Thus, taking the direct image to $B$ of this exact sequence we get the following isomorphism of sheaves: 
  \begin{equation}\label{eq: duality for F}
   \F_B(\delta)\cong \HomSh_B(\F_B,\OP_B).
  \end{equation}
  Then the Grothendieck duality in the form \cite[Theorem III.11.1]{Hartshorne_Grothendieck_duality} applied for the embedding $\iota\colon B\to Q$ implies the second equality of the assertion \textup{(2)}. 
  
  Fix a vector bundle $\E$ on $Q$. Then by projection formula one has $\F\otimes \E \cong \iota_*(\F_B\otimes \iota^*\E)$. 
  Thus, since the closed embedding is an affine morphism one has $h^i(B,\F_B\otimes \iota^*( \E))   = h^i(Q,\F\otimes \E)$ for all $0\leqslant i\leqslant 2$. Therefore, it suffices to prove the equality in the assertions \textup{(3-8)} only for the sheaf $\F_B$.
  
  Now the assertions \textup{(3)} and \textup{(4)} follows from the isomorphism \eqref{eq: duality for F} and the Serre duality on the surface~$B$.  
  The proof of the assertion \textup{(5)} is analogous to one proved in \cite[Section 3]{Casnati_Catanese}. The assertion \textup{(6)} follows from \cite[Lemma 2]{Beauville}. In order to prove next assertions note that the Todd genus of the surface $\B$ and the Chern characters of vector bundles~\mbox{$ \sigma^*(\iota^*(\Spin))$, $ \OP_{\B}(nH)$} and~$\OP_{\widetilde{B}}(-E_w/2)$ are as follows:
  \begin{align*}
   &\mathrm{td}(\B) = 1 - \frac{1}{2}H + \frac{3}{4}H^2;\\
   &\mathrm{ch}(\sigma^*(\iota^*(\Spin))) = 2 - H;\\
   &\mathrm{ch}(\OP_{\B}(nH)) = 1+ nH + \frac{n^2}{2}H^2;\\
   &\mathrm{ch}\left(\OP_{\widetilde{B}}(-E_w/2)\right) = 1 - \frac{1}{2}\left(\delta H+ \sum_{p\in w}E_p\right) + \frac{1}{8}\left(\delta H^2+ \sum_{p\in w}E_p^2\right).
  \end{align*}
  Using the Hirzebruch--Riemann--Roch formula we prove assertions \textup{(7)} and \textup{(8)}. The last assertion is analogous to \cite[Section 2]{Beauville}.
 \end{proof}
 Lemma \ref{lemma: properties of F} implies the following properties of a $\frac{\delta}{2}$-even sets of nodes on a complete intersection of a smooth quadric threefold and a quartic threefold in $\p^4$.
 \begin{corollary}\label{corollary: properties of w}
  Let $B$ be a nodal complete intersection of a smooth quadric hypersurface $Q$ and a quartic hypersurface in $\p^4$ and let $w$ be a minimal $\frac{\delta}{2}$-even set of nodes. Then $|w|\leqslant 28$ and it is divisible by $4$. Moreover, if~\mbox{$\delta = 1$} then $|w|\equiv 4 \mod 8$.
 \end{corollary}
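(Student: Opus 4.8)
The three assertions sit at increasing levels of difficulty, and I would extract all of them from Lemma~\ref{lemma: properties of F}. Divisibility by $4$ and the bound $|w|\leqslant 28$ are almost immediate; the congruence modulo $8$ in the case $\delta=1$ is the only part that requires a real idea, namely a parity argument for a single cohomology group.

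\emph{Divisibility by $4$ and the bound.} Since $\chi(Q,\F(n))\in\ZZ$ for every $n$, I would rewrite formula \textup{(7)} of Lemma~\ref{lemma: properties of F} as
\[
\frac{|w|}{4}=(2n-\delta)(2n-\delta-2)+6-\chi(Q,\F(n)),
\]
whose right-hand side is an integer (take $n=0$), so $4\mid|w|$. For the bound, Lemma~\ref{lemma: properties of F}(9) shows that any $w$ with $|w|\geqslant 32$ fails to be minimal; hence a minimal set has $|w|\leqslant 31$, and together with $4\mid|w|$ this forces $|w|\leqslant 28$.

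\emph{The congruence $|w|\equiv 4\pmod 8$ for $\delta=1$.} Here I would test the self-dual twist $n=1$. By the duality \textup{(3)} of Lemma~\ref{lemma: properties of F} one has $h^0(Q,\F(1))=h^2(Q,\F(1))$, so $\chi(Q,\F(1))=2h^0(Q,\F(1))-h^1(Q,\F(1))$; on the other hand \textup{(7)} gives $\chi(Q,\F(1))=5-|w|/4$. Comparing these modulo $2$ yields
\[
\frac{|w|}{4}\equiv 1+h^1(Q,\F(1))\pmod 2,
\]
so the congruence is equivalent to the statement that $h^1(Q,\F(1))=h^1(B,\F_B(1))$ is \emph{even}.

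\emph{Evenness of $h^1(B,\F_B(1))$.} This is where the non-degenerate symmetric pairing $S^2\F_B\to\OP_B(-1)$ of Lemma~\ref{lemma: properties of F}(1) enters. Twisting it to the symmetric map $\F_B(1)\otimes\F_B(1)\to\OP_B(1)$ and composing the cup product with this map and with the trace $H^2(B,\OP_B(1))\cong\CC$ (note $\omega_B=\OP_B(1)$) produces a pairing on $H^1(B,\F_B(1))$. Because $\F_B^\vee=\F_B(\delta)$, the sheaf $\F_B(1)$ is Serre self-dual for $\delta=1$, and this pairing is exactly the Serre duality pairing underlying \textup{(3)}, hence perfect. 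Its symmetry is the product of the sign $(-1)^{1\cdot 1}=-1$ from graded-commutativity of the cup product of two degree-one classes with the sign $+1$ of the symmetric form, so the pairing is alternating. A perfect alternating pairing can only exist on an even-dimensional space, so $h^1(B,\F_B(1))$ is even and $|w|\equiv 4\pmod 8$. The only genuinely delicate point is identifying the cup-product pairing with Serre duality and tracking its symmetry; everything else is bookkeeping with Lemma~\ref{lemma: properties of F}.
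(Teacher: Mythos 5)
Your proof is correct, and for the first two assertions it coincides with the paper's own (one\nobreakdash-line) proof, which simply invokes assertions (7) and (9) of Lemma~\ref{lemma: properties of F}: integrality of $\chi(\F(n))$ gives $4\mid|w|$, and (9) plus divisibility gives $|w|\leqslant 28$. The interesting difference is the congruence $|w|\equiv 4\pmod 8$: the paper's proof cites only (7) and (9), which by themselves give no control on the parity of $\chi(\F(1))=5-|w|/4$, so strictly speaking the paper leaves this part to the reader. Your reduction to the evenness of $h^1(B,\F_B(1))$ via $h^0=h^2$ (assertion (3) at the self-dual twist $n=1$, $\delta=1$) and the perfect alternating pairing induced by $S^2\F_B\to\OP_B(-1)$, cup product, and $\omega_B\cong\OP_B(1)$ is exactly the missing argument, and it is the same mechanism the paper itself deploys later in displays \eqref{eq: Serre for even sets of nodes} and \eqref{eq: Serre pairing 28 nodes}, where the analogous pairings on first cohomology are asserted to be perfect and alternating (and where, e.g., $h^1(Q,\F(1))=2$ for $|w|=28$ confirms the parity). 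The only point you should flag, as the paper implicitly does by working on the resolution $\B$ rather than on $B$, is that identifying the cup-product pairing with Serre duality for the non-locally-free sheaf $\F_B$ on the nodal surface is cleanest after pulling back to $\OP_{\B}(-E_w/2)$ on the smooth model; with that caveat your argument is complete and, if anything, more explicit than the paper's.
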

 \begin{proof}
  The result follows from assertions (7) and (9) of Lemma \ref{lemma: properties of F}.
 \end{proof}

 We say that the sheaf $\F$ associated to a $\frac{\delta}{2}$-even set of nodes on $Q$ fits into a {\itshape Casnati--Catanese resolution} if there exists a vector bundle~$\E$ on $Q$, a map $\varphi\in\Hom_Q(\E,\F)$ and a map $\Phi\in \Hom_Q(\E^{\vee}(-4-\delta),\E)$ such that the following sequence is exact:
 \begin{equation}\label{eq: main exact sequence}
  0\to \E^{\vee}(-4-\delta)\xrightarrow{\Phi}\E\xrightarrow{\varphi} \F \to 0.
 \end{equation}
 We say that the Casnati--Catanese resolution of the sheaf $\F$ is {\itshape symmetric} if the map $\Phi$ lies in the subspace~\mbox{$H^0(Q,(S^2\E)(4+\delta)) \subset \Hom_Q(\E^{\vee}(-4-\delta),\E)$}. 
 \begin{lemma}[{\cite[Section 2]{Casnati_Catanese}}]\label{lemma: CC plus H1 vanishes implies symmetricity}
  Let $B$ be a nodal complete intersection of a smooth quadric hypersurface~$Q$ and a quartic hypersurface in $\p^4$ and let $w$ be a minimal $\frac{\delta}{2}$-even set of nodes. If the sheaf $\F$ associated with $w$ on $Q$ fits a Casnati--Catanese resolution \eqref{eq: main exact sequence} and $H^1(Q,(S^2\E^{\vee})(-4-\delta)) = 0$ then $\F$ fits a symmetric Casnati--Catanese resolution with the same bundle $\E$.
 \end{lemma}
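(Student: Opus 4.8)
The plan is to realise the self\nobreakdash-duality of $\F$ at the level of the resolution: I would dualise \eqref{eq: main exact sequence}, match the result with the original sequence through the symmetric pairing of Lemma \ref{lemma: properties of F}, and read off a symmetric map. Write $\G:=\E^{\vee}(-4-\delta)$, so that \eqref{eq: main exact sequence} becomes $0\to\G\xrightarrow{\Phi}\E\xrightarrow{\varphi}\F\to0$, and set ${}^{t}\Phi:=\Phi^{\vee}(-4-\delta)\in\Hom_Q(\G,\E)$; by definition $\Phi$ is symmetric exactly when ${}^{t}\Phi=\Phi$. First I would apply $\HomSh_Q(-,\OP_Q)$ to \eqref{eq: main exact sequence}. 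Since $\F$ is torsion one has $\HomSh_Q(\F,\OP_Q)=0$, since $\E$ and $\G$ are locally free the higher $\ExtSh_Q$ of them vanish, and since $\F$ has projective dimension $1$ (Lemma \ref{lemma: properties of F}(1)) one has $\ExtSh^{\geqslant2}_Q(\F,\OP_Q)=0$. Using the identification $\ExtSh^1_Q(\F,\OP_Q)\cong\F(4+\delta)$ of Lemma \ref{lemma: properties of F}(2) and twisting by $-4-\delta$, this yields a second resolution $0\to\G\xrightarrow{{}^{t}\Phi}\E\to\F\to0$ with the same bundles but with $\Phi$ replaced by its transpose ${}^{t}\Phi$.

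Next I would exploit that the self\nobreakdash-duality is symmetric. The isomorphism $\ExtSh^1_Q(\F,\OP_Q)\cong\F(4+\delta)$ of Lemma \ref{lemma: properties of F}(2) is induced by the non\nobreakdash-degenerate pairing $S^2\F_B\to\OP_B(-\delta)$ of Lemma \ref{lemma: properties of F}(1); since it factors through $S^2\F$ it is a symmetric self\nobreakdash-duality, and is therefore fixed by the dualising functor $D:=\ExtSh^1_Q(-,\OP_Q)(-4-\delta)$. Applying $D$ to the first resolution returns the second, and $\theta$ identifies the two cokernels, so I may regard $\theta$ as an automorphism of $\F$. To symmetrise $\Phi$ I would seek a lift $\alpha\colon\E\to\E$ of $\theta$ along the surjection $\E\to\F$ of the dualised resolution. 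For any such lift the composite $\alpha\circ\Phi$ is forced to be self\nobreakdash-transpose, i.e. $\alpha\Phi\in H^0(Q,(S^2\E)(4+\delta))$; and since $\alpha$ lifts the isomorphism $\theta$ between two resolutions of the same length it is itself an isomorphism, so $\alpha\Phi$ is again injective with cokernel $\F$. Thus $\Psi:=\alpha\Phi$ gives the required symmetric Casnati--Catanese resolution with the same bundle $\E$.

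The hard part, and the only place the hypothesis enters, is the existence of such a lift $\alpha$. Lifting $\theta\circ\varphi$ through the surjection of the dualised resolution is obstructed by the connecting class $\partial(\theta\varphi)\in\Ext^1_Q(\E,\G)=H^1\bigl(Q,(\E^{\vee}\otimes\E^{\vee})(-4-\delta)\bigr)$. The key point I would justify is that, because $\theta$ is symmetric and the entire lifting problem is $D$\nobreakdash-equivariant, this obstruction class is invariant under the transposition of the two factors of $\E^{\vee}\otimes\E^{\vee}$; hence it lies in the symmetric summand $H^1\bigl(Q,(S^2\E^{\vee})(-4-\delta)\bigr)$, which vanishes by assumption. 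Therefore $\partial(\theta\varphi)=0$, a lift $\alpha$ exists, and after correcting it by a symmetric homotopy in $\Hom_Q(\E,\G)$ it can be taken $D$\nobreakdash-invariant, which is precisely what makes $\alpha\Phi$ symmetric. The two steps I expect to have to check with care are this $D$\nobreakdash-invariance of the obstruction class (the bookkeeping splitting $\E^{\vee}\otimes\E^{\vee}$ into its symmetric and alternating parts compatibly with $\partial$), and the assertion that a lift of the isomorphism $\theta$ between two two\nobreakdash-term locally free resolutions is automatically an isomorphism, for which I would combine the snake lemma with a comparison of determinants.
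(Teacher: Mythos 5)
Your skeleton --- dualise \eqref{eq: main exact sequence}, identify the two cokernels through the self-duality of $\F$ from Lemma \ref{lemma: properties of F}(2), and produce a lift $\alpha\colon\E\to\E$ whose obstruction is killed by $H^1(Q,(S^2\E^{\vee})(-4-\delta))=0$ --- is the same as the paper's, and you have correctly located where the hypothesis must enter. But the two steps you explicitly defer (``the key point I would justify'', ``the two steps I expect to have to check with care'') are precisely the substance of the lemma, and one assertion you make en route is false: it is \emph{not} true that for an arbitrary lift $\alpha$ of $\theta$ the composite $\alpha\circ\Phi$ is self-transpose. Writing $\beta\colon\G\to\G$ for the induced map on kernels, one has $\alpha\Phi={}^{t}\Phi\beta$, so $\alpha\Phi$ is symmetric if and only if $\beta={}^{t}\alpha$; replacing $\alpha$ by $\alpha+{}^{t}\Phi h$ with $h\in\Hom_Q(\E,\G)$ \emph{antisymmetric} changes $\beta-{}^{t}\alpha$ by $(h-{}^{t}h)\Phi=2h\Phi$, so generic lifts fail, and the existence of a lift with $\beta={}^{t}\alpha$ is exactly what has to be proven (your later ``correct by a symmetric homotopy'' cannot repair a non-symmetric discrepancy, since symmetric $h$ leave $\beta-{}^{t}\alpha$ unchanged). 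Likewise the claim that $\partial(\theta\varphi)$ lies in the symmetric summand of $H^1(Q,(\E^{\vee}\otimes\E^{\vee})(-4-\delta))$ rests on an unproved ``$D$-equivariance''; to make it precise you must determine the sign with which transposition acts on $\Ext^1_Q(\E,\E^{\vee}(-4-\delta))$ --- a priori the invariant part could be the alternating summand $H^1(Q,(\Lambda^2\E^{\vee})(-4-\delta))$, which the hypothesis does not kill.

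The paper closes both gaps at once by a device you do not use: from \eqref{eq: main exact sequence} it builds the four-term resolution $0\to \Lambda^2\E^{\vee}(-8-2\delta)\to \E^{\vee}(-4-\delta)\otimes \E \to S^2\E\to S^2\F\to 0$ and lifts the non-degenerate pairing $\varphi_0\colon S^2\F\to\OP_B(-\delta)$ of Lemma \ref{lemma: properties of F}(1) through $\OP_Q(-\delta)\to\OP_B(-\delta)$ to a map $\varphi_1\colon S^2\E\to\OP_Q(-\delta)$. The obstruction to this lift lives tautologically in $\Ext^1_Q(S^2\E,\OP_Q(-4-\delta))\cong H^1(Q,(S^2\E^{\vee})(-4-\delta))$, so no symmetric/alternating decomposition or sign analysis is needed, and the map $s_1\in\Hom_Q(\E,\E)$ obtained from $\frac{1}{f}\varphi_1\circ\beta$ automatically satisfies $s_1\Phi=\Phi^{\vee}s_1^{\vee}$ because $\varphi_1$ is defined on $S^2\E$. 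Either adopt this route, or supply genuine proofs of your two deferred claims, including the sign computation and a correct symmetrisation of the lift.
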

 \begin{proof}
   Here we repeat the argument in \cite[Section 2]{Casnati_Catanese}.
   Consider the exact sequence \eqref{eq: main exact sequence} and apply the functor $\HomSh_Q(-,\OP_Q(-4-\delta))$ we get the following new exact sequence:
   \[
    0\to \E^{\vee}(-4-\delta)\xrightarrow{\Phi^{\vee}}\E\to \ExtSh_Q^1(\F,\OP_Q(-4-\delta))\to 0.
   \]
   By Lemma \ref{lemma: properties of F}(2) there exists an isomorphism $s_0\colon \F\to \ExtSh^1_Q(\F,\OP_Q(-4-\delta))$. Our goal is to show that there exist an isomorphism $s_1\colon \E\to \E$ such that the following diagram is commutative:
  \begin{equation}\label{eq: diagram map from CC to dual CC}
    \begin{tikzcd}[ampersand replacement=\&]
	0 \& {\E^{\vee}(-4-\delta)} \& \E \& \F \& 0 \\
	0 \& {\E^{\vee}(-4-\delta)} \& \E \& {\ExtSh_Q(\F,\OP_Q(-4-\delta))} \& 0
	\arrow[from=1-4, to=1-5]
	\arrow[from=2-4, to=2-5]
	\arrow["\varphi", from=1-3, to=1-4]
	\arrow[from=2-3, to=2-4]
	\arrow["{\Phi^{\vee}}", from=2-2, to=2-3]
	\arrow["\Phi", from=1-2, to=1-3]
	\arrow["{s_1^{\vee}}", from=1-2, to=2-2]
	\arrow["{s_1}", from=1-3, to=2-3]
	\arrow["{s_0}", from=1-4, to=2-4]
	\arrow[from=1-1, to=1-2]
	\arrow[from=2-1, to=2-2]
   \end{tikzcd}
  \end{equation}
  Once the isomorphism $s_1$ is constructed we replace $\Phi$ be the composition $s_1\circ\Phi$. By commutativity of the diagram \eqref{eq: diagram map from CC to dual CC} we have $s_1\circ \Phi = \Phi^{\vee}\circ s_1^{\vee}$. Therefore, $s_1\circ\Phi$ lies in $H^0(Q,(S^2\E)(4+\delta))$ and the proof is complete.
  Using the exact sequence \eqref{eq: main exact sequence} one can construct the following resolution of the sheaf $S^2\F$:
  \[
   0\to \Lambda^2\E^{\vee}(-8-2\delta)\to \E^{\vee}(-4-\delta)\otimes \E \to S^2\E\to S^2\F\to 0.
  \]
  By Lemma \ref{lemma: properties of F}(1) there exists a map $\varphi_0\colon S^2\F \to \OP_{Q}(-\delta)$. We are going to show that this map extends to the following map between exact sequences:
\begin{equation}\label{eq: diagram with symm square of CC}
\begin{tikzcd}[ampersand replacement=\&]
	0 \& {\Lambda^2\E^{\vee}(-8-2\delta)} \& {\E^{\vee}(-4-\delta)\otimes \E} \& {S^2\E} \& {S^2\F} \& 0 \\
	\& 0 \& {\OP_Q(-4-\delta)} \& {\OP_Q(-\delta)} \& {\OP_B(-\delta)} \& 0
	\arrow[from=1-1, to=1-2]
	\arrow[from=1-5, to=1-6]
	\arrow[from=2-5, to=2-6]
	\arrow["\alpha", from=1-4, to=1-5]
	\arrow[from=2-4, to=2-5]
	\arrow[from=2-3, to=2-4]
	\arrow["\beta", from=1-3, to=1-4]
	\arrow[from=1-2, to=1-3]
	\arrow[from=2-2, to=2-3]
	\arrow["0"', from=1-2, to=2-2]
	\arrow["{\varphi_2}", from=1-3, to=2-3]
	\arrow["{\varphi_1}", from=1-4, to=2-4]
	\arrow["{\varphi_0}", from=1-5, to=2-5]
\end{tikzcd}
\end{equation}
 The lower exact sequence here is just a twist of the standard exact sequence for the sheaf $\OP_B$ in $Q$. Denote by $\alpha\colon S^2\E\to S^2\F$ and $\beta\colon \E^{\vee}(-4-\delta)\otimes \E\to S^2\E$ the maps in the upper exact sequence. Apply the functor $\Hom_Q(S^2\E,-)$ to the lower exact sequence in the diagram. We get the following long exact sequence:
 \[
  \dots\to \Hom_Q(S^2\E,\OP_Q(-\delta))\to \Hom_Q(S^2\E,\OP_B(-\delta)) \to \Ext^1(S^2\E, \OP_Q(-4-\delta))
 \]
 Since by our assumption $\Ext^1(S^2\E, \OP_Q(-4-\delta))\cong H^1(Q,(S^2\E^{\vee})(-4-\delta))=0$ the composition $\varphi_0\circ\alpha\colon S^2\E\to \OP_B(-\delta)$ can be lifted to the map $\varphi_1\colon S^2\E\to \OP_Q(-\delta)$. If we set $\varphi_2 = \frac{1}{f}\varphi_1\circ\beta$ where $f$ is the equation of the divisor $B$ inside $Q$, then we gets the necessary maps such that the diagram \eqref{eq: diagram with symm square of CC} commutes. 
 
 Finally, the map $\varphi_2 \in \Hom_Q(\E^{\vee}(-4-\delta)\otimes \E, \OP_Q(-4-\delta)) \cong \Hom_Q(\E,\E)$ defines the map $s_1\colon \E \to \E$ such that the diagram \eqref{eq: diagram map from CC to dual CC} also commutes. This finishes the proof.
 \end{proof}

 The next assertion describes how look intersections of $\frac{\delta}{2}$-even sets of nodes on the surface $B$ with lines and conics in the projective space $\p^4$.
 \begin{lemma}\label{lemma: lines and conics on B containing 1/2-even sets}
  Let $B$ be a nodal complete intersection of a smooth quadric hypersurface $Q$ and a quartic hypersurface in $\p^4$ and let $w$ be a $\frac{1}{2}$-even set of nodes on $B$. Then the following assertions hold:
  \begin{enumerate}
   \item[\textup{(1)}] If $L\subset B$ is a line in $\p^4$ then $|L\cap w|$ is odd. There is no line $L\subset B$ such that $L$ contains $w$.
   \item[\textup{(2)}] If $L$ is a line in $\p^4$ then $|L\cap w|\leqslant 4$.
   \item[\textup{(3)}] If $C$ is a conic in $\p^4$ then $|C\cap w|\leqslant 8$.
  \end{enumerate}
 \end{lemma}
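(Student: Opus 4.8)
The plan is to split each part according to whether the line or conic lies on $B$, and to reduce the numerical bounds of parts (2) and (3) to a single residuation computation on a smooth quadric surface; the parity statement of part (1), by contrast, comes from the integrality of the half-class $\tfrac{E_w}{2}$.

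First I would record the intersection numbers on the resolution $\sigma\colon\B\to B$ needed for part (1). For a line $L\subset B$ with strict transform $\widetilde L$, the projection formula gives $H\cdot\widetilde L=\deg L=1$, while a standard local computation at a node shows $E_p\cdot\widetilde L=1$ when $p\in L$ (a line in $B$ through a node is tangent to a ruling of the quadric tangent cone, so its strict transform meets the $(-2)$-curve transversally) and $E_p\cdot\widetilde L=0$ otherwise. Since by Definition \ref{definition: even sets of nodes} the class $\tfrac{E_w}{2}=\tfrac12\big(H+\sum_{p\in w}E_p\big)$ lies in $\Pic(\B)$, which is torsion-free by Lemma \ref{lemma: Pic(B) is torsion-free}, intersecting this integral class with $\widetilde L$ yields $\tfrac12\big(1+|L\cap w|\big)\in\ZZ$, so $|L\cap w|$ is odd. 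For the second assertion of (1) I would argue by contradiction: if a line $L\subset B$ contained all of $w$, then $|L\cap w|=|w|$ would be odd, whereas integrality of the Euler characteristic in Lemma \ref{lemma: properties of F}(7) forces $4\mid|w|$; as $w\neq\varnothing$ this is impossible.

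For parts (2) and (3) the cases where the curve is not contained in $B$ are handled by elementary estimates: a line (resp. conic) not on $Q$ meets $Q\supseteq w$ in at most $2$ (resp. $4$) points, and a line (resp. conic) on $Q$ but not on $V_4$ meets $V_4\supseteq w$ in at most $4$ (resp. $8$) points. The substantive case is a line or an irreducible conic $C$ contained in $B$. Here I would pick a general hyperplane $\Pi\cong\p^3$ containing the linear span of $C$; then $Q_2:=Q\cap\Pi$ is a smooth quadric surface $\cong\p^1\times\p^1$, and $\Gamma:=B\cap\Pi=Q_2\cap(V_4\cap\Pi)$ lies in $|\mathcal O_{Q_2}(4,4)|$ since $\mathcal O_{\p^3}(4)$ restricts to $\mathcal O_{Q_2}(4,4)$. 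As $C\subset\Gamma$, write $\Gamma=C\cup\Gamma'$ with $\Gamma'$ effective; the key point is that every node of $B$ on $C$ is a singular point of $\Gamma$, and since $C$ is smooth such a point must lie on the residual $\Gamma'$. Hence the nodes of $B$ on $C$ form a subset of $C\cap\Gamma'$, of size at most $C\cdot\Gamma'$. Computing in $\p^1\times\p^1$: a line has class $(1,0)$ and $\Gamma'$ has class $(3,4)$, giving $L\cdot\Gamma'=4$; a smooth conic has class $(1,1)$ and $\Gamma'$ has class $(3,3)$, giving $C\cdot\Gamma'=6$. Thus $|L\cap w|\le 4$ and $|C\cap w|\le 6\le 8$. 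A reducible conic is a union $L_1\cup L_2$ (or a double line) of lines, so part (3) in that case follows from part (2): $|C\cap w|\le|L_1\cap w|+|L_2\cap w|\le 8$.

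The hard part is precisely this contained case, where naive Bezout fails; the residuation trick circumvents it, but to apply it I must check that a general $\Pi\supset\langle C\rangle$ is admissible, namely that $Q_2$ is smooth and that $\Gamma$ contains $C$ with multiplicity one (so $C\cap\Gamma'$ is finite). Both hold generically: the hyperplanes through $\langle C\rangle$ tangent to $Q$ form a proper closed subset, and if $\Gamma$ contained $C$ doubly for every such $\Pi$ then $B$ would be tangent to all of them along $C$, forcing a fixed tangent plane along $C$ and hence a non-reduced plane section of $B$, which is impossible since $B$ has only isolated singularities and a smooth quadric threefold contains no planes. The only other delicate input is the local intersection number $E_p\cdot\widetilde L=1$ underpinning the parity statement, which I would justify from the description of a smooth curve in $B$ as tangent to a ruling of the quadric tangent cone at each node it meets.
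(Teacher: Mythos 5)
Your part (1) matches the paper's argument: intersect the integral class $\frac{E_w}{2}$ with the strict transform of $L$ to get the parity, then rule out $w\subset L$ by the divisibility $4\mid |w|$ coming from the integrality of $\chi(\F(n))$ in Lemma \ref{lemma: properties of F}(7) (the paper cites Corollary \ref{corollary: properties of w} for the same fact). For parts (2) and (3), however, you take a genuinely different and considerably longer route than the paper. The paper observes that $\Sing(B)$ is the common zero locus of $q$, $f$ and the $2\times 2$ minors of the Jacobian matrix of $(q,f)$, all of degree at most $4$; since $\Sing(B)$ is finite, any line (resp.\ conic) fails to lie in at least one of these quartics, and Bezout gives $|L\cap\Sing(B)|\leqslant 4$ (resp.\ $|C\cap\Sing(B)|\leqslant 8$) in one stroke, with no case distinction according to whether the curve lies on $B$. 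Your argument splits into cases and, for a curve contained in $B$, runs a residuation computation on a general hyperplane section $Q\cap\Pi\cong\p^1\times\p^1$: writing $B\cap\Pi=C\cup\Gamma'$ and bounding the nodes on $C$ by $C\cdot\Gamma'$. This is correct and even yields the sharper bound $|C\cap w|\leqslant 6$ for an irreducible conic contained in $B$, but it buys this at the cost of genericity checks that the paper's Jacobian argument avoids entirely. The one place where your write-up should be tightened is the claim that a general $\Pi\supset\langle C\rangle$ cuts $B$ in a divisor containing $C$ with multiplicity one: the phrase ``forcing a fixed tangent plane along $C$ and hence a non-reduced plane section'' is not quite the right reason. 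The clean argument is that if every hyperplane through $\langle C\rangle$ were tangent to $B$ at a fixed smooth point $q\in C$, then $T_qB$ would be contained in the intersection of all such hyperplanes, which is $\langle C\rangle$ itself; for a line this is impossible on dimension grounds, and for a conic the equality $T_qB=\langle C\rangle$ for all $q\in C$ contradicts the fact that $\bigcap_{q\in C}T_qQ$ is only a line (the Gauss map of $Q$ sends $C$ to a plane conic in the dual space). With that patch your proof is complete, but the paper's is the more economical one.
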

 \begin{proof}
  Denote by $\widetilde{L}$ the proper preimage of $L$ to $\widetilde{B}$. 
  Then $\widetilde{L}$ is an effective divisor on $\widetilde{B}$ with the following intersection properties:
  \begin{align*}
   \widetilde{L}\cdot H = 1; &&\widetilde{L}\cdot E_{p} = 1 \text{ for all } p\in L\cap w; &&\widetilde{L}\cdot E_{p} = 0 \text{ for all } p\in w\setminus L.
  \end{align*}
  The divisor class $E_w = \delta H - \sum_{p\in w} E_p$ lies in the group $2\Pic(\B)$. Thus, the product $L\cdot E_w = 1 + |L\cap w|$ is divisible by $2$. Therefore, Corollary \ref{corollary: properties of w} implies the assertion \textup{(1)}.

  Let $(z_0:z_1 :z_2 :z_3 :z_4)$ be homogeneous coordinates on the projective space $\p^4$ and let $q(z_0,z_1,z_2,z_3,z_4)$ and $f(z_0,z_1,z_2,z_3,z_4)$ be homogeneous polynomials of degrees $2$ and $4$ respectively such that $Q = \{q = 0\}$ and $B = \{q = f = 0\}$. Denote by $M$ the Jacobian matrix of $B$:
\[
 M = \begin{pmatrix}
      \frac{\partial q}{\partial z_0} & \frac{\partial q}{\partial z_1} & \frac{\partial q}{\partial z_2} &\frac{\partial q}{\partial z_3} & \frac{\partial q}{\partial z_4}    \\
      \frac{\partial f}{\partial z_0} & \frac{\partial f}{\partial z_1} & \frac{\partial f}{\partial z_2} &\frac{\partial f}{\partial z_3} & \frac{\partial f}{\partial z_4}    
     \end{pmatrix}.
\]
Then the singular locus of $B$ is as follows $\Sing(B) = \left\{ z\in\p^4\ | \ q(z) = f(z) = 0,\ \rk(M(z)) = 1\right\}$. Since determinants of any $2\times 2$ minor of the matrix $M$ is a polynomial of degree $4$ we get that $\Sing(B)$ is an intersection of several quartic hypersurfaces in $\p^4$. 

Since singularities of $B$ are isolated the intersection of $\Sing(B)$ with a curve is a finite set. Thus, if $L$ is a line then the cardinality of $\Sing(B)\cap L$ is at most $4$. If $C$ is a conic, then the cardinality of $\Sing(B)\cap C$ is at most $8$. Thus, assertions \textup{(2)} and \textup{(3)} hold.
 \end{proof}

 \begin{lemma}[{see \cite[Proposition 4.2]{Jaffe_Ruberman}}]\label{lemma: JR restriction on cohomology groups of F}
 Let $B$ be a nodal complete intersection of a smooth quadric hypersurface and a quartic hypersurface in $\p^4$ and let $w$ be a $\frac{\delta}{2}$-even set of nodes on~$B$. If $\F$ is the sheaf associated to $w$ on $Q$ then  $h^0(Q, \F(1)) \leqslant 2$.
 \end{lemma}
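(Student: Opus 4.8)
My plan is to reduce to a statement about the single surface $B$ and then exploit the quadratic pairing together with the smoothness of $Q$. By the projection formula (the embedding $\iota\colon B\to Q$ is affine) one has $h^0(Q,\F(1)) = h^0(B,\F_B(1))$, so I set $V := H^0(B,\F_B(1))$ and aim to prove $\dim V \le 2$. The engine is the non-degenerate pairing $S^2\F_B \to \OP_B(-\delta)$ of Lemma~\ref{lemma: properties of F}(1): twisting by $\OP_B(2)$ and passing to global sections yields a symmetric map $\mu\colon S^2V \to W := H^0(B,\OP_B(2-\delta))$, $\mu(s,t)=\langle s,t\rangle$. I would first record three facts. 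First, $h^0(B,\F_B)=0$: since $\F_B=\pi_*\OP_{\B}(-E_w/2)$, the class $-\tfrac12\bigl(\delta H+\sum_{p\in w}E_p\bigr)$ is not effective for non-empty $w$, as it pairs non-positively with the ample class $H$ with equality only when $w=\emptyset$. Second, for each $0\ne s$ the map $t\mapsto\mu(s,t)$ is injective, because $B$ is integral and the pairing is an isomorphism at the generic point, so $\mu(s,t)=0$ forces $t$ to vanish generically. Third, from $0\to\OP_Q(-2-\delta)\to\OP_Q(2-\delta)\to\OP_B(2-\delta)\to0$ and the vanishing of the cohomology of $\OP_Q(-2-\delta)$, every value $\mu(s,t)$ is the restriction of a \emph{unique} form $G_{st}$ of degree $2-\delta$ on $Q$.

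Assuming $\dim V\ge3$, I fix independent $s_0,s_1,s_2$ and write $G_{ij}$ for the form with $G_{ij}|_B=\mu(s_i,s_j)$. The Veronese identities $\mu(s_i,s_i)\mu(s_j,s_j)=\mu(s_i,s_j)^2$ and their polarizations say that the symmetric matrix $\mathcal{M}=(G_{ij})$ has all $2\times2$ minors vanishing on $B$. Since the kernel of restriction from $Q$ to $B$ in the relevant degree is generated by the defining equation, each minor is a constant multiple of the equation $q$ of $Q$ when $\delta=1$, and of the quartic $f$ cutting out $B$ in $Q$ when $\delta=0$. Injectivity of $\mu(s_0,-)$ also shows that $G_{00}$ and $G_{01}$ are linearly independent.

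For $\delta=1$ I expect a quick contradiction in which smoothness of $Q$ is decisive. The relation $G_{00}G_{11}-G_{01}^2=c\,q$ is an identity of quadrics in $\p^4$. If $c=0$, unique factorization forces the linear forms $G_{00},G_{01}$ to be proportional, which is impossible. If $c\ne0$, restricting to $\{G_{00}=0\}\cong\p^3$ gives $q|_{\{G_{00}=0\}}=-c^{-1}G_{01}^2$, a quadratic form of rank $\le1$; but a hyperplane section of a smooth quadric threefold has rank $\ge3$. Hence $\dim V\le2$.

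The case $\delta=0$ is the main obstacle, since the minors now live in degree $4$ and the homogeneous coordinate ring of $Q$ is only normal (not a UFD). Here $\mathrm{adj}(\mathcal{M})=fN$ for a constant symmetric matrix $N$, and $\mathcal{M}\cdot\mathrm{adj}(\mathcal{M})=\det(\mathcal{M})\,\mathrm{Id}$ forces, over the integral coordinate ring, $\det\mathcal{M}=fh$ and $\mathcal{M}N=h\,\mathrm{Id}$ for some quadric $h$. Because the adjugate of a $3\times3$ matrix has rank in $\{0,1,3\}$, the rank of $N$ is $0$, $1$, or $3$. Rank $3$ gives $\mathcal{M}=h\,\mathrm{Id}$, hence $G_{01}=0$ and $s_0s_1=0$ on $B$, impossible; rank $1$ gives (after diagonalizing $N$, i.e.\ a linear change of the $s_i$) the determinantal identity $G_{00}^2 f=0$, hence $G_{00}=0$ and $s_0^2=0$, impossible. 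The delicate case is rank $0$: then $\mathcal{M}$ has rank $\le1$ on all of $Q$, and since $\Pic(Q)$ is torsion-free a symmetric rank-one matrix of quadrics must be $\mathcal{M}=vv^{\mathsf{T}}$ with $v_i\in H^0(\OP_Q(1))$; comparing divisors of $\mu(s_i,s_i)=v_i^2|_B$ then gives that the divisor of $s_i$ on the smooth locus of $B$ equals $\{v_i=0\}\cap B\sim H$, so $\F_B\cong\OP_B$ in codimension one, and by reflexivity $\F_B\cong\OP_B$, contradicting $h^0(B,\F_B)=0$. Thus $\dim V\le2$ in every case, which is the asserted bound.
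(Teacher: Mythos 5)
Your proof is correct in substance but takes a genuinely different route from the paper's. The paper's proof is four lines: it squares sections of $\F_B(1)$ to produce three independent hyperplane sections containing $w$, places $w$ on a line, and appeals to Lemma~\ref{lemma: lines and conics on B containing 1/2-even sets} (as written it only displays the $\delta=1$ divisor class $\tfrac{1}{2}(H-\sum_{p\in w}E_p)$). You instead exploit the full bilinear pairing of Lemma~\ref{lemma: properties of F}(1): lifting the products $\langle s_i,s_j\rangle$ uniquely to forms $G_{ij}$ on $Q$ and using that the symmetric matrix $(G_{ij})$ has rank at most one on the integral surface $B$, you reduce everything to a determinantal analysis. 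This buys a uniform treatment of both parities with no reliance on Lemma~\ref{lemma: lines and conics on B containing 1/2-even sets}; your $\delta=1$ argument (a hyperplane restriction of $q$ cannot have rank $\leqslant 1$) is particularly clean, and your $\delta=0$ case is an honest argument where the paper's is terse. The cost is length and two spots that need tightening. First, in the rank-$3$ subcase $\mathcal{M}N=h\,\mathrm{Id}$ gives $\mathcal{M}=hN^{-1}$ rather than $h\,\mathrm{Id}$; the conclusion (all $G_{ij}$ proportional to $h$, contradicting the independence of $G_{00},G_{01}$) is unaffected. Second, in the rank-$0$ subcase the assertion that a symmetric rank-one matrix of quadrics equals $vv^{\mathsf{T}}$ with $v_i$ linear is false as stated (consider $p\cdot cc^{\mathsf{T}}$ with $p$ an irreducible quadric and $c$ constant), and the input you need is not torsion-freeness of $\Pic(Q)$ but factoriality of the homogeneous coordinate ring $\CC[z_0,\dots,z_4]/(q)$, which does hold for a smooth quadric of dimension $\geqslant 3$ because $\mathrm{Cl}(Q)=\ZZ H$. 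With that, $G_{00}G_{11}=G_{01}^2$ forces either $G_{00}\mid G_{01}$ (excluded by the independence of $G_{00}$ and $G_{01}$) or $G_{00}=l^2$ for a linear form $l$, and only the latter branch yields $\mathcal{M}=vv^{\mathsf{T}}$; your reflexivity argument then finishes the job (alternatively, on $\B$ the identity $2\operatorname{div}(v_i)=2\operatorname{div}(s_i)+\sum_{p\in w}E_p$ is already a parity contradiction). These are repairs rather than missing ideas, so the proof goes through.
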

 \begin{proof} 
 Let $\F_B$ be the sheaf associated to $w$ on $B$. Then $\F = \iota_*\F_B$ and $h^0(Q, \F(1)) = h^0(B,\F_B(1))$ since the closed embedding $\iota\colon B\to Q$ is an affine morphism.
 
 Assume that $h^0(B, \F_B(1))\geqslant 3$ then by projection formula we have also $h^0\left(\B,\frac{H - \sum_{p\in w}E_p}{2}\right)\geqslant 3$. This implies that $h^0(\B, H - \sum_{p\in w}E_p)\geqslant 3$; i.e. there exists three linearly independent hyperplane sections $H_1, H_2$ and $H_3$ in the surface $B$ which contain the $\frac{\delta}{2}$-even set of node $w$. Therefore, $w$ lies on a line in $\p^4$ and this contradicts Lemma \ref{lemma: lines and conics on B containing 1/2-even sets}.
 \end{proof}

 \subsection{Cokernel of a map between vector bundles}
 Let $X$ be a smooth variety, let $\LA$ and $\LB$ be vector bundles of equal ranks on~$X$ and let $\Phi$ be an embedding of $\OP_X$-sheaves $\Phi\colon \LA\to \LB$. Let $B$ be the zero locus of the determinant map $B = \{\det(\Phi) = 0\}$. By construction the support of the cokernel $\Coker(\Phi)$ coincides with the reduced scheme $B_{\mathrm{red}}$. We study the influence of the geometric properties of the variety $B$ on the map~$\Phi$.
 
 In view of the following assertion if $\Phi$ maps a direct summand of $\LA$ isomorphically to a direct summand of $\LB$ then $B$ can be realized as a support of the cokernel of another embedding.
 \begin{lemma}\label{lemma: excluding the isomorphism}
 Let $\LA_1$, $\LA_2$, $\LB_1$ and $\LB_2$ be vector bundles on a smooth variety $X$ and let $\Phi_{ij}\in \Hom_X(\LA_i,\LB_j)$ be maps for all $i = 1,2$ and $j  = 1,2$ such that $\Phi = (\Phi_{ij})\colon \LA_1\oplus \LA_2 \to \LB_1\oplus \LB_2$ is an embedding of sheaves. 
  
 If $\rk(\LA_1)+\rk(\LA_2) = \rk(\LB_1)+\rk(\LB_2)$ and $\Phi_{11}$ is an isomorphism then there exists $\Phi_{22}'\in \Hom(\LA_2,\LB_2)$ such that $\{\det(\Phi) = 0\}_{\mathrm{red}}= \{\det(\Phi_{22}') = 0\}_{\mathrm{red}}$.
  \end{lemma}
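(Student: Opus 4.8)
The plan is to carry out a block Gaussian elimination (a Schur complement reduction) driven by the invertibility of $\Phi_{11}$. Writing $\Phi$ as a $2\times 2$ block map and using that $\Phi_{11}$ is an isomorphism, I can multiply $\Phi$ on the source by an automorphism of $\LA_1\oplus\LA_2$ and on the target by an automorphism of $\LB_1\oplus\LB_2$ so as to make it block-diagonal. Since multiplying by automorphisms changes the determinant only by a nowhere-vanishing factor, the zero locus of $\det(\Phi)$ is preserved; the block-diagonal form then exhibits this zero locus as the union of $\{\det(\Phi_{11})=0\}=\varnothing$ with the zero locus of the Schur complement, which will be the desired $\Phi_{22}'$.

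Concretely, I would arrange the blocks according to the convention $\Phi_{ij}\colon\LA_i\to\LB_j$, so that
$$\Phi = \begin{pmatrix} \Phi_{11} & \Phi_{21} \\ \Phi_{12} & \Phi_{22} \end{pmatrix} \colon \LA_1\oplus\LA_2 \to \LB_1\oplus\LB_2,$$
with the top row landing in $\LB_1$ and the bottom row in $\LB_2$. Because $\Phi_{11}$ is an isomorphism, the inverse $\Phi_{11}^{-1}\colon\LB_1\to\LA_1$ is an honest morphism of sheaves, hence so are all composites below, and I may define the Schur complement
$$\Phi_{22}' = \Phi_{22} - \Phi_{12}\circ\Phi_{11}^{-1}\circ\Phi_{21} \in \Hom_X(\LA_2,\LB_2).$$

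Next I would introduce the unipotent automorphisms
$$L = \begin{pmatrix} \Id_{\LB_1} & 0 \\ -\Phi_{12}\circ\Phi_{11}^{-1} & \Id_{\LB_2} \end{pmatrix} \in \Aut_X(\LB_1\oplus\LB_2), \qquad R = \begin{pmatrix} \Id_{\LA_1} & -\Phi_{11}^{-1}\circ\Phi_{21} \\ 0 & \Id_{\LA_2} \end{pmatrix} \in \Aut_X(\LA_1\oplus\LA_2);$$
each is invertible, being triangular with identity diagonal blocks. A direct block computation then yields
$$L\circ\Phi\circ R = \begin{pmatrix} \Phi_{11} & 0 \\ 0 & \Phi_{22}' \end{pmatrix}.$$
Passing to top exterior powers and using multiplicativity of the determinant along composition, one has $\det(L\circ\Phi\circ R) = \det(L)\cdot\det(\Phi)\cdot\det(R)$, where $\det(L)$ and $\det(R)$ are nowhere-vanishing units since $L$ and $R$ are isomorphisms; the block-diagonal form gives on the other hand $\det(L\circ\Phi\circ R) = \det(\Phi_{11})\cdot\det(\Phi_{22}')$, and $\det(\Phi_{11})$ is likewise nowhere-vanishing. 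Comparing the two expressions, the nowhere-vanishing factors drop out of the vanishing condition and I obtain the equality of closed subschemes
$$\{\det(\Phi) = 0\} = \{\det(\Phi_{22}') = 0\},$$
from which the statement follows after passing to reductions.

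I do not expect a serious obstacle here; the argument is purely formal once the block picture is set up. The only points demanding care are that $\Phi_{11}^{-1}$ and the Schur complement $\Phi_{22}'$ are genuine morphisms of locally free sheaves (guaranteed by $\Phi_{11}$ being an isomorphism) and that ``$\det$'' of a map between bundles of equal rank is read as the induced section of $\det(\LA)^{\vee}\otimes\det(\LB)$, whose multiplicativity under composition and behaviour under isomorphisms are exactly what makes the comparison of zero loci valid.
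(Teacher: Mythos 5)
Your proof is correct and is essentially the paper's argument: your Schur complement $\Phi_{22}'=\Phi_{22}-\Phi_{12}\circ\Phi_{11}^{-1}\circ\Phi_{21}$ is exactly the map the paper obtains as the induced morphism on cokernels of its column operation $\alpha$, and your block elimination is the matrix form of the paper's commutative diagrams. The only cosmetic difference is that you conclude via multiplicativity of determinants (which in fact gives scheme-theoretic equality of the two loci), whereas the paper concludes via the snake lemma and the identification of the support of $\Coker(\Phi)$ with the reduced determinant locus.
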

  \begin{proof}
   Let $i_{\LA_1}\colon \LA_1 \to \LA_1\oplus \LA_2$ and $i_{\LB_1}\colon \LB_1 \to \LB_1\oplus \LB_2$ be the embeddings of the first summands into the direct sum. 
   Let $\pi_{\LA_j}\colon \LA_1\oplus \LA_2\to \LA_j$ and  $\pi_{\LB_j}\colon \LB_1\oplus \LB_2\to \LB_j$ be the projections from the direct sum to the $j$-th summand. Denote by $\alpha$ the following map:
   \begin{align*}
   \alpha\colon \LB_1 \to \LB_1\oplus \LB_2;&& \alpha(\nu) = \left(\nu, \Phi_{12}\left(\Phi_{11}^{-1}(\nu)\right)\right).
   \end{align*}
   Here $\nu$ is a local section of $\LB_1$.
   Then the following diagram is commutative:
   \begin{equation}\label{eq_es_excluding_the_isomorphism}
   \begin{tikzcd}[ampersand replacement=\&]
	{\LA_1} \& {\LA_1\oplus\LA_2} \\
	{\LB_1} \& {\LB_1\oplus\LB_2}
	\arrow["{i_{\LA_1}}", from=1-1, to=1-2]
	\arrow["\alpha", from=2-1, to=2-2]
	\arrow["{\Phi_{11}}"', from=1-1, to=2-1]
	\arrow["\Phi", from=1-2, to=2-2]
\end{tikzcd}
\end{equation}
This implies that $\Phi$ induces a map between the cokernels of $i_{\LA_1}$ and $\alpha$. Note that $\Coker(i_{\LA_1}) \cong \LA_2$. Moreover, one has $\Coker(\alpha) \cong \LB_2$ since the following sequence is exact:
\[\begin{tikzcd}[ampersand replacement=\&]
	0 \& {\LB_1} \& {\LB_1\oplus\LB_2} \& {\LB_2} \& 0
	\arrow[from=1-1, to=1-2]
	\arrow["\alpha", from=1-2, to=1-3]
	\arrow["\beta", from=1-3, to=1-4]
	\arrow[from=1-4, to=1-5]
\end{tikzcd}\]
Here $\beta\colon \LB_1\oplus\LB_2\to \LB_2$ is the map $\beta(\nu_1,\nu_2) =  \nu_2 -  \Phi_{12}\left(\Phi_{11}^{-1}(\nu_1)\right)$ for a local section  $(\nu_1,\nu_2)$ of the vector bundle $\LB_1\oplus \LB_2$.
Therefore, the commutative diagram \eqref{eq_es_excluding_the_isomorphism} induces the embedding of sheaves $\Phi'_{22}\colon \LA_2 \to \LB_2$ such that the following diagram commutes:
\[\begin{tikzcd}[ampersand replacement=\&]
	0 \& {\LA_1} \& {\LA_1\oplus\LA_2} \& {\LA_2} \& 0 \\
	0 \& {\LB_1} \& {\LB_1\oplus\LB_2} \& {\LB_2} \& 0
	\arrow[from=2-1, to=2-2]
	\arrow["\alpha", from=2-2, to=2-3]
	\arrow["\beta", from=2-3, to=2-4]
	\arrow[from=2-4, to=2-5]
	\arrow[from=1-1, to=1-2]
	\arrow["{i_{\LA_1}}", from=1-2, to=1-3]
	\arrow["{\pi_{\LA_2}}", from=1-3, to=1-4]
	\arrow[from=1-4, to=1-5]
	\arrow["{\Phi_{11}}"', from=1-2, to=2-2]
	\arrow["\Phi"', from=1-3, to=2-3]
	\arrow["{\Phi_{22}'}", from=1-4, to=2-4]
\end{tikzcd}\]
Since $\Phi_{11}$ is an isomorphism the snake lemma implies that $\Coker(\Phi)$ is isomorphic to $\Coker(\Phi_{22}')$. Therefore, the supports of these sheaves are isomorphic and we get the result.
 \end{proof}
 The next assertion shows that if $B$ is an irreducible scheme then there is an additional condition on the map $\Phi$.
 \begin{lemma}\label{lemma: excluding the embedding of full rank}
 Let $\LA_1$, $\LA_2$, $\LB_1$ and $\LB_2$ be vector bundles on a smooth variety $X$ and let $\Phi_{ij}\in \Hom_X(\LA_i,\LB_j)$ be maps for all $i = 1,2$ and $j  = 1,2$ such that $\Phi = (\Phi_{ij})\colon \LA_1\oplus \LA_2 \to \LB_1\oplus \LB_2$ is an embedding of sheaves. 
  
 If $\rk(\LA_1)+\rk(\LA_2) = \rk(\LB_1)+\rk(\LB_2)$ and one has $\Phi_{21}\colon \LA_1\to \LB_2$ is the zero map and $\deg(\LA_1)\ne \deg(\LB_1)$ then $\rk(\LA_1) \leqslant \rk(\LB_1)$ and in the case of the equality either $B = \{\det(\Phi) = 0\}$ coincides with $\{\det(\Phi_{11}) = 0\}$ or $B$ is reducible or non-reduced.
 \end{lemma}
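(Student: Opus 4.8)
The plan is to exploit the vanishing $\Phi_{21}=0$ (the component $\LA_1\to\LB_2$ is zero) to put $\Phi$ in block upper-triangular form and then read off its determinant. First I would note that, since the $\LB_2$-component of $\Phi$ restricted to $\LA_1$ is zero, the map $\Phi$ carries $\LA_1$ into $\LB_1$; writing $\Phi$ as a block matrix with rows indexed by $\LB_1,\LB_2$ and columns by $\LA_1,\LA_2$ gives $\Phi=\left(\begin{smallmatrix}\Phi_{11} & \psi\\ 0 & \Phi_{22}\end{smallmatrix}\right)$ for some $\psi\colon\LA_2\to\LB_1$, where $\Phi_{11}\colon\LA_1\to\LB_1$. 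Because $\Phi$ is an embedding of sheaves and $\Phi(a,0)=(\Phi_{11}(a),0)$, the map $\Phi_{11}$ is itself injective as a morphism of sheaves; passing to the generic point of the integral variety $X$ this yields an injective $K(X)$-linear map of fibres $(\LA_1)_\eta\hookrightarrow(\LB_1)_\eta$, whence $\rk(\LA_1)\leqslant\rk(\LB_1)$. This settles the first assertion and uses neither the rank equality nor the degree hypothesis.

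For the equality case $\rk(\LA_1)=\rk(\LB_1)$, the total-rank assumption forces $\rk(\LA_2)=\rk(\LB_2)$, so both diagonal blocks are square. I would then compute the determinant locally, where all bundles trivialize and the elementary identity $\det\left(\begin{smallmatrix}A & C\\ 0 & D\end{smallmatrix}\right)=\det(A)\det(D)$ applies, to conclude $\det(\Phi)=\det(\Phi_{11})\cdot\det(\Phi_{22})$ as a section of $\det\LB_1\otimes\det\LB_2\otimes\det\LA_1^{\vee}\otimes\det\LA_2^{\vee}$. Consequently, as effective divisors on $X$, one has $B=\{\det(\Phi)=0\}=D_1+D_2$, where $D_1=\{\det(\Phi_{11})=0\}$ and $D_2=\{\det(\Phi_{22})=0\}$.

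The crux, and the step I expect to be the main obstacle, is showing that $D_1$ is a \emph{nonzero} divisor; this is exactly where the degree hypothesis is indispensable. Since $\Phi_{11}$ is an injective sheaf map between bundles of the same rank, $\det(\Phi_{11})$ is a nonzero global section of the line bundle $\det\LA_1^{\vee}\otimes\det\LB_1$. The assumption $\deg(\LA_1)\ne\deg(\LB_1)$ guarantees this line bundle is nontrivial, so the section cannot be nowhere vanishing, a nowhere-vanishing section would trivialize it and make $\Phi_{11}$ an isomorphism. Hence $D_1>0$. Everything else here is formal; it is precisely at this point that one must invoke the degree obstruction to rule out $\Phi_{11}$ being an isomorphism.

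Finally I would combine these observations. We always have $B=D_1+D_2$ with $D_1>0$ effective and $D_2\geqslant 0$ effective. If $D_2=0$, then $B=D_1=\{\det(\Phi_{11})=0\}$, which is the first alternative. If $D_2>0$, then $B$ is a sum of two nonzero effective divisors, so it cannot be a prime divisor of multiplicity one; equivalently, a reduced irreducible divisor admits no effective subdivisor strictly between $0$ and itself, and therefore $B$ must be reducible or non-reduced. This yields exactly the stated dichotomy.
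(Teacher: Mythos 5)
Your proof is correct and follows essentially the same route as the paper: block upper-triangularity from $\Phi_{21}=0$ gives injectivity of $\Phi_{11}$ and hence the rank inequality, the factorization $\det(\Phi)=\det(\Phi_{11})\det(\Phi_{22})$ decomposes $B$, and the degree hypothesis rules out $\Phi_{11}$ being an isomorphism. You make explicit (via the nontriviality of $\det\LA_1^{\vee}\otimes\det\LB_1$ and the effective-divisor dichotomy) what the paper compresses into a snake-lemma remark and "this implies the result," which is a welcome clarification rather than a different method.
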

 \begin{proof}
  Since the map $\Phi$ defines an embedding and $\Phi_{21}$ is zero  $\Phi_{11}$ should also be an embedding and the scheme $\{\det(\Phi) = 0\}$ coincides with the scheme $\{\det(\Phi_{11})\cdot\det(\Phi_{22})) = 0\}$. Thus, we get the inequality on ranks of bundles $\LA_1$ and $\LB_1$. 
  
  Assume now that $\rk(\LA_1) = \rk(\LB_1)$. Then the following diagram commutes:
\[\begin{tikzcd}[ampersand replacement=\&]
	0 \& {\LA_1} \& {\LA_1\oplus\LA_2} \& {\LA_2} \& 0 \\
	0 \& {\LB_1} \& {\LB_1\oplus\LB_2} \& {\LB_2} \& 0
	\arrow[from=1-1, to=1-2]
	\arrow[from=1-2, to=1-3]
	\arrow[from=1-3, to=1-4]
	\arrow[from=1-4, to=1-5]
	\arrow[from=2-1, to=2-2]
	\arrow[from=2-2, to=2-3]
	\arrow[from=2-3, to=2-4]
	\arrow[from=2-4, to=2-5]
	\arrow["{\Phi_{11}}"', hook', from=1-2, to=2-2]
	\arrow["\Phi"', hook', from=1-3, to=2-3]
	\arrow["{\Phi_{22}}", from=1-4, to=2-4]
\end{tikzcd}\]
Then by snake lemma we conclude that either $\Phi_{11}$ is an isomorphism or the cokernel of $\Phi$ is supported on the union of supports of the cokernels of $\Phi_{11}$ and $\Phi_{22}$. This implies the result.
 \end{proof}

 \subsection{Defect of a set of points} 
 We start with the definition of the invariant of a finite set of points in a smooth quadric threefold $Q\subset \p^4$. 
 \begin{defin}\label{definition: defect}
  Let $Q$ be a smooth quadric threefold  in $\p^4$ and let $w\subset Q$ be a finite set of points. Denote by  $\I_w$ the sheaf of ideals  of the set of points $w$. Then the {\it defect of $w$} is the following number:
  \begin{equation*}
   d(w) = h^0(Q, \I_w(3)) - (h^0(Q,\OP_Q(3)) - |w|) = h^0(Q, \I_w(3)) - 30 + |w|.
  \end{equation*}
 \end{defin}
 The defect shows how the dimension of the space of divisors in the linear system $|\OP_Q(3)|$ passing through the set of points $w$ differs from the expected dimension. The defect has the following useful property.
 \begin{lemma}[{\cite[Lemma 4.6]{Kuznetsova_sextic_double_solids}}]\label{lemma: defect of a subset}
 Let $w$ be a set of points on a smooth quadric threefold $Q$  and let $w'$ be its subset. Then $d(w')\leqslant d(w)$.
\end{lemma}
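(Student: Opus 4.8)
The plan is to re-express the defect as the dimension of a single cohomology group and then deduce monotonicity from an inclusion of ideal sheaves. \emph{First}, I would interpret $d(w)$ cohomologically. Twisting the structure sequence of the finite set $w$ by $\OP_Q(3)$ gives
\[
0\to \I_w(3)\to \OP_Q(3)\to \OP_w\to 0,
\]
where $\OP_w$ denotes the (length $|w|$) skyscraper structure sheaf of $w$. The only external input needed is the standard vanishing $H^1(Q,\OP_Q(3))=0$ of intermediate cohomology of line bundles on the quadric threefold. Granting this, the long exact cohomology sequence identifies $H^1(Q,\I_w(3))$ with the cokernel of the restriction map $H^0(Q,\OP_Q(3))\to H^0(Q,\OP_w)$, and counting dimensions yields
\[
h^1(Q,\I_w(3)) = |w| - \bigl(h^0(Q,\OP_Q(3)) - h^0(Q,\I_w(3))\bigr) = h^0(Q,\I_w(3)) - 30 + |w| = d(w).
\]
Thus $d(w) = h^1(Q,\I_w(3))$, and the claim becomes the inequality $h^1(Q,\I_{w'}(3))\le h^1(Q,\I_w(3))$.

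\emph{Second}, I would exploit the inclusion $w'\subseteq w$. Setting $w'' = w\setminus w'$, this inclusion induces an inclusion of ideal sheaves $\I_w\hookrightarrow \I_{w'}$ whose cokernel is the skyscraper $\OP_{w''}$ supported on $w''$ (locally at a point of $w''$ the quotient is the residue field, and it is zero elsewhere). Twisting by $\OP_Q(3)$ gives
\[
0\to \I_w(3)\to \I_{w'}(3)\to \OP_{w''}\to 0.
\]
The relevant fragment of the long exact sequence is $H^1(Q,\I_w(3))\to H^1(Q,\I_{w'}(3))\to H^1(Q,\OP_{w''})$, and the last group vanishes because $\OP_{w''}$ is supported on a finite set of points. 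Hence the natural map $H^1(Q,\I_w(3))\to H^1(Q,\I_{w'}(3))$ is surjective, so $h^1(Q,\I_{w'}(3))\le h^1(Q,\I_w(3))$, which by the first step is exactly $d(w')\le d(w)$.

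The argument is short and I do not expect a serious obstacle; the one point that has to be handled with a little care is the identification $d(w)=h^1(Q,\I_w(3))$, which rests on the vanishing $H^1(Q,\OP_Q(3))=0$ (equivalently, on $h^0(Q,\OP_Q(3))=30$). Everything else is a purely formal consequence of two short exact sequences of sheaves, the decisive feature being that the comparison sheaf $\OP_{w''}$ is a skyscraper and therefore carries no first cohomology.
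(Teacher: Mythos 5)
Your proof is correct. The paper itself gives no argument for this lemma --- it is imported by citation from \cite[Lemma 4.6]{Kuznetsova_sextic_double_solids} --- so there is nothing in-paper to compare against, but your two steps (identifying $d(w)=h^1(Q,\I_w(3))$ via the structure sequence and the vanishing $H^1(Q,\OP_Q(3))=0$, then using the surjection $H^1(Q,\I_w(3))\to H^1(Q,\I_{w'}(3))$ coming from the skyscraper quotient $\I_{w'}/\I_w\cong\OP_{w''}$) constitute the standard and expected argument, equivalent to the elementary observation that the corank of the evaluation map $H^0(Q,\OP_Q(3))\to \CC^{|w|}$ can only drop by at most $|w|-|w'|$ upon projecting to $\CC^{|w'|}$.
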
 
 Let $B$ be a nodal complete intersection of a smooth quadric hypersurface $Q$ and a quartic hypersurface in the projective space $\p^4$. Assume that $w_1$ and~$w_2$ are a $\frac{\delta_1}{2}$-even and a $\frac{\delta_2}{2}$-even sets of nodes on $B$. Following Catanese one can define a new set of nodes $w_1+w_2$ on $B$ which we call the \emph{sum of $w_1$ and $w_2$}. Namely, we set 
 \[
 w_1 + w_2 = (w_1\cup w_2)\setminus (w_1\cap w_2). 
 \]
 By \cite{Endrass99} one can see that if $\delta = \delta_1 +\delta_2 \mod 2$, then $w_1+w_2$ is a $\frac{\delta}{2}$-even set of nodes on $B$. Denote by~$\codes$ the abelian group whose elements are~\mbox{$\frac{\delta}{2}$-ev}\-en sets of nodes on $B$ and the group law described above. Since all non-zero elements of $\codes$ are of order $2$ we see that the order of $|\codes|$ is a power of $2$. 
 
 The following assertion gives a criterion for the existence of an Artin--Mumford obstruction to rationality of a double cover of a quadric threefold in termes of the defect of $\Sing(B)$ and the order of $\codes$.
 \begin{theorem}[{\cite[Corollary 2]{Cynk_defect}}]\label{theorem: inequality between codes and defect}
  Let $B$ be a nodal complete intersection of a smooth quadric hypersurface $Q$ and a quartic hypersurface in the projective space $\p^4$ and let $\codes$ be the group of even sets of nodes on $B$. Then the double cover of $Q$ ramified over $B$ admits an Artin--Mumford obstruction to rationality if one has the following inequality:
  \begin{equation*}
   |\codes|> 2^{d(\Sing(B))},
  \end{equation*}
  where $d(\Sing(B))$ is the defect of the set $\Sing(B)$ of singular points in $B$.
 \end{theorem}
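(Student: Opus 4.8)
The Artin--Mumford obstruction is, by definition, the non-triviality of the torsion subgroup $T\subset H^3(\widetilde{\doublecover},\ZZ)$ of a resolution of the double cover $\pi\colon\doublecover\to Q$ branched over $B$. The plan is to prove the sharper statement that $T$ is non-trivial precisely when $\log_2|\codes|>d(\Sing(B))$; the asserted implication is one direction of this equivalence. Concretely I would aim for the estimate
\[
 \dim_{\mathbb{F}_2}T\ \geq\ \log_2|\codes|-d(\Sing(B)),
\]
which forces $T\neq 0$ whenever $|\codes|>2^{d(\Sing(B))}$. By Endrass's construction there is an embedding $T\hookrightarrow\codes$ (see \cite[Lemma 1.2]{Endrass99}), so the bound $\dim_{\mathbb{F}_2}T\leq\log_2|\codes|$ is automatic, and the whole content lies in the opposite inequality: the image of $T$ inside $\codes$ has $\mathbb{F}_2$-codimension at most the defect.

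First I would fix a resolution $\widetilde{\doublecover}$ and compute $H^3(\widetilde{\doublecover},\ZZ)$ from the local geometry of the nodes. Over each node $p\in\Sing(B)$ the cover $\doublecover$ acquires an ordinary threefold node; resolving it (by a small contraction to a $\p^1$, or by blowing up to a quadric surface) produces a vanishing curve $C_p$ and hence a class in $H_2(\widetilde{\doublecover},\ZZ)$. Using a Mayer--Vietoris sequence relating $\widetilde{\doublecover}$, its smooth locus, and the exceptional loci, together with the facts that $H^{*}(Q,\ZZ)$ is torsion-free and $H^3(Q,\ZZ)=0$, I would show that the deck involution acts trivially on the relevant ambient classes and thereby reduce the computation to the sublattice of $H_2(\widetilde{\doublecover},\ZZ)$ spanned by the $C_p$. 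By the universal coefficient theorem $T\cong H_2(\widetilde{\doublecover},\ZZ)_{\mathrm{tors}}$, and this torsion is the cokernel of the inclusion of the vanishing sublattice into its saturation. A half-sum $\tfrac12\sum_{p\in w}C_p$ becomes an integral class exactly when $w$ carries a $\frac{\delta}{2}$-even structure, which is the point where $\codes$ enters the picture.

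The decisive step is to bound $\log_2|\codes|-\dim_{\mathbb{F}_2}T$ by $d(\Sing(B))$. Here the defect enters as the corank of the evaluation map $H^0(Q,\OP_Q(3))\to\bigoplus_{p\in\Sing(B)}\CC$, whose cokernel has dimension $d(\Sing(B))$ by Definition \ref{definition: defect}: cubics through the nodes produce the additional classes spanning the free part of $H^3(\widetilde{\doublecover},\ZZ)$. An even set $w$ fails to yield a non-trivial torsion class exactly when the half-integral cycle $\tfrac12\sum_{p\in w}C_p$ already lies in the span of these defect classes, and assembling these conditions over all even sets gives a linear map $\codes\to\mathbb{F}_2^{\,d(\Sing(B))}$ whose kernel is $T$. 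Its image has dimension at most $d(\Sing(B))$, which is exactly the required codimension bound.

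I expect the main obstacle to be precisely this comparison between a computation over $\CC$ (the defect, i.e.\ a corank of the cubic-evaluation map) and a computation over $\mathbb{F}_2$ (the even sets and the $2$-torsion), namely verifying that the obstruction map $\codes\to\mathbb{F}_2^{\,d(\Sing(B))}$ has rank at most the defect. This is a genuinely mixed statement, since a priori an $\mathbb{F}_2$-rank and a $\CC$-corank need not be comparable; making it rigorous requires identifying the two sides with the same incidence data attached to the nodes and checking that reduction modulo $2$ does not increase the rank. Once this is established, the inequality follows, and with it the stated implication.
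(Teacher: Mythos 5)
The paper does not actually prove this statement: it is imported wholesale from \cite[Corollary 2]{Cynk_defect}, so there is no internal argument to compare yours against. Judged on its own terms, your plan identifies the correct sharper statement to aim for, namely
\[
\dim_{\mathbb{F}_2}T\ \geqslant\ \log_2|\codes|-d(\Sing(B)),
\]
and the correct ingredients (Endrass's embedding $T\hookrightarrow\codes$, the vanishing cycles over the nodes, the defect as the corank of the cubic evaluation map at $\Sing(B)$). This is indeed the shape of the Endrass--Cynk argument. But as written it is a plan rather than a proof: the decisive step --- constructing the map $\codes\to\mathbb{F}_2^{\,d(\Sing(B))}$ and bounding its rank by the defect --- is precisely the step you flag as an unresolved obstacle, and it is where all the content of Cynk's theorem lives. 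Nothing in your sketch actually produces that map or controls its rank, so the inequality is not established.

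Two further points would need repair even once that map is constructed. First, you assert that the kernel of the obstruction map \emph{is} $T$; what one can hope to show directly is that the kernel is contained in the image of $T$ under Endrass's embedding, which is what the inequality requires --- claiming equality smuggles in the reverse containment, which is a separate (and for the stated theorem unnecessary) assertion. Second, the comparison of a $\CC$-corank with an $\mathbb{F}_2$-rank cannot be made by ``identifying incidence data'' in the abstract: the standard resolution of this difficulty is to exhibit both the code of even sets and the defect as kernel/cokernel data of one and the same \emph{integral} matrix attached to the nodes (the conditions imposed by $\Sing(B)$ on forms of the relevant degree, here $3$), and then use that reduction of an integer matrix modulo $2$ cannot increase its rank. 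Identifying the $\frac{\delta}{2}$-even sets with mod-$2$ relations among the rows of that integral matrix is itself a nontrivial step (for $\delta=1$ the class $\frac{1}{2}(H+\sum_{p\in w}E_p)$ involves the hyperplane class and must be tracked separately), and without it the bridge between $|\codes|$ and $2^{d(\Sing(B))}$ remains open. In short: right target, right scaffolding, but the load-bearing step is missing.
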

 The next assertion gives a relation between defects of $\frac{\delta}{2}$-even sets of nodes on the nodal surface $B$ with the defect of all singular points of $B$.
\begin{theorem}[{\cite[Corollary 4.10]{Kuznetsova_sextic_double_solids}}]\label{theorem: if all defects equal 1}
  Let $Q$ be a smooth quadric threefold and let $B$ be a nodal surface in $Q$. If for any non-zero $\frac{\delta}{2}$-even set of nodes $w\in \codes$ one has~$d(w)\geqslant 1$ then~$|\codes|= 2^{d(\Sing(B))}$.
\end{theorem}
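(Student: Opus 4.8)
The plan is to reduce the statement to linear algebra of the cubic evaluation map, and to read off both $d(\Sing(B))$ and the group $\codes$ from two linked objects. Write $N=\Sing(B)$ and consider the evaluation map
\[
 \mathrm{ev}_N\colon H^0(Q,\OP_Q(3))\longrightarrow \bigoplus_{p\in N}\CC_p,
\]
which sends a cubic on $Q$ to the vector of its values at the nodes. From the ideal sheaf sequence $0\to\I_N(3)\to\OP_Q(3)\to\OP_N\to 0$ one gets $h^0(Q,\I_N(3))=\dim\Ker(\mathrm{ev}_N)$, so rank--nullity gives $d(N)=|N|-\rk(\mathrm{ev}_N)=\dim\Coker(\mathrm{ev}_N)$. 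Running the same computation for a subset $w\subseteq N$ identifies $d(w)$ with the dimension of the space $R(w)$ of \emph{relations supported on $w$}, i.e. of weight vectors $(\lambda_p)_{p\in w}$ with $\sum_{p\in w}\lambda_p\,s(p)=0$ for every cubic $s$; extension by zero realises $R(w)\subseteq R(N):=\Coker(\mathrm{ev}_N)^{\vee}$, and Lemma~\ref{lemma: defect of a subset} becomes the monotonicity $R(w')\subseteq R(w)$. The integral relations $R(N)_{\ZZ}=R(N)\cap\ZZ^{N}$ form a lattice whose rank equals $d(N)$, and I would compare $\codes$ with its reduction $R(N)_{\ZZ}\otimes\mathbb{F}_2$.

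Next I would attach to every $\frac{\delta}{2}$-even set $w\in\codes$ a canonical integral relation $\rho_w\in R(N)_{\ZZ}$ supported on $w$, and assemble these into a homomorphism
\[
 \Psi\colon\codes\longrightarrow R(N)_{\ZZ}\otimes\mathbb{F}_2,\qquad w\longmapsto\rho_w\bmod 2.
\]
The even condition $E_w\in 2\Pic(\B)$ is exactly what makes $\rho_w$ well defined: the half-class $\tfrac12 E_w$, the associated sheaf $\F$ of Section~\ref{subsection: even sets of nodes} and the non-degenerate pairing $S^2\F_B\to\OP_B(-\delta)$ of Lemma~\ref{lemma: properties of F}(1) together produce a distinguished system of $\{0,\pm1\}$-weights on the nodes of $w$. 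The kernel of $\Psi$ consists of the even sets that carry no full-support relation, which I would identify with the even sets of defect $0$; this is the precise point at which the hypothesis is used, since the assumption $d(w)\ge 1$ for every non-zero $w\in\codes$ forces $\Ker\Psi=0$.

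It then remains to prove that $\Psi$ is surjective, i.e. that every integral relation among the nodes arises from an even set. Given $0\neq\lambda\in R(N)_{\ZZ}$, the plan is to analyse its support $s=\mathrm{supp}(\lambda)$ using the description of $\Sing(B)$ as a complete intersection of quartics (the Jacobian-minor computation in the proof of Lemma~\ref{lemma: lines and conics on B containing 1/2-even sets}) together with the pairing on $\F$, and thereby to recover a class $E_s\in 2\Pic(\B)$, that is a $\frac{\delta}{2}$-even set whose relation is $\lambda$ modulo $2$. Surjectivity gives the rank formula $\dim_{\mathbb{F}_2}\codes=\dim_{\mathbb{F}_2}\Ker\Psi+d(N)$; in the language of Theorem~\ref{theorem: inequality between codes and defect} the summand $\Ker\Psi$ is precisely the torsion $T\subset H^3(\widetilde{\doublecover},\ZZ)$ responsible for the Artin--Mumford obstruction. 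Since the hypothesis makes $\Ker\Psi$ trivial, we obtain $\dim_{\mathbb{F}_2}\codes=d(N)$, that is $|\codes|=2^{d(N)}$.

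The main obstacle is this surjectivity step: one must show that the support of an arbitrary integral relation among the nodes is organised by a $\frac{\delta}{2}$-even set, and that each even set supports at most a one-dimensional space of relations, so that $\Psi$ genuinely matches the $\mathbb{F}_2$-rank of the code with the complex dimension $d(N)$. This is where the geometry of the cubics on the quadric threefold passing through the nodes enters essentially, and where the positivity $d(w)\ge 1$ is indispensable: it excludes the ``pure torsion'' even sets of defect $0$ which, in the classical Artin--Mumford configuration, make $|\codes|$ strictly larger than $2^{d(\Sing(B))}$.
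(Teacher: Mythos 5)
You are proving a statement that this paper does not prove at all: Theorem~\ref{theorem: if all defects equal 1} is imported verbatim from \cite[Corollary~4.10]{Kuznetsova_sextic_double_solids}, so there is no internal argument to compare with, and your proposal has to stand on its own. Its overall shape --- an exact sequence $0\to T\to \codes\to(\ZZ/2)^{d(\Sing(B))}$ with the hypothesis killing $T$ --- is indeed the shape of the cited machinery, but every load-bearing step is asserted rather than proved. The first genuine gap is the claim that $R(N)_{\ZZ}=R(N)\cap\ZZ^{N}$ is a lattice of rank $d(N)=\dim_{\CC}R(N)$. The space $R(N)$ is the annihilator of the span of the vectors $(s(p))_{p\in N}$ for $s\in H^0(Q,\OP_Q(3))$; these depend on the complex coordinates of the nodes, so $R(N)$ is not visibly defined over $\mathbb{Q}$, and a priori $R(N)\cap\ZZ^{N}$ could even be zero, in which case your count gives $|\codes|=1$. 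The fact that the ``defect directions'' are spanned by integral vectors is essentially Cynk's theorem identifying $d(\Sing(B))$ with a rank defect in the \emph{integral} cohomology of the resolved double cover; you are assuming the hardest input of the cited results instead of proving it.

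The second gap is the map $\Psi$ itself. You give no argument that the divisibility $E_w\in 2\Pic(\B)$ produces a $\{0,\pm1\}$-weighted relation $\rho_w$ among the values of cubics at the nodes of $w$; the pairing $S^2\F_B\to\OP_B(-\delta)$ of Lemma~\ref{lemma: properties of F}(1) does not obviously yield one. Even granting existence, your identification $\Ker\Psi=\{w:\ d(w)=0\}$ needs the \emph{canonical} $\rho_w$ to be nonzero whenever \emph{some} relation is supported in $w$, which does not follow from $d(w)\geqslant 1$; and since $\rho_w\bmod 2$ is just the indicator vector of $\mathrm{supp}(\rho_w)$, compatibility of $\Psi$ with the group law $w_1+w_2=(w_1\cup w_2)\setminus(w_1\cap w_2)$ imposes rigid constraints on these supports that you have not checked (if all weights were genuinely $\pm1$ on all of $w$, then $\Psi$ would be injective unconditionally, contradicting your own claim that injectivity is where the hypothesis enters). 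Finally, surjectivity of $\Psi$ --- that every integral relation mod $2$ is realized by an even set --- is the step you yourself flag as the main obstacle, and no argument is offered. Each of these three points is of the same order of difficulty as the theorem; the actual proof in the cited source works in the topology of the resolution (Endrass's embedding of the torsion into $\codes$ together with Cynk's cohomological formula for the defect), where integrality of the relations is automatic because they arise from classes in integral cohomology rather than from a direct analysis of cubics through the nodes.
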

Theorems \ref{theorem: inequality between codes and defect} and \ref{theorem: if all defects equal 1} lead to the following useful corollary.
\begin{corollary}\label{corollary: AM implies w with 0 defect}
 Let $Q$ be a smooth quadric threefold and let $B$ be a nodal surface in $Q$. If the double cover of $Q$ ramified over $B$ admits an Artin--Mumford obstruction to rationality then there is a non-empty~\mbox{$\frac{\delta}{2}$-ev}\-en set of nodes $w\subset B$ such that $d(w) = 0$.
\end{corollary}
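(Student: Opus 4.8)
The plan is to prove the contrapositive: assuming that no non-empty $\frac{\delta}{2}$-even set of nodes on $B$ has defect $0$, I would show that the double cover of $Q$ ramified over $B$ carries no Artin--Mumford obstruction to rationality. The whole argument is then a short chain built on the two quoted comparison theorems, once the elementary positivity of the defect is in place.

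First I would check that the defect of any set of nodes is non-negative. By Definition \ref{definition: defect} the empty set satisfies $d(\emptyset)=h^0(Q,\OP_Q(3))-30+0=0$, using $h^0(Q,\OP_Q(3))=30$. Since $\emptyset\subseteq w$ for every finite set $w$, Lemma \ref{lemma: defect of a subset} gives $0=d(\emptyset)\leqslant d(w)$, so all defects are non-negative integers. Consequently, under the contrapositive hypothesis every non-zero element $w\in\codes$ has $d(w)\neq 0$, hence $d(w)\geqslant 1$.

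This is precisely the hypothesis of Theorem \ref{theorem: if all defects equal 1}, which I would apply directly to conclude $|\codes|=2^{d(\Sing(B))}$. In particular the strict inequality $|\codes|>2^{d(\Sing(B))}$ fails. Reading Theorem \ref{theorem: inequality between codes and defect} as the numerical criterion detecting the Artin--Mumford obstruction---so that an obstruction forces $|\codes|>2^{d(\Sing(B))}$---I would conclude that the double cover admits no such obstruction, which is the contrapositive of the desired statement.

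The proof has no serious geometric content beyond what is already assembled; the one point that must be handled with care is the logical direction of Theorem \ref{theorem: inequality between codes and defect}. As stated it only asserts that the strict inequality is sufficient for an obstruction, so to close the argument I rely on it as the actual criterion (equivalently, on the necessity of that inequality). Granting this, the non-negativity of the defect from the first step is the only input that does real work.
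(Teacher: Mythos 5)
Your proof is correct and follows essentially the same route as the paper, which derives the corollary directly from Theorems \ref{theorem: inequality between codes and defect} and \ref{theorem: if all defects equal 1} without further comment. The caveat you flag---that Theorem \ref{theorem: inequality between codes and defect} must be used in the necessity direction (obstruction implies $|\codes|>2^{d(\Sing(B))}$), which its statement in the paper does not literally give but which the cited result of Cynk does---is equally implicit in the paper's argument, and your verification that all defects are non-negative via $d(\emptyset)=0$ and Lemma \ref{lemma: defect of a subset} is a correct filling-in of a detail the paper leaves unstated.
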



 Finally, the next assertion explains how to compute the defect of a $\frac{\delta}{2}$-even set of nodes on $B$ using a symmetric Casnati--Catanese exact sequence.
 \begin{theorem}[{\cite[Theorem 3.1]{Jozefiak}}]\label{theorem: exact sequence for defect}
 Let $B$ be a nodal complete intersection of a smooth quadric hypersurface $Q$ and a quartic hypersurface in the projective space $\p^4$ and let $w$ be a $\frac{\delta}{2}$-even set of nodes on~$B$. If the sheaf $\F$ associated to $w$ on $Q$ fits into a symmetric Casnati--Catanese resolution \textup{\eqref{eq: main exact sequence}} then the sheaf of ideals~$\I_w(3)$ has the following locally free resolution on $Q$:
 \[
  0\to \Lambda^2\E^{\vee}(-5-\delta)\to \mathrm{sl}(\E)(-1)\to (S^2\E)(3+\delta)\to \I_w(3)\to 0.
 \]
 Here by $\mathrm{sl}(\E)$ we denote the vector bundle of traceless endomorphisms of $\E$.
\end{theorem}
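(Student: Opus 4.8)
The plan is to realise the statement as the sheaf-theoretic globalisation of J\'ozefiak's resolution of the ideal of submaximal minors of a generic symmetric matrix, transported along the symmetric map $\Phi$. The first step is to identify what is being resolved: by the Barth construction \eqref{eq: barth construction} the set $w=w(\Phi)$ is exactly the locus where $\Phi\colon \E^{\vee}(-4-\delta)\to\E$ has corank $2$, i.e. the common zero locus of all the $(\rk\E-1)$-minors of $\Phi$. Since $w$ is a reduced $0$-dimensional subscheme of $Q$, the ideal generated by these minors is precisely $\I_w$. The decisive structural feature is the symmetry of $\Phi$: for a symmetric map the corank-$2$ locus has expected codimension $3$, rather than $4$ as in the general case, and this matches $\dim Q=3$ — it is exactly what makes a length-$3$ resolution possible. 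This is why one must use the symmetric Casnati--Catanese resolution \eqref{eq: main exact sequence} and not a general one.

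I would then write the functorial J\'ozefiak complex attached to a symmetric map $\phi\colon G^{\vee}\to G$ of a bundle $G$ of rank $r$, namely
\[
 0\to \Lambda^2 G^{\vee}\xrightarrow{d_2}\mathrm{sl}(G)\xrightarrow{d_1} S^2 G\xrightarrow{d_0}\I\to 0,
\]
in which $d_0$ is contraction against the adjugate form $\mathrm{adj}(\phi)\in S^2 G^{\vee}$, the map $d_1$ sends a traceless endomorphism $A$ to the symmetrisation of $\phi\circ A$, and $d_2$ applies $\phi$ to one factor by $\xi\wedge\eta\mapsto \phi(\xi)\otimes\eta-\phi(\eta)\otimes\xi$ (which is traceless precisely because $\phi$ is symmetric). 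These maps are $\GL(G)$-equivariant, and for the generic symmetric matrix the complex is a minimal free resolution; this is the content of J\'ozefiak's theorem. Specialising to $\Phi$ and globalising with $G=\E$, the complex remains exact by the Buchsbaum--Eisenbud acyclicity criterion, since the minors cut out $w$ in the expected codimension $3$.

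Next I would pin down the three twists, which are forced by $\OP_Q$-linearity of the maps. The identity $\det(\Phi)\in H^0(Q,\OP_Q(4))$ cutting out $B$ gives $(\det\E)^{\otimes 2}\cong\OP_Q\big(4-(\rk\E)(4+\delta)\big)$; feeding this into the adjugate computation shows that $\mathrm{adj}(\Phi)$ is a section of $S^2\E^{\vee}(-\delta)$, so that contraction against it is a map $(S^2\E)(\delta)\to\OP_Q$ with image $\I_w$, and after twisting by $\OP_Q(3)$ the first term is $(S^2\E)(3+\delta)$ surjecting onto $\I_w(3)$. Because $d_1$ and $d_2$ each apply $\Phi$ once, and $\Phi$ carries the twist $\OP_Q(4+\delta)$, the other terms are then determined: the relation $\OP_Q(4+\delta)\otimes T_1=\OP_Q(3+\delta)$ forces the middle term to be $\mathrm{sl}(\E)(-1)$, and $\OP_Q(4+\delta)\otimes T_2=\OP_Q(-1)$ forces the last term to be $\Lambda^2\E^{\vee}(-5-\delta)$, exactly as claimed.

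I expect the main obstacle to be the exactness after specialisation together with the verification that the symmetric submaximal-minor ideal is radical and equal to $\I_w$, rather than merely having the right support. The clean way to secure both is to use that $B$ is nodal: near each node $\Phi$ is a versal symmetric degeneration, so the corank-$2$ scheme is a reduced point, and combined with the Cohen--Macaulayness of the complex (perfect of codimension $3$) this shows the minor ideal is radical and defines $w$. Once the functorial J\'ozefiak resolution and this codimension/reducedness bookkeeping are in hand, the twist computation above is purely formal and completes the proof.
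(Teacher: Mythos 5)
The paper does not actually prove this statement: it is quoted verbatim from J\'ozefiak's resolution of the ideal of submaximal minors of a generic symmetric matrix, so there is no internal argument to compare against. Your reconstruction --- the functorial J\'ozefiak complex with the traceless middle term, exactness after specialisation because the symmetric corank-$2$ locus attains its expected codimension $3$ in $Q$, reducedness of that locus at the nodes, and the twist bookkeeping starting from $\det\Phi\in H^0(Q,\OP_Q(4))$ --- is correct and is exactly how the cited result is meant to be transported to this setting.
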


\section{Minimal $0$-even sets of nodes}\label{section: 0-even sets}
 In this section we describe $0$-even sets of nodes on a nodal complete intersection $B$ of a smooth quadric hypersurface $Q$ with a quartic  hypersurface in $\p^4$. The idea is to fix a $0$-even set of nodes $w\subset B$, then to consider the sheaf $\F$ associated to $w$ on $Q$, see Section \ref{subsection: even sets of nodes}, and finally, to construct a Casnati--Catanese symmetric resolution~\eqref{eq: main exact sequence} for $\F$.

 Let $w$ be a minimal non-empty $0$-even set of nodes. The number $\nu = |w|/4$ is an integer by Corollary~\ref{corollary: properties of w}. In view of our assumptions and assertions 3, 5, 6 and 7 of Lemma \ref{lemma: properties of F} the dimensions of cohomology groups of the sheaf $\F(n)$ are as follows (note that the third cohomology group of $\F(n)$ vanishes since the support of the sheaf is a surface):
   \begin{center}
  \begin{longtable}{|c|c|c|c|c|c|c|}
  \caption{\label{table: cohomology groups of F in 0-even case}}\\
  \hline
    & $h^i(Q, \F(-2))$ & $h^i(Q, \F(-1))$ & $h^i(Q,\F)$ & $h^i(Q, \F(1))$ &$h^i(Q, \F(2))$ & $h^i(Q, \F(3))$  \\
  \hline
   $i=2$& $30 - \nu$ & $14 - \nu$  & $6 - \nu$   & $0$ & $0$ & $0$  \\
  \hline
  $i=1$& $0$ &$0$ & $0$ & $0$ & $0$ & $0$ \\
  \hline
  $i=0$& $0$ &$0$ & $0$ & $6 - \nu$ & $14 - \nu$ & $30 - \nu$\\
  \hline
  \end{longtable}
 \end{center}
 This table implies the following property of the sheaf $\F$.
 \begin{lemma}\label{lemma: w is 0-even then F3 is globally generated}
  Let $B$ be a nodal complete intersection of a smooth quadric hypersurface $Q$ and a quartic hypersurface in the projective space $\p^4$, let $w$ be a minimal non-empty $0$-even set of nodes on the surface~$B$ and let $\F$ be the sheaf associated with $w$ on $Q$. Then the sheaf $\F(3)$ is generated by its global sections.
 \end{lemma}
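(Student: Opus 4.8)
The plan is to show that $\F(3)$ is globally generated by combining the cohomological data in Table~\ref{table: cohomology groups of F in 0-even case} with the duality and local structure of $\F$ established in Lemma~\ref{lemma: properties of F}. From the table we read off $h^0(Q,\F(3)) = 30 - \nu$ and $h^i(Q,\F(3)) = 0$ for $i > 0$; moreover $\F$ has projective dimension $1$ on $Q$ and its rank jumps from $1$ to $2$ exactly along the set of nodes $w$. Since $\F$ is supported on the surface $B$, global generation of $\F(3)$ is equivalent to global generation of $\F_B(3)$ on $B$, i.e.\ of $\OP_{\B}\bigl(3H - \frac{1}{2}\sum_{p\in w}E_p\bigr)$ on the resolution $\B$.

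First I would translate the statement into an intersection-theoretic question on the smooth surface $\B$. The candidate generating linear system is $\bigl|3H - \tfrac12\sum_{p\in w}E_p\bigr|$, which by the table has projective dimension $29 - \nu$; since $\nu \leqslant 7$ by Corollary~\ref{corollary: properties of w}, this system is large. The natural approach is to verify that this divisor class is base-point free. I would do this by checking that it has no base points away from $w$ (where the system cuts out the full linear series $|3H|$ restricted to $B$, which is very ample as $3H$ is the restriction of $\OP_{\p^4}(3)$) and, near each node, that the half of the exceptional divisor entering with coefficient $-\tfrac12$ does not force a base point on $E_p$.

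The cleanest route is to produce, for an arbitrary point $x\in B$, a global section of $\F(3)$ not vanishing at $x$; the machinery for this is a Castelnuovo--Mumford style argument. I would show that $\F(3)$ is $0$-regular in a suitable sense by verifying the vanishing of the higher cohomology of the appropriate twists: from the table, $H^1(Q,\F(2)) = H^2(Q,\F(1)) = 0$ and $H^3$ vanishes as the support is a surface. Since the relevant twisting sheaf on $Q$ is $\OP_Q(1)$ and the needed vanishings $H^i(Q,\F(3-i)) = 0$ for $i \geqslant 1$ hold by the table, regularity yields that $\F(3)$ is globally generated.

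The main obstacle I anticipate is justifying global generation at the nodes themselves, where $\F$ fails to be locally free and the Castelnuovo--Mumford criterion must be applied with care to a non-locally-free sheaf. I would handle this by working on the resolution $\B$ via the projection formula $H^0(Q,\F(3)) = H^0\bigl(\B, \OP_{\B}(3H - \tfrac12 E_w)\bigr)$ from the construction of $\F_B$, reducing global generation of $\F(3)$ to base-point-freeness of the divisor class $3H - \frac{1}{2}\sum_{p\in w}E_p$ on the smooth surface $\B$, where I can invoke the vanishing $h^1(\B,\, 3H - \tfrac12 E_w - C) = 0$ for an appropriate curve $C$ through the test point to separate points and conclude.
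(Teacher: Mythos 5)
Your core argument is exactly the paper's proof: the table gives $H^{i}(Q,\F(3-i))=0$ for $1\leqslant i\leqslant 3$, and Castelnuovo--Mumford regularity (the paper cites \cite[Lecture 14]{Mumford}) then yields global generation of $\F(3)$. The obstacle you anticipate is not real --- Mumford's regularity theorem applies to arbitrary coherent sheaves on projective space, with no local freeness hypothesis, so no special care is needed at the nodes. In fact your proposed workaround is the only shaky part of the write-up: reducing global generation of $\F_B(3)=\sigma_*\OP_{\B}\bigl(3H-\tfrac12 E_w\bigr)(3)$ to base-point-freeness of the line bundle on $\B$ is not a valid equivalence, since $\sigma_*$ need not preserve surjectivity of evaluation maps and at a point of $w$ the fibre of $\F_B$ is two-dimensional, so non-vanishing along $E_p$ is weaker than generation of the stalk there; you should simply drop that paragraph and rely on the regularity argument alone.
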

 \begin{proof}
  The table above implies that cohomology group $H^i(Q,\F(3-i))$ vanishes for all $1\leqslant i\leqslant 3$. Thus, the result follows by~\cite[Lecture 14]{Mumford}.
 \end{proof}
 Denote by $x$ the dimension of the cohomology group $H^1(Q,\Spin\otimes\F)$.
 Then by assertions 4 and 8 of Lemma~\ref{lemma: properties of F} the dimensions of cohomology groups of the sheaf $\Spin\otimes \F(n)$ are as follows:
   \begin{center}
  \begin{longtable}{|c|c|c|c|c|c|c|}
  \caption{\label{table: cohomology groups of FxS in 0-even case}}\\
  \hline
     & $h^i(Q, \Spin\otimes\F(-1))$ & $h^i(Q,\Spin\otimes\F)$ & $h^i(Q, \Spin\otimes\F(1))$ &$h^i(Q, \Spin\otimes\F(2))$ & $h^i(Q, \Spin\otimes\F(3))$  \\
  \hline
   $i=2$&  $40 - 2\nu$  & $16 - 2\nu + x$   & $x$ & $0$ & $0$  \\
  \hline
  $i=1$& $0$ & $x$ & $2x+2\nu-8$ & $x$ & $0$ \\
  \hline
  $i=0$& $0$ & $0$ & $x$ &  $16 - 2\nu + x$ & $40 - 2\nu$\\
  \hline
  \end{longtable}
 \end{center} 
 Denote by $\F_B$ the sheaf associated to $w$ on $B$, then $\F = \iota_*\F_B$.
 By Serre duality on the surface $B$ there is an isomorphism $\Ext_B^1(\F_B\otimes\iota_*(\Spin(1)),\OP_B(K_B))\cong H^1(B,\F_B\otimes\iota^*(\Spin(1)))$ where $K_B$ is the canonical class on the surface $B$. This isomorphism and the cup product induce the following perfect pairing:
 \begin{equation}\label{eq: Serre for even sets of nodes}
  H^1(B,\F_B\otimes\iota_*(\Spin(1)))\otimes H^1(B,\F_B\otimes\iota_*(\Spin(1)))
  \to H^2(B,\OP_B(K_B)) \cong \CC.
 \end{equation}
 The last isomorphism is due to Serre duality on the surface $B$.
 Since the cup product is a graded-commutative operation this pairing is alternating. 
 
 Denote by $W$ a maximal isotropic subspace of $H^1(B,\F_B\otimes\iota_*(\Spin(1)))\cong H^1(Q, \Spin\otimes\F(1))$. Then the dimension of $W$ equals $x+\nu - 4$. Denote by $\E_{spin}$ the following vector bundle:
 \begin{equation}\label{eq: E_spin in 0-even case}
 \E_{spin} = \big( H^0(Q,\Spin\otimes \F(2))\otimes \Spin(-2)\big) \oplus \big( W\otimes \Spin(-1)\big). 
 \end{equation}
 Then $\E_{spin}$ is isomorphic to~\mbox{$\Spin(-2)^{\oplus x}\oplus \Spin(-1)^{\oplus x+\nu - 4}$} and by Lemma \ref{lemma: map from spin} applied several times to $\F(1)$ and~$\F(2)$ there is a map of vector bundles:
 \[
   \varphi_{spin}\colon \E_{spin} \to \F
 \]
 such that the map $H^1(\Id_{\Spin}\otimes\varphi_{spin}(n))\colon H^1(Q,\Spin\otimes\E_{spin}(n)) \to H^1(Q,\Spin\otimes\F(n))$ is an isomorphism for $n=2$ and an embedding into the cohomology group $H^1(Q,\Spin\otimes\F(1))$ whose image equals $W$.
 
 Denote by $A$ the graded algebra $\bigoplus_{n\in \ZZ} H^0(Q,\OP_Q(n))$.
 Then $M = \bigoplus_{n\in \ZZ} H^0(Q,\F(n))/ \Im(\varphi_{spin}(\ZZ))$ is a graded $A$-module; here by $\Im(\varphi_{spin}(\ZZ))$ we mean the following graded submodule:
 \[
  \Im(\varphi_{spin}(\ZZ)) = \bigoplus_{n\in \ZZ}\left(H^0(\varphi_{spin}(n))(H^0(Q,\E_{spin}(n)))\right)\subset \bigoplus_{n\in \ZZ} H^0(Q,\F(n)).
 \]
 Since the cohomology group $H^0(Q,\F(n))$ vanishes for all $n\leqslant 0$ and by Lemma \ref{lemma: w is 0-even then F3 is globally generated} the module $M$ is generated by elements in~\mbox{$H^0(Q,\F(1))\oplus H^0(Q,\F(2))\oplus H^0(Q,\F(3))$}. 
 
 Choose the minimal set of graded generators of $M$ (i.e. any element from the set of generators lies in a graded component of $A$) and assume that this set contains exactly $m_i$ elements of $H^0(Q,\F(i))$ for $i = 1,2$ and~$3$. Note that by construction we get $m_1 = h^0(Q,\F(1)) = 6-\nu$. Denote by $\E_{lin}$ the following vector bundle:
 \begin{equation}\label{eq: E_lin in 0-even case}
  \E_{lin} = \OP_Q(-1)^{\oplus 6-\nu}\oplus \OP_Q(-2)^{\oplus m_2}\oplus \OP_Q(-3)^{\oplus m_3}.
 \end{equation}
 Then the set of generators we have choosen defines a map from $\E_{lin}$ to~$\F$:
 \[
  \varphi_{lin}\colon \E_{lin}\to \F.
 \]
 Denote by $\E$ and $\varphi$ the vector bundle $\E_{spin}\oplus \E_{lin}$ and the map $\varphi = \varphi_{spin}\oplus\varphi_{lin}$.
 \begin{lemma}\label{lemma: properties of varphi 0-even sets of nodes}
  Let $\E = \E_{spin}\oplus \E_{lin}$ be the direct sum of vector bundles defined in \textup{\eqref{eq: E_spin in 0-even case}} and \textup{\eqref{eq: E_lin in 0-even case}} and let the map $\varphi=\varphi_{spin}\oplus\varphi_{lin}\colon \E\to \F$ be as defined above. Then $\varphi$ is surjective and satisfies the following properties:
  \begin{enumerate}
   \item[\textup{(1)}] the map $H^0(\varphi(n))\colon H^0(Q,\E(n)) \to H^0(Q,\F(n))$ is surjective for all $n\in \ZZ$ and this map is an isomorphism for~\mbox{$n=1$};
   \item[\textup{(2)}] the map $H^1(\Id_{\Spin}\otimes\varphi(n))\colon H^1(Q,\Spin\otimes\E(n)) \to H^1(Q,\Spin\otimes\F(n))$ is an isomorphism for $n=2$, it is an embedding into the cohomology group $H^1(Q,\Spin\otimes\F(1))$ whose image equals $W$, and it is zero for all $n\ne 1$ or $2$.
  \end{enumerate}
 \end{lemma}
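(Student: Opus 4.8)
The plan is to prove the three assertions---surjectivity of $\varphi$, property (1), and property (2)---in the order that respects their dependencies: first the surjectivity of $\varphi$ on global sections, then the surjectivity of $\varphi$ as a map of sheaves (which uses the case $n=3$ of (1) together with Lemma \ref{lemma: w is 0-even then F3 is globally generated}), and finally the behaviour after twisting by $\Spin$.

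For the surjectivity in (1), I would argue with graded modules. The image of $H^0(\varphi(n))$ is $\Im H^0(\varphi_{spin}(n)) + \Im H^0(\varphi_{lin}(n))$. By definition $\Im H^0(\varphi_{spin}(n))$ is precisely the degree-$n$ piece of the submodule $\Im(\varphi_{spin}(\ZZ))$ that we quotient out to form $M$, whereas $\Im H^0(\varphi_{lin}(n))$ is the degree-$n$ part of the $A$-submodule generated by the chosen lifts of the minimal generators of $M$. Since these lifts generate $M$ over $A$, their images span $M_n = H^0(Q,\F(n))/\Im(\varphi_{spin}(\ZZ))_n$, so the two images together fill $H^0(Q,\F(n))$; here nothing is needed for $n\leqslant 0$ as $H^0(Q,\F(n))=0$, and for $n>3$ one uses that $M$ is generated in degrees $\leqslant 3$. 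For the isomorphism at $n=1$, I would observe that $H^0(Q,\E_{spin}(1))=0$, because $\E_{spin}(1)\cong\Spin(-1)^{\oplus x}\oplus\Spin^{\oplus x+\nu-4}$ and $h^0(Q,\Spin(-1))=h^0(Q,\Spin)=0$ by Lemma \ref{lemma: properties of Spin}(2). Hence $H^0(Q,\E(1))=H^0(Q,\E_{lin}(1))\cong\CC^{6-\nu}$, and comparing with $h^0(Q,\F(1))=6-\nu$ from Table \ref{table: cohomology groups of F in 0-even case}, a surjection of equidimensional spaces is an isomorphism.

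Next I would deduce that $\varphi$ is surjective from the case $n=3$ of (1) and the global generation of $\F(3)$. By naturality of the evaluation map, the composite $H^0(Q,\E(3))\otimes\OP_Q\to\E(3)\xrightarrow{\varphi(3)}\F(3)$ equals $H^0(Q,\E(3))\otimes\OP_Q\xrightarrow{H^0(\varphi(3))\otimes\Id}H^0(Q,\F(3))\otimes\OP_Q\to\F(3)$; the second composite is surjective, since $H^0(\varphi(3))$ is surjective by (1) and the evaluation of $\F(3)$ is surjective by Lemma \ref{lemma: w is 0-even then F3 is globally generated}. Therefore $\varphi(3)$, and hence $\varphi$, is surjective.

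Finally, for (2) I would first note that the $\Spin$-twists of $\E_{lin}$ contribute nothing: each summand gives $\Spin\otimes\OP_Q(k)(n)=\Spin(n+k)$, and $h^1(Q,\Spin(m))=0$ for all $m$ by Lemma \ref{lemma: properties of Spin}(2). Thus $H^1(Q,\Spin\otimes\E(n))=H^1(Q,\Spin\otimes\E_{spin}(n))$ and the map $H^1(\Id_{\Spin}\otimes\varphi(n))$ coincides with $H^1(\Id_{\Spin}\otimes\varphi_{spin}(n))$. The asserted behaviour at $n=1,2$ is then exactly the defining property of $\varphi_{spin}$ fixed in its construction, and for $n\neq 1,2$ one checks via Lemma \ref{lemma: properties of Spin}(3) that $\E_{spin}(n)$ involves only twists $\Spin\otimes\Spin(m)$ with $m\neq 0$, so $H^1(Q,\Spin\otimes\E_{spin}(n))=0$ and the map is zero. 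The main obstacle, I expect, is the careful bookkeeping in the surjectivity step of (1): one must verify that the minimal generators of $M$ together with $\Im(\varphi_{spin}(\ZZ))$ genuinely exhaust $H^0(Q,\F(n))$ in every degree, keeping track of which graded component each generator occupies and using $m_1=6-\nu$ to ensure the spinor part carries none of the degree-$1$ sections. Once (1) is in place, the surjectivity of $\varphi$ and assertion (2) follow formally from global generation, the construction of $\varphi_{spin}$, and the cohomology tables.
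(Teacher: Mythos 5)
Your proposal is correct and follows essentially the same route as the paper's (much terser) proof: assertion (1) from the construction of $\E_{lin}$ as lifts of a generating set of the graded module $M$, surjectivity of $\varphi$ from (1) together with the global generation of $\F(3)$, and assertion (2) from the defining property of $\varphi_{spin}$ plus the vanishing of the intermediate cohomology of $\Spin\otimes\E_{lin}(n)$ and of $\Spin\otimes\Spin(m)$ for $m\neq 0$. Your write-up simply supplies the bookkeeping the paper leaves implicit.
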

 \begin{proof}
  One has $\E = \E_{spin}\oplus\E_{lin}$ where $\E_{spin}$ and $\E_{lin}$ are vector bundles constructed above.
  The assertion~\textup{(2)} holds by construction of the vector bundle $\E_{spin}$ since the bundle $\Spin\otimes\E_{lin}$ has no intermediate cohomology groups. The assertion \textup{(1)} holds since we choose the vector bundle $\E_{lin}$ in the way that the map~\mbox{$H^0(Q,\E_{lin}(n)) \to H^0(Q,\F(n))/ H^0(Q,\E_{spin}(n))$} is surjective for all $n\in\ZZ$. 
  
  Finally, the fact that $\varphi$ is sujective follows from assertion \textup{(1)}.
 \end{proof}
 Denote by $\K$ the kernel of the surjective map $\varphi$ and by $\Phi$ its embedding into $\E$. Then by Lemma \ref{lemma: properties of varphi 0-even sets of nodes} we have the following exact sequence:
 \begin{equation}\label{eq: CC resolution in 0-even case}
  0\to \K\xrightarrow{\Phi} \E\xrightarrow{\varphi} \F \to 0.
 \end{equation}
 Further we study properties of the sheaf $\K$. We start with the following assertion.
 \begin{proposition}\label{proposition: bundle K in 0-even case}
  Let $\E=\Spin(-2)^{\oplus x}\oplus \Spin(-1)^{\oplus x+\nu - 4}\oplus \OP_Q(-1)^{\oplus 6 - \nu}\oplus \OP_Q(-2)^{\oplus m_2}\oplus \OP_Q(-3)^{\oplus m_3}$, where numbers $m_2$ and $m_3$ are choosen in a minimal way and let the map $\varphi\colon \E\to \F$ be as defined above. Then~\mbox{$m_3 = 0$} and $\K = \Ker(\varphi)$ is isomorphic to $\E^{\vee}(-4)$.
 \end{proposition}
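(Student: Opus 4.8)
The plan is to show that $\K$ is a direct sum of twisted spinor and line bundles and then to identify it with $\E^{\vee}(-4)$ by dualizing the resolution \eqref{eq: CC resolution in 0-even case} and invoking the self-duality $\ExtSh_Q^1(\F,\OP_Q)\cong\F(4)$ from Lemma \ref{lemma: properties of F}(2). First I would note that $\K$ is locally free: since $\F$ has projective dimension $1$ on $Q$ by Lemma \ref{lemma: properties of F}(1) and $\E$ is a vector bundle, the kernel of the surjection $\varphi$ has projective dimension $0$.

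Next I would compute the intermediate cohomology of $\K$. Every summand of $\E$ is a twist of $\Spin$ or of $\OP_Q$, so $H^1(Q,\E(n))=H^2(Q,\E(n))=0$ for all $n$ by Lemma \ref{lemma: properties of Spin}(2) and the corresponding vanishing for line bundles on $Q$. Feeding this into the long exact sequence of \eqref{eq: CC resolution in 0-even case} twisted by $\OP_Q(n)$: surjectivity of $H^0(\varphi(n))$ (Lemma \ref{lemma: properties of varphi 0-even sets of nodes}(1)) yields $H^1(Q,\K(n))=0$, while the vanishing $H^1(Q,\F(n))=0$ for all $n$ (which follows from Table \ref{table: cohomology groups of F in 0-even case} together with Lemma \ref{lemma: properties of F}(3),(5),(6)) yields $H^2(Q,\K(n))\cong H^1(Q,\F(n))=0$. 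Hence $H^1(Q,\K(n))=H^2(Q,\K(n))=0$ for all $n$, and the Horrocks-type criterion of Lemma \ref{lemma: properties of Spin}(4) lets me write $\K=\bigoplus_i\Spin(i)^{\oplus a_i}\oplus\bigoplus_j\OP_Q(j)^{\oplus b_j}$.

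It remains to pin down the summands. Applying $\HomSh_Q(-,\OP_Q(-4))$ to \eqref{eq: CC resolution in 0-even case}, and using that $\HomSh_Q(\F,\OP_Q(-4))=0$ (as $\F$ is torsion) while $\ExtSh_Q^1(\F,\OP_Q(-4))\cong\F$ by Lemma \ref{lemma: properties of F}(2), I obtain a second exact sequence
\[
 0\to \E^{\vee}(-4)\to \K^{\vee}(-4)\to \F\to 0,
\]
in which $\K^{\vee}(-4)$ is again a sum of spinor and line bundles. I would then compare the two resolutions. The inclusion $\Phi\colon\K\hookrightarrow\E$ can have no isomorphism among its components: an iso-component (such as $\OP_Q(-1)\to\OP_Q(-1)$ or $\Spin(i)\to\Spin(i)$) would, by Lemma \ref{lemma: excluding the isomorphism}, let one split off a common summand and resolve $\F$ by a strictly smaller bundle, contradicting the minimal choice of generators underlying the construction of $\E$ (in particular the count $6-\nu=h^0(Q,\F(1))$ of $\OP_Q(-1)$-summands is forced). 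Since each summand of $\K$ must inject into $\E$, a short computation of $\Hom$-groups on $Q$ (using $\Spin^{\vee}\cong\Spin(1)$ and the vanishing ranges of $h^0(Q,\Spin(n))$ and $h^0(Q,\Spin\otimes\Spin(n))$ from Lemma \ref{lemma: properties of Spin}(2),(3)) bounds the admissible twists and forces $\K^{\vee}(-4)\to\F$ to be a minimal surjection of exactly the same shape as $\varphi$; the self-duality of the cohomology of $\F$ (Lemma \ref{lemma: properties of F}(3),(4)) together with the maximal isotropy of $W$ matches the spinor multiplicities. This identifies $\K^{\vee}(-4)$ with $\E$, i.e. $\K\cong\E^{\vee}(-4)$. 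Finally, $\E^{\vee}(-4)$ contains $\OP_Q(-1)^{\oplus m_3}$, whereas $\K$ has no $\OP_Q(-1)$-summand (such a summand would map into $\E$ only through an iso-component, excluded above); hence $m_3=0$, a conclusion also confirmed by comparing first Chern classes in \eqref{eq: CC resolution in 0-even case}, where $c_1(\F)$ is the class of the quartic divisor $B\in|\OP_Q(4)|$.

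The main obstacle is precisely the step of matching the two resolutions. Because the Horrocks criterion does not give uniqueness of minimal resolutions on $Q$, as the introduction emphasizes, I cannot simply quote a uniqueness theorem; instead I must extract enough rigidity from the explicit minimality conditions of Lemma \ref{lemma: properties of varphi 0-even sets of nodes}(1),(2) and from the self-duality of $\F$ to force every multiplicity $a_i,b_j$, not merely to check numerical consistency. Controlling the spinor summands is the delicate point, since $\Spin\otimes\Spin$ itself carries intermediate cohomology by Lemma \ref{lemma: properties of Spin}(3); the isotropy of the alternating pairing \eqref{eq: Serre for even sets of nodes} on $W$ must be used to prevent the spinor multiplicities of $\K^{\vee}(-4)$ from exceeding those of $\E$.
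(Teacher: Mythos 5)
Your opening steps coincide with the paper's: $\K$ is locally free because $\F$ has projective dimension $1$, and the vanishing of $H^1(Q,\K(n))$ and $H^2(Q,\K(n))$ for all $n$ (from the surjectivity of $H^0(\varphi(n))$ and the vanishing of $H^1(Q,\F(n))$) gives, via Lemma \ref{lemma: properties of Spin}(4), a decomposition of $\K$ into twists of $\Spin$ and of $\OP_Q$. The dualized sequence $0\to\E^{\vee}(-4)\to\K^{\vee}(-4)\to\F\to 0$ is also correctly derived from Lemma \ref{lemma: properties of F}(2). But the step on which the whole proposition rests --- pinning down the multiplicities so that $\K\cong\E^{\vee}(-4)$ --- is not actually carried out. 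You assert that a ``short computation of $\Hom$-groups'' together with ``self-duality of the cohomology of $\F$'' and ``maximal isotropy of $W$'' forces $\K^{\vee}(-4)\to\F$ to have the same shape as $\varphi$, and you yourself flag this as the main obstacle. Knowing that $\K^{\vee}(-4)$ surjects onto $\F$ with kernel $\E^{\vee}(-4)$ does not determine its summands: you would first have to show that this surjection satisfies the normalizing conditions of Lemma \ref{lemma: properties of varphi 0-even sets of nodes} (isomorphism on $H^0$ in degree $1$, isomorphism on $H^1(\Spin\otimes-)$ in degree $2$, image exactly a maximal isotropic $W$ in degree $1$), and even then you would need an argument that such a presentation is unique up to isomorphism of the bundle --- none of which is supplied. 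So the dualization is a detour that does not replace the missing computation.

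The computation that actually closes the gap --- and is what the paper does --- is to tensor the resolution \eqref{eq: CC resolution in 0-even case} by $\Spin(n)$ and read off the cohomology of $\K\otimes\Spin(n)$ from the long exact sequence, using Lemma \ref{lemma: properties of varphi 0-even sets of nodes}(2) for the maps $H^1(\Id_{\Spin}\otimes\varphi(n))$ and Lemma \ref{lemma: properties of Spin}(5) to relate $h^1$ and $h^2$ of consecutive twists. This yields $h^1(Q,\K\otimes\Spin(1))=x$ and $h^1(Q,\K\otimes\Spin(2))=x+\nu-4$, hence $s_1=x$, $s_2=x+\nu-4$ and $s_i=0$ for $i\geqslant 3$ in the decomposition of $\K$; the line-bundle multiplicities then follow from the rank equality $\rk(\K)=\rk(\E)$ and from $h^0(Q,\K(2))$ and $h^0(Q,\K(3))$, giving $n_2=m_2$ and $n_3=m_3-\nu+6$. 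Only after this does the vanishing of $\Hom_Q(\Spin(-1),\OP_Q(-3))$, $\Hom_Q(\Spin(-2),\OP_Q(-3))$ and $\Hom_Q(\OP_Q(-2),\OP_Q(-3))$, combined with the minimality of the generating set and Lemma \ref{lemma: excluding the isomorphism}, force $m_3=0$. Your closing observations (that $\K$ can have no $\OP_Q(-1)$-summand, and the first Chern class consistency check) are fine but are downstream of the identification you have not established.
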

 \begin{proof}
  Since the projective dimension of the sheaf $\F$ equals $1$ by Lemma \ref{lemma: properties of F}(1) we deduce that $\K$ is a vector bundle. By construction of the vector bundle $\E$ one has $H^1(Q,\E(n)) = H^2(Q,\E(n)) = 0$ for all $n\in\ZZ$. Thus, by Lemma \ref{lemma: properties of varphi 0-even sets of nodes} we conclude that $H^0(Q,\K(n)) = 0$ for all $n\leqslant 1$ and that $H^1(Q,\K(n)) = H^2(Q, \K(n)) = 0$ for all~$n\in \ZZ$. Thus, by Lemma \ref{lemma: properties of Spin}(4) the bundle $\K$ is isomorphic to the following:
  \begin{equation}\label{eq: K for 0-even sets}
   \K = \bigoplus_{i\geqslant 1} \Spin(-i)^{\oplus s_i} \oplus \bigoplus_{j\geqslant 2} \OP_Q(-j)^{\oplus n_j},
  \end{equation}
  where $s_i$ and $n_j$ are non-negative integers for $i\geqslant 1$ and $j\geqslant 2$. By Lemma \ref{lemma: properties of Spin}(5) this implies the equalities~\mbox{$h^1(Q, \Spin\otimes\K(n+1))  = h^2(Q, \Spin\otimes\K(n))$} for all~$n\in \ZZ$. 
  
  Consider the long exact sequence of cohomology groups for the exact sequence \eqref{eq: CC resolution in 0-even case} tensored by $\Spin(n)$:
  \begin{multline*}
   0\to H^0(Q, \K\otimes\Spin(n)) \to H^0(Q, \E\otimes\Spin(n))  \to H^0(Q, \F\otimes\Spin(n))  \to H^1(Q, \K\otimes\Spin(n)) \to  H^1(Q, \E\otimes\Spin(n)) \to \\ \to H^1(Q, \F\otimes\Spin(n)) \to H^2(Q, \K\otimes\Spin(n)) \to H^2(Q, \E\otimes\Spin(n)) \to H^2(Q, \F\otimes\Spin(n)) \to \dots
  \end{multline*}
  From this exact sequence we see that $H^0(Q,\K\otimes\Spin(n))$ and $H^1(Q, \K\otimes\Spin(n))$ vanish for all $n\leqslant 0$. The groups~\mbox{$H^0(Q,\K\otimes\Spin(1))$} and $H^0(Q,\K\otimes\Spin(2))$ also vanish by \eqref{eq: K for 0-even sets}; thus, if we fix $n = 1$ then by Lemma \ref{lemma: properties of varphi 0-even sets of nodes} we get $h^1(Q, \K\otimes\Spin(1)) = x$. Thus, we conclude  that $s_1 = x$ in \eqref{eq: K for 0-even sets}.
  
  If we fix $n = 2$ the long exact sequence of cohomology groups implies that $h^1(Q, K\otimes\Spin(2)) = x+\nu-4$. Moreover, one can see that the group $H^2(Q,\K\otimes \Spin(n))$ vanishes for $n\geqslant 2$. Thus, $s_2 = x+\nu - 4$ and $s_i = 0$ for all $i\geqslant 3$ in \eqref{eq: K for 0-even sets}.
  
  Since the ranks of bundles $\E$ and $\K$ are equal and by Lemma \ref{lemma: properties of varphi 0-even sets of nodes} we get the following equalities:
  \begin{align*}
   4x + 2\nu - 8 + \sum n_j = \rk(\K) &= \rk(\E) = 4x + \nu - 2 + m_2+m_3;\\
   4x -\nu +n_2 + 14 = h^0(Q,\K(2)) + h^0(Q,\F(2)) &= h^0(Q, \E(2)) = 4x - \nu +m_2+14;\\
   20 x + 3\nu+ 14 + 5n_2 +n_3 = h^0(Q,\K(3)) + h^0(Q,\F(3)) &= h^0(Q, \E(3)) = 20 x +2 \nu -20 + 5m_2 +m_3.
  \end{align*}
  This implies that $n_2 = m_2$ and $n_3 = m_3 -\nu + 6$ and $n_j = 0$ for all $j\geqslant 3$. Finally, since the map from $\K$ to $\E$ is an embedding and groups $\Hom_Q(\Spin(-1), \OP_Q(-3))$, $\Hom_Q(\Spin(-2), \OP_Q(-3))$ and $\Hom_Q(\OP_Q(-2), \OP_Q(-3))$ vanish and since the choice of numbers $m_i$ was minimal we conclude by Lemma \ref{lemma: excluding the isomorphism} that $m_3 = 0$. This implies the result.
 \end{proof}
 By Lemma \ref{lemma: excluding the isomorphism} there could be many Casnati--Catanese resolutions for a given sheaf $\F$. Thus, we restrict our search only for minimal resolutions of this type in the following sense. We say that the map of vector bundles $\Phi\colon \K\to \E$ is {\itshape minimal} if for any subbundles $\K_0\subset \K$ and~$\E_0\subset \E$ the composition $\mathrm{pr}_{\E_0}\circ \Phi|_{\K_0} \colon \K_0\to\E_0$ is not an isomorphism where $\Phi|_{\K_0}$ is the restriction of $\Phi$ to the direct summand of $\K$ and $\mathrm{pr}_{\E_0}$ is the projection of $\E$ to its direct summand. This condition implies the following condition for parameters of $\E$.
 \begin{lemma}\label{lemma: restrictions for 0-even sets}
  Let $B$ be a nodal complete intersection of a smooth quadric hypersurface $Q$ and a quartic hypersurface in $\p^4$, let $w$ be a minimal $0$-even set of $4\nu$ nodes on $B$ and let the embedding $\Phi\colon \K\to \E$ be a minimal map satisfying properties listed in Proposition \textup{\ref{proposition: bundle K in 0-even case}}. Then we have the following constraint for the numbers $m_2$, $x$ and $\nu$:
  \begin{enumerate}
   \item[\textup{(1)}] $8-2\nu \leqslant 2x \leqslant \max\{0,5 - \nu\}$;
   \item[\textup{(2)}] $m_2 \leqslant \nu - 3$;
   \item[\textup{(3)}] $4\leqslant \nu\leqslant 6$.
  \end{enumerate}
 \end{lemma}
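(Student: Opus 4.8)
The plan is to read off every numerical constraint directly from the explicit shapes of $\E$ and $\K=\E^{\vee}(-4)$ furnished by Proposition \ref{proposition: bundle K in 0-even case}, combined with minimality of $\Phi$ and irreducibility of $B$. Using $\Spin^{\vee}\cong\Spin(1)$ one computes
\[
 \K=\E^{\vee}(-4)=\Spin(-1)^{\oplus x}\oplus\Spin(-2)^{\oplus x+\nu-4}\oplus\OP_Q(-3)^{\oplus 6-\nu}\oplus\OP_Q(-2)^{\oplus m_2}.
\]
Since all these multiplicities are non‑negative integers, the multiplicity $x+\nu-4$ of $\Spin(-1)$ in $\E$ already gives the left inequality of (1), namely $2x\geqslant 8-2\nu$, and the multiplicity $6-\nu$ of $\OP_Q(-1)$ gives $\nu\leqslant 6$. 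For the other half of (3) I would use that $m_1=h^0(Q,\F(1))=6-\nu$ by the cohomology table together with Lemma \ref{lemma: JR restriction on cohomology groups of F}, which forces $6-\nu\leqslant 2$, i.e.\ $\nu\geqslant 4$; this settles (3).

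The heart of the argument is to exploit minimality of $\Phi$. First I would tabulate $\Hom_Q$ between the indecomposable summands of $\K$ and of $\E$ using Lemma \ref{lemma: properties of Spin}(2),(3). The only pairs of \emph{isomorphic} summands occurring in both $\K$ and $\E$ are $\Spin(-1)$, $\Spin(-2)$ and $\OP_Q(-2)$, so the only blocks of $\Phi$ that can carry a degree‑zero (constant) entry are $\Spin(-1)^{\oplus x}\to\Spin(-1)^{\oplus x+\nu-4}$, $\Spin(-2)^{\oplus x+\nu-4}\to\Spin(-2)^{\oplus x}$ and $\OP_Q(-2)^{\oplus m_2}\to\OP_Q(-2)^{\oplus m_2}$. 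Minimality forbids any summand from mapping isomorphically onto a summand, so each of these three blocks vanishes. Together with the vanishings $\Hom_Q(\Spin(-1),\Spin(-2))=\Hom_Q(\Spin(-1),\OP_Q(-2))=\Hom_Q(\OP_Q(-2),\Spin(-2))=0$ (again Lemma \ref{lemma: properties of Spin}), this produces two structural facts: the summand $\Spin(-1)^{\oplus x}\subset\K$ is carried by $\Phi$ into $\OP_Q(-1)^{\oplus 6-\nu}\subset\E$ only, and the summand $\Spin(-1)^{\oplus x}\oplus\OP_Q(-2)^{\oplus m_2}\subset\K$ is carried into $\Spin(-1)^{\oplus x+\nu-4}\oplus\OP_Q(-1)^{\oplus 6-\nu}\subset\E$ only.

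With these two zero‑block decompositions I would apply Lemma \ref{lemma: excluding the embedding of full rank} twice. For the right inequality of (1) take $\LA_1=\Spin(-1)^{\oplus x}$ and $\LB_1=\OP_Q(-1)^{\oplus 6-\nu}$; the lemma yields $2x=\rk\LA_1\leqslant\rk\LB_1=6-\nu$, and in the equality case it forces $B=\{\det\Phi_{11}=0\}$ or $B$ reducible or non‑reduced. As $B$ is irreducible and reduced while $\det\Phi_{11}$ is a section of $\OP_Q(x)$ with $x<4$ (here $\nu\geqslant4$ gives $2x=6-\nu\leqslant2$), the divisor $\{\det\Phi_{11}=0\}$ cannot equal $B\in|\OP_Q(4)|$, so equality is excluded and $2x\leqslant 5-\nu$; with $2x\geqslant0$ this is exactly $2x\leqslant\max\{0,5-\nu\}$ (the case $\nu=6$, where $6-\nu=0$ forces $x=0$ by injectivity, is immediate). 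For (2) take $\LA_1=\Spin(-1)^{\oplus x}\oplus\OP_Q(-2)^{\oplus m_2}$ and $\LB_1=\Spin(-1)^{\oplus x+\nu-4}\oplus\OP_Q(-1)^{\oplus 6-\nu}$; when $m_2\neq\nu-3$ the degrees of $\LA_1$ and $\LB_1$ differ, so the lemma gives $2x+m_2=\rk\LA_1\leqslant\rk\LB_1=2x+\nu-2$, i.e.\ $m_2\leqslant\nu-2$. The boundary value $m_2=\nu-2$ is then ruled out because in that case $\det\Phi_{11}$ and $\det\Phi_{22}$ are nonzero sections of $\OP_Q(2)$, so $B$ would again be reducible or non‑reduced. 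Hence $m_2\leqslant\nu-3$.

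The main obstacle I expect is the bookkeeping in these two applications of Lemma \ref{lemma: excluding the embedding of full rank}: one must check the hypothesis $\deg\LA_1\neq\deg\LB_1$ in each case (using $c_1(\Spin(k))=(2k-1)H$) and, crucially, eliminate the equality case by a degree count, which is exactly where irreducibility and reducedness of $B$ and the fact that $B\in|\OP_Q(4)|$ enter. Getting the $\Hom$ computation right and deducing the correct zero‑block decompositions from minimality is the delicate input on which the two rank inequalities rest.
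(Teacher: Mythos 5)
Your proof is correct and follows essentially the same route as the paper: the non-negativity of the multiplicities $x+\nu-4$ and $6-\nu$, the identification $h^0(Q,\F(1))=6-\nu\leqslant 2$ via Lemma \ref{lemma: JR restriction on cohomology groups of F}, and the zero-block decompositions of $\Phi$ forced by minimality and the $\Hom$ vanishings, fed into Lemma \ref{lemma: excluding the embedding of full rank}. Your only addition is to spell out the degree counts that rule out the alternative $B=\{\det(\Phi_{11})=0\}$ in the equality cases, a point the paper's proof leaves implicit.
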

 \begin{proof}
  The groups $\Hom_Q(\Spin(-1), \V)$ vanish for $\V = \Spin(-2)$ and $\OP_Q(-2)$. Since $\Hom_Q(\Spin(-1),\Spin(-1))$ is generated by the identity, the minimality of the resolution of $\F$ implies that $\Phi(\Spin(-1)^{\oplus x})$ is a subsheaf of the bundle $\OP_Q(-1)^{\oplus 6 - \nu}$. Therefore, $2x \leqslant 6-\nu$. Moreover, in the case of equality by Lemma \ref{lemma: excluding the embedding of full rank} the surface~$B$ is reducible; thus, $2x \leqslant 5 - \nu$. Finally, the number $x + \nu -4$ also should be non-negative; thus, $2x\geqslant 8-2\nu$. Thus, we get the assertion \textup{(1)}.
  
  The assertion \textup{(2)} follows from the irreducibility of $B$ and the fact that $\Phi(\Spin(-1)^{\oplus x}\oplus \OP_Q(-2)^{\oplus m_2})$ is a subsheaf of $\OP_Q(-1)^{\oplus 6 - \nu}\oplus \Spin (-1)^{\oplus x+\nu-4}$. Finally, the assertion \textup{(3)} is an implication of the assertion \textup{(1)}, the fact that the number $6 - \nu$ is non-negative and Lemma \ref{lemma: JR restriction on cohomology groups of F}.
 \end{proof}
 
  Now we can describe all minimal $0$-even sets of nodes $w$ which can arise on the  intersection of a smooth quadric hypersurface $Q$ and a quartic hypersurface. Moreover, Theorem \ref{theorem: exact sequence for defect} allows us to compute defects~$d(w)$ for them.
 \begin{corollary}\label{corollary: CC-resolutions in 0-even case}
  Let $B$ be a nodal complete intersection of a smooth quadric hypersurface $Q$ and a quartic hypersurface in the projective space $\p^4$ and let $w$ be a minimal non-empty $0$-even set of nodes on the surface~$B$. Then~\mbox{$B = \left\{\det\left(\Phi\colon \E^{\vee}(-4)\to \E\right) = 0 \mid \Phi\in H^0(Q, (S^2\E)(4))\right\}$} and the following cases are possible:
  \begin{align*}
   \text{\textup{(E1)}} \hspace{1cm} &|w| = 16 \text{ and } \E =\OP_Q(-1)^{\oplus 2}\oplus \OP_Q(-2), && d(w) = 1;\\
   \text{\textup{(E2)}} \hspace{1cm} &|w| = 20 \text{ and } \E = \Spin(-1)\oplus \OP_Q(-1), && d(w) = 0;\\
   \text{\textup{(E3)}} \hspace{1cm} &|w| = 24 \text{ and } \E = \Spin(-1)^{\oplus 2}, && d(w) = 0.
  \end{align*}
  Moreover, if one takes a general element $\Phi$ of $ H^0(Q, (S^2\E)(4))$ for $\E$ as in \textup{(E1 -- E3)} then $B = \{\det(\Phi) = 0\}$ is a nodal complete intersection of $Q$ and a quartic hypersurface in $\p^4$ and $|\Sing(B)| = 16$, $20$ and $24$ respectively.
 \end{corollary}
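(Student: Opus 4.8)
The plan is to treat the three admissible cardinalities $|w|=4\nu$ with $\nu\in\{4,5,6\}$ (Lemma~\ref{lemma: restrictions for 0-even sets}(3)) separately: pin down the bundle $\E$ in the resolution \eqref{eq: CC resolution in 0-even case}, symmetrise that resolution, compute the defect, and finally obtain the converse from Barth's construction.

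\emph{Shape of $\E$.} First I would apply Lemma~\ref{lemma: restrictions for 0-even sets}(1): since $2x\le\max\{0,5-\nu\}\le 1$ and $2x$ is even, one gets $x=0$ for every $\nu\ge 4$. Together with $m_3=0$ from Proposition~\ref{proposition: bundle K in 0-even case} and the definitions \eqref{eq: E_spin in 0-even case}, \eqref{eq: E_lin in 0-even case}, this leaves
\[
 \E=\Spin(-1)^{\oplus \nu-4}\oplus\OP_Q(-1)^{\oplus 6-\nu}\oplus\OP_Q(-2)^{\oplus m_2},\qquad \K\cong\E^\vee(-4),
\]
with the single undetermined parameter $m_2$, bounded by $m_2\le\nu-3$ (Lemma~\ref{lemma: restrictions for 0-even sets}(2)). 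The task reduces to showing $m_2=1$ when $\nu=4$ and $m_2=0$ when $\nu\in\{5,6\}$, which returns precisely the bundles of \textup{(E1)--(E3)}.

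\emph{Determining $m_2$ (the main step).} Recall $m_2$ is the number of degree-$2$ generators of the module $M$, i.e. the codimension in $H^0(Q,\F(2))$ of the subspace spanned by the spinor sections $\Im\varphi_{spin}(2)$ and by $H^0(Q,\OP_Q(1))\cdot H^0(Q,\F(1))$. For $\nu=6$ one has $h^0(\F(1))=0$ and $h^0(\F(2))=8=h^0(\E_{spin}(2))$ by Table~\ref{table: cohomology groups of F in 0-even case}, and I would check that $H^0(\varphi_{spin}(2))$ is injective, hence an isomorphism, so that $M$ is generated entirely by the spinor part and $m_2=0$; for $\nu=5$ the analogous count $4+5=9=h^0(\F(2))$ shows the spinor image and the span of the single linear generator fill $H^0(\F(2))$, again giving $m_2=0$. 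These two cases can alternatively be settled by excluding $m_2\ge 1$ directly, since minimality (Lemma~\ref{lemma: excluding the isomorphism}) forces the scalar $\OP_Q(-2)$-to-$\OP_Q(-2)$ block of the symmetric $\Phi$ to vanish, after which Lemma~\ref{lemma: excluding the embedding of full rank} conflicts with the irreducibility of $B$. For $\nu=4$ the spinor part is absent, $h^0(\F(1))=2$ and $h^0(\F(2))=10$, and the point is that the multiplication map $H^0(Q,\OP_Q(1))\otimes H^0(Q,\F(1))\to H^0(Q,\F(2))$ has rank $9$ rather than $10$: the pencil of sections of $\F(1)$ carries exactly one linear syzygy, so one degree-$2$ generator is forced and $m_2=1$ (the upper bound $m_2\le 1$ coming from Lemma~\ref{lemma: restrictions for 0-even sets}(2)). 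I expect this rank computation for $\nu=4$ — equivalently, the analysis of linear syzygies among the two sections of $\F(1)$ — to be the main obstacle, since it is the one place where the intrinsic minimal resolution of $\F$ must be read off rather than deduced from the numerical constraints.

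\emph{Symmetrisation and defect.} Once $\E$ is fixed, I would invoke Lemma~\ref{lemma: CC plus H1 vanishes implies symmetricity}: a short check with Lemma~\ref{lemma: properties of Spin} gives $H^1(Q,(S^2\E^\vee)(-4))=0$ in each of \textup{(E1)--(E3)}, so the Casnati--Catanese resolution \eqref{eq: CC resolution in 0-even case} can be taken symmetric, $\Phi\in H^0(Q,(S^2\E)(4))$, and $B=\{\det\Phi=0\}$. The defect is then computed from the resolution of $\I_w(3)$ in Theorem~\ref{theorem: exact sequence for defect}: splitting the four-term sequence into two short exact ones and running the long exact cohomology sequences (the only subtle contributions come from $H^1(Q,\Spin\otimes\Spin)\cong\CC$ and $H^2(Q,\Spin\otimes\Spin(-1))\cong\CC$ in Lemma~\ref{lemma: properties of Spin}(3)) yields $h^0(Q,\I_w(3))=15,10,6$ and hence, by Definition~\ref{definition: defect}, $d(w)=1,0,0$ in cases \textup{(E1)}, \textup{(E2)}, \textup{(E3)} respectively. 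For the converse I would apply Barth's construction (Example~\ref{example: Phi}): in each case $(S^2\E)(4)$ is globally generated, being a direct sum of non-negative twists of $\OP_Q$ and of symmetric powers of the globally generated bundle $\Spin(1)$, so there is a dense open $\mathcal U\subset H^0(Q,(S^2\E)(4))$ over which $B(\Phi)$ is a nodal surface with $\Sing B(\Phi)=w(\Phi)$ a $0$-even set of nodes. Since $\det\Phi\in H^0(Q,\OP_Q(4))$ lifts to a quartic on $\p^4$, $B(\Phi)$ is a complete intersection of $Q$ with a quartic; and $|w(\Phi)|$ — the degree of the symmetric corank-$2$ degeneracy locus of $\Phi$, equivalently the value $4\nu$ read off from $\chi(\F(n))$ in Lemma~\ref{lemma: properties of F}(7) — equals $16$, $20$, $24$ respectively, completing the proof.
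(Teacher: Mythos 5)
Your overall architecture --- split by $\nu\in\{4,5,6\}$, pin down $\E$ via Proposition \ref{proposition: bundle K in 0-even case} and Lemma \ref{lemma: restrictions for 0-even sets}, symmetrise, compute the defect from Theorem \ref{theorem: exact sequence for defect}, and get the converse from Barth's Bertini-type lemma --- is the paper's. But two of your key steps are not merely left unproved; they are false as stated.

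First, for $\nu=4$ you assert that the multiplication map $H^0(Q,\OP_Q(1))\otimes H^0(Q,\F(1))\to H^0(Q,\F(2))$ has rank $9$, so that a degree-$2$ generator is forced and $m_2=1$. This fails for the generic member of the family: a general symmetric $\Phi$ for $\E=\OP_Q(-1)^{\oplus 2}\oplus\OP_Q(-2)$ has a nonzero constant in its $\OP_Q(-2)$-to-$\OP_Q(-2)$ entry, so by Lemma \ref{lemma: excluding the isomorphism} the same sheaf $\F$ is presented by a $2\times 2$ symmetric matrix of quadrics $\OP_Q(-3)^{\oplus 2}\to\OP_Q(-1)^{\oplus 2}$; for that presentation the syzygies of $\bigoplus_n H^0(Q,\F(n))$ begin in degree $3$, the multiplication map is an isomorphism of $10$-dimensional spaces, and $m_2=0$ (and one checks, using the classification of small even sets, that the $16$ nodes of such a surface do form a \emph{minimal} $0$-even set). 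The paper's resolution of this point is the opposite of yours: it allows both $m_2=0$ and $m_2=1$ and observes that the $m_2=0$ resolution can be padded by a unit block into one with $\E$ as in \textup{(E1)} without changing $\det\Phi$ or the corank-$2$ locus. You need that padding observation; the rank-$9$ claim you yourself flag as the main obstacle cannot be established.

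Second, your symmetrisation step breaks in exactly the two interesting cases. Lemma \ref{lemma: CC plus H1 vanishes implies symmetricity} requires $H^1(Q,(S^2\E^{\vee})(-4))=0$, but for $\E=\Spin(-1)^{\oplus 2}$ (and likewise for $\Spin(-1)\oplus\OP_Q(-1)$) the bundle $(S^2\E^{\vee})(-4)$ contains $S^2\Spin$ as a direct summand, and $H^1(Q,S^2\Spin)\ne 0$ because $\Spin\otimes\Spin=S^2\Spin\oplus\OP_Q(-1)$ and $h^1(Q,\Spin\otimes\Spin)=1$ by Lemma \ref{lemma: properties of Spin}(3) --- the very group you invoke later in the defect computation. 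So the hypothesis of Lemma \ref{lemma: CC plus H1 vanishes implies symmetricity} holds only in case \textup{(E1)}, and for \textup{(E2)} and \textup{(E3)} the paper must prove symmetry by a separate argument (Lemma \ref{lemma: cases E2 and E3 are symmetric}): one compares the resolution with its $\HomSh(-,\OP_Q(-4))$-dual and uses the fact that $\varphi_{spin}$ was built from a maximal isotropic subspace $W$ of the alternating Serre pairing on $H^1(Q,\F\otimes\Spin(1))$, together with the uniqueness in Lemma \ref{lemma: map from spin}, to identify the two surjections onto $\F$. Your defect numbers and the Barth converse are fine, and your fallback argument for $\nu=5,6$ via minimality and Lemmas \ref{lemma: excluding the isomorphism}--\ref{lemma: excluding the embedding of full rank} is essentially the paper's, but without these two repairs the proof does not close.
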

 \begin{proof}
  Let $w$ be a minimal non-empty $0$-even set of $4\nu$ nodes on the surface~$B$. Consider minimal Casnati--Catanese resolution \eqref{eq: CC resolution in 0-even case} as in Proposition \ref{proposition: bundle K in 0-even case}. By Lemma \ref{lemma: restrictions for 0-even sets} we conclude $\nu = 4$, $5$ or $6$. We study each of these cases below.
  
  If $\nu = 4$ then $|w|=16$. Lemma \ref{lemma: restrictions for 0-even sets} implies that $x=0$ and $m_2\leqslant 1$. Then in fact two cases are possible~\mbox{$m_2 = 0$} and $m_2 = 1$; thus, the vector bundle $\E$ equals either $ \OP_Q(-1)^{\oplus 2}$ or $ \OP_Q(-1)^{\oplus 2}\oplus \OP_Q(-2)$. However, for any element $\Phi'\in H^0(Q,(S^2\E')(4))$ where $\E' = \OP_Q(-1)^{\oplus 2}$ there exists an element $\Phi\in H^0(Q,(S^2\E)(4))$ where~\mbox{$\E = \OP_Q(-1)^{\oplus 2}\oplus \OP_Q(-2)$} such that $\det(\Phi)$ and $\det(\Phi')$ defines same divisor $B$ in $Q$. Thus, the case~\textup{(E1)} describes all possible $0$-even sets of $16$ nodes on $B$. Since $H^1(Q,(S^2\E^{\vee})(-4)) = 0$ then by Lemma~\ref{lemma: CC plus H1 vanishes implies symmetricity} there $\F$ fits the necessary symmetric Casnati--Catanese resolution.
  
  If $\nu = 5$ then $|w| = 20$. Lemma \ref{lemma: restrictions for 0-even sets} implies $x=0$ and $m_2\leqslant 2$. Since the resolution is minimal \eqref{eq: CC resolution in 0-even case}  and the group $\Hom_Q(\Spin(-2), \OP_Q(-2))$ vanishes, the only direct summand of $\K$ which maps with non-zero image to $\OP_Q(-2)^{\oplus m_2}$ is $\OP_Q(-3)$. Therefore, $m_2 \leqslant 1$ and the case of the equality is excluded by  by Lemma \ref{lemma: excluding the embedding of full rank} since $B$ is an irreducible surface. Thus, we get the vector bundle from the case \textup{(E2)}. By Lemma \ref{lemma: cases E2 and E3 are symmetric} below the resolution \eqref{eq: CC resolution in 0-even case} is symmetric in this case.
  
  If $\nu = 6$ then $|w| = 24$.  Lemma \ref{lemma: restrictions for 0-even sets} implies $x=0$ and $m_2\leqslant 3$. Since the resolution is minimal and the group $\Hom_Q(\Spin(-2), \OP_Q(-2))$ vanishes we get that $\Phi(\K)$ is a subsheaf of $\Spin(-1)^{\oplus 2}$. Thus, $m_2 = 0$ and we get the case \textup{(E3)}. By Lemma \ref{lemma: cases E2 and E3 are symmetric} below the resolution \eqref{eq: CC resolution in 0-even case} is symmetric in this case.
  
  The defect in all the cases is computed by Theorem \ref{theorem: exact sequence for defect}. The last assertion follows from the modification of Bertini theorem \cite[Lemma 4]{Barth} since the vector bundle $S^2\E$ is globally generated for bundles $\E$ as in cases \textup{(E1 -- E3)}.
 \end{proof}
 It remains to prove the symmetricity of the Casnati--Catanese resolutions in the cases \textup{(E2)} and \textup{(E3)}.
 \begin{lemma}\label{lemma: cases E2 and E3 are symmetric}
  Let $B$ be a nodal complete intersection of a smooth quadric hypersurface $Q$ and a quartic hypersurface in the projective space $\p^4$ and let $w$ be a minimal non-empty $0$-even set of nodes on the surface~$B$. Assume that the sheaf $\F$ associated with $w$ on $Q$ fits the Casnati--Catanese resolution \eqref{eq: CC resolution in 0-even case} where $\E$ is as in the cases \textup{(E2)} or \textup{(E3)} and the map $\varphi$ is as in Lemma \textup{\ref{lemma: properties of varphi 0-even sets of nodes}}. Then the Casnati--Catanese resolution is symmetric.
 \end{lemma}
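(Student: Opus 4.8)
The plan is to follow the argument proving Lemma \ref{lemma: CC plus H1 vanishes implies symmetricity} as far as it goes, and then to carry out by hand the one cohomological step that no longer comes for free. By Proposition \ref{proposition: bundle K in 0-even case} the kernel $\K$ of $\varphi$ is $\E^{\vee}(-4)$, so \eqref{eq: CC resolution in 0-even case} already reads $0\to\E^{\vee}(-4)\xrightarrow{\Phi}\E\xrightarrow{\varphi}\F\to 0$; dualizing it and using the self-duality $\ExtSh^1_Q(\F,\OP_Q)\cong\F(4)$ from Lemma \ref{lemma: properties of F}(2) yields a second presentation of $\F$ with matrix $\Phi^{\vee}$. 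Exactly as in the proof of Lemma \ref{lemma: CC plus H1 vanishes implies symmetricity}, in order to replace $\Phi$ by a symmetric map it suffices to produce the self-adjoint isomorphism $s_1$ of the diagram \eqref{eq: diagram map from CC to dual CC}; using the resolution of $S^2\F$ and the nondegenerate pairing $\varphi_0\colon S^2\F\to\OP_B$ of Lemma \ref{lemma: properties of F}(1), this reduces (set $\delta=0$ in \eqref{eq: diagram with symm square of CC}) to lifting the quadratic form $\varphi_0\circ\alpha\colon S^2\E\to\OP_B$ along $\OP_Q\twoheadrightarrow\OP_B$ to a map $\varphi_1\colon S^2\E\to\OP_Q$.

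The feature that distinguishes these cases from Lemma \ref{lemma: CC plus H1 vanishes implies symmetricity} is that the obstruction group $\Ext^1_Q(S^2\E,\OP_Q(-4))\cong H^1(Q,(S^2\E^{\vee})(-4))$ no longer vanishes, so I would first make it explicit. Using $\Spin^{\vee}\cong\Spin(1)$, $\Lambda^2\Spin\cong\OP_Q(-1)$ and $\Spin\otimes\Spin\cong S^2\Spin\oplus\OP_Q(-1)$, one decomposes $(S^2\E^{\vee})(-4)$ and checks via Lemma \ref{lemma: properties of Spin}(3) that the only contribution to $H^1$ comes from copies of $S^2\Spin$, with $H^1(Q,S^2\Spin)\cong H^1(Q,\Spin\otimes\Spin)\cong\CC$. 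Concretely one finds $(S^2\E^{\vee})(-4)\cong S^2\Spin\oplus\Spin(-1)\oplus\OP_Q(-2)$ in case \textup{(E2)} and $(S^2\Spin)^{\oplus3}\oplus\OP_Q(-1)$ in case \textup{(E3)}, so the obstruction lives in $\CC$, respectively $\CC^3$.

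The main obstacle is therefore to show that the \emph{particular} class $\partial(\varphi_0\circ\alpha)$ vanishes in this nonzero group. I would analyse the long exact sequence
\[ \Hom_Q(S^2\E,\OP_Q)\to\Hom_Q(S^2\E,\OP_B)\xrightarrow{\partial}\Ext^1_Q(S^2\E,\OP_Q(-4)) \]
coming from $0\to\OP_Q(-4)\to\OP_Q\to\OP_B\to 0$, and prove that $\varphi_0\circ\alpha$ lies in the image of the first arrow. Two structural facts should force this. First, $\varphi_0\circ\alpha=\varphi_0\circ S^2\varphi$ factors through the surjection $\alpha$ onto $S^2\F$ and annihilates $\Im\beta$ (as $\alpha\circ\beta=0$), so the obstruction is really a class attached to the symmetric sheaf $S^2\F$. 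Second, the entire configuration is self-dual, so the obstruction to choosing a self-adjoint lift is an alternating quantity, whereas $\varphi_0\circ\alpha$ is symmetric. I expect that combining these, together with the explicit identification of the offending $\CC$-summand as the image under $\varphi$ of the intermediate cohomology $H^1(Q,\Spin\otimes\Spin)$ already matched in Lemma \ref{lemma: properties of varphi 0-even sets of nodes}(2), yields $\partial(\varphi_0\circ\alpha)=0$. This is the step I expect to be genuinely delicate, since one cannot simply quote a vanishing of the ambient $\Ext$ group.

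Once the lift $\varphi_1$ exists, the remaining steps are formal and identical to the proof of Lemma \ref{lemma: CC plus H1 vanishes implies symmetricity}: setting $\varphi_2=\frac{1}{f}\varphi_1\circ\beta$ (with $f$ the quartic equation of $B$ in $Q$) completes the diagram \eqref{eq: diagram with symm square of CC}, the map $\varphi_2\in\Hom_Q(\E^{\vee}(-4)\otimes\E,\OP_Q(-4))\cong\Hom_Q(\E,\E)$ defines the self-adjoint isomorphism $s_1$ of \eqref{eq: diagram map from CC to dual CC} (an isomorphism by the snake lemma and the five lemma), and replacing $\Phi$ by $s_1\circ\Phi$ lands it in $H^0(Q,(S^2\E)(4))$, producing the desired symmetric Casnati--Catanese resolution.
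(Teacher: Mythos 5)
Your reduction of the problem is sound as far as it goes: you correctly observe that $\K\cong\E^{\vee}(-4)$, that the argument of Lemma \ref{lemma: CC plus H1 vanishes implies symmetricity} reduces everything to lifting $\varphi_0\circ\alpha$ along $\OP_Q(-\delta)\twoheadrightarrow\OP_B(-\delta)$, and that the obstruction group $\Ext^1_Q(S^2\E,\OP_Q(-4))\cong H^1(Q,(S^2\E^{\vee})(-4))$ is genuinely nonzero here (your computations $\CC$ for \textup{(E2)} and $\CC^3$ for \textup{(E3)} are correct, since $h^1(Q,S^2\Spin)=h^1(Q,\Spin\otimes\Spin)=1$). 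But the proof stops exactly at the point where it becomes nontrivial: the vanishing of the specific class $\partial(\varphi_0\circ\alpha)$ is only asserted (``I expect that\dots''), and the two structural reasons you offer do not constitute an argument. In particular the claim that ``the obstruction to choosing a self-adjoint lift is an alternating quantity, whereas $\varphi_0\circ\alpha$ is symmetric'' is not given any precise meaning in the group $\Ext^1_Q(S^2\E,\OP_Q(-4))$, and the factorization of $\varphi_0\circ\alpha$ through $S^2\F$ by itself says nothing about whether its boundary class vanishes. So there is a genuine gap at the one step you yourself identify as delicate.

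The paper avoids this obstruction computation altogether and argues differently: it dualizes the resolution to obtain a second surjection $\deltamap\colon\E\to\ExtSh^1_Q(\F,\OP_Q(-4))\cong\F(4)$, and then shows directly that $s^{-1}\circ\deltamap$ and $\varphi$ agree up to an automorphism of $\E$, summand by summand. For the $\Spin(-1)$ summand this uses the uniqueness statement of Lemma \ref{lemma: map from spin} together with the fact that $W$ is a \emph{maximal} isotropic subspace of $H^1(Q,\F\otimes\Spin(1))$ for the alternating Serre pairing \eqref{eq: Serre for even sets of nodes} --- so that the image of $H^1(\deltamap)$, being the annihilator of the image of $H^1(\varphi)$, is again $W$ itself; for the $\OP_Q(-1)$ summand it uses $h^0(Q,\F(1))=1$. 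This is precisely the structural input (the Lagrangian choice of $W$ built into Lemma \ref{lemma: properties of varphi 0-even sets of nodes}) that would be needed to kill your obstruction class, and without invoking it in some form your approach cannot be completed. If you want to salvage your route, you would have to show that the one-dimensional summand $H^1(Q,S^2\Spin)\subset H^1(Q,(S^2\E^{\vee})(-4))$ pairs with $\partial(\varphi_0\circ\alpha)$ exactly through the restriction of \eqref{eq: Serre for even sets of nodes} to $W\otimes W$, which vanishes by isotropy --- but that argument is not in your write-up.
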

 \begin{proof}
  Apply the functor $\HomSh_Q(-,\OP_Q(-4))$ to the Casnati--Catanese resolution \eqref{eq: CC resolution in 0-even case}. We get the following exact sequence:
  \begin{equation}\label{eq: dual CC}
   0\to\E^{\vee}(-4) \xrightarrow{\Phi^{\vee}} \E\xrightarrow{\deltamap} \ExtSh_Q^1(\F,\OP_Q(-4))\to 0.
  \end{equation}
  Here $\deltamap$ is the connecting homomorphism of the long exact sequence. By Lemma \ref{lemma: properties of F}(2) there exists an isomorphism    $s\colon \F\to \ExtSh_Q^1(\F,\OP_Q(-4))$. It suffices to show that $s\circ\varphi = \deltamap$; i.e. $s$ induces an isomorphism of the maps $\deltamap$ and $\varphi$.
  
  First consider the case \textup{(E2)} that is $\E = \Spin(-1)\oplus \OP_Q(-1)$. By construction the map $\varphi$ is the direct sum of maps $\varphi_{spin}\colon \Spin(-1)\to \F$ and $\varphi_{lin}\colon \OP_Q(-1)\to \F$. Denote by $\deltamap_{spin}$ and $\deltamap_{lin}$ the components of the map $\deltamap$:
  \begin{align*}
   \deltamap_{spin}\colon \Spin(-1)\to \ExtSh_Q^1(\F,\OP_Q(-4)); && \deltamap_{lin}\colon \OP_Q(-1)\to \ExtSh_Q^1(\F,\OP_Q(-4)).
  \end{align*}
  Here $\deltamap = \deltamap_{spin}\oplus\deltamap_{lin}$.
  Tensor the exact sequences \eqref{eq: CC resolution in 0-even case} and \eqref{eq: dual CC} by $\Spin(-1)$ and consider the long exact sequence of cohomology groups:
\[\begin{tikzcd}[ampersand replacement=\&, column sep=2em]
	0 \& {H^1(Q,\E\otimes\Spin(1))} \& {H^1(Q, \F\otimes\Spin(1))} \& {H^2(Q,\E^{\vee}(-4)\otimes\Spin(-1))} \& 0 \\
	0 \& {H^1(Q,\E\otimes\Spin(1))} \& {H^1(Q, \ExtSh_Q^1(\F,\OP_Q(-4))\otimes\Spin(1))} \& {H^2(Q,\E^{\vee}(-4)\otimes\Spin(-1))} \& 0
	\arrow[from=1-4, to=1-5]
	\arrow[from=2-4, to=2-5]
	\arrow[from=2-3, to=2-4]
	\arrow[from=1-3, to=1-4]
	\arrow["\varphi", from=1-2, to=1-3]
	\arrow["\deltamap", from=2-2, to=2-3]
	\arrow[from=1-1, to=1-2]
	\arrow[from=2-1, to=2-2]
	\arrow["s", from=1-3, to=2-3]
\end{tikzcd}\]
  If one applies the Serre duality to the upper exact sequence we get the dualisation of the lower one. By Lemma~\ref{lemma: properties of varphi 0-even sets of nodes} we have $H^1(\varphi)\colon H^1(Q,\E\otimes\Spin(1))\to H^1(Q, \F\otimes\Spin(1))$ is the embedding of $H^1(Q,\E\otimes\Spin(1))$ to the isotropic subspace $W$ of $ H^1(Q, \F\otimes\Spin(1))$. 
  Since \eqref{eq: dual CC} is the dualisation of \eqref{eq: CC resolution in 0-even case} the map 
  \[
  \deltamap\colon H^1(Q, \E\otimes\Spin(1))\to H^1(Q, \ExtSh_Q^1(\F,\OP_Q(-4))\otimes\Spin(1))
  \]
  is also an embedding to an isotropic subspace $s(W)$ of $H^1(Q, \ExtSh_Q^1(\F,\OP_Q(-4))\otimes\Spin(1))$. Thus, the maps~$\varphi_{spin}$ and $\deltamap_{spin}$ are isomorphic by Lemma \ref{lemma: map from spin}.
  
  Now tensor the exact sequences \eqref{eq: CC resolution in 0-even case} and \eqref{eq: dual CC} by $\OP_Q(1)$.
The maps $\varphi_{lin}$ and $\deltamap_{lin}$ are defined by the images of the cohomology group $H^0(Q,\E(1))\cong \CC$ under the maps $\varphi$ and $\deltamap$ in the groups $H^0(Q,\F(1))$ and~\mbox{$H^0(Q, \ExtSh_Q^1(\F,\OP_Q(-3)))$} respectively. Since $H^0(Q,\F(1))\cong H^0(Q, \ExtSh_Q^1(\F,\OP_Q(-3)))\cong \CC$ there is an isomorphism between maps $\varphi_{lin}$ and $\deltamap_{lin}$. Thus, the map $s$ induces an isomorphism of the maps $\varphi$ and $\deltamap$ and the map $\Phi$ is self-dual in the case \textup{(E2)}.
  
  The case \textup{(E3)} is analogous. Thus, the proof is finished.
 \end{proof}

 \section{Minimal $\frac{1}{2}$-even sets of at most 20 nodes}\label{section: 1/2-even sets less than 20}
 In this section we study $\frac{1}{2}$-even sets of nodes on a nodal complete intersection $B$ of a smooth quadric hypersurface $Q$ with a quartic  hypersurface in $\p^4$. Fix a $\frac{1}{2}$-even set of nodes $w\subset B$ such that $|w| \leqslant 20$ and consider the sheaf $\F$ associated to $w$ on $Q$ as defined in Section \ref{subsection: even sets of nodes}. Our goal is to construct a Casnati--Catanese symmetric resolution for the sheaf $\F$. 

  In the case when $|w|\leqslant 20$ we have the following condition on the cohomology groups of the sheaf $\F$.
 \begin{lemma}\label{lemma: h1(F) vanishes in case mu less than 3}
  Let $B$ be a nodal complete intersection of a smooth quadric hypersurface $Q$ and a quartic hypersurface in the projective space $\p^4$ and let $w$ be a minimal non-empty $\frac{1}{2}$-even set of nodes on the surface~$B$. If $|w|\leqslant 20$ then $H^1(Q,\F(1)) = 0$.
 \end{lemma}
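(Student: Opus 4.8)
Since $w$ is a $\frac{1}{2}$-even set we are in the case $\delta = 1$, and since $w$ is minimal with $|w|\leqslant 20$, Corollary \ref{corollary: properties of w} leaves only the possibilities $|w|\in\{4,12,20\}$. The plan is to control the single group $H^1(Q,\F(1))$ through its Euler characteristic. Applying the duality of Lemma \ref{lemma: properties of F}(3) with $\delta=1$ and $n=1$ gives $h^0(Q,\F(1)) = h^2(Q,\F(1))$, while $h^3(Q,\F(1))=0$ because $\F$ is supported on the surface $B$. Combined with the formula of Lemma \ref{lemma: properties of F}(7), this yields $h^1(Q,\F(1)) = 2h^0(Q,\F(1)) - \chi(\F(1)) = 2h^0(Q,\F(1)) - \bigl(5 - |w|/4\bigr)$, so everything reduces to bounding $h^0(Q,\F(1)) = h^0(B,\F_B(1))$.

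The heart of the proof is to sharpen the bound $h^0\leqslant 2$ of Lemma \ref{lemma: JR restriction on cohomology groups of F} to $h^0(B,\F_B(1))\leqslant 1$, and here is where I would use the smoothness of $Q$ essentially. Suppose $s_0,s_1\in H^0(B,\F_B(1))$ are linearly independent. Using the non-degenerate pairing $S^2\F_B\to\OP_B(-1)$ of Lemma \ref{lemma: properties of F}(1), form the three sections $\ell_{ij} := b(s_i,s_j)\in H^0(B,\OP_B(1))$. Away from the finite set $w$ the sheaf $\F_B$ is invertible and the pairing identifies $\F_B^{\otimes 2}$ with $\OP_B(-1)$, so there $\ell_{ij}=s_i s_j$ and the Gram determinant $\ell_{00}\ell_{11}-\ell_{01}^2$ vanishes; since $B$ is reduced and irreducible it then vanishes on all of $B$. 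As restriction $H^0(\p^4,\OP(1))\to H^0(B,\OP_B(1))$ is an isomorphism, I would lift the $\ell_{ij}$ to linear forms on $\p^4$; then $\ell_{00}\ell_{11}-\ell_{01}^2$ lies in the degree-two part of the ideal of $B=Q\cap V_4$, which is spanned by the quadratic form $q$ defining $Q$ (as $\deg V_4 = 4$). Hence $\ell_{00}\ell_{11}-\ell_{01}^2 = cq$. If $c\neq 0$ then $q$ is a quadratic form in at most three linear forms, so $\rk(q)\leqslant 3$, contradicting smoothness of $Q$; if $c=0$ then $\ell_{00}\ell_{11}=\ell_{01}^2$ forces all $\ell_{ij}$ proportional, whence $s_0,s_1$ are dependent. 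Either way we reach a contradiction, so $h^0(B,\F_B(1))\leqslant 1$.

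Feeding $h^0\leqslant 1$ into the Euler-characteristic relation disposes of the small cases: for $|w|=4$ it would force $h^1 = 2h^0-4<0$, so no such set exists and the claim is vacuous there, while for $|w|=12$ non-negativity of $h^1 = 2h^0-2$ forces $h^0=1$ and $h^1=0$. For $|w|=20$ we have $h^1 = 2h^0$, so it remains to exclude $h^0=1$. A nonzero section of $\F_B(1)$ corresponds, via the projection formula, to an effective divisor $Z$ on $\B$ in the class $\tfrac12\bigl(H-\sum_{p\in w}E_p\bigr)$; writing $Z = Z' + \sum_p a_p E_p$ with $Z'$ containing no exceptional curve, the identity $Z\cdot E_q = \tfrac12(-E_q^2)=1$ gives $Z'\cdot E_q = 1+2a_q\geqslant 1$ for every $q\in w$, so the image $\Gamma = \sigma(Z')$ is a curve of degree $Z'\cdot H = \tfrac12 H^2 = 4$ passing through all $20$ nodes. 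But $\Sing(B)$ is cut out by quartics (as in the proof of Lemma \ref{lemma: lines and conics on B containing 1/2-even sets}) and is zero-dimensional, so by Bezout $\Gamma$ meets $\Sing(B)$ in at most $4\cdot\deg\Gamma = 16 < 20$ points, a contradiction. Hence $h^0=0$ and $h^1(Q,\F(1))=0$.

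The main obstacle is precisely the improvement from $h^0\leqslant 2$ to $h^0\leqslant 1$: the Gram-determinant computation is the delicate step, and it is the only place where the maximal rank of the smooth quadric $Q$ is used in an indispensable way. The remaining ingredient for $|w|=20$—that a nonzero section produces a degree-four curve through every node—is a routine intersection computation on the resolution $\B$, after which the crude Bezout bound finishes the case as soon as $|w|$ exceeds $16$.
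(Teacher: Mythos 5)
Your proof is correct, but it departs from the paper's argument at both of the delicate points, so it is worth recording the differences. For the bound $h^0(B,\F_B(1))\leqslant 1$ the paper argues geometrically: two independent sections $s_0,s_1$ give the three products $s_i s_j$ as hyperplane sections through $w$, hence a pencil of hyperplanes containing $w$, so $w$ lies on the conic $H_1\cap H_2\cap Q$ and $|w|\leqslant 8$ by Lemma \ref{lemma: lines and conics on B containing 1/2-even sets}(3); this kills $|w|=12$ and $20$ but says nothing for $|w|=4$, which the paper instead handles directly from $\chi(\F(1))=4$ and $h^0\leqslant 2$. Your Gram-determinant argument uses the same input (the sections $\ell_{ij}=b(s_i,s_j)$) but exploits the relation $\ell_{00}\ell_{11}-\ell_{01}^2\in H^0(\I_B(2))=\CC\, q$ together with $\rk(q)=5$; this is uniform in $|w|$ and in fact shows that no minimal $\frac12$-even set of $4$ nodes exists at all, a fact the paper only obtains much later (in Lemma \ref{lemma: constraint of minimal map for half-even sets of nodes}) by a longer bundle-theoretic argument. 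For the exclusion of $h^0=1$ when $|w|=20$, the paper uses the injectivity of the multiplication map $H^0(B,\F_B(1))\otimes H^0(B,\OP_B(1))\to H^0(B,\F_B(2))$ against the dimension count $5>4=h^0(B,\F_B(2))$, whereas you produce from the section an effective divisor whose non-exceptional part maps to a degree-$4$ curve through all $20$ nodes and invoke the quartic-Bezout bound $4\cdot 4=16<20$, exactly extending the paper's own argument for lines and conics in Lemma \ref{lemma: lines and conics on B containing 1/2-even sets}. Two small points you should make explicit: the identity $\ell_{00}\ell_{11}=\ell_{01}^2$ on $B\setminus w$ only needs that $\F_B$ is invertible there and that the pairing is $\OP_B$-bilinear and generically non-degenerate (so $b(s,s)=0$ forces $s=0$), and in the Bezout step the bound must be applied to each irreducible component of $\Gamma$ separately, choosing for each a quartic from the defining family not vanishing on it.
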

 \begin{proof}
   By assertions (3) and (7) of  Lemma \ref{lemma: properties of F} one has $h^2(Q,\F(1)) = h^0(Q,\F(1))$ and $\chi(\F(1)) = 5-|w|/4$. Moreover, by Lemma \ref{lemma: JR restriction on cohomology groups of F} we get that $h^0(Q,\F(1)) \leqslant 2$. This implies that if $|w| = 4$ then~\mbox{$H^1(Q,\F(1))$} vanishes.
 
  Recall that $\widetilde{B}$ is the minimal resolution of singularities of the nodal surface $B$ and that $E_w$ is the sum of classes of exceptional divisors over points in the set $w\subset \Sing(B)$ on $\widetilde{B}$. Let $\F_B$ be the sheaf associated to $w$ on $B$. Then $\F = \iota_*\F_B$ and $h^1(Q, \F(1)) = h^1(B,\F_B(1))$ since the closed embedding $\iota\colon B\to Q$ is an affine morphism.

  By Corollary \ref{corollary: properties of w} if $|w|\leqslant 20$ then $|w| = 4$ or $12$ or $20$.  If there is a non-zero section of the sheaf~$\F_B(1)$ then it defines a non-zero element $s$ in the group $H^0\left(\widetilde{B}, \OP_{\widetilde{B}}\left(\frac{H-E_w}{2}\right)\right)$. Then $s^2$ is an element in the cohomology group $H^0(\widetilde{B}, \OP_{\widetilde{B}}(H - E_w))$ so the $\frac{1}{2}$-even set of nodes $w$ lies on the intersection of the surface $B$ with a hyperplane section $H$ in $\p^4$.
  
  Assume that  $h^0(B,\F_B(1)) = 2$ and $|w| = 12$ or $20$. Then there exists a two-dimensional space of hyperplanes which contain the set of nodes $w$. Let $H_1$ and $H_2$ be two elements in this space. Thus, $w$ lies on the intersection $H_1\cap H_2\cap Q$ which is a conic. Then we get a contradiction by Lemma \ref{lemma: lines and conics on B containing 1/2-even sets}(3). Thus, if $|w| = 12$ or $20$ then $h^0(B,\F_B(1))\leqslant 1$. Therefore, if $|w| = 12$ then~\mbox{$H^1(Q,\F(1))$} vanishes.
 
  Let $w$ be an even set of $20$ nodes. By the previous argument we conclude that $h^0(B, \F_B(1)) \leqslant 1$. Assume that this is an equality. Consider the multiplication map:
  \[
   m\colon H^0(B, \F_B(1))\otimes H^0(B,\OP_B(1)) \to H^0(B,\F_B(2))
  \]
  On the one hand, since $h^0(B, \F_B(1)) = 1$ the map $m$ is an embedding; otherwise there exists a non-zero section in $H^0(B,\OP_B(1))$ which vanishes on $B$. On the other hand, in view of assertions (6) and (7) of Lemma \ref{lemma: properties of F} the dimension of the cohomology group $H^0(B,\F_B(2))$ equals $4$ which is less that $h^0(B,\OP_B(1)) = 5$. Thus, we get a contradiction; therefore, both numbers $h^0(B, \F_B(1))$ and  $h^1(B, \F_B(1))$ also vanish by Lemma \ref{lemma: properties of F}(3). Thus, so does $h^1(Q,\F(1))$.
 \end{proof}
 By Corollary \ref{corollary: properties of w} we get that $|w|\equiv 4 \mod 8$. Let $\mu$ be an integer such that $|w| = 8\mu+4$. Then by assertions (3), (5), (6) and (4) of Lemma \ref{lemma: properties of F} and by Lemma~\ref{lemma: h1(F) vanishes in case mu less than 3} we get the following table of cohomology groups of $\F$ in the case when $|w|\leqslant 20$.
  \begin{center}
  \begin{longtable}{|c|c|c|c|c|c|c|}
  \caption{\label{table: cohomology groups of F in 1/2-even case less than 28 nodes}}\\
  \hline
   & $h^i(Q, \F(-1))$ & $h^i(Q,\F)$ & $h^i(Q, \F(1))$ &$h^i(Q, \F(2))$ & $h^i(Q, \F(3))$ & $h^i(Q,\F(4))$ \\
    \hline
    $i=2$& $20-2\mu$ & $8 - 2\mu$     & $2-\mu$ & $0$ & $0$ &$0$ \\
  \hline
  $i=1$& $0$ &$0$  & $0$ & $0$ & $0$ &$0$\\
  \hline
  $i=0$& $0$ &$0$  & $2 - \mu$ & $8 - 2\mu$ & $20 - 2\mu$& $40-2\mu$\\
  \hline
  \end{longtable}
 \end{center}
  This table implies the following property of the sheaf $\F$.
 \begin{lemma}\label{lemma: w is half-even then F4 is globally generated}
  Let $B$ be a nodal complete intersection of a smooth quadric hypersurface $Q$ and a quartic hypersurface in the projective space $\p^4$, let $w$ be a minimal non-empty $\frac{1}{2}$-even set of nodes on the surface~$B$ and let $\F$ be the sheaf associated with $w$ on $Q$. Then the sheaf $\F(4)$ is generated by its global sections.
 \end{lemma}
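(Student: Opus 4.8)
The plan is to deduce global generation of $\F(4)$ from the Castelnuovo--Mumford regularity criterion, exactly as $\F(3)$ was handled in the $0$-even case above. The only input needed is Table \ref{table: cohomology groups of F in 1/2-even case less than 28 nodes}, which records all the cohomology of the relevant twists of $\F$.

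First I would verify that $\F$ is $4$-regular on $Q$ with respect to $\OP_Q(1)$, that is, that $H^i(Q,\F(4-i)) = 0$ for every $i\geqslant 1$. For $i=1$ and $i=2$ this can be read straight off the table: the entries $h^1(Q,\F(3))$ and $h^2(Q,\F(2))$ both vanish. For $i=3$ the group $H^3(Q,\F(1))$ vanishes automatically, since $\F$ is supported on the surface $B$, which is $2$-dimensional; the same dimension reason kills all higher $i$. With these vanishings in hand, the Mumford criterion \cite[Lecture 14]{Mumford} immediately yields that the $4$-regular sheaf $\F$ has $\F(4)$ generated by its global sections, which is precisely the claim.

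There is essentially no obstacle, as the statement has already been reduced to a cohomology computation carried out in the table; the proof is a one-line application of regularity paralleling the $0$-even case. The single point that deserves attention is the degree shift: here one must twist by $4$ rather than by $3$. Indeed $\F$ is \emph{not} $3$-regular in general, because Table \ref{table: cohomology groups of F in 1/2-even case less than 28 nodes} shows $h^2(Q,\F(1)) = 2-\mu$, which fails to vanish for $\mu = 0$ (that is, for $|w| = 4$); the extra twist reflects the shift by $\delta = 1$ in the Serre-duality symmetry of Lemma \ref{lemma: properties of F}(3).
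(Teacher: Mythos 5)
Your proof is correct and coincides with the paper's own argument: both read the vanishing of $H^1(Q,\F(3))$ and $H^2(Q,\F(2))$ off Table \ref{table: cohomology groups of F in 1/2-even case less than 28 nodes}, note that $H^3$ vanishes for dimension reasons, and invoke the Castelnuovo--Mumford criterion from \cite[Lecture 14]{Mumford}. Your closing remark correctly identifies why the twist must be $4$ rather than $3$ in the $\delta=1$ case.
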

 \begin{proof}
  The table above implies that cohomology group $H^i(Q,\F(4-i))$ vanishes for all $1\leqslant i\leqslant 3$. Thus, the result follows by~\cite[Lecture 14]{Mumford}.
 \end{proof}
 Denote by $x$ and $y$ dimensions of cohomology groups $H^1(Q,\Spin\otimes\F)$ and $H^1(Q,\Spin\otimes\F(1))$ respectively.
 Then by Lemma \ref{lemma: properties of F}(8) dimensions of cohomology groups of the sheaf $\Spin\otimes \F(n)$ are as follows:
   \begin{center}
  \begin{longtable}{|c|c|c|c|c|c|c|}
  \caption{\label{table: cohomology groups of FxS in  1/2-even case less than 28 nodes}}\\
  \hline
     & $h^i(Q, \Spin\otimes\F(-1))$ & $h^i(Q,\Spin\otimes\F)$ & $h^i(Q, \Spin\otimes\F(1))$ &$h^i(Q, \Spin\otimes\F(2))$ & $h^i(Q, \Spin\otimes\F(3))$  \\
  \hline
   $i=2$&  $56-4\mu$  & $24 - 4\mu + x$   & $8-4\mu-x+y$ & $x$ & $0$  \\
  \hline
  $i=1$& $0$ & $x$ & $y$ & $y$ & $x$ \\
  \hline
  $i=0$& $0$ & $0$ & $x$ &  $8-4\mu-x+y$ & $24 - 4\mu + x$\\
  \hline
  \end{longtable}
 \end{center} 
 Denote by $\E_{spin}$ the following vector bundle:
 \begin{equation}\label{eq: E_spin in half-even case}
 \E_{spin} = \big( H^0(Q,\Spin\otimes \F(3))\otimes \Spin(-3)\big) \oplus \big( H^0(Q,\Spin\otimes \F(2))\otimes \Spin(-2)\big).
 \end{equation}
  Then $\E_{spin}$ is isomorphic to $\Spin(-3)^{\oplus x}\oplus \Spin(-2)^{\oplus y}$ and by Lemma \ref{lemma: map from spin} applied several times to $\F(2)$ and $\F(3)$ there is a map of vector bundles:
 \[
   \varphi_{spin}\colon \E_{spin} \to \F
 \]
 such that the map $H^1(\Id_{\Spin}\otimes\varphi_{spin}(n))\colon H^1(Q,\Spin\otimes\E_{spin}(n)) \to H^1(Q,\Spin\otimes\F(n))$ is an isomorphism for $n=2$ and $3$.
 
 Denote by $A$ the graded algebra $\bigoplus_{n\in \ZZ} H^0(Q,\OP_Q(n))$.
 Then $M = \bigoplus_{n\in \ZZ} H^0(Q,\F(n))/ \Im(\varphi_{spin}(\ZZ))$ is a graded left $A$-module; here by $\Im(\varphi_{spin}(\ZZ))$  the following graded submodule:
 \[
  \Im(\varphi_{spin}(\ZZ)) = \bigoplus_{n\in \ZZ}\left(H^0(\varphi_{spin}(n))(H^0(Q,\E_{spin}(n)))\right)\subset \bigoplus_{n\in \ZZ} H^0(Q,\F(n)).
 \]
 Since the cohomology group $H^0(Q,\F(n))$ vanishes for all $n\leqslant 0$ and by Lemma \ref{lemma:  w is half-even then F4 is globally generated} the module $M$ is generated by elements in its first, second, third and fourth graded components. 
 
 Choose the minimal set of graded generators of $M$ (i.e. any element from the set of generators lies in a graded component of $M$) and assume that this set contains exactly $m_i$ elements of $H^0(Q,\F(i))$ for $i = 1,2,3$ and~$4$.  Note that by construction we get $m_1 = h^0(Q,\F(1)) = 2-\mu$. Denote by $\E_{lin}$ the following vector bundle:
 \begin{equation}\label{eq: E_lin in half-even case}
  \E_{lin} = \OP_Q(-1)^{\oplus 2-\mu}\oplus \OP_Q(-2)^{\oplus m_2}\oplus \OP_Q(-3)^{\oplus m_3}\oplus \OP_Q(-4)^{\oplus m_4}.
 \end{equation}
  Then the set of generators we have choosen defines a map from $\E_{lin}$ to~$\F$:
 \[
  \varphi_{lin}\colon \E_{lin}\to \F.
 \]
 Denote by $\E$ and $\varphi$ the vector bundle $\E_{spin}\oplus \E_{lin}$ and map $\varphi = \varphi_{spin}\oplus\varphi_{lin}$.  
 \begin{lemma}\label{lemma: properties of varphi 1/2-even sets of nodes less than 28}
  Let $\E = \E_{spin}\oplus \E_{lin}$ be the direct sum of vector bundles defined in \textup{\eqref{eq: E_spin in 0-even case}} and \textup{\eqref{eq: E_lin in 0-even case}} and let the map $\varphi=\varphi_{spin}\oplus\varphi_{lin}\colon \E\to \F$ be as defined above. Then $\varphi$ is surjective and satisfies the following properties:
  \begin{enumerate}
   \item[\textup{(1)}] the map $H^0(\varphi(n))\colon H^0(Q,\E(n)) \to H^0(Q,\F(n))$ is surjective for all $n\in \ZZ$ and this map is an isomorphism for~\mbox{$n=1$};
   \item[\textup{(2)}] the map $H^1(\Id_{\Spin}\otimes\varphi(n))\colon H^1(Q,\Spin\otimes\E(n)) \to H^1(Q,\Spin\otimes\F(n))$ is an isomorphism for $n=2$ and $3$ and it is zero for all $n\ne 2$ or $3$.
  \end{enumerate}
 \end{lemma}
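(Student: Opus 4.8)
The plan is to follow the proof of Lemma \ref{lemma: properties of varphi 0-even sets of nodes} essentially verbatim, exploiting the splitting $\E=\E_{spin}\oplus\E_{lin}$ from \eqref{eq: E_spin in half-even case} and \eqref{eq: E_lin in half-even case} to decouple the two assertions. The whole point of the construction is that $\E_{lin}$ is a direct sum of line bundles, so by Lemma \ref{lemma: properties of Spin}(2) the twisted bundle $\Spin\otimes\E_{lin}(n)$ has vanishing $H^1$ and $H^2$ for every $n$, while $\E_{spin}$ carries all the relevant intermediate cohomology. This lets me verify assertion (2) on $\E_{spin}$ alone and assertion (1) by a generation argument on $\E_{lin}$.

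First I would prove assertion (2). Since $\Spin\otimes\E_{lin}(n)$ has no first cohomology, the inclusion identifies $H^1(Q,\Spin\otimes\E(n))$ with $H^1(Q,\Spin\otimes\E_{spin}(n))$, and here $\Spin\otimes\E_{spin}(n)=(\Spin\otimes\Spin)(n-3)^{\oplus x}\oplus(\Spin\otimes\Spin)(n-2)^{\oplus y}$. By Lemma \ref{lemma: properties of Spin}(3) the group $H^1(Q,(\Spin\otimes\Spin)(k))$ is non-zero only for $k=0$, so $H^1(Q,\Spin\otimes\E(n))$ vanishes for all $n\ne 2,3$; this already gives the claim that $H^1(\Id_{\Spin}\otimes\varphi(n))$ is zero outside $n=2,3$, as its source vanishes. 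For $n=2$ and $n=3$ the map $H^1(\Id_{\Spin}\otimes\varphi_{spin}(n))$ was arranged to be an isomorphism when $\E_{spin}$ was built by repeated application of Lemma \ref{lemma: map from spin} to $\F(2)$ and $\F(3)$; since $\E_{lin}$ contributes nothing to $H^1(Q,\Spin\otimes\E(n))$, the map $H^1(\Id_{\Spin}\otimes\varphi(n))$ coincides with this isomorphism.

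Next I would prove assertion (1). By construction the graded $A$-module $M=\bigoplus_n H^0(Q,\F(n))/\Im(\varphi_{spin}(\ZZ))$ is generated, by Lemma \ref{lemma: w is half-even then F4 is globally generated}, in degrees at most $4$, and $\E_{lin}$ was defined precisely so that $\varphi_{lin}$ hits a minimal set of such generators; hence $H^0(Q,\E_{lin}(n))\to H^0(Q,\F(n))/\Im(\varphi_{spin}(\ZZ))$ is surjective for every $n$. Combined with the surjectivity of $H^0(Q,\E_{spin}(n))\to\Im(\varphi_{spin}(n))$, this yields surjectivity of $H^0(\varphi(n))$ for all $n$. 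For the isomorphism at $n=1$, I would note that $H^0(Q,\E_{spin}(1))=0$ because $h^0(Q,\Spin(-1))=h^0(Q,\Spin(-2))=0$ by Lemma \ref{lemma: properties of Spin}(2); thus $H^0(Q,\E(1))=H^0(Q,\E_{lin}(1))$ has dimension $m_1=2-\mu=h^0(Q,\F(1))$, and since the degree-one generators form a basis of $H^0(Q,\F(1))$ the map $H^0(\varphi(1))$ is an isomorphism. Finally, surjectivity of $\varphi$ as a map of sheaves follows from assertion (1): if $\mathcal{C}=\Coker(\varphi)$ were non-zero, then for $n\gg 0$ the surjectivity of $H^0(\varphi(n))$ forces the map $H^0(Q,\F(n))\to H^0(Q,\mathcal{C}(n))$ to be zero while global generation makes it surjective, so $H^0(Q,\mathcal{C}(n))=0$, which is impossible for a non-zero coherent sheaf after sufficient twisting.

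I expect no serious obstacle, as the statement is cohomological bookkeeping parallel to the $0$-even case; the only point requiring genuine care is the ``zero for $n\ne 2,3$'' clause of assertion (2), which hinges on the precise vanishing pattern of $H^1(Q,(\Spin\otimes\Spin)(k))$ recorded in Lemma \ref{lemma: properties of Spin}(3) together with the fact that $\Spin\otimes\E_{lin}$ has no intermediate cohomology.
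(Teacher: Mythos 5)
Your proposal is correct and follows essentially the same route as the paper, whose proof of this lemma simply defers to the $0$-even case (Lemma \ref{lemma: properties of varphi 0-even sets of nodes}): assertion (2) reduces to $\E_{spin}$ because $\Spin\otimes\E_{lin}$ has no intermediate cohomology, assertion (1) follows from the choice of $\E_{lin}$ as generators of the quotient module $M$, and sheaf surjectivity follows from (1) together with global generation of $\F(4)$. The only cosmetic slip is the phrase ``global generation makes it surjective'' in the last step --- the cleaner statement is that $\Coker(\varphi)(n)$ is generated by the images of global sections of $\F(n)$, which you have shown map to zero --- but the conclusion is unaffected.
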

 \begin{proof}
  The proof of this lemma is analogous to one of Lemma \ref{lemma: properties of varphi 0-even sets of nodes}.
 \end{proof}
 Denote by $\K$ the kernel of the map $\varphi$ and by $\Phi$ its embedding into $\E$; we get an exact sequence:
 \begin{equation}\label{eq: CC resolution in 1/2-even case less than 28}
  0\to \K\xrightarrow{\Phi} \E\xrightarrow{\varphi} \F \to 0.
 \end{equation}
 Further we study properties of the sheaf $\K$. We start with the following assertion.
 \begin{proposition}\label{proposition: bundle K in 1/2-even case less than 28}
  Let $\E = \E_{spin}\oplus \E_{lin}$ be the direct sum of vector bundles defined in \textup{\eqref{eq: E_spin in 0-even case}} and \textup{\eqref{eq: E_lin in 0-even case}} and let the map $\varphi=\varphi_{spin}\oplus\varphi_{lin}\colon \E\to \F$ be as defined above. If we denote by $z = 4x+3\mu-2$ then
  \[
   \K = \Spin(-1)^{\oplus x}\oplus \Spin(-2)^{\oplus y} \oplus \OP_Q(-2)^{m_2 - z}\oplus\OP_Q(-3)^{m_3 + z}\oplus \OP_Q(-4)^{\oplus m_4+2-\mu}.
  \]

 \end{proposition}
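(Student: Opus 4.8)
The plan is to imitate the proof of Proposition~\ref{proposition: bundle K in 0-even case}, reading the summands of $\K$ off the cohomology of its twists by $\Spin$ and by $\OP_Q$, with one new idea needed to handle the fact that, unlike in the $0$-even case, the group $H^2(Q,\E\otimes\Spin(2))$ does not vanish. First I would note that $\K$ is locally free: by Lemma~\ref{lemma: properties of F}(1) the sheaf $\F$ has projective dimension $1$, so the kernel of the surjection $\varphi$ from the locally free $\E$ is locally free. As $\E$ is a sum of twists of $\Spin$ and $\OP_Q$ it has no intermediate cohomology, and together with $H^1(Q,\F(n))=0$ (Table~\ref{table: cohomology groups of F in 1/2-even case less than 28 nodes}) and Lemma~\ref{lemma: properties of varphi 1/2-even sets of nodes less than 28}(1) the long exact sequence of \eqref{eq: CC resolution in 1/2-even case less than 28} gives $H^1(Q,\K(n))=H^2(Q,\K(n))=0$ for all $n$; moreover $H^0(Q,\E(n))=0$ for $n\le 0$ and the isomorphism at $n=1$ force $H^0(Q,\K(n))=0$ for $n\le 1$. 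Applying Lemma~\ref{lemma: properties of Spin}(4) I would obtain a decomposition $\K=\bigoplus_{i\ge 1}\Spin(-i)^{\oplus s_i}\oplus\bigoplus_{j\ge 2}\OP_Q(-j)^{\oplus n_j}$.

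From this decomposition and Lemma~\ref{lemma: properties of Spin}(2),(3) one has $h^1(Q,\K\otimes\Spin(i))=s_i$ and $h^2(Q,\K\otimes\Spin(i-1))=s_i$. Tensoring \eqref{eq: CC resolution in 1/2-even case less than 28} by $\Spin(n)$ and comparing Table~\ref{table: cohomology groups of FxS in  1/2-even case less than 28 nodes} with the cohomology of the summands of $\E$, the cases $n=1,2$ give $s_1=x$ and $s_2=y$ exactly as before (at $n=1$ the group $H^0(Q,\E\otimes\Spin(1))$ vanishes, and at $n=2$ the map $H^1(\Id_\Spin\otimes\varphi(2))$ is an isomorphism by Lemma~\ref{lemma: properties of varphi 1/2-even sets of nodes less than 28}(2), so $s_2=\dim\Coker(H^0(\E\otimes\Spin(2))\to H^0(\F\otimes\Spin(2)))=y$). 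For $i=4$ the map $H^1(\Id_\Spin\otimes\varphi(3))$ is an isomorphism of $x$-dimensional spaces and $H^2(Q,\E\otimes\Spin(3))=0$, whence $s_4=h^2(Q,\K\otimes\Spin(3))=0$; for $i\ge 5$ one uses $H^2(Q,\E\otimes\Spin(n))=0$ and $H^1(Q,\F\otimes\Spin(n))=0$ for $n\ge 4$ (Lemmas~\ref{lemma: properties of F}(4),(5) and \ref{lemma: properties of Spin}(1)) to get $s_i=0$. Thus $s_3$ is the only undetermined spinor multiplicity.

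Next I would compute the line-bundle multiplicities. Since $H^0(\varphi(n))$ is surjective (Lemma~\ref{lemma: properties of varphi 1/2-even sets of nodes less than 28}(1)), one has $h^0(Q,\K(n))=h^0(Q,\E(n))-h^0(Q,\F(n))$; evaluating both sides through Lemma~\ref{lemma: properties of Spin}(2), the values of $h^0(Q,\OP_Q(k))$ and Table~\ref{table: cohomology groups of F in 1/2-even case less than 28 nodes}, the cases $n=2,3,4$ solve successively to
\[
 n_2=m_2-z,\qquad n_3=m_3+z,\qquad n_4=m_4+2-\mu-4s_3,\qquad z=4x+3\mu-2,
\]
where the $n=4$ relation sees the summand $\Spin(-3)^{\oplus s_3}$ through $s_3\,h^0(Q,\Spin(1))=4s_3$. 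Equality of ranks $\rk\K=\rk\E$ (recall $\rk\Spin=2$) then yields $\sum_{j\ge 5}n_j=2s_3$.

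\textbf{The main obstacle} is to show $s_3=0$; this is precisely where the quadric threefold differs from the projective space, since here $h^2(Q,\E\otimes\Spin(2))=x\ne 0$. Tensoring \eqref{eq: CC resolution in 1/2-even case less than 28} by $\Spin(2)$, the surjectivity of $H^1(\Id_\Spin\otimes\varphi(2))$ and the vanishing $H^3(Q,\E\otimes\Spin(2))=0$ identify $H^2(Q,\K\otimes\Spin(2))=\ker(d)$ and $H^3(Q,\K\otimes\Spin(2))=\Coker(d)$, where $d\colon H^2(Q,\E\otimes\Spin(2))\to H^2(Q,\F\otimes\Spin(2))$ is induced by $\varphi$. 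Both spaces have dimension $x$ (Table~\ref{table: cohomology groups of FxS in  1/2-even case less than 28 nodes} and the summands of $\E$), so $d$ is an endomorphism and $\dim\ker(d)=\dim\Coker(d)$. On the other hand the decomposition gives $H^2(Q,\K\otimes\Spin(2))=s_3$ and, since $s_i=0$ for $i\ge 4$ and the summands $\OP_Q(-j)$ with $j\le 4$ contribute nothing to $h^3$, also $H^3(Q,\K\otimes\Spin(2))=\sum_{j\ge 5}n_j\,h^3(Q,\Spin(2-j))$. Combining, $\sum_{j\ge 5}n_j\,h^3(Q,\Spin(2-j))=s_3$, while $\sum_{j\ge 5}n_j=2s_3$ from the rank count; as $h^3(Q,\Spin(2-j))\ge 4$ for every $j\ge 5$ by Lemma~\ref{lemma: properties of Spin}(2), this forces $s_3\ge 8s_3$, hence $s_3=0$ and $n_j=0$ for $j\ge 5$. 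Substituting $s_3=0$ gives $n_4=m_4+2-\mu$ and the asserted form of $\K$. I expect every step other than this positivity argument to be routine bookkeeping parallel to the $0$-even case; the interplay of the square map $d$ with the strict positivity of $h^3(Q,\Spin(2-j))$ is the essential new ingredient forced by the non-vanishing of the intermediate cohomology of $\E\otimes\Spin(2)$ on the quadric.
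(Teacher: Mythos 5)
Your proposal is correct, and up to the determination of $s_1,s_2$, $n_2$, $n_3$ and the rank count it follows the same skeleton as the paper's proof (Horrocks-type splitting of $\K$ via Lemma~\ref{lemma: properties of Spin}(4), then bookkeeping in the long exact sequences of \eqref{eq: CC resolution in 1/2-even case less than 28} twisted by $\OP_Q(n)$ and $\Spin(n)$). The one place where you genuinely diverge is the crux you correctly single out, the vanishing of $s_3$ and of $n_j$ for $j\geqslant 5$. The paper gets $s_3=0$ by an exact dimension count: after computing $n_2=m_2-z$, the four-term sequence
\[
0\to H^0(Q,\K\otimes\Spin(3))\to H^0(Q,\E\otimes\Spin(3))\to H^0(Q,\F\otimes\Spin(3))\to H^1(Q,\K\otimes\Spin(3))\to 0
\]
(exact because $H^1(\Id_{\Spin}\otimes\varphi(3))$ is an isomorphism) pins down $s_3=h^1(Q,\K\otimes\Spin(3))$ exactly, since $h^0(Q,\K\otimes\Spin(3))=15s_1+s_2+4n_2$ involves only quantities already known; one checks the alternating sum is $0$. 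Your route instead twists by $\Spin(2)$, identifies $H^2(Q,\K\otimes\Spin(2))=\ker(d)$ and $H^3(Q,\K\otimes\Spin(2))=\Coker(d)$ for the induced map $d$ between two $x$-dimensional spaces, and plays the resulting equality $s_3=\sum_{j\geqslant 5}n_j\,h^3(Q,\Spin(2-j))$ against the rank relation $\sum_{j\geqslant 5}n_j=2s_3$ and the bound $h^3(Q,\Spin(2-j))\geqslant 4$ to force $s_3=0$; I have checked all the inputs ($H^3(Q,\E\otimes\Spin(2))=0$, the $h^3$ values from Lemma~\ref{lemma: properties of Spin}(2), the formula $n_4=m_4+2-\mu-4s_3$) and the argument is sound. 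Your approach has the mild advantage of being robust against arithmetic slips (it only needs inequalities and the self-duality of the dimension count for $d$), and it also disposes of $s_4$ and $s_{\geqslant 5}$ before, rather than after, computing $n_3$; the paper's computation is shorter but requires the Euler-characteristic identity to come out exactly to zero. Both are valid proofs of the stated decomposition.
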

 \begin{proof}
  Since the projective dimension of $\F$ equals $1$ by Lemma \ref{lemma: properties of F} we get that $\K$ is a vector bundle. By construction of the vector bundle $\E$ cohomology groups $H^1(Q,\E(n))$ and $H^2(Q,\E(n))$ vanish for all $n\in\ZZ$. Lemma \ref{lemma: properties of varphi 1/2-even sets of nodes less than 28} implies that $H^0(Q,\K(n)) = 0$ for all $n\leqslant 1$ and that groups $H^1(Q,\K(n))$ and $H^2(Q, \K(n))$ vanish for all~$n\in \ZZ$. Thus, by \cite[Theorem 3.5]{Ottaviani-Horrocks_criterion} the bundle $\K$ is isomorphic to the following:
  \begin{equation}\label{eq: K for 1/2-even sets less than 28}
   \K = \bigoplus_{i\geqslant 1} \Spin(-i)^{\oplus s_i} \oplus \bigoplus_{j\geqslant 2} \OP_Q(-j)^{\oplus n_j},
  \end{equation}
  where $s_i$ and $n_j$ are non-negative integers for $i\leqslant -1$ and $j\leqslant -2$. By Lemma \ref{lemma: properties of F} this implies the equality~\mbox{$h^1(Q, \Spin\otimes\K(n+1))  = h^2(Q, \Spin\otimes\K(n))$} for all~$n\in \ZZ$. 
  
  Consider the long exact sequence of cohomology groups for the exact sequence \eqref{eq: CC resolution in 1/2-even case less than 28} tensored by $\Spin(n)$:
  \begin{multline*}
   0\to H^0(Q, \K\otimes\Spin(n)) \to H^0(Q, \E\otimes\Spin(n))  \to H^0(Q, \F\otimes\Spin(n))  \to H^1(Q, \K\otimes\Spin(n)) \to  H^1(Q, \E\otimes\Spin(n)) \to \\ \to H^1(Q, \F\otimes\Spin(n)) \to H^2(Q, \K\otimes\Spin(n)) \to H^2(Q, \E\otimes\Spin(n)) \to H^2(Q, \F\otimes\Spin(n)) \to \dots
  \end{multline*}
  From this exact sequence we get that $H^0(Q,\K\otimes\Spin(n))$ and $H^1(Q, \K\otimes\Spin(n))$ vanish for all $n\leqslant 0$. Substituting $n = 0, 1$ and $2$ we get that $s_1 = x$ and $s_2 = y$ by Lemma \ref{lemma: properties of varphi 1/2-even sets of nodes less than 28}.
  
  Consider the long exact sequence of cohomology groups for the exact sequence \eqref{eq: CC resolution in 1/2-even case less than 28} twisted by $2$. It implies that $n_2 = m_2 - 4x-3\mu+2$. Then by the long exact sequence for $\K\otimes \Spin(3)$ we get that $s_3 = 0$.
  
  Consider the long exact sequence of cohomology groups for the exact sequence \eqref{eq: CC resolution in 1/2-even case less than 28} twisted by $3$. It implies that $n_3 = m_3 + 4x+3\mu-2$. Then by the long exact sequence for $\K\otimes \Spin(j)$ we get that $s_j = 0$ for~\mbox{$j\geqslant 4$.}
  
  Computing the rank of $\K$ and considering the  the exact sequence \eqref{eq: CC resolution in 1/2-even case less than 28} twisted by $4$ we get~\mbox{$n_4 = m_4 + 2-\mu$} and $n_j$ vanishes for all $j\geqslant 5$. This finishes the proof.
  \end{proof}
  
  As well as in Section \ref{section: 0-even sets} here it suffices for us to construct minimal Casnati--Catanese resolution. Recall that that the map $\Phi\colon \K\to \E$ is  minimal i.e. for any subbundles $\K_0\subset \K$ and~$\E_0\subset \E$ the composition $\mathrm{pr}_{\E_0}\circ \Phi|_{\K_0} \colon \K_0\to\E_0$ is not an isomorphism where $\Phi|_{\K_0}$ is the restriction of $\Phi$ to the direct summand of $\K$ and $\mathrm{pr}_{\E_0}$ is the projection of $\E$ to its direct summand. This condition implies the following condition for parameters of $\E$.
  
  \begin{lemma}\label{lemma: constraint of minimal map for half-even sets of nodes}
   Let $B$ be a nodal complete intersection of a smooth quadric hypersurface $Q$ and a quartic hypersurface in $\p^4$, let $w$ be a minimal $\frac{1}{2}$-even set of $4\mu+4$ nodes on $B$ and let the embedding $\Phi\colon \K\to \E$ be a minimal map satisfying properties listed in Proposition \textup{\ref{proposition: bundle K in 1/2-even case less than 28}}. Then $x = m_4 = 0$ and $\mu = 1$ or $2$.
  \end{lemma}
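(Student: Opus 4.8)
The plan is to prove the three assertions $x=0$, $m_4=0$ and $\mu\in\{1,2\}$ separately, using the explicit description of $\K$ and $\E$ from Proposition~\ref{proposition: bundle K in 1/2-even case less than 28} together with the vanishing pattern of the $\Hom$-groups between their indecomposable summands. First note that $|w|=8\mu+4\leqslant 20$ and Corollary~\ref{corollary: properties of w} force $\mu\in\{0,1,2\}$, so it only remains to exclude $\mu=0$. To control the matrix of $\Phi$ I would rewrite every $\Hom$ between twists of $\Spin$ and $\OP_Q$ as a cohomology group of $\Spin(k)$, $\Spin\otimes\Spin(k)$ or $\OP_Q(k)$ and read off from Lemma~\ref{lemma: properties of Spin}(2),(3) exactly which blocks may be nonzero. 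Throughout I use that $\Phi$ in \eqref{eq: CC resolution in 1/2-even case less than 28} is injective while $\varphi$ is surjective with cokernel $\F$ supported on the surface $B$.

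For $x=0$: the only summand of $\E$ into which $\Spin(-1)^{\oplus x}\subset\K$ can map nontrivially is $\OP_Q(-1)^{\oplus 2-\mu}$, since $\Hom_Q(\Spin(-1),\Spin(-j))$ and $\Hom_Q(\Spin(-1),\OP_Q(-j))$ for $j\geqslant 2$ all vanish by Lemma~\ref{lemma: properties of Spin}(2),(3). Injectivity of $\Phi$ then gives $2x\leqslant 2-\mu$. If equality held I would invoke Lemma~\ref{lemma: excluding the embedding of full rank} with $\LA_1=\Spin(-1)^{\oplus x}$, $\LB_1=\OP_Q(-1)^{\oplus 2-\mu}$ and the complementary summands as $\LA_2,\LB_2$: the block $\Phi_{21}$ vanishes and $\deg\LA_1=-3x\neq -2x=\deg\LB_1$, so in the equality case $\{\det\Phi_{11}=0\}$ is a hypersurface of degree $x\leqslant 1\neq 4$ and hence cannot equal $B$, whence the lemma forces $B$ to be reducible or non-reduced, contradicting its irreducibility. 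Thus equality is impossible, and combining this with $2x\leqslant 2-\mu$ yields $x=0$ for every $\mu\in\{0,1,2\}$.

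For $m_4=0$: once $x=0$, the only summand of $\K$ admitting a nonzero map to $\OP_Q(-4)^{\oplus m_4}\subset\E$ is $\OP_Q(-4)^{\oplus 2-\mu}\subset\K$, because $\Hom_Q(\Spin(-2),\OP_Q(-4))$, $\Hom_Q(\OP_Q(-2),\OP_Q(-4))$ and $\Hom_Q(\OP_Q(-3),\OP_Q(-4))$ all vanish. The remaining block $\OP_Q(-4)^{\oplus 2-\mu}\to\OP_Q(-4)^{\oplus m_4}$ is a matrix of scalars, and minimality of $\Phi$ forbids any sub-summand from mapping isomorphically, so this block is zero. Consequently $\mathrm{pr}_{\OP_Q(-4)^{\oplus m_4}}\circ\Phi=0$, the projection $\E\to\OP_Q(-4)^{\oplus m_4}$ factors through $\varphi$, and one obtains a surjection $\F\twoheadrightarrow\OP_Q(-4)^{\oplus m_4}$. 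Since $\F$ is supported on the $2$-dimensional $B$ while $\OP_Q(-4)^{\oplus m_4}$ is supported on all of the threefold $Q$, such a surjection exists only if $m_4=0$.

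The remaining and hardest point is $\mu\neq 0$. Suppose $\mu=0$; then Table~\ref{table: cohomology groups of F in 1/2-even case less than 28 nodes}, whose entries rely on Lemma~\ref{lemma: h1(F) vanishes in case mu less than 3}, gives $h^0(Q,\F(1))=2$. Via the closed embedding $\iota$ and the projection formula for $\sigma$ this equals $h^0\big(\B,\OP_{\B}(\tfrac{H-\sum_{p\in w}E_p}{2})\big)=2$; fix independent sections $s_1,s_2$. Because $\B$ is integral, the products $s_1^2,s_1s_2,s_2^2$ are linearly independent in $H^0(\B,\OP_{\B}(H-\sum_{p\in w}E_p))$, i.e.\ they give three linearly independent hyperplanes in $\p^4$ all passing through $w$, whose common intersection is a line $L\supset w$ with $|w|=4$. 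I then exclude $L$ geometrically: if $L\not\subset Q$ then $|L\cap Q|=2<4$, impossible; if $L\subset Q$ but $L\not\subset B$ then $L\cap B=L\cap V_4$ has length $4$, yet each of the four nodes of $w$ lying on $L$ contributes intersection multiplicity at least $2$, since a line through a node of $B$ meets $B$ with local multiplicity at least $2$, forcing length $\geqslant 8$, a contradiction; and if $L\subset B$ then $L$ is a line in $B$ containing the whole $\tfrac12$-even set $w$, excluded by Lemma~\ref{lemma: lines and conics on B containing 1/2-even sets}(1). Hence $\mu\neq 0$, so $\mu\in\{1,2\}$. I expect this last step to be the main obstacle: the resolution-theoretic bookkeeping is blind to $\mu=0$, and ruling it out requires turning $h^0(\F(1))=2$ into an actual line through $w$ and then playing the node-multiplicity estimate against the bound $\deg(L\cap V_4)=4$.
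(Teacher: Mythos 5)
Your proof is correct. For $x=0$ and $m_4=0$ it runs essentially parallel to the paper: the $\Hom$-vanishings from Lemma~\ref{lemma: properties of Spin} confine the relevant blocks of $\Phi$, injectivity gives $2x\leqslant 2-\mu$, minimality kills the constant block into $\OP_Q(-4)^{\oplus m_4}$, and the resulting surjection $\F\twoheadrightarrow\OP_Q(-4)^{\oplus m_4}$ (the paper phrases this via Lemma~\ref{lemma: excluding the isomorphism} and full rank) forces $m_4=0$. The genuine divergence is in ruling out $\mu=0$. The paper stays entirely inside the resolution: it argues that $x=0$ together with the $m_2$-bookkeeping is impossible, hence $x=1$, and then that the block $\Spin(-1)\to\OP_Q(-1)^{\oplus 2}$ would cut out $B$ by a section of $\OP_Q(1)$, contradicting $\deg B=8$ via Lemma~\ref{lemma: excluding the embedding of full rank}. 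You instead exclude the equality $2x=2-\mu$ once and for all by that same degree count (giving $x=0$ uniformly, including for $\mu=0$) and then kill $\mu=0$ geometrically: $h^0(Q,\F(1))=2$ yields two sections whose three pairwise products are independent by integrality of $\B$, hence three independent hyperplanes through $w$ and a line $L\supset w$ with $|w|=4$; the cases $L\not\subset Q$, $L\subset Q$ with $L\not\subset B$, and $L\subset B$ are excluded by $\deg(L\cap Q)=2$, by the local multiplicity $\geqslant 2$ of $L\cap V_4$ at each node (length $\geqslant 8>4$), and by Lemma~\ref{lemma: lines and conics on B containing 1/2-even sets}(1) respectively. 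Both routes are sound; yours buys independence from the somewhat delicate $m_2$-inequality in the paper's proof at the price of extra geometric input, which is in any case the same mechanism underlying Lemma~\ref{lemma: lines and conics on B containing 1/2-even sets}. Two cosmetic slips: the $\OP_Q(-4)$ summand of $\K$ has exponent $m_4+2-\mu$, not $2-\mu$, and $\{\det\Phi_{11}=0\}$ is a divisor in $|\OP_Q(x)|$, i.e.\ a surface of degree $2x$ rather than ``degree $x$''; neither affects the argument.
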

  \begin{proof}
   One has $\Hom_Q(\OP_Q(i), \OP_Q(-4)) = 0$ for $i\geqslant -3$.
   By Lemma \ref{lemma: properties of Spin}(2) one has $\Hom_Q(\Spin(j), \OP_Q(-4)) = 0$ for $j\geqslant -3$. Therefore, we get that the only direct summand of $\K$ with a non-zero map to $\OP_Q(-4)^{\oplus m_4}\subset \E$ is $\OP_Q(-4)^{\oplus m_4+2-\mu}\subset \K$. Since the minimal embedding $\Phi$ is of full rank then by Lemma \ref{lemma: excluding the isomorphism} we conclude that $m_4 = 0$.
   
   
   By  Lemma \ref{lemma: properties of Spin}(2) and (3) one has $\Hom_Q(\Spin(-1), \Spin(i)) = 0$ for $i\leqslant -2$ and $\Hom_Q(\Spin(-1),\OP_Q(j)) = 0$ for~\mbox{$j\leqslant -2$}. Therefore, we get that the image of $\Phi|_{\Spin(-1)^{\oplus x}}$ lies in the subbundle $\OP_Q(-1)^{\oplus 2-\mu}\subset \E$. Since the  embedding $\Phi$ is of full rank then $x\leqslant (2-\mu)/2$. Thus, either $\mu = 0$ or $\mu = 1$ or $2$ and in two latter cases $x = 0$. To complete the proof we have to exclude the former case.
   
If $\mu = 0$ then $x\leqslant 1$ by the above argument. By  Lemma \ref{lemma: properties of Spin}(2) one has~\mbox{$\Hom_Q(\OP_Q(-2),\Spin(i)) = 0$} for~\mbox{$i\leqslant -2$} and $\Hom_Q(\OP_Q(-2),\OP_Q(j)) = 0$ for $j\leqslant -3$. Therefore, we get that the image of $\Phi|_{\OP_Q(-2)^{\oplus m_2-z}}$ lies in the subbundle $\OP_Q(-1)^{\oplus 2-\mu}\oplus\OP_Q(-2)^{\oplus m_2}\subset \E$. Since the minimal embedding $\Phi$ is of full rank then by Lemma~\ref{lemma: excluding the isomorphism} we conclude that $m_2-z\leqslant 2-\mu = 2$. If one has $x = 0$ then this implies that $m_2 +2\leqslant 0$ and this contradicts the fact that $m_2$ is a non-negative integer. Therefore, $x = 1$ if $\mu=0$.
   
   Assume that $\mu = 0$ and $x=1$. Then one can easily check that by Lemma \ref{lemma: excluding the isomorphism} the image of the subbundle~\mbox{$\Spin(-1)^{\oplus x}\oplus \OP_Q(-2)^{\oplus m_2 - 4x +2}$} of the vector bundle $\K$ under the minimal embedding $\Phi$ lies in the subbundle $\OP_Q(-1)^{\oplus 2-\mu}$ of $\E$. Therefore, $m_2 = 2x + m_2-4x + 2\leqslant 2-\mu = 2$. Denote by $\mathrm{pr}$ the projection from $\E$ to the direct sum of all its  direct summands except $\OP_Q(-1)^{\oplus 2}$:
   \[
    \mathrm{pr}\colon \E\to \Spin(-3)\oplus\Spin(-2)^{\oplus y}\oplus \OP_Q(-2)^{\oplus 2}\oplus \OP_Q(-3)^{\oplus m_3}.
   \]
   Then one can observe that the restriction $\Phi|_{\Spin(-1)}$ of the embedding $\Phi$ to the subbundle $\Spin(-1)\subset \K$ composed with $\mathrm{pr}$ equals $0$. Thus, by Lemma \ref{lemma: excluding the embedding of full rank} we get that either the singularities of $B$ are worse than nodes or~\mbox{$B =\{ \det(\mathrm{pr}\circ \Phi|_{\Spin(-1)}) = 0\}$}. However, the equation  $\det(\mathrm{pr}\circ \Phi|_{\Spin(-1)})$ is the section of $\OP_Q(1)$, this contradicts the fact that $B$ is the intersection of $Q$ with a quartic hypersurface. Thus, we get a contradiction and the assertion is proved.
  \end{proof}

  \begin{corollary}\label{corollary: CC-resolutions in 1/2-even case less than 28}
  Let $B$ be a nodal complete intersection of a smooth quadric hypersurface $Q$ and a quartic hypersurface in $\p^4$, let $w$ be a minimal $\frac{1}{2}$-even set of $8\mu+4$ nodes on the surface $B$. If $|w|$ is at most $20$ then~\mbox{$B = \left\{\det\left(\Phi\colon \E^{\vee}(-4)\to \E\right) = 0 \mid \Phi\in H^0(Q, (S^2\E)(5))\right\}$} where the map $\Phi$ does not split into a direct sum of two maps of proper vector subbundles  of $\E$. Moreover, only the following three cases are possible:
  \begin{align*}
   \text{\textup{(O1)}} \hspace{1cm} &|w| = 12 \text{ and } \E = \OP_Q(-1)\oplus \OP_Q(-2), && d(w) = 1;\\
   \text{\textup{(O2)}} \hspace{1cm} &|w| = 20 \text{ and } \E = \OP_Q(-2)^{\oplus 4}, && d(w) =0;\\
   \text{\textup{(O3)}} \hspace{1cm} &|w| = 20 \text{ and } \E = \Spin(-2)\oplus\OP_Q(-2)^{\oplus 4}, && d(w) =1.\\
  \end{align*}
  If one takes a general element $\Phi$ in $ H^0(Q, (S^2\E)(4))$ for $\E$ as in \textup{(O1)} or \textup{(O2)} then $ \{\det(\Phi) = 0\}$ is a nodal complete intersection of $Q$ and a quartic hypersurface in $\p^4$ with exactly $ 12$ or $20$ nodes respectively.
 \end{corollary}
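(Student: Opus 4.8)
The plan is to run the same machine as in the $0$-even classification of Corollary~\ref{corollary: CC-resolutions in 0-even case}: produce a minimal resolution \eqref{eq: CC resolution in 1/2-even case less than 28}, read off $\E$, promote it to a symmetric resolution, and compute the defect. The genuinely new feature, and the source of the extra difficulty, is that for $\mu=1$ the kernel $\K$ carries an $\OP_Q(-4)$-summand (which is absent when $\delta=0$), so the crude ``only $\OP_Q(-3)$ can hit $\OP_Q(-3)$'' argument no longer eliminates the unwanted summands and the irreducibility of $B$ must be used in an essential way. I would begin by invoking Lemma~\ref{lemma: constraint of minimal map for half-even sets of nodes} to reduce to $x=m_4=0$ and $\mu\in\{1,2\}$, i.e.\ $|w|\in\{12,20\}$, and write out $\E$ and $\K$ using Proposition~\ref{proposition: bundle K in 1/2-even case less than 28}. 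The integer $m_2$ is then forced at once from Table~\ref{table: cohomology groups of F in 1/2-even case less than 28 nodes}: since the spinor part contributes nothing to $H^0(Q,\E_{spin}(2))$ one has $m_2=h^0(Q,\F(2))-\dim\bigl(H^0(Q,\OP_Q(1))\cdot H^0(Q,\F(1))\bigr)$, and the injectivity of the multiplication map established inside the proof of Lemma~\ref{lemma: h1(F) vanishes in case mu less than 3} gives $m_2=1$ for $\mu=1$ and $m_2=4$ for $\mu=2$.

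\textbf{The core step, which I expect to be the main obstacle}, is to bound $y$ and then show $m_3=0$, using the irreducibility of $B$ decisively. Minimality of $\Phi$ kills every constant block between isomorphic summands (a nonzero such block splits off an isomorphic summand by Lemma~\ref{lemma: excluding the isomorphism}); combined with the Hom-vanishings of Lemma~\ref{lemma: properties of Spin}(2),(3) this shows that $\Spin(-2)^{\oplus y}\subset\K$ can map nontrivially only into $\OP_Q(-1)^{\oplus 2-\mu}\oplus\OP_Q(-2)^{\oplus m_2}$, of rank $(2-\mu)+m_2$, so injectivity of $\Phi$ gives $2y\leqslant(2-\mu)+m_2$. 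The extreme value is excluded by Lemma~\ref{lemma: excluding the embedding of full rank}: the resulting square block $\Spin(-2)^{\oplus y}\to\OP_Q(-1)^{\oplus 2-\mu}\oplus\OP_Q(-2)^{\oplus m_2}$ has $\det\Phi_{11}\in H^0(Q,\OP_Q(2))$, whose zero locus lies in $|\OP_Q(2)|$ and so cannot equal $B\in|\OP_Q(4)|$, forcing $B$ reducible. This gives $y=0$ for $\mu=1$ and $y\leqslant 1$ for $\mu=2$. With $y$ fixed I turn to $m_3$: for $\mu=2$ the composite $\K\to\E\to\OP_Q(-3)^{\oplus m_3}$ vanishes identically, so $\OP_Q(-3)^{\oplus m_3}$ would be a direct summand of the surface-supported sheaf $\F$, whence $m_3=0$; for $\mu=1$ (where $y=0$) that composite has rank at most one, and applying Lemma~\ref{lemma: excluding the embedding of full rank} to $\LA_1=\OP_Q(-3)^{\oplus m_3+1}\subset\K$ and $\LB_1=\OP_Q(-1)\oplus\OP_Q(-2)$ forces $m_3+1\leqslant 2$, the equality case being excluded since then $\det\Phi_{11}\in H^0(Q,\OP_Q(3))$ is again of too small degree to define $B$.

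These values give $\E$ exactly as in \textup{(O1)} ($\mu=1$, $y=0$, $m_2=1$, $m_3=0$), \textup{(O2)} ($\mu=2$, $y=0$) and \textup{(O3)} ($\mu=2$, $y=1$). A direct check then shows $\K\cong\E^{\vee}(-5)=\E^{\vee}(-4-\delta)$ in all three cases, so \eqref{eq: CC resolution in 1/2-even case less than 28} is a Casnati--Catanese resolution and $B=\{\det\Phi=0\}$ with $\Phi$ indecomposable, this being precisely the minimality imposed throughout. To obtain symmetry I would verify $H^1(Q,(S^2\E^{\vee})(-5))=0$ case by case: the summands of $(S^2\E^{\vee})(-5)$ are negative twists $\OP_Q(-j)$ (which have no first cohomology) together with $\Spin$ and $(S^2\Spin)(1)$ in case \textup{(O3)}, all of which have vanishing $H^1$ by Lemma~\ref{lemma: properties of Spin}. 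Lemma~\ref{lemma: CC plus H1 vanishes implies symmetricity} then upgrades the resolution to a symmetric one with $\Phi\in H^0(Q,(S^2\E)(5))$.

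Finally, the defects come from the resolution $0\to\Lambda^2\E^{\vee}(-6)\to\mathrm{sl}(\E)(-1)\to(S^2\E)(4)\to\I_w(3)\to 0$ of Theorem~\ref{theorem: exact sequence for defect}. A subtlety worth flagging is that $\chi(\I_w(3))=30-|w|$ is the same in all three cases, so the defect $d(w)=h^1(Q,\I_w(3))$ is invisible to Euler characteristics; I would instead split this four-term complex into two short exact sequences and compute $h^0(Q,\I_w(3))$ directly. For \textup{(O1)} and \textup{(O2)} all terms are sums of line bundles and the bookkeeping is immediate, giving $d(w)=1$ and $d(w)=0$. Case \textup{(O3)} is the delicate one, since here $\mathrm{sl}(\E)(-1)$ and $(S^2\E)(4)$ both contain a copy of $S^2\Spin$ and hence the one-dimensional group $H^1(Q,\Spin\otimes\Spin)$ of Lemma~\ref{lemma: properties of Spin}(3), and one must check that the connecting map preserves it to get $d(w)=1$. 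The converse assertion follows from the modified Bertini theorem \cite[Lemma~4]{Barth} as in Example~\ref{example: Phi}: for $\E$ as in \textup{(O1)} or \textup{(O2)} the bundle $(S^2\E)(5)$ is a sum of globally generated line bundles $\OP_Q(j)$ with $j\geqslant 0$, so a general $\Phi$ produces a nodal complete intersection $B(\Phi)$ with $\Sing(B(\Phi))=w(\Phi)$ of cardinality $12$ respectively $20$; the spinor case \textup{(O3)} is omitted here precisely because $(S^2\E)(5)$ then contains the factor $(S^2\Spin)(1)$, which is not globally generated.
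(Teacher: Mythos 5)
Your proposal is correct and follows essentially the same route as the paper: reduce via Lemma \ref{lemma: constraint of minimal map for half-even sets of nodes} to $x=m_4=0$ and $\mu\in\{1,2\}$, pin down the remaining summands of the minimal map $\Phi\colon\K\to\E$ using the Hom-vanishings of Lemma \ref{lemma: properties of Spin} together with Lemmas \ref{lemma: excluding the isomorphism} and \ref{lemma: excluding the embedding of full rank}, identify $\K\cong\E^{\vee}(-5)$, symmetrize via Lemma \ref{lemma: CC plus H1 vanishes implies symmetricity}, compute defects from Theorem \ref{theorem: exact sequence for defect}, and finish with \cite[Lemma 4]{Barth}. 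The only (harmless) divergences are that you fix $m_2$ by a cohomology count using the multiplication map rather than by the rank argument on $\K\to\E$, and that you spell out the degree-based exclusion of the equality cases ($y=1$ for $\mu=1$, $y=2$ for $\mu=2$, $m_3=1$) which the paper treats rather tersely.
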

 \begin{proof}
  By  assertions (3) and (5) of Lemma \ref{lemma: properties of F} and by Lemma \ref{lemma: h1(F) vanishes in case mu less than 3} we get that in our case groups $H^1(Q, \F(n))$ vanish for all $n\in \ZZ$. Then we construct the surjective map $\varphi\colon \E\to \F$ as in Lemma \ref{lemma: properties of varphi 1/2-even sets of nodes less than 28}. We denote by~\mbox{$\Phi\colon \K\to \E$} the kernel of $\varphi$ and using Lemma~\ref{lemma: excluding the isomorphism} change vector bundles $\E$ and $\K$ so that $\Phi$ is a minimal map. Therefore, by Lemma \ref{lemma: constraint of minimal map for half-even sets of nodes} we deduce $x = m_4 = 0$ and $\mu = 1$ or $2$ using the notation from \eqref{eq: E_spin in half-even case} and~\eqref{eq: E_lin in half-even case}.
  
  First, assume that $\mu = 1$. By Lemma \ref{lemma: properties of Spin}(2) we have $\Hom_Q(\Spin(-2), \OP_Q(i)) = 0$ for all $i\leqslant -3$. Thus, since the group $\Hom_Q(\Spin(-2),\Spin(-2))$ is generated by the identical map and since $\Phi$ is a minimal map we conclude that the image $\Phi\left(\Spin(-2)^{\oplus y}\oplus \OP_Q(-2)^{\oplus m_2-1}\right)$ of the subbundle $\Spin(-2)^{\oplus y})\oplus \OP_Q(-2)^{\oplus m_2-1}$ of $\K$ lies in the subbundle $\OP_Q(-1)\oplus \OP_Q(-2)^{\oplus m_2}$ of $\E$. Since the rank of the image can not decrease we deduce that~$y=0$. Analogously, the image of $\OP_Q(-2)^{\oplus m_2-1}\subset \K$ lies in $\OP_Q(-1)\subset \E$ and $m_2 = 1$ or $2$. If $m_2 = 2$ then the minimality of $\Phi$ by Lemma \ref{lemma: excluding the isomorphism} we observe that the image of the subbundle $\OP_Q(-2)\subset \K$ lies in the subbundle $\OP_Q(-1)\subset \E$ of the same rank. This is impossible by Lemma \ref{lemma: excluding the embedding of full rank}. Thus, $m_2 = 1$. Analogously, we show that $m_3 = 0$ and prove that $\K\cong \E^{\vee}(-5)$ and that $\E$ is a vector bundle as in the case $\textup{(O1)}$. 
  
  The case $\mu = 2$ can be analysed in the same way. We see that in this case $\K\cong \E^{\vee}(-5)$ and $\E$ is a vector bundle as in the cases $\textup{(O2)}$ and $\textup{(O3)}$. One can check that $H^1(Q,(S^2\E^{\vee})(-5)) = 0$ in all the cases~\mbox{\textup{(O1 -- O3)}}; thus, by Lemma~\ref{lemma: CC plus H1 vanishes implies symmetricity} the sheaf $\F$ fits the necessary symmetric Casnati--Catanese resolution. In order to compute the defects of~\mbox{$\frac{1}{2}$-ev}\-en sets of nodes  we consider the spectral sequence of cohomology groups associated with the exact sequence from Theorem \ref{theorem: exact sequence for defect} and using assertions (2) and (3) of Lemma \ref{lemma: properties of Spin} we find the necessary result. Finally, the  last assertion follows from the modification of Bertini theorem~\mbox{\cite[Lemma 4]{Barth}} and the fact that the vector bundle $(S^2\E)(5)$ is globally generated if $\E$ is the bundle as in \textup{(O1)} or \textup{(O2)}. 
 \end{proof}
 
  Note that the surface we get in the case \textup{(O2)} can be also obtained as a zero locus of a section of the vector bundle $(S^2\E)(5)$ defined in the case \textup{(O3)} if we allow $\Phi$ to split into a direct sum $\mathrm{Id}_{\Spin(-2)}\oplus \Phi'$, where~$\Phi'$ is a section of the bundle $(S^2\E')(5)$ and $\E'$ is as in the case \textup{(O2)}. However, the defects in these two cases differ, that is why we divide them in two different options. 
  Also by the next assertion there exists an easier resolution for the sheaf $\F$ in the case \textup{(O3)} of Corollary~\ref{corollary: CC-resolutions in 1/2-even case less than 28}.
 \begin{lemma}\label{lemma: case O3}
  Let $B$ be a nodal complete intersection of a smooth quadric hypersurface $Q$ and a quartic hypersurface in $\p^4$, let $w$ be a minimal $\frac{1}{2}$-even set of $20$ nodes on the surface $B$. If $w$ is as in the case \textup{(O3)} of Corollary \textup{\ref{corollary: CC-resolutions in 1/2-even case less than 28}}, then $\F$ fits into a symmetric Casnati--Catanese resolution \textup{\eqref{eq: main exact sequence}} where $\E = \Spin(-1)$. 
  
  Moreover, if one take a general element $\Phi$ in~\mbox{$ H^0(Q, (S^2\E)(5))$}, then $B = B(\Phi)$ is a nodal complete intersection of $Q$ and a quartic hypersurface in $\p^4$ and $w$ is the set of all singularities of $B$.
 \end{lemma}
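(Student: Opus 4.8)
In case \textup{(O3)} the relevant numerical invariants are $\mu = 2$ and, by the identification of $\E$ carried out in the proof of Corollary \ref{corollary: CC-resolutions in 1/2-even case less than 28}, $x = 0$ and $y = h^1(Q,\Spin\otimes\F(1)) = 1$. Thus $H^1(Q,\F(1)) = 0$ by Lemma \ref{lemma: h1(F) vanishes in case mu less than 3}, while $H^1(Q,\Spin\otimes\F(1))\cong\CC$, and a direct count (using Lemma \ref{lemma: properties of F}(8) together with the vanishing of the higher cohomology of $\Spin\otimes\F(2)$) gives $\Hom_Q(\Spin(-1),\F)\cong H^0(Q,\Spin\otimes\F(2))\cong\CC$. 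I would therefore apply Lemma \ref{lemma: map from spin} to the sheaf $\F(1)$, with $U = H^1(Q,\Spin\otimes\F(1))$, to obtain a map $\varphi\colon\Spin(-1)\to\F$ for which $H^1(\Id_{\Spin}\otimes\varphi)$ is an isomorphism onto $H^1(Q,\Spin\otimes\F(1))$; this $\varphi$ is the unique nonzero map up to scale. Since $\E=\Spin(-1)$ gives $\E^{\vee}(-5) = \Spin(2)(-5) = \Spin(-3)$, the goal is to prove that $\varphi$ fits into an exact sequence
\[
 0\to\Spin(-3)\xrightarrow{\Phi}\Spin(-1)\xrightarrow{\varphi}\F\to0 .
\]

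\textbf{Reduction of surjectivity and kernel cohomology.} Write $\K = \Ker(\varphi)$. Because $\F$ has projective dimension $1$ on $Q$ (Lemma \ref{lemma: properties of F}(1)), once $\varphi$ is surjective $\K$ is a vector bundle of rank $2$, and a Chern-class computation, consistent with the identity $\chi(\Spin(n-1))-\chi(\Spin(n-3)) = 4(n-1)^2 = \chi(\F(n))$ coming from Lemma \ref{lemma: properties of Spin}(2) and Lemma \ref{lemma: properties of F}(7), yields $\det\K = \OP_Q(-7)$. Taking the long exact sequence of $0\to\K\to\Spin(-1)\to\F\to0$ twisted by $\OP_Q(n)$, and using $H^1(Q,\Spin(m)) = H^2(Q,\Spin(m)) = 0$ for all $m$ (Lemma \ref{lemma: properties of Spin}(2)) together with the vanishing $H^1(Q,\F(n)) = 0$ for all $n$ (established for $\mu=2$ in the proof of Corollary \ref{corollary: CC-resolutions in 1/2-even case less than 28}), I obtain $H^2(Q,\K(n)) = 0$ for all $n$ and $H^1(Q,\K(n))\cong\Coker H^0(\varphi(n))$. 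Moreover, since $\F(4)$ is globally generated by Lemma \ref{lemma: w is half-even then F4 is globally generated}, surjectivity of $\varphi$ itself follows once $H^0(\varphi(4))$ is surjective. Hence everything reduces to showing that $H^0(\varphi(n))\colon H^0(Q,\Spin(n-1))\to H^0(Q,\F(n))$ is surjective for every $n$.

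\textbf{The main obstacle.} This surjectivity is the heart of the argument. For $n\le1$ it is automatic because $H^0(Q,\F(n)) = 0$; for $n = 2,3$ the two spaces have equal dimensions ($4$ and $16$ by Lemma \ref{lemma: properties of Spin}(2) and Lemma \ref{lemma: properties of F}(7)), so surjectivity is equivalent to injectivity of $H^0(\varphi(n))$, and I expect these low-degree cases to be the genuine difficulty. I would settle them exactly as in Proposition \ref{proposition: bundle K in 1/2-even case less than 28}, by feeding the known behaviour of $H^1(\Id_{\Spin}\otimes\varphi)$ into the long exact sequences of the resolution tensored by $\Spin(n)$; these same $\Spin$-twisted sequences simultaneously compute $H^i(Q,\K\otimes\Spin(n))$ and $H^0(Q,\K(n))$. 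Once $H^1(Q,\K(n)) = H^2(Q,\K(n)) = 0$ for all $n$, the Horrocks criterion for the quadric, Lemma \ref{lemma: properties of Spin}(4), forces $\K$ to be a direct sum of twists of $\Spin$ and $\OP_Q$; the constraints $\rk\K = 2$ and $\det\K = \OP_Q(-7)$, refined by the computed $\Spin$-twisted cohomology, then single out $\K\cong\Spin(-3)$ and rule out the a priori competitor $\OP_Q(-3)\oplus\OP_Q(-4)$ (which would have $H^0(Q,\K(3))\ne0$).

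\textbf{Symmetry and the Bertini statement.} With $\E = \Spin(-1)$ one computes $(S^2\E^{\vee})(-5) = (S^2\Spin)(-1)$, and $H^1(Q,(S^2\Spin)(-1)) = 0$ by Lemma \ref{lemma: properties of Spin}(3), since $S^2\Spin$ is a summand of $\Spin\otimes\Spin$ whose $H^1$ is nonzero only in the untwisted case. Therefore Lemma \ref{lemma: CC plus H1 vanishes implies symmetricity} upgrades the resolution to a symmetric one with $\Phi\in H^0(Q,(S^2\E)(5))$, so that $B = B(\Phi)$. For the last assertion, $(S^2\E)(5) = (S^2\Spin)(3)$ is globally generated: the bundle $\Spin(1)$ is globally generated as a quotient of $\OP_Q^{\oplus 4}$ in the sequence \eqref{eq: spin}, hence $(\Spin\otimes\Spin)(2)$ and its summand $(S^2\Spin)(2)$ are globally generated, and tensoring by $\OP_Q(1)$ gives the claim. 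Barth's Bertini-type result \cite[Lemma 4]{Barth}, in the form recorded in Example \ref{example: Phi}, then shows that for a general $\Phi$ the surface $B(\Phi)$ is a nodal complete intersection of $Q$ with a quartic and that $w(\Phi) = \Sing(B(\Phi))$ is a $\frac12$-even set of $20$ nodes.
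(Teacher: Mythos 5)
Your overall target is right, and the peripheral steps check out: the identification $\E^{\vee}(-5)=\Spin(-3)$, the vanishing $H^1(Q,(S^2\E^{\vee})(-5))=H^1(Q,(S^2\Spin)(-1))=0$ needed for Lemma \ref{lemma: CC plus H1 vanishes implies symmetricity}, the global generation of $(S^2\Spin)(3)$, and the appeal to Barth's Bertini-type lemma. But the step you yourself single out as ``the heart of the argument'' --- surjectivity of $\varphi\colon\Spin(-1)\to\F$, equivalently injectivity of $H^0(\varphi(n))$ for $n=2,3$ --- is not actually established, and the method you point to cannot establish it. In Proposition \ref{proposition: bundle K in 1/2-even case less than 28} the surjectivity of $H^0(\varphi(n))$ is not deduced from the $\Spin$-twisted long exact sequences: it holds \emph{by construction}, because the summand $\E_{lin}$ is assembled from a generating set of the graded module $\bigoplus_n H^0(Q,\F(n))/\Im(\varphi_{spin}(\ZZ))$ (this is assertion (1) of Lemma \ref{lemma: properties of varphi 1/2-even sets of nodes less than 28}). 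Your $\E=\Spin(-1)$ has no line-bundle summands to absorb the missing generators, and the map $\varphi$ produced by Lemma \ref{lemma: map from spin} is pinned down only by its effect on $H^1(Q,\Spin\otimes\F(1))$; nothing in that construction controls $H^0(\varphi(2))$. Moreover you cannot run the long exact sequence of ``the resolution'' $0\to\K\to\Spin(-1)\to\F\to0$ to compute $H^0(Q,\K(n))$ or $H^i(Q,\K\otimes\Spin(n))$ before knowing that $\varphi$ is surjective --- that is circular. The subsequent identification of $\K$ (ruling out $\OP_Q(-3)\oplus\OP_Q(-4)$, etc.) also rests on the same unproven injectivity, so the decisive point is missing.

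The paper avoids the problem by never constructing $\varphi$ from scratch. It starts from the symmetric resolution with $\E=\Spin(-2)\oplus\OP_Q(-2)^{\oplus4}$ already furnished by case \textup{(O3)} of Corollary \ref{corollary: CC-resolutions in 1/2-even case less than 28}, observes via minimality and Lemma \ref{lemma: excluding the isomorphism} that the $\Spin(-2)\to\Spin(-2)$ block of $\Phi$ vanishes, and studies the resulting block $\alpha\colon\Spin(-2)\to\OP_Q(-2)^{\oplus4}$ together with its twisted dual $\beta$. Lemma \ref{lemma: map from o to spin}, combined with the irreducibility of $B$ and the fact that $\F$ has rank greater than $1$ only on the finite set $w$, forces $\Coker(\beta)=0$; hence $\alpha$ is the standard embedding $\Spinmap$ from \eqref{eq: spin}, and the snake lemma cancels these summands, delivering the surjection $\Spin(-1)\to\F$ with kernel $\Spin(-3)$ for free. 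If you want to keep your direct approach you must supply an independent proof that $H^0(\varphi(2))$ and $H^0(\varphi(3))$ are injective; otherwise the cancellation argument is the way to go.
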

 \begin{proof}
  By construction of a symmetric Casnati--Catanese resolution in Corollary \ref{corollary: CC-resolutions in 1/2-even case less than 28} and by Lemma \ref{lemma: excluding the isomorphism} the restriction of the map $\Phi$ to the subbundle $\Spin(-2)$ composed with the projection of $\E$ to the direct summand~$\Spin(-2)$ is a zero map. Denote by $\alpha\colon \Spin(-2)\to \OP_Q(-2)^{\oplus 4}$ the restriction of $\Phi$ to the subbundle~$\Spin(-1)$ composed with the projection of $\E$ to the direct summand $\OP_Q(-1)^{\oplus 4}$. Thus, we get the following commutative diagram:
\[\begin{tikzcd}[ampersand replacement=\&]
	0 \& {\Spin(-2)} \& {\Spin(-2)\oplus\OP_Q(-3)^{\oplus 4}} \& {\OP_Q(-3)^{\oplus 4}} \& 0 \\
	0 \& {\OP_Q(-2)^{\oplus 4}} \& {\Spin(-2)\oplus \OP_Q(-2)^{\oplus 4}} \& {\Spin(-2)} \& 0
	\arrow[from=1-1, to=1-2]
	\arrow[from=1-2, to=1-3]
	\arrow[from=1-3, to=1-4]
	\arrow[from=2-1, to=2-2]
	\arrow[from=2-2, to=2-3]
	\arrow[from=2-3, to=2-4]
	\arrow["\alpha"', from=1-2, to=2-2]
	\arrow["\beta", from=1-4, to=2-4]
	\arrow["\Phi"', from=1-3, to=2-3]
	\arrow[from=1-4, to=1-5]
	\arrow[from=2-4, to=2-5]
\end{tikzcd}\]
 Since $\Phi$ is a symmetric map here $\beta\colon \OP_Q(-3)^{\oplus 4}\to \Spin(-2)$ is the map dual up to a twist to $\alpha$. Thus, ranks of maps $\alpha$ and $\beta$ in all points of $Q$ are equal. Moreover, since $\alpha$ is an embedding of sheaves then in a general point of $Q$ ranks of $\alpha$ and $\beta$ equal 2.
 
 By Lemma \ref{lemma: map from o to spin} the support of the cokernel of $\beta$ is either empty or it is a line in the quadric threefold~\mbox{$Q\subset \p^4$} or it is a hyperplane section of $Q$. Since the surface $B$ is irreducible and the rank of $\F$ is greater than $1$ only along a finite set $w$ by the snake lemma we get that the cokernel of $\beta$ vanishes. Thus, the rank of the map~$\beta$  equals~$2$ in all points of $Q$ and so does the rank of $\alpha$.
 
 Therefore, $\alpha$ is an embedding of vector bundles. Thus, it coinsides with the map~\mbox{$\Spinmap\colon \Spin(-2)\to \OP_{Q}(-2)^{\oplus 4}$} described in Lemma \ref{lemma: properties of Spin}. Then the cokernel of the map~$\alpha$ equals $\Spin(-1)$ and the kernel of the map $\beta$ equals~$\Spin(-3)$. Thus, by the snake lemma we get that $\F$ fits into the following exact sequence:
  \[
   0\to \Spin(-3)\xrightarrow{\Phi'} \Spin(-1)\to \F\to0.
  \]
  Thus, by Lemma \ref{lemma: CC plus H1 vanishes implies symmetricity} we get a new symmetric Casnati--Catanese resolution for the sheaf $\F$. Finally, since the vector bundle~$(S^2\E')(5)$ is globally generated then by~\mbox{\cite[Lemma 4]{Barth}} we get the last assertion of the lemma. 
 \end{proof}

 \section{Minimal $\frac{1}{2}$-even sets of 28 nodes}\label{section: 1/2-even sets of 28 nodes}
 Our goal in this section is to prove Proposition \ref{proposition: 28 nodes implies 12 nodes}. The proof of this assertion is quite long, here we give its sketch. We fix a nodal surface $B$ in a smooth quadric threefold $Q$ as in the proposition with a minimal $\frac{1}{2}$-even set of $28$ nodes $w$. We denote by~$\F$ the sheaf associated to $w$ on $Q$ and construct a symmetric Casnati--Catanese resolution for $\F$. The construction of the resolution involves several steps. First, we consider a double covering $p\colon Q\to \p^3$ and construct a symmetric Casnati--Catanese resolution of the sheaf $p_*\F$ on $\p^3$. This helps us to deduce new properties of the sheaf~$\F$ and to construct a free resolution of $\F$ on $Q$ which is not symmetric though. However, using properties of the sheaf $\F$ deduced from this resolution we finally construct the symmetric Casnati--Catanese resolution for $\F$. This allows us to find a hyperplane in $\p^4$ which is tangent to $B$ along a conic and intersect $\Sing(B)$ in a $0$-even set $w'$ of $12$ nodes. Thus, we conclude that the surface $B$ lies in the family \textup{(E1)} from Theorem \ref{theorem: classification of minimal sets of nodes}.
 \subsection{Pushforward of $\F$ to $\p^3$}\label{subsection: pushforward of F in case of 28 nodes}
 Choose a point $x\in \p^4\setminus Q$ and let $p\colon \p^{4}\dashrightarrow \p^3$ be the linear projection from the point $x$ to a hyperplane in $\p^4$. Then the restricted map $p\colon Q\to \p^3$ is a finite morphism of degree~$2$. Consider the direct image  $p_*\F$ of the sheaf $\F$ to $\p^3$. This sheaf has the following property.
 \begin{lemma}\label{lemma: Grothendieck duality after pushforward to p3}
  Let $B$ be a nodal complete intersection of a smooth quadric hypersurface $Q$ and a quartic hypersurface in $\p^4$, let $w$ be a $\frac{1}{2}$-even set of nodes on $B$ and let $\F$ be the sheaf on $Q$ with support on $B$ associated  with $w$ on $Q$. If $p\colon Q\to \p^3$ is a double cover induced by a projection from $\p^4$ to a hyperplane then one has the following isomorphism for any $m\in \ZZ$:
  \[
   \ExtSh^1_{\p^3}(p_*\F(m), \OP_{\p^3}) \cong  p_*\F(6-m).
  \]
 \end{lemma}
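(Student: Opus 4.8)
The plan is to deduce the asserted isomorphism from Grothendieck duality for the finite flat degree-two cover $p\colon Q\to \p^3$. First I would identify the relative dualizing sheaf $p^!\OP_{\p^3}\cong \omega_{Q/\p^3}$. By adjunction on the quadric one has $\omega_Q\cong \OP_Q(-3)$, while $\omega_{\p^3}\cong \OP_{\p^3}(-4)$; since $p$ is given by the hyperplanes through the center of projection, one has $p^*\OP_{\p^3}(1)\cong \OP_Q(1)$, and therefore
\[
 p^!\OP_{\p^3}\cong \omega_Q\otimes p^*\omega_{\p^3}^{-1}\cong \OP_Q(-3)\otimes \OP_Q(4)\cong \OP_Q(1).
\]

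Next I would produce a two-term locally free resolution of $p_*\F(m)$ on $\p^3$. By Lemma \ref{lemma: properties of F}(1) the sheaf $\F$ has projective dimension $1$ on $Q$, so there is a short exact sequence $0\to \E_1\to \E_0\to \F(m)\to 0$ with $\E_0$ and $\E_1$ vector bundles. Because $p$ is finite and flat, the functor $p_*$ is exact and carries vector bundles on $Q$ to vector bundles on $\p^3$; hence
\[
 0\to p_*\E_1\to p_*\E_0\to p_*\F(m)\to 0
\]
is a locally free resolution, so $p_*\F(m)$ again has projective dimension at most $1$ and its only possibly nonzero sheaf $\ExtSh$ against $\OP_{\p^3}$ is $\ExtSh^1$.

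The final step is to dualize this resolution and match it with one coming from $Q$. The key input is the duality isomorphism for pushforwards of vector bundles along $p$: since $p_*\E$ is locally free for every vector bundle $\E$ on $Q$, one has $\HomSh_{\p^3}(p_*\E,\OP_{\p^3})\cong p_*\HomSh_Q\!\bigl(\E,p^!\OP_{\p^3}\bigr)\cong p_*\bigl(\E^{\vee}(1)\bigr)$ with no higher $\ExtSh$-sheaves. Applying this to $\E_0$ and $\E_1$ identifies the dual of the resolution above with $p_*$ of the complex $\E_0^{\vee}(1)\to \E_1^{\vee}(1)$ on $Q$. Since the support of $\F$ is a divisor, the map $\E_0^{\vee}(1)\to \E_1^{\vee}(1)$ is injective and its cokernel is $\ExtSh^1_Q\bigl(\F(m),\OP_Q(1)\bigr)$; using exactness of $p_*$ and then Lemma \ref{lemma: properties of F}(2) with $\delta=1$, which gives $\ExtSh^1_Q(\F,\OP_Q)\cong \F(5)$ and hence $\ExtSh^1_Q(\F(m),\OP_Q(1))\cong \F(6-m)$, I obtain
\[
 \ExtSh^1_{\p^3}(p_*\F(m),\OP_{\p^3})\cong p_*\ExtSh^1_Q\bigl(\F(m),\OP_Q(1)\bigr)\cong p_*\F(6-m),
\]
as required.

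I expect the only delicate point to be the correct bookkeeping of the twists through Grothendieck duality, in particular pinning down $p^!\OP_{\p^3}\cong \OP_Q(1)$; once the relative dualizing sheaf is in hand the rest is formal, and the vanishing of $\ExtSh^i_{\p^3}(p_*\F(m),\OP_{\p^3})$ for $i\neq 1$ is immediate from the length-one resolution constructed above.
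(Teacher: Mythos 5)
Your proof is correct and follows essentially the same route as the paper: both arguments reduce to Grothendieck duality for the finite flat cover $p$, the identification $\omega_{Q/\p^3}\cong\OP_Q(1)$, and Lemma \ref{lemma: properties of F}(2) with $\delta=1$. The paper simply cites the duality theorem (Hartshorne, \emph{Residues and Duality}, III.11.1) as a black box, whereas you unpack it via an explicit length-one locally free resolution and the locally-free-pushforward duality $\HomSh_{\p^3}(p_*\E,\OP_{\p^3})\cong p_*(\E^{\vee}(1))$; the bookkeeping of twists is accurate in both.
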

 \begin{proof}
 Since the relative canonical bundle $\omega_{Q/\p^3}$ equals~$\OP_Q(1)$ then using the Grothendieck duality as stated in \cite[Theorem III.11.1]{Hartshorne_Grothendieck_duality} we deduce the following:
  \[
   \ExtSh^1_{\p^3}(p^*\F(m),\OP_{\p^3}) \cong p_* \ExtSh_Q^1(\F(m),\OP_Q(1))\cong p_*\ExtSh_Q^1(\F(m-1),\OP_Q).
  \]
  Finally, by Lemma~\mbox{\ref{lemma: properties of F}$(2)$} one has $\ExtSh_Q^1(\F(m-1),\OP_Q)\cong \F(6-m)$. Thus, the proof is complete.
 \end{proof}
 The cohomology groups of the direct image $p_*\F$ are related to those of the sheaf $\F$. Namely, the following assetion holds.
 \begin{lemma}\label{lemma: cohomologies of F for 28 nodes}
  Let $B$ be a nodal complete intersection of a smooth quadric hypersurface $Q$ and a quartic hypersurface in $\p^4$, let $w$ be a minimal $\frac{1}{2}$-even set of $28$ nodes on $B$ and let $p\colon Q\to \p^3$ be a double cover induced by a projection from $\p^4$ to a hyperplane. If $\F$ is the sheaf on $Q$ with support on $B$ associated  with~$w$ then
  \[
   H^j(\p^3,p_*\F(n))\cong H^j(Q, \F(n))
  \]
  for all $0\leqslant j\leqslant 3$ and all $n\in \ZZ$. Moreover, $H^1(\p^3,p_*\F(1))\cong \CC^2$ and $ H^0(\p^3,p_*\F(1)) = 0$ and $p_*\F(3)$ is globally generated. 
 \end{lemma}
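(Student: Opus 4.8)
The plan is to transfer everything to statements about $H^i(Q,\F(n))$ and then to isolate the one cohomological input that is not immediate, namely $h^0(Q,\F(1))$. First I would prove the isomorphism $H^j(\p^3,p_*\F(n))\cong H^j(Q,\F(n))$ formally: since $p\colon Q\to\p^3$ is finite it is affine, so $R^{>0}p_*\F(n)=0$ and the Leray spectral sequence collapses to $H^j(\p^3,p_*(\F(n)))\cong H^j(Q,\F(n))$. As $p$ is the restriction to $Q$ of a linear projection, $p^*\OP_{\p^3}(1)\cong\OP_Q(1)$, hence $\F(n)\cong\F\otimes p^*\OP_{\p^3}(n)$ and the projection formula yields $p_*(\F(n))\cong(p_*\F)(n)$. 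This gives the first assertion and reduces the other three to computations on $Q$.

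Next I would assemble the cohomology of $\F(n)$ from Lemma~\ref{lemma: properties of F}. With $\delta=1$ and $|w|=28$, assertion~(7) gives $\chi(\F(n))=(2n-1)(2n-3)-1$, so $\chi(\F(1))=-2$ and $\chi(\F)=\chi(\F(2))=2$; assertion~(6) gives $h^1(Q,\F)=0$ by minimality; and assertion~(3) reads $h^i(\F(n))=h^{2-i}(\F(2-n))$, so $\F(1)$ is self-dual and $h^0(Q,\F(2))=h^2(Q,\F)$. Since the class $-\tfrac{E_w}{2}$ defining $\F_B$ meets the nef class $H$ negatively, $h^0(Q,\F)=0$, and therefore $h^0(Q,\F(2))=h^2(Q,\F)=\chi(\F)=2$. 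At this point everything hinges on $h^0(Q,\F(1))$.

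The hard part, and the main obstacle, is the vanishing $h^0(Q,\F(1))=0$; I would argue it exactly as in the $|w|\le20$ case (Lemma~\ref{lemma: h1(F) vanishes in case mu less than 3}), now using $28>8$. By Lemma~\ref{lemma: JR restriction on cohomology groups of F} one has $h^0(Q,\F(1))\le2$, so it suffices to exclude the values $1$ and $2$. A nonzero section of $\F_B(1)$ is a section of $\OP_{\widetilde B}\!\big(\tfrac{H-\sum_{p\in w}E_p}{2}\big)$; if there were two independent ones, their squares would be two independent sections of $\OP_{\widetilde B}(H-\sum_{p\in w}E_p)$, i.e.\ two distinct hyperplanes in $\p^4$ whose common $\p^2$ cuts $Q$ in a conic passing through all of $w$, contradicting Lemma~\ref{lemma: lines and conics on B containing 1/2-even sets}(3) since $|w|=28>8$; this rules out $h^0=2$. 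If $h^0(\F_B(1))=1$, fix $0\neq s\in H^0(B,\F_B(1))$; multiplication $t\mapsto s\cdot t$ embeds $H^0(B,\OP_B(1))$ into $H^0(B,\F_B(2))$ (torsion-freeness of $\F_B$ on the integral surface $B$ makes $s\cdot t\neq0$ for $t\neq0$), forcing $5=h^0(B,\OP_B(1))\le h^0(B,\F_B(2))=2$, a contradiction; this rules out $h^0=1$. Hence $h^0(Q,\F(1))=0$.

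Finally I would collect the consequences. Self-duality gives $h^2(\F(1))=h^0(\F(1))=0$, so $\chi(\F(1))=-h^1(Q,\F(1))=-2$, i.e.\ $h^1(Q,\F(1))=2$; combined with the transfer isomorphism of the first paragraph this yields $H^1(\p^3,p_*\F(1))\cong\CC^2$ and $H^0(\p^3,p_*\F(1))=0$. For global generation of $p_*\F(3)=(p_*\F)(3)$ I would invoke Castelnuovo--Mumford regularity on $\p^3$: the sheaf $p_*\F$ is $3$-regular because $H^1(Q,\F(2))=h^1(Q,\F)=0$, $H^2(Q,\F(1))=0$ (shown above), and $H^3(Q,\F)=0$ as $\F$ is supported on a surface, so by \cite[Lecture 14]{Mumford} the twist $(p_*\F)(3)$ is generated by global sections. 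The entire weight of the proof sits in the third paragraph, the vanishing $h^0(Q,\F(1))=0$.
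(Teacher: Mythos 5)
Your proof is correct and follows essentially the same route as the paper: transfer via $R^{>0}p_*=0$, the vanishing $h^0(Q,\F(1))=0$ obtained by the conic bound and the multiplication-map argument, duality and Euler characteristic to get $h^1=2$, and Castelnuovo--Mumford regularity for global generation. The only difference is that you write out in full the step the paper dispatches with ``analogously to the proof of Lemma~\ref{lemma: h1(F) vanishes in case mu less than 3}'', and your computation $h^1(\p^3,p_*\F(1))=2$ is what the paper intends (its stated ``$=0$'' there is a typo, contradicting the lemma's own claim).
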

 \begin{proof}
  The isomorphism $H^j(\p^3,p_*\F(n))\cong H^j(Q, \F(n))$ holds because $R^i p_*\F(n) = 0$ for all $i\geqslant 1$ and for all integer $n$.
  
  Analogously to the proof of Lemma \ref{lemma: h1(F) vanishes in case mu less than 3} one can show that~\mbox{$h^0(Q,\F(1)) = 0$}.  Thus, by Lemma \ref{lemma: properties of F})(3) we have $h^2(\p^3,p_*\F(1)) = h^0(\p^3,p_*\F(1)) =0$ and $\chi(p_*\F(1)) = -2$. By Lemma \ref{lemma: properties of F})(7) one has $\chi(\F(1)) = -2$, and therefore, $h^1(\p^3,p_*\F(1))=0$. Finally, by Lemma \ref{lemma: properties of F}(5) and (6)  cohomology groups $H^i(\p^3,p_*\F(3-i))$ vanish for all $1\leqslant i \leqslant 3$; thus, $p_*\F(3)$ is globally generated by \cite[Lecture 14]{Mumford}. 
 \end{proof}
 Let $\F_B$ be the sheaf associated to $w$ on $B$. Then $\F = \iota_*\F_B$ and $h^0(Q, \F(1)) = h^0(B,\F_B(1))$ since the closed embedding $\iota\colon B\to Q$ is an affine morphism. Then by Serre duality on the surface $B$ there is an isomorphism $\Ext_B^1(\F_B (1),\OP_B(K_B))\cong H^1(B,\F_B(1))$ where $K_B$ is the canonical class on the surface $B$. This isomorphism and the cup product induce the following perfect pairing:
 \begin{equation}\label{eq: Serre pairing 28 nodes}
 H^1(\p^3,p_*\F(1))\otimes H^1(\p^3,p_*\F(1)) \cong  H^1(B,\F(1))\otimes H^1(B,\F(1)) 
  \to H^2(B,\OP_B(K_B)) \cong \CC.
 \end{equation}
 The last isomorphism is due to Serre duality on the surface $B$. This pairing is altrenating. Denote by $U$ a one-dimensional isotropic subspace of $H^1(\p^3,p_*\F(1))$ with respect to this pairing. Then by~\cite{Walter} there exists a map of sheaves on $\p^3$
 \[
  \varphi_{\Omega}\colon \Omega^1_{\p^3}(-1)\to p_*\F,
 \]
 which induces  embeddings between the first cohomology groups of the twist of the sheaves  $\Omega^1_{\p^3}(-1)$ and~$p_*\F$ by $\OP_{\p^3}(2)$, and moreover,~\mbox{$H^1(\varphi_{\Omega})(H^1(\p^3,\Omega^1_{\p^3})) = U\subset H^1(\p^3,p_*\F(1))$.}
 
 Denote by $A$ the graded algebra $\bigoplus_{n\in \ZZ} H^0(\p^3,\OP_Q(n))$.
 Then $M = \bigoplus_{n\in \ZZ} H^0(\p^3,\F(n))/ \Im(\varphi_{\Omega}(\ZZ))$ is a graded left $A$-module; here by $\Im(\varphi_{\Omega}(\ZZ))$  the following graded submodule:
 \[
  \Im(\varphi_{\Omega}(\ZZ)) = \bigoplus_{n\in \ZZ}\left( H^0(\varphi_{\Omega}(n-1)) H^0(\p^3,\Omega^1_{\p^3}(n-1))\right)\subset \bigoplus_{n\in \ZZ} H^0(Q,\F(n)).
 \]
 Since the cohomology group $H^0(Q,\F(n))$ vanishes for all $n\leqslant 0$ and by Lemma \ref{lemma: cohomologies of F for 28 nodes} the module $M$ is generated by elements in its first, second and third graded components. 
 
 Choose the minimal set of graded generators of $M$ (i.e. any element from the set of generators lies in a graded component of $M$) and assume that this set contains exactly $m_i$ elements of $H^0(\p^3,p_*\F(i))$ for~$i = 1,2$ and $3$. Denote by $\E_{lin}$ the vector bundle $\OP_Q(-1)^{\oplus m_1}\oplus \OP_Q(-2)^{\oplus m_2}\oplus \OP_Q(-3)^{\oplus m_3}$. By Lemmas \ref{lemma: properties of F} and \ref{lemma: cohomologies of F for 28 nodes} we get $m_1 = 0$ and $m_2 = 2$. Then the set of generators we have choosen defines a map from $\E_{lin}$ to~$\F$:
 
 \[
  \varphi_{lin}\colon \E_{lin}\to \F.
 \]
 Denote by $\E$ and $\varphi$ the vector bundle $\Omega^1_{\p^3}(-1)\oplus \E_{lin}$ and the map $\varphi = \varphi_{\Omega}\oplus\varphi_{lin}\colon \E\to \F$. The construction of the vector bundle $\E$ and the sheaf map $\varphi$ has the following properties.
 \begin{lemma}\label{lemma: properties of varphi 1/2-even sets of 28 nodes}
  Let $\E = \Omega^1_{\p^3}(-1)\oplus \OP_Q(-2)^{\oplus 2}\oplus \OP_Q(-3)^{\oplus m_3}$ and let the map $\varphi\colon \E\to p_*\F$ be as defined above. Then $\varphi$ is surjective and satisfies the following properties:
  \begin{enumerate}
   \item[\textup{(1)}] the map $H^0(\varphi(n))\colon H^0(\p^3,\E(n)) \to H^0(\p^3,p_*\F(n))$ is surjective for all $n\in \ZZ$ and this map is an isomorphism for~\mbox{$n= 2$};
   \item[\textup{(2)}] the map $H^1(\varphi(n))\colon H^1(\p^3,\E(n)) \to H^1(\p^3,p_*\F(n))$ is an embedding to the isotropic subspace $U$ of the space $H^1(\p^3,p_*\F(1))$ for $n=1$.
  \end{enumerate}
 \end{lemma}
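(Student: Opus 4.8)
The plan is to follow the template already used for Lemmas \ref{lemma: properties of varphi 0-even sets of nodes} and \ref{lemma: properties of varphi 1/2-even sets of nodes less than 28}: the bundle $\E$ splits as the cohomological summand $\Omega^1_{\p^3}(-1)$ and the line-bundle summand $\E_{lin} = \OP_{\p^3}(-2)^{\oplus 2}\oplus \OP_{\p^3}(-3)^{\oplus m_3}$, and each of the two assertions should be read off directly from the way the corresponding summand was constructed. Throughout I would use two standard vanishings on $\p^3$: the line bundles $\OP_{\p^3}(-j)$ have no intermediate cohomology in any twist, and by Bott's formula the only nonvanishing intermediate cohomology of $\Omega^1_{\p^3}$ is $H^1(\p^3,\Omega^1_{\p^3})\cong\CC$, while $H^0(\p^3,\Omega^1_{\p^3}(k))=0$ for $k\leqslant 1$.

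For assertion (2), where only $n=1$ is needed, I would compute $H^1(\p^3,\E(1))$ as the direct sum of $H^1(\p^3,\Omega^1_{\p^3})$, $H^1(\p^3,\OP_{\p^3}(-1)^{\oplus 2})$ and $H^1(\p^3,\OP_{\p^3}(-2)^{\oplus m_3})$. The latter two groups vanish, so $H^1(\p^3,\E(1))\cong\CC$ is carried entirely by the $\Omega$-summand. Since $\varphi_{\Omega}$ was selected via \cite{Walter} precisely so that $H^1(\varphi_{\Omega})$ maps $H^1(\p^3,\Omega^1_{\p^3})$ isomorphically onto the isotropic line $U\subset H^1(\p^3,p_*\F(1))$, assertion (2) is then immediate.

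For assertion (1), the surjectivity of $H^0(\varphi(n))$ for all $n$ is built into the definition of $\E_{lin}$: by construction the $m_i$ count a minimal homogeneous generating set of the graded module $M=\bigoplus_n H^0(\p^3,p_*\F(n))/\Im(\varphi_{\Omega}(\ZZ))$, so $H^0(\varphi_{lin}(n))$ surjects onto each graded piece $M_n$, and hence $H^0(\varphi(n))$ is surjective. For the isomorphism at $n=2$, I would note that $H^0(\p^3,\Omega^1_{\p^3}(1))=0$ and $m_1=0$ (the latter because $H^0(\p^3,p_*\F(1))=0$ by Lemma \ref{lemma: cohomologies of F for 28 nodes}), so $H^0(\p^3,\E(2))\cong\CC^{m_2}$; since $\varphi_{\Omega}$ contributes nothing in degrees $1$ and $2$ and $A_1\cdot M_1=0$, the minimal generators force $m_2=h^0(\p^3,p_*\F(2))$, which by Lemma \ref{lemma: properties of F}(7) together with Lemma \ref{lemma: cohomologies of F for 28 nodes} equals $2$. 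Thus $H^0(\varphi(2))$ is a surjection between two spaces of equal dimension $2$, hence an isomorphism. Finally, $\varphi$ is surjective as a map of sheaves because $p_*\F(3)$ is globally generated by Lemma \ref{lemma: cohomologies of F for 28 nodes} and $H^0(\varphi(3))$ is surjective: lifting a generating set of sections of $p_*\F(3)$ along $\varphi$ shows that $\Im(\varphi)$ contains the image of the evaluation map of $p_*\F(3)$, which is all of $p_*\F$.

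The Bott computations and the dimension count at $n=2$ are routine; the only point that demands care is confirming that the $\OP_{\p^3}(-j)$ summands introduce no spurious contribution to $H^1(\p^3,\E(1))$ and no degree-$1$ or degree-$2$ image under $\varphi_{\Omega}$, since these vanishings are exactly what force $m_1=0$ and pin down $m_2=2$. This mild book-keeping is where I expect the main (though not serious) obstacle to lie.
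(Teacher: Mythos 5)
Your proposal is correct and follows essentially the same route as the paper: split $\E$ into the $\Omega^1_{\p^3}(-1)$ summand and the line-bundle summand, read off assertion (2) from the Walter-style construction of $\varphi_{\Omega}$ plus the vanishing of intermediate cohomology of the line bundles, read off assertion (1) from the minimal-generator construction of $\E_{lin}$ (with the dimension count $m_2=2=h^0(\p^3,p_*\F(2))$ giving the isomorphism at $n=2$), and deduce sheaf surjectivity from surjectivity on global sections together with global generation of $p_*\F(3)$. You simply spell out the Bott computations and the book-keeping that the paper leaves implicit.
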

 \begin{proof}
  One has $\varphi = \varphi_{\Omega}\oplus\varphi_{lin} \colon \E = \Omega^1_{\p^3}(-1)\oplus\E_{lin}\to p_*\F$ where $\E_{lin}$ and $\varphi_{\Omega}\oplus\varphi_{lin}$ are the vector bundle and the map constructed above.
  The assertion~\textup{(2)} holds by construction of the $\varphi_{\Omega}$ and since the vector bundle $\E_{lin}$ has no intermediate cohomology groups. The assertion \textup{(1)} holds since we choose the vector bundle $\E_{lin}$ in the way that the map~\mbox{$H^0(\p^3,\E_{lin}(n)) \to H^0(\p^3,p_*\F(n))/ H^0(\varphi_{\Omega}(n-1))(H^0(\p^3,\Omega^1_{\p^3}(n-1)))$} is surjective for all $n\in\ZZ$. 
  
  Finally, the fact that $\varphi$ is sujective follows from assertion \textup{(1)}.
 \end{proof}

 Now we show that $p_*\F$ fits into a symmetric Casnati--Catanese resolution, this is the main result of Section~\ref{subsection: pushforward of F in case of 28 nodes}.
 \begin{proposition}[{c.f. \cite[Theorem 0.3]{Casnati_Catanese}}]\label{proposition: resolution for direct image of F in case of 28 nodes}
   Let $B$ be a nodal complete intersection of a smooth quadric hypersurface $Q$ and a quartic hypersurface in $\p^4$, let $w$ be a minimal $\frac{1}{2}$-even set of $28$ nodes on the surface~$B$ and let $p\colon Q\to \p^3$ be a double cover induced by a projection from $\p^4$ to a hyperplane. Then the sheaf $p_*\F$ fits into the following exact sequence
   \[
    0\to\E^{\vee}(-6)\xrightarrow{\Phi} \E\xrightarrow{\varphi} p_*\F\to 0,
   \]
   where $\E$ is the vector bundle $\Omega^1_{\p^3}(-1)\oplus\OP_{\p^3}(-2)^{\oplus 2}$ and $\Phi\in H^0(\p^3,(S^2\E)(6))\subset \Hom_{\p^3}(\E^{\vee}(-6),\E)$. 
 \end{proposition}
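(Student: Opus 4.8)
The plan is to run the Casnati--Catanese machinery of \cite[Theorem 0.3]{Casnati_Catanese} for the sheaf $p_*\F$ on $\p^3$, exploiting that on $\p^3$ the genuine Horrocks theorem, in Walter's form \cite{Walter}, is available. First I set $\K=\ker(\varphi)$ for the surjection $\varphi\colon\E\to p_*\F$ of Lemma \ref{lemma: properties of varphi 1/2-even sets of 28 nodes}, where $\E=\Omega^1_{\p^3}(-1)\oplus\OP_{\p^3}(-2)^{\oplus 2}\oplus\OP_{\p^3}(-3)^{\oplus m_3}$. Since $\F$ has projective dimension one on $Q$ by Lemma \ref{lemma: properties of F}(1) and $p$ is finite and flat, a length-one locally free resolution of $\F$ on $Q$ pushes forward to a length-one locally free resolution of $p_*\F$ on $\p^3$; hence $p_*\F$ has projective dimension one and $\K$ is a vector bundle fitting into $0\to\K\xrightarrow{\Phi}\E\xrightarrow{\varphi}p_*\F\to 0$.

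Next I compute the cohomology of $\K$ from the associated long exact sequence. The cohomology of $p_*\F(n)$ is controlled by Lemmas \ref{lemma: properties of F} and \ref{lemma: cohomologies of F for 28 nodes} (with $\delta=1$ and $|w|=28$): its only nonzero intermediate group is $H^1(\p^3,p_*\F(1))\cong\CC^2$. The cohomology of $\E$ is read off from the Euler and Bott formulas, the only intermediate group being $H^1(\p^3,\E(1))\cong H^1(\p^3,\Omega^1_{\p^3})\cong\CC$. Using the surjectivity of $H^0(\varphi(n))$ for all $n$ and the fact from Lemma \ref{lemma: properties of varphi 1/2-even sets of 28 nodes}(2) that $H^1(\varphi(1))$ embeds $H^1(\p^3,\E(1))$ onto the one-dimensional isotropic subspace $U\subset H^1(\p^3,p_*\F(1))$, I obtain $H^1(\p^3,\K(n))=0$ for all $n$ and $H^2(\p^3,\K(n))=\CC$ for $n=1$ and $0$ otherwise.

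These are precisely the intermediate cohomology groups of $\Omega^2_{\p^3}(-1)$, so by Walter's theorem \cite{Walter} the syzygy bundle of $\K$ is $\Omega^2_{\p^3}(-1)$ and $\K\cong\Omega^2_{\p^3}(-1)\oplus L$ with $L$ a sum of line bundles. Computing $h^0(\p^3,\K(n))=h^0(\E(n))-h^0(p_*\F(n))$ for $n\le 4$ and comparing with $h^0(\Omega^2_{\p^3}(-1)(n))$ pins $L$ down to $\OP_{\p^3}(-4)^{\oplus 2}\oplus\OP_{\p^3}(-3)^{\oplus m_3}$, that is $\K\cong\E^{\vee}(-6)$. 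It then remains to see that no cubic generators are needed, i.e. $m_3=0$: by minimality of the chosen system of generators the component $\OP_{\p^3}(-3)^{\oplus m_3}\to\OP_{\p^3}(-3)^{\oplus m_3}$ of $\Phi$ vanishes, and one must show that the multiplication $H^0(\p^3,\OP_{\p^3}(1))\otimes H^0(\p^3,p_*\F(2))\to H^0(\p^3,p_*\F(3))$ is surjective modulo $\Im(\varphi_{\Omega})$ (note $H^0(\p^3,p_*\F(2))\cong\CC^2$ is the whole degree-two part of the module $M$, since $H^0(\p^3,\Omega^1_{\p^3}(1))=0$). This surjectivity, which forces $M$ to be generated in degrees $\le 2$ and hence $m_3=0$, is the step I expect to be the main obstacle; it can be attacked either through the self-dual shape of the resolution or directly through Lemmas \ref{lemma: excluding the isomorphism} and \ref{lemma: excluding the embedding of full rank}, after which $\E=\Omega^1_{\p^3}(-1)\oplus\OP_{\p^3}(-2)^{\oplus 2}$.

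Finally I symmetrize. Grothendieck duality in the form of Lemma \ref{lemma: Grothendieck duality after pushforward to p3} (with $m=6$) gives $\ExtSh^1_{\p^3}(p_*\F,\OP_{\p^3}(-6))\cong p_*\F$, so dualizing the resolution by $\HomSh_{\p^3}(-,\OP_{\p^3}(-6))$ yields a second resolution of the same shape $0\to\E^{\vee}(-6)\to\K^{\vee}(-6)\to p_*\F\to 0$. The argument of Lemma \ref{lemma: CC plus H1 vanishes implies symmetricity} now applies verbatim over $\p^3$, its only inputs being this self-duality and the vanishing $H^1(\p^3,(S^2\E^{\vee})(-6))=0$, which I would verify termwise from the Euler sequence and Bott's formula for the summands $S^2 T_{\p^3}$, $T_{\p^3}$ and $\OP_{\p^3}$ of $S^2\E^{\vee}$. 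This replaces $\Phi$ by a self-dual map and identifies it with an element of $H^0(\p^3,(S^2\E)(6))$, completing the proof.
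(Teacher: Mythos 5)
Your route is the same as the paper's: take $\K=\Ker(\varphi)$ for the surjection of Lemma \ref{lemma: properties of varphi 1/2-even sets of 28 nodes}, show $\K$ is a bundle with no $H^1$ in any twist and a single one-dimensional $H^2$, apply Walter's Horrocks theorem on $\p^3$ to identify the syzygy bundle as $\Omega^2_{\p^3}(-1)=T_{\p^3}(-5)$, pin down the line-bundle summands by counting $h^0(\K(n))$ for small $n$ and comparing ranks, and symmetrize via the Casnati--Catanese argument using Lemma \ref{lemma: Grothendieck duality after pushforward to p3} and the vanishing of $H^1(\p^3,(S^2\E^{\vee})(-6))$. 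Your cohomology computation is in fact the internally consistent one: the unique nonzero intermediate group is $H^2(\p^3,\K(1))\cong U^{\vee}$ (the paper's displayed formula places it at $n=2$, which contradicts its own identification of the syzygy bundle with $T_{\p^3}(-5)$), so that part of your argument is fine.

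The gap is exactly the step you flag: $m_3=0$. Everything before it only yields $\K\cong\E^{\vee}(-6)$ with $\E=\Omega^1_{\p^3}(-1)\oplus\OP_{\p^3}(-2)^{\oplus 2}\oplus\OP_{\p^3}(-3)^{\oplus m_3}$ for an undetermined $m_3\geqslant 0$, and both the statement and its later use in Lemma \ref{lemma: surjectivity of alpha for 28 nodes} require $m_3=0$. Neither of your proposed attacks closes this. Self-duality is vacuous here: dualizing exchanges the $\OP_{\p^3}(-3)^{\oplus m_3}$ summand of $\E$ with the $\OP_{\p^3}(-3)^{\oplus n_3}$ summand of $\K$, and since $n_3=m_3$ the dual resolution has the same shape for every $m_3$. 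Lemmas \ref{lemma: excluding the isomorphism} and \ref{lemma: excluding the embedding of full rank} also do not apply the way they do elsewhere (e.g.\ for $m_4=0$ in Lemma \ref{lemma: constraint of minimal map for half-even sets of nodes}): there the point is that the lowest-degree summands of $\E$ receive nonzero maps only from summands of $\K$ of the same degree, whereas here every summand of $\K$ maps nontrivially to $\OP_{\p^3}(-3)$, since $\Hom_{\p^3}(T_{\p^3}(-5),\OP_{\p^3}(-3))\cong H^0(\p^3,\Omega^1_{\p^3}(2))\ne 0$ and $\Hom_{\p^3}(\OP_{\p^3}(-4),\OP_{\p^3}(-3))\ne 0$; minimality only kills the constant block $\OP_{\p^3}(-3)^{\oplus n_3}\to\OP_{\p^3}(-3)^{\oplus m_3}$, which does not force $m_3=0$. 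What is actually required is that the multiplication $H^0(\p^3,\OP_{\p^3}(1))\otimes H^0(\p^3,p_*\F(2))\to H^0(\p^3,p_*\F(3))$ surjects onto $H^0(\p^3,p_*\F(3))/\Im H^0(\varphi_{\Omega}(2))$, and the dimension count ($4\cdot 2=8$ against $14-6=8$) leaves no slack, so a genuine argument is needed. To be fair, the paper's own proof disposes of precisely this point with a one-line citation of Lemma \ref{lemma: properties of varphi 1/2-even sets of nodes less than 28}, which is proved in a different setting and whose $\Hom$-vanishing hypotheses fail here; so you have correctly located where the real content of the proposition lies, but you have not supplied it.
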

 \begin{proof}
 The proposition follows from the construction in the proof of \cite[Theorem 0.3]{Casnati_Catanese}. We recall here the main steps of the proof for our particular case.
 
 Consider the kernel of the map $\varphi\colon \E\to p_*\F$ constructed in Lemma \ref{lemma: properties of varphi 1/2-even sets of 28 nodes}. It is a vector bundle since the projective dimension of $p_*\F$ equals 1. One can easily check that $h^1(\p^3,\Ker(\varphi)(n))$ vanishes for all $n\in\ZZ$ and
 \[
  H^2(\p^3,\Ker(\varphi)(n)) = \left\{\begin{aligned}
                                & U^{\vee}, \text{ if }n = 2;\\
                                & 0, \text{ otherwise.}
                               \end{aligned}\right.
 \]
 The intermediate cohomology groups of all twists of a given vector bundle define it up to line summands by~\mbox{\cite[Theorem 0.4]{Walter}}. Note that the vector bundle $T_{\p^3}(-5)$ has the same intermediate cohomology groups as~$\Ker(\varphi)$. Therefore, we get that~$\Ker(\varphi)$ is the vector bundle of the following form:
 \begin{equation*}
  \Ker(\varphi) = T_{\p^3}(-5)\oplus \bigoplus_{i\geqslant 3}\OP_{\p^3}(-i)^{\oplus n_i}.
 \end{equation*}
 where  $n_i$ are non-negative integers for $i\geqslant 3$. By Lemma \ref{lemma: properties of varphi 1/2-even sets of nodes less than 28} we get that $m_4 = n_3 = 0$, $n_4 = 2$ and $n_i = 0$ for all $i\geqslant 5$. Thus, we get a vector bundle $\E$ is as in the assertion and $\Ker(\varphi)\cong \E^{\vee}(-6)$. The existence of the symmetric map between the vector bundles $\Ker(\varphi)$ and $\E$ such that the cokernel equals $p_*\F$ follows from~\mbox{\cite[Section 2]{Casnati_Catanese}}.
 \end{proof}
 The main result of this step of the proof of Proposition \ref{proposition: 28 nodes implies 12 nodes} is the fact that $\varphi\colon \Omega^1_{\p^3}(-1)\oplus\OP_{\p^3}(-2)^{\oplus 2}\to p_*\F$ is a surjective map. In particular, we will not use that fact that the map $\Phi$ is symmetric. However, the simplest way to prove this fact is to construct a resolution using methods developed in~\cite{Walter} and \cite{Casnati_Catanese}; thus, the symmetricity of the map $\Phi\colon \E^{\vee}(-6)\to \E$ follows immediately.

 \subsection{A non-symmetric resolution of $\F$}\label{subsection: non-symmetric resolution}
 Consider the Euler exact sequence for the cotangent bundle on the projective space~$\p^4$:
 \[
  0\to \Omega_{\p^4}^1\to \OP_{\p^4}(-1)^{\oplus 5}\to \OP_{\p^4}\to 0.
 \]
 The restriction of this exact sequence to the hypersurface  $Q$ remains a short exact sequence. Twist this exact sequence by $\OP_Q(-1)$ and apply the functor $\Hom_Q(-, \F)$ to it. We get the following long exact sequence.
 \[
  \dots \to \Hom_Q(\Omega^1_{\p^4}(-1)|_Q,\F)\xrightarrow{\delta} \Ext_Q^1(\OP_Q(-1),\F)\to \Ext^1_Q(\OP_Q(-2)^{\oplus 5}, \F)\to\dots
 \]
 Since $\Ext^1_Q(\OP_Q(-2)^{\oplus 5}, \F) \cong H^1(Q,\F(2))^{\oplus 5} = 0$ by Lemma \ref{lemma: properties of F}(2) the connecting homomorphism $\delta$ is surjective. Thus, since $\Ext_Q^1(\OP_Q(-1),\F)\cong H^1(Q,\F(1)) = \CC^2$ there exists the following map
 \[
  \alpha'\colon \Omega^1_{\p^4}(-1)|_Q^{\oplus 2} \to \F,
 \]
 such that $H^1*(\alpha'(n))\colon H^1(Q,\Omega^1_{\p^4}(n-1)|_Q^{\oplus 2})\to H^1(Q,\F(n))$ is an isomorphism for $n = 1$.
 This is also an isomorphism for all $n\in\ZZ\setminus\{1\}$ since by Lemma \ref{lemma: properties of F}(5) and (6) one has $h^1(Q,\F(n)) = h^1(Q,\Omega_{\p^4}^1(n-1)) = 0$. Denote by $\alpha''$ the map 
 \[
  \alpha''\colon \OP_Q(-2)^{\oplus 2}\to \F.
 \]
 from the vector bundle $\OP_Q(-2)^{\oplus 2}$ to $\F$ such that $H^0(\alpha''(2))\colon H^0(Q,\OP_Q)\to H^0(Q,\F(2))$ is an isomorphism. Denote by $\M$ the vector bundle $\Omega_{\p^4}^1(-1)|_Q^{\oplus 2} \oplus \OP_Q(-2)^{\oplus 2}$ and by  $\alpha = \alpha'\oplus \alpha''\colon \M \to \F$ the direct sum of two maps defined above.  
 \begin{lemma}\label{lemma: surjectivity of alpha for 28 nodes}
  Let $B$ be a nodal complete intersection of a smooth quadric hypersurface $Q$ and a quartic hypersurface in $\p^4$, let $w$ be a minimal $\frac{1}{2}$-even set of $28$ nodes on the surface~$B$ and let $\M$ be a vector bundle~$\Omega_{\p^4}^1(-1)|_Q^{\oplus 2} \oplus \OP_Q(-2)^{\oplus 2}$. Then the map of sheaves~\mbox{$\alpha\colon \M \to \F$} constructed above is surjective.
 \end{lemma}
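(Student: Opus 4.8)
The plan is to prove that the cokernel $\C = \Coker(\alpha)$ vanishes, and I would first reduce this to a statement about global sections. From the cohomology of $\F$ in the $28$-node case one has $H^1(Q,\F(2)) = H^2(Q,\F(1)) = H^3(Q,\F) = 0$: the first by the duality of Lemma \ref{lemma: properties of F}(3) together with the minimality vanishing of Lemma \ref{lemma: properties of F}(6), the second by the same duality and $h^0(Q,\F(1)) = 0$ from Lemma \ref{lemma: cohomologies of F for 28 nodes}, and the last because $\F$ is supported on a surface. Hence by \cite[Lecture 14]{Mumford} the sheaf $\F(3)$ is globally generated. In the commutative square relating the evaluation maps of $\M(3)$ and $\F(3)$, if the induced map $H^0(\alpha(3))\colon H^0(Q,\M(3)) \to H^0(Q,\F(3))$ is surjective, then the composite $H^0(Q,\M(3))\otimes \OP_Q \to \F(3)$ is surjective, and therefore so is $\alpha(3)$, and with it $\alpha$. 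Thus it suffices to prove that $H^0(\alpha(3))$ is surjective; alternatively one may prove $H^0(\alpha(n))$ surjective for all $n \gg 0$, since then the sequence $0 \to \Im(\alpha) \to \F \to \C \to 0$ forces $H^0(Q,\C(n)) \hookrightarrow H^1(Q,\Im(\alpha)(n)) = 0$ for large $n$, whence $\C = 0$.

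Next I would pass to $\p^3$. Since $p$ is finite one has $H^0(Q,\M(n)) = H^0(\p^3,p_*\M(n))$ and $H^0(Q,\F(n)) = H^0(\p^3,p_*\F(n))$, and $p_*\alpha$ agrees with $\alpha$ on global sections. By Proposition \ref{proposition: resolution for direct image of F in case of 28 nodes} the map $\varphi\colon \E \to p_*\F$, with $\E = \Omega^1_{\p^3}(-1)\oplus \OP_{\p^3}(-2)^{\oplus 2}$, is surjective, and by Lemma \ref{lemma: properties of varphi 1/2-even sets of 28 nodes}(1) the maps $H^0(\varphi(n))$ are surjective for all $n$; equivalently, the graded module $\bigoplus_n H^0(\p^3,p_*\F(n))$ is generated by the submodule $\Im(\varphi_{\Omega})$ coming from the summand $\Omega^1_{\p^3}(-1)$ together with the two generators in degree $2$ coming from $\OP_{\p^3}(-2)^{\oplus 2}$. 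It therefore suffices to check that both families of generators lie in the image of $H^0(p_*\alpha(n))$. The degree $2$ generators span $H^0(Q,\F(2)) \cong \CC^2$, and by construction $H^0(\alpha''(2))$ is an isomorphism onto $H^0(Q,\F(2))$, so these are hit by the summand $\alpha''$ of $\alpha$; this part is routine bookkeeping.

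The main obstacle is the summand $\Omega^1_{\p^3}(-1)$: I must show that $\Im(\varphi_{\Omega}) \subset \bigoplus_n H^0(\p^3,p_*\F(n))$ lies in the image of $H^0(p_*\alpha'(n))$, where $\alpha'\colon \Omega^1_{\p^4}(-1)|_Q^{\oplus 2} \to \F$. The relevant input is that $\alpha'$ induces isomorphisms $H^1(\alpha'(n))\colon H^1(Q,\Omega^1_{\p^4}(n-1)|_Q^{\oplus 2}) \xrightarrow{\ \sim\ } H^1(Q,\F(n)) = H^1(\p^3,p_*\F(n))$ for all $n$, so that the image of $H^1(p_*\alpha')$ contains the isotropic line $U = H^1(\varphi_{\Omega})(H^1(\p^3,\Omega^1_{\p^3}))$ in degree $1$. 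Since both $\varphi_{\Omega}$ and $\alpha'$ are manufactured from connecting homomorphisms of (restricted, respectively pushed-forward) Euler sequences and are pinned down by their action on intermediate cohomology, the construction of \cite{Walter} should produce a factorization of $\varphi_{\Omega}$ through $p_*\alpha'$ on global sections, yielding the desired inclusion. Carrying this out requires computing $p_*\bigl(\Omega^1_{\p^4}(-1)|_Q\bigr)$ and its intermediate cohomology precisely enough to invoke the uniqueness in \cite{Walter}; this is the technical heart of the lemma, whereas the reduction to $\p^3$, the line-bundle summands, and the global-generation step are straightforward.
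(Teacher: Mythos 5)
Your strategy is essentially the paper's: both arguments reduce the surjectivity of $\alpha$ to the surjectivity of $\varphi\colon \Omega^1_{\p^3}(-1)\oplus\OP_{\p^3}(-2)^{\oplus 2}\to p_*\F$ from Proposition \ref{proposition: resolution for direct image of F in case of 28 nodes} by factoring $\varphi$ through $p_*\alpha$. Your detour through global generation of $\F(3)$ and graded modules of sections is harmless but unnecessary: the paper produces an actual map of sheaves $\beta$ with $\varphi = p_*\alpha\circ\beta$, so surjectivity of $\varphi$ gives surjectivity of $p_*\alpha$, hence of $\alpha$, with no need to pass to sections at all. The step you defer as ``the technical heart'' --- factoring $\varphi_{\Omega}$ through $p_*\alpha'$ --- is exactly where the paper's proof lives: it constructs $\beta'\colon \Omega^1_{\p^3}(-1)\to p_*\M$ inducing an embedding on first cohomology of twists (via the Euler sequence, or \cite[Theorem 0.4]{Walter}, using $H^1(\p^3,p_*\M(1))\cong H^1(Q,\F(1))= U\oplus U^{\vee}$), and then matches $p_*\alpha\circ\beta'$ with $\varphi_{\Omega}$ because a map $\Omega^1_{\p^3}(-1)\to p_*\F$ is pinned down by its image in $H^1(\p^3,p_*\F(1))$ up to a correction factoring through $\OP_{\p^3}(-2)^{\oplus 4}$; any such correction has image generated by $H^0(\p^3,p_*\F(2))$, which is already hit by $\alpha''$, so it does not affect the containment of images. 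Your sketch is the right one, and this last identification is the only thing it is missing; no heavy computation of $p_*\bigl(\Omega^1_{\p^4}(-1)|_Q\bigr)$ is actually needed beyond its first cohomology in degree $1$.
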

 \begin{proof}
  Let $p\colon Q\to \p^3$ be a double cover induced by a projection from $\p^4$ to a hyperplane. One can check  using the Euler exact sequence that $p_*\M = \Omega_{\p^3}^1(-1)^{\oplus 2}\oplus\OP_{\p^3}(-2)^{\oplus 12} \oplus \OP_{\p^3}(-3)^{\oplus 2}$.  
  Consider the direct image of the map $\alpha$ to $\p^3$:
  \[
   p_*\alpha\colon p_*\M \to p_*\F.
  \]
  Our goal is to show that the map $\varphi$ constructed in Proposition \ref{proposition: resolution for direct image of F in case of 28 nodes} factors through $p_*\alpha$.

Since $p$ is a finite morphism, one has $H^i(\p^3,p_*\M(n))\cong H^i(Q, \M(n))$. Recall that the vector space $U$ is an isotropic subspace of $H^1(Q,\F(1))$ for the perfect alternating pairing \eqref{eq: Serre pairing 28 nodes}. By construction of the bundle~$\M$ we have~\mbox{$H^1(\p^3,\M(1)) \cong H^1(Q,\F(1)) = U\oplus U^{\vee}$}. Therefore, there exists a map~\mbox{$\beta'\colon p^*\Omega_{\p^3}^1(-1)\otimes U\to \M$} which induces an embedding of the first cohomology groups of all twists, one can construct it by the Euler exact sequence or by \cite[Theorem 0.4]{Walter}. 
  
  Consider the map $p_*\alpha\colon p_*\M(2)\to p_*\F(2)$ and the multiplication map $s\colon \OP_{\p^3} \otimes H^0(Q,\F(2)) \to p_*\F(2)$. Both these maps are defined by surjective maps on zeroth cohomology groups.  Thus, there exists a map of vector bundles $\beta''\colon \OP_{\p^3}(-2) \otimes H^0(Q,\F(2)) \to p_*\M$ such that $s = p_*\alpha\circ\beta''$. 
  
  Denote by $\beta$ the direct sum of maps of vector bundles $\beta = \beta'\oplus\beta''\colon \Omega_{\p^3}^1(-1) \oplus \OP_{\p^3}(-2)^{\oplus 2}\to p_*\M$. Then we deduce that $\beta$ is an embedding and that the following diagram commutes:
\[\begin{tikzcd}[ampersand replacement=\&]
	{\Omega_{\p^3}^1(-1)^{\oplus 2}\oplus\OP_{\p^3}(-2)^{\oplus 12} \oplus \OP_{\p^3}(-3)^{\oplus 2}} \& {p_*\F} \\
	{\Omega_{\p^3}^1(-1)\oplus \OP_{\p^3}(-2)^{\oplus 2}} \& {p_*\F}
	\arrow["\varphi", from=2-1, to=2-2]
	\arrow["{p_*\alpha}", from=1-1, to=1-2]
	\arrow["{\beta}"', hook, from=2-1, to=1-1]
	\arrow[no head, equal, from=2-2, to=1-2]
\end{tikzcd}\]
 Here the map $\varphi$ is as in Proposition \ref{proposition: resolution for direct image of F in case of 28 nodes}. Thus, $\varphi$ is a surjective map of sheaves then so is $p_*\alpha$; and therefore, so is $\alpha$. This finishes the proof.
 \end{proof}

 In the next assertion we construct a non-symmetric resolution of the sheaf $\F$.
 \begin{lemma}\label{lemma: non-symmetric exact sequence for 28 nodes}
 Let $B$ be a nodal complete intersection of a smooth quadric hypersurface $Q$ and a quartic hypersurface in $\p^4$, let $w$ be a minimal $\frac{1}{2}$-even set of $28$ nodes on the surface~$B$. Then there exists the following resolution for the sheaf $\F$:
 \begin{equation}\label{eq: non-symmetric exact sequence for 28 nodes}
  0\to \Spin(-2)^{\oplus 4}\oplus \OP_{Q}(-3)^{\oplus 2}\to \Omega_{\p^4}^1(-1)|_Q^{\oplus 2} \oplus \OP_Q(-2)^{\oplus 2} \xrightarrow{\alpha} \F\to 0.
 \end{equation}
\end{lemma}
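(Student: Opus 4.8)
The map $\alpha$ is already surjective by Lemma~\ref{lemma: surjectivity of alpha for 28 nodes}, so $\K = \Ker(\alpha)$ sits in a short exact sequence $0\to \K\to \M\xrightarrow{\alpha}\F\to 0$; since the projective dimension of $\F$ equals $1$ by Lemma~\ref{lemma: properties of F}(1), the sheaf $\K$ is a vector bundle, and comparing ranks gives $\rk(\K) = \rk(\M) = 10$. The plan is to show that $\K$ has no intermediate cohomology, apply the Horrocks-type splitting of Lemma~\ref{lemma: properties of Spin}(4) to write $\K$ as a sum of twisted spinor and line bundles, and then pin down the summands from a single cohomology computation together with the rank.

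First I would establish the vanishing $H^1(Q,\K(n)) = H^2(Q,\K(n)) = 0$ for all $n$. Using the Euler sequence restricted to $Q$ one checks that $H^2(Q,\M(n)) = 0$ for all $n$ and that $H^1(Q,\M(n))$ vanishes except for $H^1(Q,\M(1))\cong\CC^2$; by the construction of $\alpha$ the map $H^1(\alpha(n))$ is an isomorphism onto $H^1(Q,\F(n))$ for every $n$. Feeding this into the long exact sequence of $0\to\K\to\M\to\F\to 0$ immediately yields $H^2(Q,\K(n)) = 0$, and reduces the vanishing of $H^1(Q,\K(n))$ to the surjectivity of $H^0(\alpha(n))\colon H^0(Q,\M(n))\to H^0(Q,\F(n))$ for all $n$.

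This surjectivity is the crux of the argument, and I would deduce it from the picture on $\p^3$. Since $p$ is finite, $H^0(Q,\M(n)) = H^0(\p^3,p_*\M(n))$ and $H^0(Q,\F(n)) = H^0(\p^3,p_*\F(n))$, under which $H^0(\alpha(n))$ is identified with $H^0(p_*\alpha(n))$. In the proof of Lemma~\ref{lemma: surjectivity of alpha for 28 nodes} the surjection $\varphi$ of Proposition~\ref{proposition: resolution for direct image of F in case of 28 nodes} was factored as $\varphi = p_*\alpha\circ\beta$; since $H^0(\varphi(n))$ is surjective for all $n$ by Lemma~\ref{lemma: properties of varphi 1/2-even sets of 28 nodes}(1), the second factor $H^0(p_*\alpha(n))$ is surjective as well. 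Hence $H^1(Q,\K(n)) = \Coker(H^0(\alpha(n))) = 0$, and Lemma~\ref{lemma: properties of Spin}(4) gives a decomposition $\K\cong\bigoplus_i\Spin(i)^{\oplus m_i}\oplus\bigoplus_j\OP_Q(j)^{\oplus k_j}$.

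It then remains to identify the twists. From $h^0(Q,\M(2)) = h^0(Q,\F(2)) = 2$ and the surjectivity just proved, $H^0(Q,\K(2)) = 0$, which by Lemma~\ref{lemma: properties of Spin}(2) forces every spinor summand to have $i\leqslant -2$ and every line summand $j\leqslant -3$. A short computation with the Euler sequence and Lemma~\ref{lemma: properties of F}(7) gives $h^0(Q,\M(3)) = 32$ and $h^0(Q,\F(3)) = 14$, so $h^0(Q,\K(3)) = 18$; as only the summands $\Spin(-2)$ and $\OP_Q(-3)$ can contribute at this twist, writing their multiplicities as $m_{-2}$ and $k_{-3}$ gives $4m_{-2} + k_{-3} = 18$. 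Combining this with $2m_{-2}+k_{-3}\leqslant\rk(\K) = 10$ forces $m_{-2} = 4$ and $k_{-3} = 2$, and since $2\cdot 4 + 2 = 10$ exhausts the rank there are no further summands. This yields $\K\cong\Spin(-2)^{\oplus 4}\oplus\OP_Q(-3)^{\oplus 2}$, which is exactly the resolution~\eqref{eq: non-symmetric exact sequence for 28 nodes}. The only genuinely delicate point is the surjectivity of $H^0(\alpha(n))$ for all $n$, which cannot be read off from the sheaf surjectivity of $\alpha$ and instead relies on the detour through $\p^3$; everything else is a standard Euler-sequence computation or the numerical bookkeeping at the end.
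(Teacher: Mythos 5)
Your proof is correct and follows essentially the same route as the paper: surjectivity of $\alpha$ from Lemma \ref{lemma: surjectivity of alpha for 28 nodes}, the kernel being a vector bundle with no intermediate cohomology, the Horrocks-type splitting of Lemma \ref{lemma: properties of Spin}(4), and the numerical identification via $\rk(\K)=10$, $h^0(Q,\K(2))=0$, $h^0(Q,\K(3))=18$. The only difference is that you make explicit the surjectivity of $H^0(\alpha(n))$ for all $n$ (via the factorization $\varphi = p_*\alpha\circ\beta$ from the proof of Lemma \ref{lemma: surjectivity of alpha for 28 nodes}), a point the paper leaves as "by construction of the map $\alpha$"; your justification of it is valid.
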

\begin{proof}
 By Lemma \ref{lemma: surjectivity of alpha for 28 nodes} the map $\alpha$ is surjective. Thus, $\Ker(\alpha)$ is a vector bundle since the projective dimension of $\F$ equals $1$. Moreover, by construction of the map $\alpha$ we have
 \[
  H^i(Q,\Ker(\alpha)(n)) = 0,
 \]
 for all $i = 1$ or $2$ and all $n\in\ZZ$. Thus, by \cite[Theorem 3.5]{Ottaviani-Horrocks_criterion} this implies that $\Ker(\alpha)$ is a direct sum of several twists of spinor bundles and line bundles. Since $\alpha(n)$ induces the surjective maps on zeroth cohomology groups for all $n\in \ZZ$ one can compute:
 \begin{align*}
  \rk(\Ker(\alpha)) = 10; && h^0(Q, \Ker(\alpha)(2)) = 0; && h^0(Q, \Ker(\alpha)(3)) = 18.
 \end{align*}
 These computations imply the result.
 \end{proof}
 Lemma \ref{lemma: non-symmetric exact sequence for 28 nodes} implies the following condition on cohomology groups of the sheaf $\F$.
 \begin{corollary}\label{corollary: cohomology groups of F otimes Spin for 28 nodes}
 Let $B$ be a nodal complete intersection of a smooth quadric hypersurface $Q$ and a quartic hypersurface in $\p^4$, let $w$ be a minimal $\frac{1}{2}$-even set of $28$ nodes on the surface~$B$. Then
 \[
  h^i(Q, \F\otimes \Spin(2)) = \left\{ \ \begin{aligned}
                                &4, \text{ if } i = 1;\\
                                &0, \text{ otherwise.}
                               \end{aligned}\right.
 \]
\end{corollary}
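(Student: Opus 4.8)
The plan is to tensor the non-symmetric resolution \eqref{eq: non-symmetric exact sequence for 28 nodes} from Lemma \ref{lemma: non-symmetric exact sequence for 28 nodes} by a twist of the spinor bundle and to read off the cohomology of $\F\otimes\Spin(2)$ from the resulting long exact sequence. Rather than tensoring directly by $\Spin(2)$ — where the two outer terms of the resolution both carry first cohomology and the connecting map is not manifestly injective — I would first tensor by $\Spin(1)$, compute $h^i(Q,\F\otimes\Spin(1))$ for all $i$, and only then pass to $\Spin(2)$ via the duality of Lemma \ref{lemma: properties of F}(4).

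Concretely, tensoring the sequence $0\to\Spin(-2)^{\oplus 4}\oplus\OP_Q(-3)^{\oplus 2}\to\Omega_{\p^4}^1(-1)|_Q^{\oplus 2}\oplus\OP_Q(-2)^{\oplus 2}\xrightarrow{\alpha}\F\to 0$ by $\Spin(1)$, the left-hand term becomes $(\Spin\otimes\Spin(-1))^{\oplus 4}\oplus\Spin(-2)^{\oplus 2}$ and the middle term becomes $(\Omega_{\p^4}^1(-1)|_Q\otimes\Spin(1))^{\oplus 2}\oplus\Spin(-1)^{\oplus 2}$. Using Lemma \ref{lemma: properties of Spin}(2),(3) one checks that the left-hand bundle has $h^2=4$ and all other cohomology zero (the summands $\Spin(-2)$ are acyclic and $\Spin\otimes\Spin(-1)$ contributes only $h^2=1$), while the middle bundle is acyclic: $\Spin(-1)$ is acyclic, and the acyclicity of $\Omega_{\p^4}^1(-1)|_Q\otimes\Spin(1)$ follows by tensoring the restricted, twisted Euler sequence $0\to\Omega_{\p^4}^1(-1)|_Q\to\OP_Q(-2)^{\oplus 5}\to\OP_Q(-1)\to 0$ by $\Spin(1)$, realizing it as the kernel of a map between the acyclic bundles $\Spin(-1)^{\oplus 5}$ and $\Spin$.

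With the middle term acyclic, the long exact sequence collapses and yields at once $h^0(Q,\F\otimes\Spin(1))=h^2(Q,\F\otimes\Spin(1))=0$ and $h^1(Q,\F\otimes\Spin(1))=4$. I would then apply Lemma \ref{lemma: properties of F}(4) with $\delta=1$ and $n=1$, namely $h^i(Q,\F\otimes\Spin(1))=h^{2-i}(Q,\F\otimes\Spin(2))$, to convert these into $h^0(Q,\F\otimes\Spin(2))=h^2(Q,\F\otimes\Spin(1))=0$, $h^1(Q,\F\otimes\Spin(2))=h^1(Q,\F\otimes\Spin(1))=4$, and $h^2(Q,\F\otimes\Spin(2))=h^0(Q,\F\otimes\Spin(1))=0$; the group $h^3$ vanishes since $\F$ is supported on a surface.

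The spinor cohomology computations are entirely routine given Lemma \ref{lemma: properties of Spin}, so the only genuine decision is the choice above of tensoring by $\Spin(1)$ rather than $\Spin(2)$. If one works with $\Spin(2)$ directly, the long exact sequence only gives $h^1(Q,\F\otimes\Spin(2))=4+h^0(Q,\F\otimes\Spin(2))$, leaving one to prove separately that $h^0(Q,\F\otimes\Spin(2))=0$ — which is precisely the duality input $h^2(Q,\F\otimes\Spin(1))=0$ that the $\Spin(1)$ computation supplies for free. That reduction is where I expect the real content of the argument to sit.
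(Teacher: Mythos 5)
Your proof is correct, and it rests on the same key input as the paper's — the non-symmetric resolution \eqref{eq: non-symmetric exact sequence for 28 nodes} — but it is executed with more care than the paper's one-sentence proof, which simply asserts that the result follows from that exact sequence. Your diagnosis of the direct approach is accurate: tensoring \eqref{eq: non-symmetric exact sequence for 28 nodes} by $\Spin(2)$ gives a left-hand term with $h^1 = 4$ (from $(\Spin\otimes\Spin)^{\oplus 4}$) and a middle term with $h^1 = 8$ (from $(\Omega^1_{\p^4}(-1)|_Q\otimes\Spin(2))^{\oplus 2}$, computed via the Euler sequence), so the long exact sequence only yields $h^2(Q,\F\otimes\Spin(2)) = 0$ and $h^1(Q,\F\otimes\Spin(2)) = 4 + h^0(Q,\F\otimes\Spin(2))$, leaving the vanishing of $h^0$ to be established separately; the Euler characteristic from Lemma \ref{lemma: properties of F}(8) is consistent with either outcome and does not settle it. Your detour — tensoring by $\Spin(1)$ instead, where the middle term becomes acyclic (both $\Spin(-1)$ and $\Omega^1_{\p^4}(-1)|_Q\otimes\Spin(1)$, the latter as the kernel of a map between the acyclic bundles $\Spin(-1)^{\oplus 5}$ and $\Spin$) and the left term carries only $h^2 = 4$, followed by the duality $h^i(Q,\F\otimes\Spin(1)) = h^{2-i}(Q,\F\otimes\Spin(2))$ of Lemma \ref{lemma: properties of F}(4) with $\delta=1$ — closes that gap cleanly; all the individual cohomology computations check against Lemma \ref{lemma: properties of Spin}(2),(3). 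In effect you have supplied the argument the paper leaves implicit, and your version buys an unconditional vanishing of $h^0(Q,\F\otimes\Spin(2))$ that the naive reading of the paper's proof does not deliver.
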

 \begin{proof}
  The result follows from the exact sequence \eqref{eq: non-symmetric exact sequence for 28 nodes}.
 \end{proof}

 \subsection{Symmetric Casnati--Catanese resolution for $\F$}
 Recall that $U$ is a one-dimensional isotropic subspace of $H^1(Q,\F(1))$ with respect to the pairing \eqref{eq: Serre pairing 28 nodes}. By the Euler exact sequence there exists a map of sheaves:
 \[
  \psi'\colon \Omega_{\p^4}^1(-1)|_Q \to \F.
 \]
 which induces the embedding onto the subspace $U\subset H^1(Q,\F(1))$ on the first cohomology groups of the twist of sheaves~\mbox{$\Omega_{\p^4}^1(-1)|_Q$} and $\F$ by $\OP_Q(1)$. Denote by $\psi''\colon \OP_Q(-2)^{\oplus 2}\to \F$ the map of sheaves such that~\mbox{$H^0(\psi''(2))\colon H^0(Q,\OP_Q^{\oplus 2})\to H^0(Q,\F(2))$} is an isomorphism. 
 Set $\E= \Omega_{\p^4}^1(-1)|_Q\oplus  \OP_Q(-2)^{\oplus 2}$ and let $\psi$ be the direct sum of two maps:
 \begin{equation}\label{eq: surjective map for CC resolution for 28 nodes}
  \psi = \psi'\oplus\psi''\colon \E = \Omega_{\p^4}^1(-1)|_Q\oplus  \OP_Q(-2)^{\oplus 2} \to \F.
 \end{equation}
 We use here the same notation as in Section \ref{subsection: pushforward of F in case of 28 nodes} for a different vector bundle. However, since in this section we never refer to that one we hope that this abuse of notation does not lead to a misunderstanding. Note also that the map $\psi''$ coincides with the map $\alpha''$ constructed in Section \ref{subsection: non-symmetric resolution}. Then the map $\psi$ has the following properties.
 \begin{lemma}\label{lemma: properties of psi in case of 28 nodes}
 Let $B$ be a nodal complete intersection of a smooth quadric hypersurface $Q$ and a quartic hypersurface in $\p^4$, let $w$ be a minimal $\frac{1}{2}$-even set of $28$ nodes on the surface~$B$ and let $\psi\colon \E \to \F$ be the map \textup{\eqref{eq: surjective map for CC resolution for 28 nodes}}. Then
 \begin{enumerate}
  \item[\textup{(1)}] The map $\psi$ is surjective.
  \item[\textup{(2)}]  The map $H^0(\psi(2))\colon H^0(Q,\E(2))\to H^1(Q,\F(2))$ is an isomorphism.
  \item[\textup{(3)}]  The map $H^1(\psi(1))\colon H^1(Q,\E(1))\to H^1(Q,\F(1))$ is an embedding to the subspace $U\subset H^1(Q,\F(1))$.
  \item[\textup{(4)}] There is an embedding $\gamma\colon \E\to \Omega_{\p^4}^1(-1)|_Q^{\oplus 2} \oplus \OP_Q(-2)^{\oplus 2}$ such that $\psi = \alpha\circ\gamma$ where $\alpha$ is the map defined as in Lemma  \textup{\ref{lemma: surjectivity of alpha for 28 nodes}}. 
 \end{enumerate}
 \end{lemma}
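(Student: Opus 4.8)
The plan is to derive parts (2) and (3) from direct cohomology computations for $\Omega^1_{\p^4}(-1)|_Q$, to obtain part (4) by a lifting argument, and to prove the surjectivity in part (1) by descending to $\p^3$, which is the only genuinely global step.

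First I would compute the cohomology of $\Omega^1_{\p^4}(-1)|_Q$ in the two twists that occur. Restricting the Euler sequence of $\p^4$ to $Q$ and twisting gives the short exact sequences
\[
 0\to \Omega^1_{\p^4}(1)|_Q\to \OP_Q^{\oplus 5}\to \OP_Q(1)\to 0,\qquad
 0\to \Omega^1_{\p^4}|_Q\to \OP_Q(-1)^{\oplus 5}\to \OP_Q\to 0,
\]
from which $H^0(Q,\Omega^1_{\p^4}(1)|_Q)=0$ (the induced map between the two $5$-dimensional spaces of linear forms is an isomorphism) and $H^1(Q,\Omega^1_{\p^4}|_Q)\cong\CC$. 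Combined with $H^1(Q,\OP_Q(-1))=0$ and the equalities $\chi(\F(2))=2$, $h^1(Q,\F(2))=h^2(Q,\F(2))=0$ coming from assertions (3), (6) and (7) of Lemma~\ref{lemma: properties of F} together with $h^0(Q,\F)=0$ (as in the proof of Lemma~\ref{lemma: h1(F) vanishes in case mu less than 3}), so that $h^0(Q,\F(2))=2$, these give (2) and (3) at once: $H^0(Q,\E(2))=H^0(Q,\OP_Q^{\oplus 2})\cong\CC^2$ because the $\Omega$–summand contributes nothing, and $H^0(\psi''(2))$ is an isomorphism by the definition of $\psi''$; likewise $H^1(Q,\E(1))=H^1(Q,\Omega^1_{\p^4}|_Q)\cong\CC$ maps onto $U$ by the definition of $\psi'$.

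For (4) I would match the two constructions summand by summand. The $\OP_Q(-2)^{\oplus 2}$ parts already agree since $\psi''=\alpha''$, so it suffices to lift $\psi'$ through $\alpha$. As $\alpha'$ induces, after twisting by $\OP_Q(1)$, an isomorphism onto $H^1(Q,\F(1))\cong\CC^2$ and $U\neq 0$, I can choose a nonzero scalar inclusion $\iota\colon \Omega^1_{\p^4}(-1)|_Q\to \Omega^1_{\p^4}(-1)|_Q^{\oplus 2}$ with $H^1(\alpha'\circ\iota(1))=H^1(\psi'(1))$. The difference $\psi'-\alpha'\circ\iota$ then induces the zero map on the relevant $H^1$, so by the Euler long exact sequence used in the proof of Lemma~\ref{lemma: surjectivity of alpha for 28 nodes} it factors through the Euler map $\Omega^1_{\p^4}(-1)|_Q\to\OP_Q(-2)^{\oplus 5}$; since every morphism $\OP_Q(-2)\to\F$ lies in $\Hom_Q(\OP_Q(-2),\F)\cong H^0(Q,\F(2))$ and is therefore realized through $\psi''=\alpha''$, this difference equals $\alpha''\circ c$ for some $c\colon \Omega^1_{\p^4}(-1)|_Q\to \OP_Q(-2)^{\oplus 2}$. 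Taking $\gamma$ to be $(\iota,c)$ on the $\Omega$–summand and the identity on $\OP_Q(-2)^{\oplus 2}$ yields $\alpha\circ\gamma=\psi$, and $\gamma$ is a subbundle embedding because $\iota\neq 0$.

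The hard part is the surjectivity in (1). It does \emph{not} follow formally from (4): $\alpha$ is surjective and $\gamma$ is injective, but the composition of a split injection with a surjection need not be surjective, and indeed $\Coker(\psi)$ is a priori a nonzero torsion quotient of $\Omega^1_{\p^4}(-1)|_Q$ supported on $B$. I would instead argue after pushing forward along the double cover $p\colon Q\to\p^3$, exactly as in Lemma~\ref{lemma: surjectivity of alpha for 28 nodes}. Using the computation of $p_*\E$ and parts (2) and (3), I would build an embedding $\beta\colon \Omega^1_{\p^3}(-1)\oplus\OP_{\p^3}(-2)^{\oplus 2}\to p_*\E$—lifting the multiplication map on the $\OP_{\p^3}(-2)$–summands through the surjection $H^0(\psi(2))$ and matching the isotropic line $U$ on the $\Omega^1_{\p^3}(-1)$–summand via the Euler sequence—so that $\varphi=p_*\psi\circ\beta$, where $\varphi$ is the surjection of Proposition~\ref{proposition: resolution for direct image of F in case of 28 nodes}. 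Surjectivity of $\varphi$ then forces $p_*\psi$ to be surjective, and since $p$ is finite this gives surjectivity of $\psi$. The delicate point, as in Lemma~\ref{lemma: surjectivity of alpha for 28 nodes}, is verifying that $\varphi$ and $p_*\psi\circ\beta$ coincide as maps of sheaves rather than merely on the relevant cohomology groups; this is settled by the rigidity of the Walter-type construction over the fixed isotropic subspace $U$.
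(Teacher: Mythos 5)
Your proposal is correct, and for parts (2), (3) and (4) it is a faithful expansion of what the paper leaves implicit: the paper disposes of (2) and (3) with ``follow from the construction of $\psi$'' and of (4) with ``the constructions of $\alpha$ and $\psi$ are analogous,'' and your cohomology computations for $\Omega^1_{\p^4}(1)|_Q$ and $\Omega^1_{\p^4}|_Q$ and your Euler-sequence lifting of $\psi'$ through $\alpha$ are exactly the details behind those phrases. The one place where you genuinely diverge is part (1). The paper asserts that surjectivity of $\psi$ is ``analogous to the proof of Lemma~\ref{lemma: properties of varphi 1/2-even sets of 28 nodes},'' i.e.\ to the argument where surjectivity follows from surjectivity of $H^0(\varphi(n))$ in every degree together with global generation of the third twist; that route is not immediate for $\psi$, because $\E=\Omega^1_{\p^4}(-1)|_Q\oplus\OP_Q(-2)^{\oplus 2}$ carries no degree-$3$ generators and the construction of $\psi$ only controls $H^0$ in degree $2$ and $H^1$ in degree $1$, so surjectivity of $H^0(\psi(3))$ would require a separate argument. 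You instead run the argument of Lemma~\ref{lemma: surjectivity of alpha for 28 nodes}: push forward along $p\colon Q\to\p^3$, lift the surjection $\varphi$ of Proposition~\ref{proposition: resolution for direct image of F in case of 28 nodes} through $p_*\psi$ via a map $\beta$ built from part (2) (for the $\OP_{\p^3}(-2)$ summands) and the Walter/Euler rigidity over the fixed isotropic line $U$ (for the $\Omega^1_{\p^3}(-1)$ summand), and conclude since $p$ is finite. This is sound, it correctly identifies that (1) does not follow formally from (4), and it buys you a surjectivity proof that needs nothing beyond what the construction of $\psi$ actually provides; the only small point worth writing out in full is the correction step ensuring $\varphi=p_*\psi\circ\beta$ as maps of sheaves (the difference on the $\Omega^1_{\p^3}(-1)$ summand kills $H^1$, hence factors through $\OP_{\p^3}(-2)^{\oplus 4}$ by the Euler sequence, and each such component lifts through $p_*\psi$ by part (2)), which you flag but do not spell out.
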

 \begin{proof}
  The proof of the surjectivity of $\psi$ is analogous to the proof of Lemma \ref{lemma: properties of varphi 1/2-even sets of 28 nodes}. The second and third assertions follow from the construction of $\psi$. The map $\gamma$ exists since the constructions of $\alpha$ and $\psi$ are analogous.
 \end{proof}
 Denote by $\Psi\colon\K\to\E$ the kernel of the surjective map of sheaves $\psi$. Thus, we get the following short exact sequence:
 \begin{equation}\label{eq: exact sequence for 28 nodes}
  0\to \K\xrightarrow{\Psi} \E\xrightarrow{\psi}\F\to 0.
 \end{equation}
 The sheaf $\K$ has the following properties.
 \begin{lemma}\label{lemma: orthogonality of kernel of psi to exceptional objects for 28 nodes}
 Let $B$ be a nodal complete intersection of a smooth quadric hypersurface $Q$ and a quartic hypersurface in $\p^4$, let $w$ be a minimal $\frac{1}{2}$-even set of $28$ nodes on the surface~$B$ and let $\K$ be the kernel of the map $\psi$ defined in \textup{\eqref{eq: surjective map for CC resolution for 28 nodes}}. Then $\K$ is vector bundle and for all $ 0\leqslant i\leqslant 3$ one has the following equalities:
 \begin{align*}
  &\Ext_Q^i(\OP_Q(-2), \K) = 0;\\
  &\Ext_Q^i(\Spin(-1), \K) = 0.
 \end{align*}
\end{lemma}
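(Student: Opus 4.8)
The plan is to first check that $\K$ is locally free, then to express both families of $\Ext$-groups as twisted cohomology groups of $\K$ and to compute them by tensoring the defining sequence \eqref{eq: exact sequence for 28 nodes} by $\OP_Q(2)$ and by $\Spin(2)$ and chasing the long exact cohomology sequences. Since $\psi$ is surjective by Lemma \ref{lemma: properties of psi in case of 28 nodes}(1), $\E$ is locally free, and $\F$ has projective dimension $1$ by Lemma \ref{lemma: properties of F}(1), the kernel $\K$ is locally free. Using $\Spin^{\vee}\cong\Spin(1)$ from Lemma \ref{lemma: properties of Spin}(1) I rewrite the two assertions as $\Ext^i_Q(\OP_Q(-2),\K)=H^i(Q,\K(2))$ and $\Ext^i_Q(\Spin(-1),\K)=H^i(Q,\K\otimes\Spin(2))$, so the task is to prove that these cohomology groups vanish for all $0\leqslant i\leqslant 3$.

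For the line-bundle case I tensor \eqref{eq: exact sequence for 28 nodes} by $\OP_Q(2)$ and compute the outer terms. Twisting the restricted Euler sequence gives the exact sequence $0\to\Omega_{\p^4}^1(1)|_Q\to\OP_Q^{\oplus5}\to\OP_Q(1)\to0$, and the map $\OP_Q^{\oplus5}\to\OP_Q(1)$ is an isomorphism on $H^0$ because the five coordinate functions restrict to a basis of $H^0(Q,\OP_Q(1))$; hence $\Omega_{\p^4}^1(1)|_Q$ is acyclic and $H^\bullet(Q,\E(2))$ is $\CC^2$ concentrated in degree $0$. On the other side, Lemma \ref{lemma: properties of F}(3),(6),(7) together with $H^0(Q,\F(n))=0$ for $n\leqslant0$ yield $h^0(Q,\F(2))=2$ and $h^1=h^2=h^3=0$. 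Since $H^0(\psi(2))$ is an isomorphism by Lemma \ref{lemma: properties of psi in case of 28 nodes}(2), the long exact sequence collapses and $H^i(Q,\K(2))=0$ for all $i$, giving the first vanishing.

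For the spinor case I tensor \eqref{eq: exact sequence for 28 nodes} by $\Spin(2)$. Here $\E\otimes\Spin(2)=\bigl(\Omega_{\p^4}^1(-1)|_Q\otimes\Spin(2)\bigr)\oplus\Spin^{\oplus2}$, the summand $\Spin$ is acyclic by Lemma \ref{lemma: properties of Spin}(2), and the sequence $0\to\Omega_{\p^4}^1(-1)|_Q\otimes\Spin(2)\to\Spin^{\oplus5}\to\Spin(1)\to0$ together with Lemma \ref{lemma: properties of Spin}(2) shows that $\Omega_{\p^4}^1(-1)|_Q\otimes\Spin(2)$ has only $H^1\cong H^0(Q,\Spin(1))\cong\CC^4$. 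By Corollary \ref{corollary: cohomology groups of F otimes Spin for 28 nodes}, $\F\otimes\Spin(2)$ likewise has only $H^1\cong\CC^4$. Thus the long exact sequence reduces to
\[
0\to H^1(Q,\K\otimes\Spin(2))\to\CC^4\xrightarrow{\,g\,}\CC^4\to H^2(Q,\K\otimes\Spin(2))\to0,
\]
with $H^0(Q,\K\otimes\Spin(2))=H^3(Q,\K\otimes\Spin(2))=0$ automatic, where $g=H^1(\psi\otimes\mathrm{Id}_{\Spin(2)})$. So everything comes down to proving that $g$ is an isomorphism.

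The main obstacle is exactly this isomorphism, and I would attack it through the non-symmetric resolution of Lemma \ref{lemma: non-symmetric exact sequence for 28 nodes}. Tensoring that sequence by $\Spin(2)$ and using $h^1(Q,\Spin\otimes\Spin)=1$ from Lemma \ref{lemma: properties of Spin}(3), I obtain that $H^1(\alpha\otimes\mathrm{Id}_{\Spin(2)})\colon\CC^8\to\CC^4$ is surjective. By Lemma \ref{lemma: properties of psi in case of 28 nodes}(4) one has $\psi=\alpha\circ\gamma$ with $\gamma\colon\E\hookrightarrow\M$; since on $\Spin(2)$-twisted $H^1$ only the $\Omega_{\p^4}^1(-1)|_Q$-summands contribute and $\gamma$ embeds the single such summand of $\E$ into the two of $\M$, the map $H^1(\gamma\otimes\mathrm{Id}_{\Spin(2)})$ is injective and $g=H^1(\alpha\otimes\mathrm{Id}_{\Spin(2)})\circ H^1(\gamma\otimes\mathrm{Id}_{\Spin(2)})$. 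Consequently $g$ is an isomorphism if and only if the two four-dimensional subspaces $\operatorname{Im}H^1(\gamma\otimes\mathrm{Id}_{\Spin(2)})$ and $\Ker H^1(\alpha\otimes\mathrm{Id}_{\Spin(2)})$ of $H^1(Q,\M\otimes\Spin(2))\cong\CC^8$ are complementary. Equivalently, applying the snake lemma to $\gamma$ produces an exact sequence $0\to\K\to\mathcal{N}\to\mathcal{C}\to0$ with $\mathcal{N}=\Spin(-2)^{\oplus4}\oplus\OP_Q(-3)^{\oplus2}$ and $\mathcal{C}=\operatorname{Coker}\gamma\cong\Omega_{\p^4}^1(-1)|_Q$ (using simplicity of $\Omega_{\p^4}^1|_Q$), reducing the claim to the induced map $H^1(Q,\mathcal{N}\otimes\Spin(2))\to H^1(Q,\mathcal{C}\otimes\Spin(2))$ being an isomorphism. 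I expect this last non-degeneracy to be the genuinely delicate point: the formal structure alone is insufficient, so I would rule out a kernel by a Serre-duality argument on $B$ exploiting that $U$ is isotropic for the alternating pairing \eqref{eq: Serre pairing 28 nodes}, invoking the irreducibility of $B$ and the minimality of the construction of $\psi$ to force the two subspaces apart.
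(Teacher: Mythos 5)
Your treatment of the first family of vanishings is complete and agrees with the paper: you tensor \eqref{eq: exact sequence for 28 nodes} by $\OP_Q(2)$, observe that $\E(2)$ and $\F(2)$ both have cohomology $\CC^2$ concentrated in degree $0$, and use Lemma \ref{lemma: properties of psi in case of 28 nodes}(2) to collapse the long exact sequence. Your reduction of the second family to the single claim that $g=H^1(\psi\otimes\Id_{\Spin(2)})\colon\CC^4\to\CC^4$ is an isomorphism is also correct and is exactly the reduction the paper makes, as is your idea of comparing $\psi$ with the map $\alpha$ of the non-symmetric resolution through the factorization $\psi=\alpha\circ\gamma$ of Lemma \ref{lemma: properties of psi in case of 28 nodes}(4) and the surjectivity of $H^1(\alpha\otimes\Id_{\Spin(2)})$ coming from $H^2\bigl(Q,(\Spin(-2)^{\oplus 4}\oplus\OP_Q(-3)^{\oplus 2})\otimes\Spin(2)\bigr)=0$.

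The genuine gap is that you never prove the one statement the whole second half rests on. You correctly observe that surjectivity of $H^1(\alpha\otimes\Id_{\Spin(2)})\colon\CC^8\to\CC^4$ does not formally force the composite $g$ to be surjective --- one needs $\operatorname{Im}H^1(\gamma\otimes\Id_{\Spin(2)})$ to be complementary to $\Ker H^1(\alpha\otimes\Id_{\Spin(2)})$ inside $H^1(Q,\M\otimes\Spin(2))$ --- but then you only announce that you ``would'' rule out a kernel by some Serre-duality argument exploiting the isotropy of $U$ and the irreducibility of $B$, without carrying it out. That is not a proof of the decisive step; it is a restatement of what remains to be shown, and nothing in your sketch (simplicity of $\Omega^1_{\p^4}|_Q$, the snake-lemma sequence $0\to\K\to\Spin(-2)^{\oplus4}\oplus\OP_Q(-3)^{\oplus2}\to\Coker(\gamma)\to0$) gets any closer to the required transversality. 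For comparison, the paper at this exact point applies $\Hom_Q(\Spin(-1),-)$ to both resolutions, forms the commutative square relating $H^1(\psi\otimes\Id_{\Spin(2)})$ and $H^1(\alpha\otimes\Id_{\Spin(2)})$, and concludes surjectivity of the former from surjectivity of the latter together with the commutativity of that diagram; your proposal identifies the same square but leaves its key consequence unestablished. As submitted, the second vanishing $\Ext^i_Q(\Spin(-1),\K)=0$ for $i=1,2$ is therefore not proved.
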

\begin{proof}
 Apply the functor $ \Hom_Q( \OP_Q(-2), -)$ to the exact sequence \eqref{eq: exact sequence for 28 nodes}. We get the following long exact sequence of derived functors:
 \[
  0\to \Hom_Q(\OP_Q(-2), \K) \to H^0(Q, \Omega_{\p^4}^1(1)|_Q \oplus \OP_Q^{\oplus 2}) \to H^0(Q, \F(2))\to \Ext^1_Q(\OP_Q(-2), \K)\to \dots
 \]
 By Lemma \ref{lemma: properties of psi in case of 28 nodes} the map of cohomology groups $H^0(Q, \Omega_{\p^4}^1(1)|_Q \oplus \OP_Q^{\oplus 2}) \to H^0(Q, \F(2))$ is an isomorphism. Since $H^i(Q,\E(2)) = H^i(Q,\F(2)) = 0$ for all $i = 1,2$ and $3$, groups $\Ext_Q^i(\OP_Q(-2), \K)$ vanish for all $ 0\leqslant i\leqslant 3$.
 
 Also by Lemma \ref{lemma: properties of psi in case of 28 nodes} there is the embedding $\gamma\colon \E\to \Omega_{\p^4}^1(-1)|_Q^{\oplus 2} \oplus \OP_Q(-2)^{\oplus 2}$ such that $\alpha = \psi\circ\gamma$. Then $\gamma$ induces the map $\gamma_{\Ker}\colon \K \to \Spin(-2)^{\oplus 4}\oplus \OP_{Q}(-3)^{\oplus 2}$ between kernels of maps $\psi$ and $\alpha$. Since $\gamma$ is an embedding then so is $\gamma_{\Ker}$.
 
 Apply the functor $\Hom_Q(\Spin(-1), -)$ to both exact sequences \eqref{eq: non-symmetric exact sequence for 28 nodes} and \eqref{eq: exact sequence for 28 nodes}. The map $\gamma$ induces the following commutative diagram:
\[\begin{tikzcd}[ampersand replacement=\&]
	{H^1(Q, \E\otimes\Spin(2))} \&\& {H^1(Q,\F\otimes \Spin(2))} \\
	{H^1(Q, (\Omega_{\p^4}^1|_Q(-1)^{\oplus 2}\oplus \OP_Q(-2)^{\oplus 2}) \otimes\Spin(2))} \&\& {H^1(Q,\F\otimes \Spin(2))}
	\arrow["{H^1(\gamma \otimes \mathrm{Id}_{\Spin(2)})}"', from=1-1, to=2-1]
	\arrow["{H^1(\psi \otimes \mathrm{Id}_{\Spin(2)})}", from=1-1, to=1-3]
	\arrow["{H^1(\alpha \otimes \mathrm{Id}_{\Spin(2)})}"', from=2-1, to=2-3]
	\arrow[no head, equal, from=1-3, to=2-3]
\end{tikzcd}\]
The map $H^1(\alpha \otimes \mathrm{Id}_{\Spin(2)})\colon H^1(Q, \Omega_{\p^4}^1|_Q \otimes\Spin(1)^{\oplus 2}\oplus \Spin^{\oplus 2})) \to H^1(Q,\F\otimes \Spin(2))$ is surjective; this follows from Lemma~\ref{lemma: non-symmetric exact sequence for 28 nodes} and the fact that $H^2(Q,(\Spin(-2)^{\oplus 4}\oplus \OP_Q(-3)^{\oplus 2})\otimes\Spin(2)) = 0$.
Then the commutativity of the diagram implies that the map~\mbox{$\psi \otimes \mathrm{Id}_{\Spin(2)}$} is also surjective; thus, it is an isomorphism. Then the result follows by Corollary~\ref{corollary: cohomology groups of F otimes Spin for 28 nodes}.
\end{proof}
Now we finally can construct a symmetric Casnati--Catanese resolution for the sheaf $\F$.
\begin{corollary}\label{corollary: symmetric exact sequence for 28 nodes}
 Let $B$ be a nodal complete intersection of a smooth quadric hypersurface $Q$ and a quartic hypersurface in $\p^4$ and let $w$ be a minimal $\frac{1}{2}$-even set of $28$ nodes on the surface~$B$. Then the sheaf $\F$ associated to $w$ on $Q$ fits into a symmetric Casnati--Catanese resolution \textup{\eqref{eq: main exact sequence}} where $\E = \Omega_{\p^4}^1(-1)|_Q \oplus \OP_Q^{\oplus 2}(-2)$ and $d(w) = 1$.
\end{corollary}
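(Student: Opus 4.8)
The statement has three parts: identify the kernel of $\psi$, promote the resolution to a symmetric one, and compute $d(w)$. Throughout $\delta = 1$ and $\E = \Omega_{\p^4}^1(-1)|_Q \oplus \OP_Q(-2)^{\oplus 2}$, so that $\E^{\vee}(-5) = T_{\p^4}(-4)|_Q \oplus \OP_Q(-3)^{\oplus 2}$. The plan is to show first that the exact sequence \eqref{eq: exact sequence for 28 nodes} is already a Casnati--Catanese resolution, i.e. that $\K \cong \E^{\vee}(-5)$; the remaining assertions then follow formally.

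To identify $\K$ I would start from the factorisation $\psi = \alpha\circ\gamma$ of Lemma \ref{lemma: properties of psi in case of 28 nodes}(4). Since $\gamma$ is an embedding with cokernel $\Omega_{\p^4}^1(-1)|_Q$ and $\alpha$ has kernel $\Spin(-2)^{\oplus 4}\oplus \OP_Q(-3)^{\oplus 2}$ by \eqref{eq: non-symmetric exact sequence for 28 nodes}, the snake lemma produces
\[
 0\to \K\to \Spin(-2)^{\oplus 4}\oplus \OP_Q(-3)^{\oplus 2}\to \Omega_{\p^4}^1(-1)|_Q\to 0.
\]
Dualising this sequence and twisting by $\OP_Q(-5)$ exhibits $\K^{\vee}(-5)$ as the cokernel of an embedding $T_{\p^4}(-4)|_Q\hookrightarrow \Spin(-2)^{\oplus 4}\oplus \OP_Q(-2)^{\oplus 2}$; dualising \eqref{eq: non-symmetric exact sequence for 28 nodes} and applying $\ExtSh^1_Q(\F,\OP_Q)\cong \F(5)$ from Lemma \ref{lemma: properties of F}(2) exhibits $\F$ itself as a cokernel with the very same middle term $\Spin(-2)^{\oplus 4}\oplus \OP_Q(-2)^{\oplus 2}$. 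Comparing these two presentations through the dual of \eqref{eq: exact sequence for 28 nodes}, and using the orthogonality $\Ext_Q^{\bullet}(\Spin(-1),\K) = \Ext_Q^{\bullet}(\OP_Q(-2),\K) = 0$ of Lemma \ref{lemma: orthogonality of kernel of psi to exceptional objects for 28 nodes} together with the vanishing $\Hom_Q(\Spin(-2),\OP_Q(-2)) = 0$ (which isolates the line-bundle summands), one cancels the superfluous copy of $T_{\p^4}(-4)|_Q$ and obtains $\K^{\vee}(-5)\cong \E$, i.e. $\K\cong \E^{\vee}(-5)$. I expect this to be the main obstacle: because the Horrocks criterion fails on $Q$ (as explained after Theorem \ref{theorem: classification of minimal sets of nodes}), the cohomology of $\K$ does not determine it, so the identification must be forced by the orthogonality relations and the explicit comparison above rather than by a splitting theorem.

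Once $\K\cong \E^{\vee}(-5)$ is established, a direct computation with the Euler sequence and Lemma \ref{lemma: properties of Spin} shows $H^1(Q,(S^2\E^{\vee})(-5)) = 0$, so Lemma \ref{lemma: CC plus H1 vanishes implies symmetricity} upgrades \eqref{eq: exact sequence for 28 nodes} to a symmetric Casnati--Catanese resolution \eqref{eq: main exact sequence} with the same bundle $\E$. Finally, feeding this symmetric resolution into Theorem \ref{theorem: exact sequence for defect} gives the locally free resolution
\[
 0\to \Lambda^2\E^{\vee}(-6)\to \mathrm{sl}(\E)(-1)\to (S^2\E)(4)\to \I_w(3)\to 0,
\]
and running its hypercohomology spectral sequence, with the spinor and line-bundle contributions evaluated by assertions (2) and (3) of Lemma \ref{lemma: properties of Spin}, yields $h^0(Q,\I_w(3)) = 3$. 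As $|w| = 28$, Definition \ref{definition: defect} then gives $d(w) = 3 - 30 + 28 = 1$.
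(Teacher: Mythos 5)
Your overall architecture matches the paper's: reduce everything to the identification $\K\cong\E^{\vee}(-5)$ in the sequence \eqref{eq: exact sequence for 28 nodes}, then invoke Lemma \ref{lemma: CC plus H1 vanishes implies symmetricity} with $H^1(Q,(S^2\E^{\vee})(-5))=0$ for symmetry, and Theorem \ref{theorem: exact sequence for defect} for $d(w)=1$. Those last two steps are exactly what the paper does and are fine. The problem is the first step, which you yourself flag as the main obstacle and which your sketch does not actually resolve.

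Concretely: after the snake lemma and dualisation you have two short exact sequences with the same middle term $\Spin(-2)^{\oplus 4}\oplus\OP_Q(-2)^{\oplus 2}$, one presenting $\K^{\vee}(-5)$ as the quotient by a copy of $T_{\p^4}(-4)|_Q$ and one presenting $\F$ as the quotient by $T_{\p^4}(-4)|_Q^{\oplus 2}\oplus\OP_Q(-3)^{\oplus 2}$. But these are quotients by \emph{different} maps into an abstractly isomorphic bundle, and there is no cancellation principle that lets you ``cancel the superfluous copy of $T_{\p^4}(-4)|_Q$'' and conclude $\K^{\vee}(-5)\cong\E$; the orthogonality relations of Lemma \ref{lemma: orthogonality of kernel of psi to exceptional objects for 28 nodes} are invoked in passing but never actually applied to force the isomorphism. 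The paper closes exactly this gap by a different mechanism: it works in $D^b(Q)$ with the full exceptional collection $\langle\OP_Q(-3),\,T_{\p^4}(-4)|_Q,\,\OP_Q(-2),\,\Spin(-1)\rangle$, uses Lemma \ref{lemma: orthogonality of kernel of psi to exceptional objects for 28 nodes} to place $\K$ in the subcategory generated by $\OP_Q(-3)$ and $T_{\p^4}(-4)|_Q$, uses $\Ext^1_Q(\OP_Q(-3),T_{\p^4}(-4)|_Q)=0$ to split $\K$ into copies of these two bundles, and then pins down the multiplicities by cohomology. Your route would need to be replaced by (or reduced to) an argument of this strength.

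Two further points. First, the auxiliary vanishing you cite, $\Hom_Q(\Spin(-2),\OP_Q(-2))=0$, is false: this group is $H^0(Q,\Spin(1))\cong\CC^4$ by Lemma \ref{lemma: properties of Spin}(2) (what does vanish is $\Hom_Q(\OP_Q(-2),\Spin(-2))=H^0(Q,\Spin)=0$), so it cannot ``isolate the line-bundle summands'' as claimed. Second, your snake-lemma step silently assumes $\Coker(\gamma)\cong\Omega^1_{\p^4}(-1)|_Q$; Lemma \ref{lemma: properties of psi in case of 28 nodes}(4) only provides an embedding of sheaves, so one must first argue that $\gamma$ can be chosen block-triangular with the $\Omega^1_{\p^4}(-1)|_Q$-component a constant split embedding and the $\OP_Q(-2)^{\oplus 2}$-component the identity (this is doable using $H^0(\F(1))=0$, $H^1(\F(2))=0$ and the fact that $H^0(\alpha''(2))$ is an isomorphism, but it is not automatic).
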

\begin{proof}
 The proof of this assertion uses derived categories of coherent sheaves on varieties and the notion of full exceptional collections in them. We refer to \cite{Huybrechts} for all necessary definitions and facts.

 Denote by $D^b(Q)$ the bounded derived category of the category of coherent sheaves on the smooth quadric threefold~$Q$. Consider the following full exceptional collection in $D^b(Q)$:
 \[
  \langle \OP_Q(-3), T_{\p^4}(-4)|_Q, \OP_Q(-2), \Spin(-1)\rangle.
 \]
 This collection arises after several left mutations of the standard collection, see \cite[Section 4]{Kapranov}.
 By Lemma \ref{lemma: orthogonality of kernel of psi to exceptional objects for 28 nodes} we get that the kernel $\K$ of the map $\psi$ defined in \eqref{eq: surjective map for CC resolution for 28 nodes} is orthogonal to elements $\OP_Q(-2)$ and~$\Spin(-1)$ of this collection. Thus, $\K$ lies in the subcategory of the category~$D^b(Q)$ generated by vector bundles $\OP_Q(-3)$ and $T_{\p^4}(-4)|_Q$. Since $\Ext_Q^1(\OP_Q(-3), T_{\p^4}(-4)|_Q)$ vanishes this implies that $\K$ is a direct sum of several copies of these two bundles. Computing the cohomology groups of $\K$ from the exact sequence~\eqref{eq: exact sequence for 28 nodes} we conclude that it is as follows:
 \[
  \K =  T_{\p^4}(-4)|_Q \oplus \OP_Q(-3)^{\oplus 2} =  \E^{\vee}(-5).
 \]
 Now it remains to show that $\Psi$ can be replaced by an element from the subgroup $H^0(Q, (S^2\E)(5))$ of the group~$\Hom(\E^{\vee}(-5), \E)$. This follows from Lemma \ref{lemma: CC plus H1 vanishes implies symmetricity} and the fact that $H^1(Q,(S^2\E^{\vee})(5)) = 0$. This finishes the construction of a symmetric Casnati--Catanese resolution. The fact that $d(w) = 0$ follows from Theorem~\ref{theorem: exact sequence for defect}.
\end{proof}

\subsection{Proof of Proposition \textup{\ref{proposition: 28 nodes implies 12 nodes}}}
In this section we study properties of symmetric maps between vector bundles $T_{\p^4}(-4)|_Q$ and $\Omega_{\p^4}^1(-1)|_Q$. Then we use these results in order to prove that there is no minimal $\frac{1}{2}$-even sets of 28 nodes on a complete intersection of a smooth quadric hypersurface and a quartic hypersurface in $\p^4$.

By \cite[Example 1.5]{Ottaviani-spinors} one has $\Omega_Q^1\cong S^2\Spin$. Since the normal bundle of $Q$ in $\p^4$ equals $\OP_Q(2)$ we see that bundles $\Omega_{\p^4}^1(-1)|_Q$ and $T_{\p^4}(-4)|_Q$ fit into the following exact sequences:
 \begin{equation}\label{eq: normal and conormal sequences for Q}
 \begin{split}
  0\to \OP_Q(-3)\to \Omega_{\p^4}^1(-1)|_Q&\xrightarrow{a} (S^2\Spin)(-1)\to 0;\\
  0\to (S^2\Spin)(-2) \xrightarrow{b}T_{\p^4}(-4)|_Q &\to \OP_Q(-2)\to 0.
  \end{split}
 \end{equation}
 Given a map from $\Omega_{\p^4}^1(-1)|_Q$ to $T_{\p^4}(-4)|_Q$ we can consider the map from~$(S^2\Spin)(-2)$ to $(S^2\Spin)(-1)$ which arises as the composition with maps in these exact sequences. 
\begin{lemma}\label{lemma: special hyperplane in case of 28 nodes}
 Let $\Xi\colon  T_{\p^4}(-4)|_Q\to \Omega_{\p^4}^1(-1)|_Q $ be a map of sheaves on a smooth quadric threefold $Q$. Then there exists a hyperplane section $\Pi$ of $Q$ such that the composition $a\circ \Xi\circ b$ vanishes along $\Pi$.
\end{lemma}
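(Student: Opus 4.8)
The plan is to read the composite $c:=a\circ\Xi\circ b$ as a single global section and to pin down which isotypic piece of a homogeneous bundle it can occupy. Since $b$ has source $(S^2\Spin)(-2)$ and $a$ has target $(S^2\Spin)(-1)$, the composite is a map
\[
 c\colon (S^2\Spin)(-2)\longrightarrow(S^2\Spin)(-1),
\]
that is, a global section of $\HomSh\bigl((S^2\Spin)(-2),(S^2\Spin)(-1)\bigr)\cong\mathcal{E}nd(S^2\Spin)(1)$, where I use $\Spin^\vee\cong\Spin(1)$ from Lemma \ref{lemma: properties of Spin}(1) to identify $(S^2\Spin)^\vee$ with $(S^2\Spin)(2)$. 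The first step is the Clebsch--Gordan decomposition for the rank-two bundle $\Spin$, which together with $\det\Spin=\OP_Q(-1)$ yields
\[
 \mathcal{E}nd(S^2\Spin)(1)\cong (S^4\Spin)(3)\oplus(S^2\Spin)(2)\oplus\OP_Q(1),
\]
the last summand being the trace (scalar) line $\OP_Q(1)\cdot\Id$.

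I want to show that $c$ lies in this scalar summand, i.e.\ that $c=\ell\cdot\Id$ for some $\ell\in H^0(Q,\OP_Q(1))$. Granting this, the hyperplane section $\Pi=\{\ell=0\}$ does the job: the kernel of the restriction $H^0(Q,\mathcal{E}nd(S^2\Spin)(1))\to H^0(\Pi,\mathcal{E}nd(S^2\Spin)(1)|_\Pi)$ equals the image of $H^0(Q,\mathcal{E}nd(S^2\Spin))$ under multiplication by $\ell$, and since $S^2\Spin\cong\Omega^1_Q$ is a simple bundle one has $H^0(Q,\mathcal{E}nd(S^2\Spin))=\CC\cdot\Id$; hence $c=\ell\cdot\Id$ restricts to $0$ on $\Pi$, which is exactly the assertion.

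It therefore remains to kill the two traceless summands. The antisymmetric piece $(S^2\Spin)(2)$ is removed by a symmetry consideration: the two sequences in \eqref{eq: normal and conormal sequences for Q} identify $b$ with the $(-5)$-twisted dual of $a$, so $c^\vee(-5)=a\circ\Xi^\vee(-5)\circ b$; for the self-dual map $\Xi$ produced by the symmetric Casnati--Catanese resolution of Corollary \ref{corollary: symmetric exact sequence for 28 nodes} this equals $c$, so $c$ is symmetric and lands in $S^2\Omega^1_Q(3)=(S^4\Spin)(3)\oplus\OP_Q(1)$. The remaining, and decisive, point is the vanishing
\[
 H^0\bigl(Q,(S^4\Spin)(3)\bigr)=0,
\]
after which $c\in H^0(Q,\OP_Q(1))\cdot\Id$ as desired.

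I expect this last vanishing to be the main obstacle, since it requires computing the cohomology of a symmetric power of $\Spin$ rather than of $\Spin$ or $\Spin\otimes\Spin$ themselves. I would attack it by running the symmetric powers of the defining sequence \eqref{eq: spin} and repeatedly invoking the cohomology of twists of $\Spin$ and of $\Spin\otimes\Spin$ from Lemma \ref{lemma: properties of Spin}(2)--(3); equivalently, one may avoid $S^4\Spin$ altogether and compute $h^0(Q,S^2\Omega^1_Q(3))$ directly through the conormal sequence, checking that it equals $h^0(Q,\OP_Q(1))=5$, which forces the traceless symmetric part to vanish. Once the scalar linear form $\ell$ has been extracted as the trace of $c$, the hyperplane section $\Pi=\{\ell=0\}$ is the one sought.
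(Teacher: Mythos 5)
Your proof is correct, but it takes a genuinely different route from the paper's, and the difference matters. The paper argues that the full space $\Hom_Q((S^2\Spin)(-2),(S^2\Spin)(-1))$ is $5$-dimensional and hence exhausted by multiplications by linear forms, which would give the statement for an arbitrary $\Xi$ with no symmetry input. That count does not appear to be right: by your own decomposition this space is $H^0(Q,(S^4\Spin)(3))\oplus H^0(Q,(S^2\Spin)(2))\oplus H^0(Q,\OP_Q(1))$, and the middle summand is $H^0(Q,(S^2\Spin)^{\vee})=\Hom_Q(\Omega^1_Q,\OP_Q)=H^0(Q,T_Q)\cong\mathfrak{so}(5)$, of dimension $10$ (equivalently, Lemma \ref{lemma: properties of Spin}(3) gives $h^0(Q,\Spin\otimes\Spin(2))=15$, of which $5$ come from $\Lambda^2\Spin(2)=\OP_Q(1)$). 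So the $\Hom$-space is $15$-dimensional; a general element has a nonzero component in the $T_Q$-summand, and such an element cannot vanish on a hyperplane section since $H^0(Q,T_Q(-1))=H^0(Q,(S^2\Spin)(1))=0$. Thus the lemma as stated for an arbitrary $\Xi$ is problematic, and your extra step --- using $b=a^{\vee}(-5)$ and the self-duality of $\Xi$ supplied by the symmetric resolution of Corollary \ref{corollary: symmetric exact sequence for 28 nodes} to force $c$ into the symmetric part $(S^4\Spin)(3)\oplus\OP_Q(1)$ --- is exactly what is needed; this restricted version is all that the proof of Proposition \ref{proposition: 28 nodes implies 12 nodes} uses.

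The one step you leave open, $H^0(Q,(S^4\Spin)(3))=0$, is true, and your second suggested route does work: one checks $h^0(Q,S^2\Omega^1_Q(3))=5$ by combining the conormal sequence of $Q\subset\p^4$ with the symmetric square of the Euler sequence, using $H^0(\p^4,S^2\Omega^1_{\p^4}(3))=0$, $h^1(\p^4,S^2\Omega^1_{\p^4}(1))=5$, and the vanishing of $h^0$ and $h^1$ of $\Omega^1_{\p^4}(1)|_Q$. Your closing appeal to the simplicity of $S^2\Spin$ is redundant --- once $c=\ell\cdot\Id$, its vanishing on $\{\ell=0\}$ is immediate --- but harmless.
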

\begin{proof}
 The dimension of the space $\Hom_Q((S^2\Spin)(-2),(S^2\Spin)(-1))$ equals $5$. On the other hand, one can construct a~\mbox{$5$-di}\-mensional space of maps from $(S^2\Spin)(-2)$ to $(S^2\Spin)(-1)$; namely, to each section $l\in H^0(Q,\OP(1))$ one can associate a multiplication of a local section of $(S^2\Spin)(-2)$ by the linear form $l$. Therefore, all elements of the group~\mbox{$\Hom_Q((S^2\Spin)(-2),(S^2\Spin)(-1))$} can be constructed this way.
 
 The composition $a\circ \psi\circ b$ lies in  $\Hom_Q((S^2\Spin)(-2),(S^2\Spin)(-1))$; thus, it is a multiplication by a linear form~$l$. Then it vanishes along the hyperplane section $\Pi = \{l = 0\}$.
\end{proof}

Now we are ready to prove Proposition \ref{proposition: 28 nodes implies 12 nodes}.
\begin{proof}[Proof of Proposition \textup{\ref{proposition: 28 nodes implies 12 nodes}}]
 Assume that there exists a nodal surface $B$ which is a complete intersection of a smooth quadric threefold $Q$ and a quartic hypersurface in $\p^4$ such that $B$ contains a minimal $\frac{1}{2}$-even set of $28$ nodes. Then by Corollary \ref{corollary: symmetric exact sequence for 28 nodes} one can construct a symmetric Casnati--Catanese resolution for the sheaf~$\F$ associated to $w$ on $Q$. More precisely, take $\E = \Omega_{\p^4}^1(-1)\oplus \OP_Q(2)$ then there exists $\Psi\in H^0(Q,(S^2\E)(5))$ such that~$B = B(\Psi)$ and $w = w(\Psi)$. 
 
  Let $a$ and $b$ be maps of sheaves defined in~\eqref{eq: normal and conormal sequences for Q}. Set $a'\colon \E\to  (S^2\Spin)(-1)$ and  $b'\colon (S^2\Spin)(-2)\to \E^{\vee}(-5)$ be the direct sums of $a$ and $b$ respectively with the zero maps. 

  By Lemma \ref{lemma: special hyperplane in case of 28 nodes} there exists a hyperplane section $\Pi\subset Q$ such that the composition $a'\circ\Psi\circ b'$ equals zero along $\Pi$. Thus, after a restriction to $\Pi$ there exist elements $\phi\in \Hom_Q((S^2\Spin)(-2)|_{\Pi}, \OP_{\Pi}(-3)\oplus \OP_{\Pi}(-2)^{\oplus 2})$ and $\psi \in\Hom_Q( \OP_{\Pi}(-2)\oplus \OP_{\Pi}(-3)^{\oplus 2}))$ such that the following diagram commutes:
\[\begin{tikzcd}[ampersand replacement=\&]
	0 \& {(S^2\Spin)(-2)|_{\Pi}} \& {T_{\p^4}|_{\Pi}(-4)\oplus \OP_{\Pi}(-3)^{\oplus 2}} \& {\OP_{\Pi}(-2)\oplus \OP_{\Pi}(-3)^{\oplus 2}} \& 0 \\
	0 \& {\OP_{\Pi}(-3)\oplus \OP_{\Pi}(-2)^{\oplus 2}} \& {\Omega_{\p^4}^1|_{\Pi}(-1)\oplus \OP_{\Pi}(-2)^{\oplus 2}} \& {(S^2\Spin)(-1)|_{\Pi}} \& 0
	\arrow[from=1-1, to=1-2]
	\arrow["{b'}", from=1-2, to=1-3]
	\arrow[from=1-3, to=1-4]
	\arrow[from=2-1, to=2-2]
	\arrow[from=2-2, to=2-3]
	\arrow["{a'}", from=2-3, to=2-4]
	\arrow[from=1-4, to=1-5]
	\arrow[from=2-4, to=2-5]
	\arrow["\Psi|_{\Pi}", from=1-3, to=2-3]
	\arrow["\phi"', from=1-2, to=2-2]
	\arrow["{\psi}", from=1-4, to=2-4]
\end{tikzcd}\]
Since the morphism $\Psi|_{\Pi}$ is symmetric one has $\phi = \psi^{\vee}(-5)$. Since $B$ is an irreducible surface of degree 8 in~$\p^4$, the morphism $\Psi$ is injective in a general point of the hyperplane $\Pi$. Therefore, maps $\psi$ and $\phi$ are also morphisms of sheaves of rank 3 in a general point of $\Pi$. Therefore, kernels of $\phi$, $\Psi$ and $\psi$ vanish and by the snake lemma we have an exact sequence of cokernels:
\[
 0\to \Coker(\phi) \to \F|_{\Pi} \to \Coker(\psi)\to 0.
\]
Since maps $\phi$ and $\psi$ are dual up to a twist their ranks decrease on a same scheme $C = \{\det(\phi) = 0\}$. Note that $C$ can not be empty since vector bundles ${(S^2\Spin(-2))|_{\Pi}}$ and $\OP_{\Pi}(-3)\oplus \OP_{\Pi}(-2)^{\oplus 2}$ are not isomorphic. Moreover, $\dim(C) = 1$. Thus, ranks of sheaves $\Coker(\phi)$ and $\Coker(\psi)$ are at least $1$ along $C$. Since the rank of the sheaf $\F$ increases in a finite set of points we deduce that the plane $\Pi$ is tangent to $B$ along the curve $C$.    

Then the curve $C$ is an intersection of two quadric surfaces in $\p^3$; thus, the genus of $C$ is at most 1. On the other hand, $C$ is isomorphic to its proper preimage~$\hat{C}$ in the minimal resolution of singularities $\hat{\delta}\colon\hat{B}\to B$ of the surface $B$. Let $w' = \Sing(B)\cap C$. Then by adjunction formula the degree of the canonical class $K_{\hat{C}}$ of the curve $\hat{C}$ on $\hat{B}$ is as follows:
\[
 \deg(K_{\hat{C}})= (\hat{\delta}^*K_B + {\hat{C}})\cdot {{\hat{C}}} = \left(\frac{3H - \sum_{p\in w'}E_p}{2}\right)\cdot\left(\frac{H - \sum_{p\in w'}E_p}{2}\right) = \frac{24 - 2|w'|}{4}.
\]
Here we denote by $E_p$ the exceptional divisor in $\hat{B}$ for any point $p\in \Sing(B)$. Since the divisor $H - \sum_{p\in w'}E_p$ lies in $2\Pic(\hat{B})$ the set of nodes $w'$ is a $\frac{1}{2}$-even.  Thus, the formula and Corollary \ref{corollary: CC-resolutions in 1/2-even case less than 28} implies that  $w'$ consists of 12 nodes. Finally, $w'$ does not intersect $w$ since the ranks of  sheaves $\Coker(\phi)$ and~$\Coker(\psi)$ equals $1$ in all points of $C$.
\end{proof}

\section{Proofs of Theorem \ref{theorem: classification of minimal sets of nodes} and Corollary \ref{corollary: obstructions to rationality}}\label{section: proofs}
 
 We are ready to prove the classification of minimal $\frac{\delta}{2}$-even sets of nodes on a nodal complete intersection of a smooth quadric threefold and a quartic threefold in $\p^4$.
\begin{proof}[Proof of Theorem \textup{\ref{theorem: classification of minimal sets of nodes}}]
Let $B$ be a nodal complete intersection of a smooth quadric hypersurface $Q$ and a quartic hypersurface in $\p^4$ and let $w$ be a minimal $\frac{\delta}{2}$-even set of nodes on $B$. If $\delta = 0$ then by Corollary \ref{corollary: CC-resolutions in 0-even case} there exists a vector bundle $\E$ on $Q$ and an element $\Phi\in H^0(Q,(S^2\E)(4))$ such that $B = B(\Phi)$ and $w = w(\Phi)$ and $\E$ one of the vector bundles as in cases \textup{(E1 -- E3)}. Thus, Theorem \ref{theorem: classification of minimal sets of nodes} is proved in the case of $0$-even sets of nodes.

Now assume that $w$ is the minimal $\frac{1}{2}$-even set of nodes on $B$. Then by Lemma \ref{lemma: properties of F} the order of the set of nodes $w$ is~$|w| = 8\mu + 4$. If $|w|\geqslant 36$ then by Lemma \ref{lemma: properties of F}(9) we get that $w$ is not minimal. If $B$ contains a minimal $\frac{1}{2}$-even set of $28$ nodes then it also contains a minimal $\frac{1}{2}$-even set $w'$ of $12$ nodes. Thus, we are in the case \textup{(O1)}. Finally if  $|w|\leqslant 20$ then by Corollary \ref{corollary: CC-resolutions in 1/2-even case less than 28} and Lemma \ref{lemma: case O3} we get the cases \textup{(O1 -- O3)} and the proof of the theorem is finished.
\end{proof}
Now we establish three families of double covers of a smooth quadric ramified over a nodal surface of degree 8 which admit Artin--Mumford obstructions to rationality.
\begin{proof}[Proof of Corollary \textup{\ref{corollary: obstructions to rationality}}]
 Assume that $\pi\colon \doublecover\to Q$ is a double cover of a smooth quadric threefold $Q$ ramified over a nodal surface $B$ which is a complete intersection of $Q$ with a quartic threefold in $\p^4$. If $\doublecover$ has an Artin--Mumford obstruction to rationality, then by Corollary \ref{corollary: AM implies w with 0 defect} there exists a non-empty $\frac{\delta}{2}$-even set of nodes $w$ on $B$ such that the defect of $w$ equals $0$. By Lemma \ref{lemma: defect of a subset} we can assume that $w$ is minimal. Theorem~\ref{theorem: classification of minimal sets of nodes} and Corollary \ref{corollary: symmetric exact sequence for 28 nodes} implies that there exists a vector bundle $\E$ and an element~\mbox{$\Phi$} in the cohomology group~\mbox{$H^0(Q,(S^2\E)(4+\delta))$} such that~$B = B(\Phi)$. Moreover, the vector bundle $\E$ is as in cases~\mbox{\textup{(E1 -- E3)}} or~\mbox{\textup{(O1 -- O3)}}, or in the case $|w| = 28$ the bundle $\E$ is as in Corollary \ref{corollary: symmetric exact sequence for 28 nodes}. Then using Theorem~\ref{theorem: exact sequence for defect} we compute the defects of $\frac{\delta}{2}$-even sets of nodes in each case. The only cases when the defect of $w$ vanishes are~\mbox{\textup{(E2), (E3)} and \textup{(O2)}}. Finally, by~\cite[Lemma 4]{Barth} we get the second assertion of Corollary \ref{corollary: obstructions to rationality}. This finishes the proof.
\end{proof}

\bibliographystyle{alpha}
\bibliography{even_sets}

\end{document}